\documentclass[a4paper,10pt]{article}

\usepackage[pdftex,unicode]{hyperref}
\usepackage{wrapfig}
\usepackage{bm}
\usepackage[usenames]{color}

\usepackage[warn]{mathtext}
\usepackage[T1]{fontenc}
\usepackage[utf8]{inputenc}
\usepackage[english]{babel}
\usepackage{amsmath}
\usepackage[left=2cm,right=2cm,top=2cm,bottom=2cm,bindingoffset=0cm]{geometry}
\usepackage{amsfonts}
\usepackage{amssymb}
\usepackage{bbm}
\usepackage{mathrsfs}
\usepackage{amsthm}
\usepackage{euscript}
\usepackage{bigints}
\usepackage{xcolor}
\usepackage{graphicx}
\graphicspath{}
\DeclareGraphicsExtensions{.pdf,.png,.jpg}
\graphicspath{{C:\Users\TS\Desktop\kdipl}}

\theoremstyle{plain}
\theoremstyle{plain}\newtheorem{Le}{Lemma}[section]
\theoremstyle{plain}
\theoremstyle{definition}\newtheorem{Def}[Le]{Definition}
\theoremstyle{plain}\newtheorem{St}[Le]{Proposition}
\theoremstyle{plain}\newtheorem{Th}[Le]{Theorem}
\theoremstyle{plain}\newtheorem{Cor}[Le]{Corollary}
\theoremstyle{plain}\newtheorem{Rem}[Le]{Remark}
\theoremstyle{plain}
\theoremstyle{plain}
\theoremstyle{plain}
\theoremstyle{plain}
\theoremstyle{plain}\newtheorem{Ex}[Le]{Example}
\numberwithin{equation}{section}

\newtheorem*{theorem*}{Theorem}{\bf}{\it}
\newtheorem*{proposition*}{Proposition}{\bf}{\it}
\newtheorem*{observation*}{Observation}{\bf}{\it}
\newtheorem*{lemma*}{Lemma}{\bf}{\it}
\theoremstyle{definition}

\theoremstyle{remark}

\DeclareMathOperator{\dir}{dir}
\DeclareMathOperator{\fin}{fin}
\DeclareMathOperator{\ri}{ri}
\DeclareMathOperator{\ray}{ray}
\DeclareMathOperator{\rb}{rb}

\DeclareMathOperator{\dist}{dist}
\DeclareMathOperator{\aff}{aff}
\DeclareMathOperator{\conv}{conv}

\DeclareMathOperator{\clos}{clos}
\DeclareMathOperator{\intX}{int}
\newcommand{\BMO}{\mathrm{BMO}}

\newcommand{\BG}{\mathcal{B}}
\newcommand{\re}{\overline{\mathbb{R}}}
\newcommand{\supfu}[1]{\overline{#1}}

\begin{document}

\title{Structure of Minimal Locally Concave Functions}

\author{Egor Dobronravov\footnote{Supported by the Russian Science Foundation grant 24-71-10011}}

\maketitle

\begin{abstract}
We propose a theory describing the structure of minimal locally concave functions on an arbitrary subdomain of $\mathbb{R}^d$. We introduce the notion of an extremal set, that is, a set on which the function is affine, and prove that extremal sets foliate the domain. We show that extremal sets are covered by simplices with vertices on the boundary of the domain, and that these simplices themselves lie in the extremal set. We also prove that one-dimensional extremal sets can approach the free boundary only tangentially, not transversally.
\end{abstract}

\section{Introduction}\label{introd}
The main object of study in this paper is the class of minimal locally concave functions. Let $\Omega$ be a subset of $X$, where $X$ is a vector space over $\mathbb{R}$ (except in Part~\ref{prinnasl}, we always have $X=\mathbb{R}^d$). A function defined on $\Omega$ is called locally concave if its restriction to every segment lying entirely in $\Omega$ is concave. Let $g$ be a function defined on $\Omega$ or on a subset of $\Omega$. We call the pointwise infimum of locally concave functions on $\Omega$ that are pointwise greater than or equal to $g$ the minimal locally concave function on $\Omega$ with obstacle $g$. Minimal locally concave and maximal locally convex functions appear in several areas of mathematics. In convex domains they are concave and, respectively, convex solutions of the degenerate Monge--Amp\`ere equation~\cite{CafNirSpr, LiWang, RaTa, TruUrb}; in nonconvex domains they are locally concave or locally convex solutions of the same equation~\cite{Guan}. They also occur as solutions of a number of optimization problems in harmonic analysis~\cite{r7, r6, StaVaZa1, StZa, r40, Vas1, VaZaZl2}. Theorem 1.3.4 of~\cite{StaVaZaMRTR} establishes a relation between minimal locally concave functions in the parabolic strip and minimal biconcave functions in the diagonal strip, which also solve a number of optimization problems in harmonic analysis (see~\cite{StaVaZaMRTR}).

For minimal locally concave functions, as for solutions of partial differential equations, it is natural to ask how their smoothness depends on the smoothness of the boundary values. For the degenerate Monge--Amp\`ere equation this question was studied for convex domains in~\cite{CafNirSpr, TruUrb}, and in nonconvex domains under the additional assumption that there exists a strictly concave $C^2$-smooth majorant with the same boundary values in~\cite{Guan}. The existence of such a strictly concave $C^2$-smooth majorant imposes strong restrictions on the structure of a minimal locally concave function (see Sec.~\ref{c2slmn}); in particular, it excludes tangent extremal sets, although these are the most typical extremal sets in the Bellman function theory, as Sec.~\ref{emprul} shows. In the Bellman function theory the exact computation of optimization functions is crucial, and it is often based on guessing the structure of the minimal locally concave function. The present paper develops a theory of extremal sets, that is segments, and domains in which minimal locally concave functions are affine.

In~\cite{CafNirSpr} and~\cite{DePhiFig}, the authors studied the smoothness of the maximal convex function defined, respectively, on a convex compact set with a boundary majorant and with a majorant given on the whole domain. The main tool in their study of maximal convex functions is a certain consequence of Carath\'eodory's theorem (see Lemma 2 in~\cite{CafNirSpr} and Lemma 3.2 in~\cite{DePhiFig}). For nonconvex domains Carath\'eodory's theorem is not applicable, and to prove a similar result, Theorem~\ref{mainth}, one has to develop a new theory.

The main result of the paper is Theorem~\ref{mainth}. Informally, it says that every interior point of the domain of a minimal locally concave function lies in a simplex with vertices on the boundary, and the function is affine on this simplex. This theorem, like Lemma 2 of~\cite{CafNirSpr} and Lemma 3.2 of~\cite{DePhiFig}, will be the key tool in studying how the smoothness of minimal locally concave functions depends on the smoothness of the boundary data. These results will be presented in subsequent papers.

Two further topics are considered in the paper. The first is the range of applicability of a basic tool for studying minimal locally concave functions, called in the previous papers the hereditary principle (see Part~\ref{prinnasl}). The second is the formalization and proof of the empirical rule that one-dimensional extremal sets can approach the free boundary only tangentially (see Sec.~\ref{emprul}).

\subsection{Structure of the paper}
The paper is divided into three parts.

The first part consists of two sections. In the first, we recall the definition of faces of a convex set and prove several properties of faces that will be needed in the main part of the paper. In the second, we recall the definition of generalized simplices in the sense of Rockafellar and prove several technical lemmas about them.

The second part is devoted to the hereditary principle for minimal locally concave functions. It says that the restriction of a minimal locally concave function to a subdomain coincides with the minimal locally concave function whose boundary data is given by the initial minimal locally concave function on the boundary of the new subdomain.

The third part develops the theory describing the structure of minimal locally concave functions. Its first section contains preliminary material. In the second section of this part we define the contact zone, that is, the part of the domain in which the function coincides with the obstacle. The third section discusses the coincidence set of a minimal locally concave function with a locally supporting affine functional; this is an intermediate notion needed for the definition of extremal sets. In the fourth section we introduce the main object of the paper, namely, extremal sets of a locally concave function, and prove their basic properties. Informally, an extremal set is a domain in an affine subspace in which the minimal locally concave function is itself affine. Thus, extremal sets of a minimal locally concave function $\BG$ are determined by two principal parameters: an affine subspace $l$ containing the extremal set and an affine functional $L$ which equals the minimal locally concave function on the extremal set, as well as by an auxiliary parameter, a point $x$ that selects the connectivity component of the subdomain of $l$ where $\BG=L$. The most important property of extremal sets proved in this section is Lemma~\ref{extremal boundary} saying that the boundary of an extremal set, except for the boundary of the domain $\Omega$, is a union of extremal sets of smaller dimension. This will later allow us to use induction on the dimension of extremal sets in the proof of the main theorem. In the fifth section we prove the theorem on the structure of extremal sets (see Theorem~\ref{mainth}). It asserts that, although an extremal set may be nonconvex, its points can still be covered by simplices lying in the extremal set and having vertices on the boundary of the domain of the minimal locally concave function. The sixth section is devoted to formalizing and proving the empirical rule that one-dimensional extremal sets can approach the inner boundary only tangentially. In the penultimate section we show which restrictions on the structure of a minimal locally concave function are imposed by the existence of a smooth strictly locally concave majorant with the same boundary values. The last chapter says a few words about possible generalizations of the results of the work.

\subsection{Description of the ideas of the proof of the main Theorem~\ref{mainth}}
In this subsection we describe the ideas of the proof of the main theorem, Theorem~\ref{mainth}, which says that the domain of a minimal locally concave function is covered by pieces of affinity of the function, called extremal sets. These extremal sets, in turn, are covered by lines and by generalized simplices with vertices on the boundary of the domain, and these simplices lie entirely inside the extremal set.

Let $x\in\intX\Omega$. We want to find either a generalized simplex containing $x$, lying in the domain, with vertices on the boundary of the domain, on which $\BG$ is affine, or a line lying entirely in the domain, containing $x$, on which $\BG$ is affine again.
\begin{enumerate}
\item
We prove that there exists an extremal set $E(x,\,L,\,l)$ passing through $x$ (see Definition~\ref{opredext}); that is, we choose an affine functional $L$ and an affine subspace $l$ such that $\BG\leqslant L$ in a neighborhood of $x$, while $\BG=L$ in a neighborhood of $x$ inside $l$. The extremal set $E(x,\,L,\,l)$ is a certain subset of $l$ on which $\BG=L$.
\item
We see that the boundary of the extremal set, relative to $l$, consists of points of the boundary of the domain and extremal sets of smaller dimension (see Lemma~\ref{extremal boundary}).
\item
We define the set $V(x,\,L,\,l)$ of candidates for the vertices of the desired simplex: these are the points of $\partial\Omega$ (and the directions of rays) that are visible from $x$ inside $E(x,\,L,\,l)$ (see Definitions~\ref{canconver},~\ref{cannapver}).
\item
In Lemma~\ref{leprovipob} we prove by induction on the dimension of $l$ that
\begin{enumerate}
\item
 either $x\in\conv(V(x,\,L,\,l))$,
\item
 or a line $m''\subseteq E(x,\,L,\,l)$ passes through $x$.
\end{enumerate}
\item
It follows from the previous item that there exists a simplex $C$ with vertices in $V(x,\,L,\,l)$ containing $x$, and among such simplices, by Corollary~\ref{tsorsim}, we may choose an inclusion-minimal one, denoted by $C_{\min}$.
\item
If $C_{\min}$ does not have the required properties, we obtain a contradiction to minimality as follows:
\begin{enumerate}
\item
If it does not lie entirely in $\Omega$, then it contains a point of $\partial\Omega$;
\item
Gradually expanding the point $x$ to the simplex $C_{\min}$, we find a point $z\in V(x,\,L,\,l)$ in it;
\item
We can decrease $C_{\min}$ by replacing one of its vertices with $z$.
\end{enumerate}
\end{enumerate}

\subsection{Notation}

\noindent By $\re$ we denote the extended real line, that is, $\re=\mathbb{R}\cup\{-\infty,\,+\infty\}$.

\noindent Let $X$ be a topological space, and let $V\subseteq U\subseteq X$. We denote by $\clos_U V$, $\intX_U V$, and $\partial_U V$ the closure, interior, and boundary of $V$ in the topology induced from $X$ on $U$.

\begin{Rem}
The closure in the induced topology is given by the formula $\clos_U V=(\clos V)\cap U$. If $V$ is closed in $U$ \textup{(}i.e., $V=\clos_U V$\textup{)}, then, for any set $W\subseteq U$ that is closed in $X$, the intersection $W\cap V$ is closed in $X$.
\end{Rem}
\begin{Rem}
The boundary in the induced topology is given by the formula $\partial_U V=\partial V\cap\partial(U\setminus V)\cap U$, and the chain of inclusions $\partial V\setminus\partial U\subseteq \partial_U V\subseteq\partial V$ holds.
\end{Rem}

\noindent If $X$ is a vector space over $\mathbb{R}$, then we assume that its finite-dimensional subspaces are equipped with the standard topology of the Euclidean space. For a nonempty set $U\subseteq X$, we denote by $\aff(U)$ and $\conv(U)$ its affine and convex hulls, respectively. For $a,\,b\in X$, we denote by $[a,\, b]$ the segment with the endpoints $a$ and $b$.

\noindent For $x\in\mathbb{R}^d$, we denote by $B_{r}(x)$ the closed ball with center $x$ and radius $r$; in particular, $B_0(x)=\{x\}$. The length of the vector $x$ will be denoted by $|x|$, that is,
\begin{equation}
|x|=\sqrt{\sum_{j=1}^dx_j^2}.
\end{equation}

\noindent Let $U\subseteq\mathbb{R}^d$ be a convex set. We denote by $\ri U$ and $\rb U$ the relative interior and relative boundary of $U$, respectively: $\ri U=\intX_{\aff U}U$, $\rb U=\partial_{\aff U}U$.

\noindent The set of natural numbers, including zero, is denoted by $\mathbb{N}_0$. By $\#A$ we denote the cardinality of the set $A$.

\noindent For a function $f\colon\Omega\subseteq\mathbb{R}^d\to\re$, we denote by $\supfu{f}$ its upper semicontinuous majorant, that is, the function defined on $\clos\Omega$ by the formula
\begin{equation}
\supfu{f}(y)
=
\limsup\limits_{z\in\Omega,z\to y}f(z).
\end{equation}

{\bf Acknowledgments.} The author expresses his deep gratitude to his supervisor Dmitry Stolyarov for suggesting the problem, constant attention to the work, and valuable advice on the exposition. The author is also sincerely grateful to Mikhail Novikov for carefully reading the final version of the paper and for valuable remarks that helped eliminate misprints and inaccuracies.

\part{On convex geometry}\label{convgeom}
\section{Faces of a convex set}\label{secfacomn}
In this paper we will need the notion of a face of a convex set. The definition of a face of a convex set and proofs of its basic properties are given in Rockafellar~\cite{Rockafellar} (p. 162). We state the facts we need.
\begin{Def}
The dimension of a convex set is the dimension of its affine hull. The dimension of the empty set is formally defined as $-1$.
\end{Def}
\begin{Def}
A convex subset $F$ of a convex set $U\subseteq\mathbb{R}^d$ is called a face of $U$ if, for any points $x,\,y\in U$ such that $(x,\,y)\cap F\ne\varnothing$, the inclusion $[x,\,y]\subseteq F$ holds. By the dimension of a face we mean its dimension as a convex set.
\end{Def}
\begin{Def}
A point $x$ of a convex set $U$ is called extreme if it is not an interior point of any segment with endpoints in $U$. In other words, for any points $y,\,z\in U$, the inclusion $x\in [y,\,z]$ implies either $x=y$ or $x=z$.
\end{Def}
\begin{Rem}
Zero-dimensional faces are one-point sets whose elements are extreme points. Thus, in what follows we will identify a zero-dimensional face with the corresponding extreme point.
\end{Rem}

\begin{Ex}\label{cilindr}
Let $U\subseteq\mathbb{R}^3$ be the cylinder
\begin{equation}
U=\{(x,\,y,\,z)\in\mathbb{R}^3|\,x^2+y^2\leqslant 1,\ z\in[0,\,1]\}.
\end{equation}
The faces of $U$ can be listed in decreasing order of their dimensions.
\begin{enumerate}
\item[3.]
There is one three-dimensional face, namely the set $U$ itself.
\item[2.]
The two-dimensional faces are the upper and lower bases of the cylinder $F_0=\{(x,\,y,\,0)\in\mathbb{R}^3|\,x^2+y^2\leqslant 1\}$, $F_1=\{(x,\,y,\,1)\in\mathbb{R}^3|\,x^2+y^2\leqslant 1\}$.
\item[1.]
One-dimensional faces correspond to vertical segments on the lateral surface, $F_{x,\,y}=\{(x,\,y,\,z)\in\mathbb{R}^3|\,z\in[0,\,1]\}$, where $x^2+y^2=1$.
\item[0.]
Zero-dimensional faces correspond to extreme points of the circles that form the upper and lower bases: $F_{x,\,y,\,0}=\{(x,\,y,\,0)\}$ and $F_{x,\,y,\,1}=\{(x,\,y,\,1)\}$, where $x^2+y^2=1$.
\item[-1.]
The formal empty face.
\end{enumerate}
\end{Ex}
\begin{figure}[h]\centering
\def\svgwidth{5cm}
%% Creator: Inkscape 1.3.2 (091e20e, 2023-11-25, custom), www.inkscape.org
%% PDF/EPS/PS + LaTeX output extension by Johan Engelen, 2010
%% Accompanies image file 'cilindr.pdf' (pdf, eps, ps)
%%
%% To include the image in your LaTeX document, write
%%   \input{<filename>.pdf_tex}
%%  instead of
%%   \includegraphics{<filename>.pdf}
%% To scale the image, write
%%   \def\svgwidth{<desired width>}
%%   \input{<filename>.pdf_tex}
%%  instead of
%%   \includegraphics[width=<desired width>]{<filename>.pdf}
%%
%% Images with a different path to the parent latex file can
%% be accessed with the `import' package (which may need to be
%% installed) using
%%   \usepackage{import}
%% in the preamble, and then including the image with
%%   \import{<path to file>}{<filename>.pdf_tex}
%% Alternatively, one can specify
%%   \graphicspath{{<path to file>/}}
%% 
%% For more information, please see info/svg-inkscape on CTAN:
%%   http://tug.ctan.org/tex-archive/info/svg-inkscape
%%
\begingroup%
  \makeatletter%
  \providecommand\color[2][]{%
    \errmessage{(Inkscape) Color is used for the text in Inkscape, but the package 'color.sty' is not loaded}%
    \renewcommand\color[2][]{}%
  }%
  \providecommand\transparent[1]{%
    \errmessage{(Inkscape) Transparency is used (non-zero) for the text in Inkscape, but the package 'transparent.sty' is not loaded}%
    \renewcommand\transparent[1]{}%
  }%
  \providecommand\rotatebox[2]{#2}%
  \newcommand*\fsize{\dimexpr\f@size pt\relax}%
  \newcommand*\lineheight[1]{\fontsize{\fsize}{#1\fsize}\selectfont}%
  \ifx\svgwidth\undefined%
    \setlength{\unitlength}{192.75590551bp}%
    \ifx\svgscale\undefined%
      \relax%
    \else%
      \setlength{\unitlength}{\unitlength * \real{\svgscale}}%
    \fi%
  \else%
    \setlength{\unitlength}{\svgwidth}%
  \fi%
  \global\let\svgwidth\undefined%
  \global\let\svgscale\undefined%
  \makeatother%
  \begin{picture}(1,1.27941176)%
    \lineheight{1}%
    \setlength\tabcolsep{0pt}%
    \put(0,0){\includegraphics[width=\unitlength,page=1]{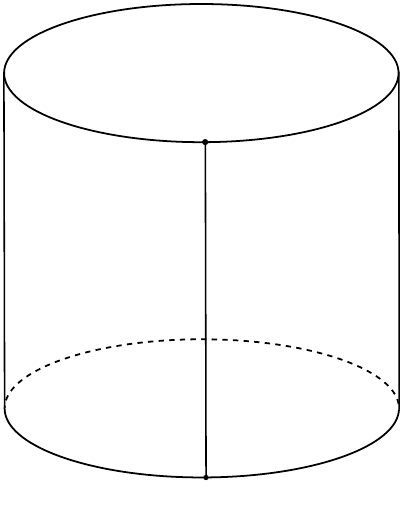}}%
    \put(0.26119983,0.70743242){\color[rgb]{0,0,0}\makebox(0,0)[lt]{\lineheight{1.25}\smash{\begin{tabular}[t]{l}$U$\end{tabular}}}}%
    \put(0.47902729,1.13476241){\color[rgb]{0,0,0}\makebox(0,0)[lt]{\lineheight{1.25}\smash{\begin{tabular}[t]{l}$F_1$\end{tabular}}}}%
    \put(0.29866066,0.28703954){\color[rgb]{0,0,0}\makebox(0,0)[lt]{\lineheight{1.25}\smash{\begin{tabular}[t]{l}$F_0$\end{tabular}}}}%
    \put(0.51926292,0.58672552){\color[rgb]{0,0,0}\makebox(0,0)[lt]{\lineheight{1.25}\smash{\begin{tabular}[t]{l}$F_{x,\,y}$\end{tabular}}}}%
    \put(0.48318963,0.95300849){\color[rgb]{0,0,0}\makebox(0,0)[lt]{\lineheight{1.25}\smash{\begin{tabular}[t]{l}$F_{x,\,y,\,1}$\end{tabular}}}}%
    \put(0.49706396,0.02396949){\color[rgb]{0,0,0}\makebox(0,0)[lt]{\lineheight{1.25}\smash{\begin{tabular}[t]{l}$F_{x,\,y,\,0}$\end{tabular}}}}%
  \end{picture}%
\endgroup%

\caption{Example~\ref{cilindr}}
\end{figure}

We will need the following properties of faces of a convex set; we state them without proof.
\begin{St}[see~\cite{Rockafellar} p. 162]\label{graff}
Let $C\subseteq\mathbb{R}^d$ be a nonempty convex set, and let $L\colon \mathbb{R}^d\to\mathbb{R}$ be an affine functional. Then the set $K=\{x\in C|\,L(x)=\max\{L(y)|y\in C\}\}$ is a face of $C$.
\end{St}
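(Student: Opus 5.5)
The plan is to check directly the two requirements in the definition of a face: that $K$ is convex, and that $K$ has the segment property. Throughout, put $M=\max\{L(y)\mid y\in C\}$. The statement tacitly assumes this maximum is attained. If it is not, or if $M=+\infty$, then $K=\varnothing$, and the empty set is trivially a face, since the condition $(x,y)\cap K\ne\varnothing$ never holds. So assume $M\in\mathbb{R}$ and $K\neq\varnothing$.

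Convexity comes first. If $u,v\in K$ and $t\in[0,1]$, then $tu+(1-t)v\in C$ because $C$ is convex. Since $L$ is affine,
\[
L\bigl(tu+(1-t)v\bigr)=tL(u)+(1-t)L(v)=M,
\]
so the combination lies in $K$. Equivalently, $K=C\cap\{L=M\}$ is an intersection of two convex sets.

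Next comes the segment property. Take $x,y\in C$ and a point $z\in(x,y)\cap K$, so $z=tx+(1-t)y$ for some $t\in(0,1)$. By affinity,
\[
M=L(z)=tL(x)+(1-t)L(y).
\]
Since $L(x)\le M$, $L(y)\le M$ and both weights $t$ and $1-t$ are strictly positive, equality forces $L(x)=L(y)=M$. Thus $x,y\in K$, and by convexity of $K$ we get $[x,y]\subseteq K$.

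I do not expect a real obstacle. The only point needing care is the strict positivity of $t$ and $1-t$, which is exactly why the definition uses the open segment $(x,y)$. The degenerate case where the maximum is not attained is handled by the empty-face convention above.
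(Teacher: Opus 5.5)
Your proof is correct and is the standard argument: $K=C\cap\{L=M\}$ is convex, and for $t\in(0,1)$ the relations $M=tL(x)+(1-t)L(y)$ and $L(x),L(y)\leqslant M$ force $x,y\in K$. The paper gives no proof of its own, only a citation to Rockafellar (p.~162), where essentially this reasoning appears; your handling of an unattained maximum via the empty face also matches the paper's convention that $\varnothing$ is a formal face.
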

\begin{St}[see~\cite{Rockafellar} p. 163]\label{grgrgr}
If $F$ is a face of $U$, and $U$ is a face of $W$, then $F$ is a face of $W$.
\end{St}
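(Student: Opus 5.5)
The statement to prove is Proposition~\ref{grgrgr}: transitivity of the face relation. I will argue directly from the definition of a face, checking the defining condition of a face of $W$ for $F$.

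First I check convexity. $F$ is convex because it is a face of $U$, and faces are by definition convex subsets. It is also a subset of $W$, since $F\subseteq U\subseteq W$. If $F=\varnothing$ there is nothing more to prove, because the condition $(x,\,y)\cap F\ne\varnothing$ is never met.

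Now I verify the segment condition. Take $x,\,y\in W$ with $(x,\,y)\cap F\ne\varnothing$, and pick $z\in(x,\,y)\cap F$.
\begin{enumerate}
\item Since $F\subseteq U$, the point $z$ lies in $(x,\,y)\cap U$, so this set is nonempty.
\item $U$ is a face of $W$ and $x,\,y\in W$, so the definition gives $[x,\,y]\subseteq U$. In particular $x,\,y\in U$.
\item We now have $x,\,y\in U$ and $(x,\,y)\cap F\ni z$. Since $F$ is a face of $U$, the definition gives $[x,\,y]\subseteq F$.
\end{enumerate}
This is exactly the defining property of a face of $W$, so $F$ is a face of $W$.

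There is no genuine obstacle. The only point needing care is the order of the steps. We cannot apply the face property of $F$ in $U$ until we know the endpoints $x,\,y$ lie in $U$, and that membership is supplied by the face property of $U$ in $W$, applied to the interior point $z$. The degenerate case $x=y$ is harmless, since then $(x,\,y)$ is empty or a single point and the conclusion is immediate.
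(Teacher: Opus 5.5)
Your proof is correct: using the face property of $U$ in $W$ to get $x,\,y\in U$, and then the face property of $F$ in $U$ to get $[x,\,y]\subseteq F$, is the standard verification directly from the definition. The paper gives no argument of its own here and cites Rockafellar (p.~163), where the fact is likewise treated as an immediate consequence of the definition of a face.
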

\begin{St}[see~\cite{Rockafellar} Theorem 18.1]\label{grvipvnutgr}
Let $F$ be a face of $U$, and let $W$ be a convex subset of $U$ such that $\ri(W)\cap F\ne\varnothing$. Then $W\subseteq F$.
\end{St}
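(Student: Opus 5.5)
The plan is to deduce the statement from the definition of a face, with $\ri W$ doing the main work. Fix a point $z\in\ri(W)\cap F$, which exists by hypothesis, and let $w\in W$ be arbitrary. The aim is to show $w\in F$. If $w=z$ there is nothing to prove, so assume $w\ne z$.

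The first step uses relative interiority of $z$ in $W$. Since $z\in\ri W=\intX_{\aff W}W$ and $w\in W\subseteq\aff W$, the whole line through $w$ and $z$ lies in $\aff W$. Points of this line close to $z$ therefore belong to $W$. Concretely, I will choose $\varepsilon>0$ so small that the point
\begin{equation}
w'=z+\varepsilon(z-w)
\end{equation}
lies in $W$. This is possible because a relative neighbourhood of $z$ in $\aff W$ is contained in $W$, and $w'\in\aff W$, being an affine combination of $z$ and $w$.

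The second step applies the face property. The point $z$ lies in the open segment $(w,\,w')$, since
\begin{equation}
z=\tfrac{1}{1+\varepsilon}w'+\tfrac{\varepsilon}{1+\varepsilon}w,
\end{equation}
and $w\ne w'$ because $w\ne z$. Both endpoints lie in $W\subseteq U$, and $(w,\,w')\cap F\ni z$ is nonempty. By the definition of a face we get $[w,\,w']\subseteq F$, in particular $w\in F$. Since $w$ was arbitrary, $W\subseteq F$.

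The only point needing care is the first step: $W$ need not be full-dimensional, so the perturbation must stay inside $\aff W$, where $\ri W$ is open. This is exactly why the hypothesis is stated with $\ri W$ rather than $W$. The degenerate case where $W$ is a single point is also harmless, because then $\ri W=W$ and the claim is immediate.
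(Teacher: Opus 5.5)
Your proof is correct. It is essentially the standard argument behind Rockafellar's Theorem~18.1, which the paper cites without proof: pick $z\in\ri(W)\cap F$, extend the segment from $w$ through $z$ slightly past $z$ while staying inside $W$, and apply the face definition to the resulting open segment containing $z$. The step Rockafellar delegates to his Theorem~6.4, the existence of such an extension point, you verify directly from $\ri W=\intX_{\aff W}W$, and that is fine.
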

\begin{St}\label{bougr}
Let $U\subseteq\mathbb{R}^d$ be a convex set, and let $x\in U$ lie on the relative boundary of $U$. Then $x$ lies in some face of $U$ whose dimension is strictly smaller than the dimension of $U$.
\end{St}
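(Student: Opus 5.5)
Since $x\in\rb U$, the plan is to separate $x$ from $\ri U$ inside $\aff U$ by a supporting hyperplane, and then take the maximizing set of the corresponding affine functional as the desired face, appealing to Proposition~\ref{graff}.

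First, we reduce to the full-dimensional case. Set $A=\aff U$ and $k=\dim U$. We identify $A$ with $\mathbb{R}^k$ through an affine isomorphism. This identification preserves convexity, faces and dimensions, and it maps $\ri U$ to the interior of $U$ in $\mathbb{R}^k$. The case $k=0$ cannot occur: then $U$ is a single point, so $\rb U=\varnothing$. Hence $k\geqslant 1$, $\intX U\ne\varnothing$, and $x\in U\setminus\intX U$.

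Next, we produce a nonconstant affine functional $L$ on $A$ with $L(x)=\max_{y\in U}L(y)$. Since $x\notin\intX U$, the point $x$ does not lie in the open convex set $\intX U$, which is nonempty. By the finite-dimensional separation theorem, applied to the point $\{x\}$ and the open convex set $\intX U$, there is a nonzero linear functional $\ell$ with $\ell(y)<\ell(x)$ for all $y\in\intX U$. Because $U\subseteq\clos(\intX U)$ for a full-dimensional convex set, we get $\ell(y)\leqslant\ell(x)$ on all of $U$. We then take $L=\ell$, extended to $\mathbb{R}^d$ by composing with an affine projection onto $A$, or simply by extending $\ell$ affinely. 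The maximum of $L$ over $U$ is attained at $x$.

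Finally, define $K=\{y\in U\mid L(y)=\max_U L\}$. By Proposition~\ref{graff}, $K$ is a face of $U$, and $x\in K$. It remains to show $\dim K<k$. The set $K$ lies in the hyperplane $\{L=L(x)\}\cap A$. This is a proper affine subspace of $A$, because $\ell\ne0$ on the direction space of $A$. So $\dim K\leqslant k-1$.

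The only step that is not formal is the separation step, together with the fact $U\subseteq\clos\intX U$; both are standard finite-dimensional convex analysis (Rockafellar, Theorems 6.3 and 11.6). One care point is to ensure $\ell$ is nonconstant on $A$, not merely on $\mathbb{R}^d$; this is automatic because we separate inside $A\cong\mathbb{R}^k$.
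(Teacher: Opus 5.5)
Your proof is correct, but it follows a different route from the paper's.

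The paper's proof is a one-line appeal to two structural facts from Rockafellar:
\begin{itemize}
\item every convex set is the disjoint union of the relative interiors of its faces (Theorem 18.2), so $x\in\ri F$ for some face $F$, and $F\ne U$ because $x\notin\ri U$;
\item a face other than $U$ has strictly smaller dimension (Corollary 18.1.3).
\end{itemize}

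You instead separate $x$ from $\ri U$ inside $\aff U$ by a nontrivial supporting hyperplane, using Rockafellar's Theorem 11.6 together with $U\subseteq\clos\ri U$. You then take the maximizing set of the resulting affine functional, which is a face by Proposition~\ref{graff}. The dimension drop is immediate because this face lies in a proper hyperplane of $\aff U$, so you never need the result that proper faces have lower dimension.

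Your route yields a slightly stronger conclusion: $x$ lies in an \emph{exposed} face of lower dimension. It also uses only the separation theorem and a proposition the paper already quotes.

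The paper's route is shorter and produces a sharper object: the smallest face containing $x$, namely the unique face with $x$ in its relative interior. This face can be strictly smaller than yours and need not be exposed. For example, at a junction point of a flat edge and a circular arc of a stadium-shaped region, the minimal face is $\{x\}$, while your exposed face is the whole edge.

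For the application in Lemma~\ref{extremal boundary}, either face works equally well.
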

\begin{proof}
This proposition is a direct corollary of Theorem 18.2 in~\cite{Rockafellar}, which says that any convex set is the disjoint union of the relative interiors of its faces, and of Corollary 18.1.3 there, which says that any face of a convex set not coinciding with the whole set has dimension smaller than that of the set itself.
\end{proof}
We will also need several lemmas concerning the local nature of faces of a convex set. Although we could not find them in the literature, they are probably not new.
\begin{Le}\label{locface}
Let $U,\,W\subseteq\mathbb{R}^d$ be convex sets, let $x\in U$ and $x\in\intX W$ \textup{(}in particular, $\intX W\ne\varnothing$ and $\dim \aff(W)=d$\textup{)}, and let $l$ be an affine subspace of $\mathbb{R}^d$ containing the point $x$. Then the following conditions are equivalent.
\begin{enumerate}
\item[$1.$]
$l\cap U$ is a face of $U$.
\item[$2.$]
$l\cap (U\cap W)$ is a face of $U\cap W$.
\end{enumerate}
\end{Le}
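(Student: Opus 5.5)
The plan is to write $F=l\cap U$ and $F'=l\cap(U\cap W)=F\cap W$, and to prove the two implications separately. Both sets are convex, being intersections of convex sets, so in each direction only the segment condition in the definition of a face has to be checked. The only genuinely local step is $2\Rightarrow1$. There I intend to use a homothety centred at $x$ to move an arbitrary segment of $U$ into the neighbourhood of $x$ contained in $W$.

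For $1\Rightarrow2$, take $a,b\in U\cap W$ with $(a,b)\cap F'\neq\varnothing$. Then $(a,b)\cap F\neq\varnothing$, and since $F$ is a face of $U$ and $a,b\in U$, we get $[a,b]\subseteq F$. Convexity of $W$ gives $[a,b]\subseteq W$, so $[a,b]\subseteq F\cap W=F'$. This direction uses neither the point $x$ nor the assumption that it is interior to $W$.

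For $2\Rightarrow1$, take $a,b\in U$ and a point $p\in(a,b)\cap F$, so that $p\in l$. It suffices to show $a,b\in l$, since then $[a,b]\subseteq l\cap U=F$ because both $l$ and $U$ are convex. Choose $r>0$ with $B_r(x)\subseteq W$, which is possible because $x\in\intX W$. For $t\in(0,1]$ put
\begin{equation*}
a_t=x+t(a-x),\qquad b_t=x+t(b-x),\qquad p_t=x+t(p-x).
\end{equation*}
Since $x\in U$ and $U$ is convex, all three points lie in $U$. For $t$ small enough they lie in $B_r(x)\subseteq W$. Since $x,p\in l$ and $l$ is affine, $p_t\in l$. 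The homothety preserves the ratio in which $p$ divides $[a,b]$, so $p_t\in(a_t,b_t)$. Hence $p_t\in(a_t,b_t)\cap F'$ with $a_t,b_t\in U\cap W$, and condition~2 yields $[a_t,b_t]\subseteq F'\subseteq l$. Finally, $a=x+t^{-1}(a_t-x)$ is an affine combination of the points $x,a_t\in l$, so $a\in l$, and likewise $b\in l$.

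I do not expect a real obstacle; the only point needing care is the rescaling step, i.e.\ checking that the homothety keeps the relevant points in $U$, in $l$, and in the open segment simultaneously. The degenerate case $a=b$ is trivial, since then $a=p\in F$.
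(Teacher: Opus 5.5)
Your proposal is correct and is essentially the paper's argument. The paper proves $2\Rightarrow1$ by contradiction, shrinking an offending segment $[y,z]$ toward $x$ via $y\mapsto(1-\varepsilon)x+\varepsilon y$ until it lies in $W$; this is exactly your homothety, and your direct step (scaled endpoints in $l$ force the original endpoints into $l$) is the contrapositive of its observation that the scaled endpoints stay outside $l$, while your $1\Rightarrow2$ direction matches the paper's verbatim.
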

\begin{figure}[h]\centering
\def\svgwidth{10cm}\label{Figclegrvpergl}
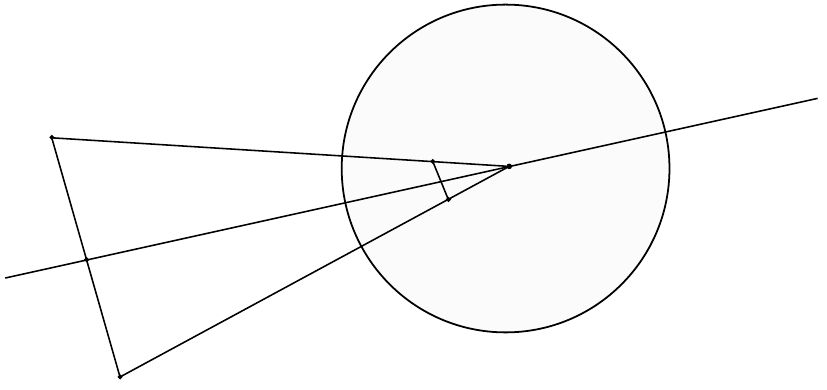
\caption{Illustration for Lemma~\ref{locface}}
\end{figure}

\begin{proof}
The proof consists of two steps.

{\bf 1) Derivation of $2.$ from $1.$}

The set $l\cap (U\cap W)$ is convex as an intersection of convex sets. Let $y,\,z\in U\cap W$ and $(l\cap (U\cap W))\cap (y,\,z)\ne\varnothing$. Then $y,\,z\in U$ and $(l\cap U)\cap (y,\,z)\ne\varnothing$. Thus, $y,\,z\in l\cap U$ by assumption $1$. Therefore, $y,\,z\in l\cap(U\cap W)$, and hence $[y,\,z]\subseteq l\cap(U\cap W)$.

{\bf 2) Derivation of $1.$ from $2.$}

Assume the contrary. Suppose that $l\cap U$ is not a face of $U$. Then there exist points $y, z\in U$ such that $(y,\,z)\cap (l\cap U)\ne\varnothing$, but $[y,\,z]\nsubseteq l\cap U$.
The segment $[y,\,z]$ has a point in the subspace $l$, but the whole segment does not lie in $l$. Thus, $[y,\,z]$ intersects $l$ at a single point $v$. In particular, neither $y$ nor $z$ lies in $l$. Therefore, since $x\in l$, the following formula:
\begin{equation}
\forall\varepsilon >0\colon\quad (1-\varepsilon)x+\varepsilon y,\,(1-\varepsilon)x+\varepsilon z\notin l;\quad\text{see Fig.~\ref{Figclegrvpergl}.}
\end{equation}
holds.
Choose the number $\varepsilon$ so small that $(1-\varepsilon)x+\varepsilon y,\,(1-\varepsilon)x+\varepsilon z\in W$. Then
\begin{equation}
(1-\varepsilon)x+\varepsilon v\in((1-\varepsilon)x+\varepsilon y,\,(1-\varepsilon)x+\varepsilon z)\cap (l\cap (U\cap W))\ne\varnothing,
\end{equation}
however
\begin{equation}
[(1-\varepsilon)x+\varepsilon y,\,(1-\varepsilon)x+\varepsilon z]\nsubseteq l\cap (U\cap W).
\end{equation}
Therefore, $l\cap(U\cap W)$ is not a face. Contradiction.
\end{proof}
\begin{Le}
Let $U,\,W\subseteq\mathbb{R}^d$ be convex sets, let $x\in U$, $x\in\intX W$, and let $l$ be an affine subspace of $\mathbb{R}^d$ such that $x\in l$. Then the equality $\aff(l\cap U)=\aff(l\cap (U\cap W))$ holds.
\end{Le}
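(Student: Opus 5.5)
The plan is to prove the two inclusions between the affine hulls, of which only one requires an argument. Since $U\cap W\subseteq U$, we have $l\cap(U\cap W)\subseteq l\cap U$, and therefore $\aff(l\cap(U\cap W))\subseteq\aff(l\cap U)$. It remains to show that every point of $l\cap U$ lies in $\aff(l\cap(U\cap W))$. Both sets are nonempty, because $x\in l\cap U\cap W$.

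For the reverse inclusion, take an arbitrary $y\in l\cap U$ and contract it toward $x$ in the same way as in the proof of Lemma~\ref{locface}. Since $x\in\intX W$, there is $\varepsilon\in(0,\,1]$ such that $y_\varepsilon=(1-\varepsilon)x+\varepsilon y\in W$. The point $y_\varepsilon$ also lies in $U$ by convexity of $U$, because $x,\,y\in U$. It lies in $l$ because $l$ is an affine subspace containing both $x$ and $y$. Hence $y_\varepsilon\in l\cap(U\cap W)$.

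We then recover $y$ as the affine combination
\begin{equation}
y=\frac{1}{\varepsilon}\,y_\varepsilon-\frac{1-\varepsilon}{\varepsilon}\,x .
\end{equation}
The coefficients sum to $1$, and both $x$ and $y_\varepsilon$ belong to $l\cap(U\cap W)$, so $y\in\aff(l\cap(U\cap W))$. Since $y\in l\cap U$ was arbitrary, $l\cap U\subseteq\aff(l\cap(U\cap W))$. Taking affine hulls gives $\aff(l\cap U)\subseteq\aff(l\cap(U\cap W))$, and together with the first inclusion this proves the equality.

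There is no real obstacle here. The only point needing care is that $\varepsilon$ is chosen separately for each $y$, which is permitted because we only need each individual $y$ to lie in the affine hull. No uniform bound on $\varepsilon$ is required, so unboundedness of $U$ causes no difficulty.
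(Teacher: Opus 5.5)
Your proof is correct and takes essentially the same route as the paper. Both contract points of $l\cap U$ toward $x$ until they lie in $W$, and then recover them as affine combinations of $x$ and their contractions. The only cosmetic difference is that the paper contracts a finite set $\{x_1,\dots,x_k\}\subseteq l\cap U$ spanning $\aff(l\cap U)$ with one common $\varepsilon$, whereas you work pointwise with $\varepsilon$ depending on $y$, which avoids choosing such a finite set.
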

\begin{proof}
On the one hand, we have the inclusion $\aff(l\cap(U\cap W))\subseteq\aff(l\cap U)$. On the other hand, let $\{x_1,\,\dots,\,x_k\}\subseteq l\cap U$ be such that $\aff(\{x_1,\,\dots,\,x_k\})=\aff(l\cap U)$. Since $x\in\intX W$, for sufficiently small number $\varepsilon>0$ the inclusion $\{x,\,(1-\varepsilon)x+\varepsilon x_1,\,\dots,\,(1-\varepsilon)x+\varepsilon x_k\}\subseteq W$ holds. Therefore,
\begin{equation*}
\aff(l\cap U)=\aff(\{x_1,\,\dots,\,x_k\})\subseteq\aff(\{x,\,(1-\varepsilon)x+\varepsilon x_1,\,\dots,\,(1-\varepsilon)x+\varepsilon x_k\})\subseteq\aff(l\cap(U\cap W)).
\end{equation*}
\end{proof}
\begin{Cor}\label{lemepdel}
Let $U,\,W\subseteq\mathbb{R}^d$ be convex sets, let $x\in U$ and $x\in\intX W$, and let $l$ be an affine subspace of $\mathbb{R}^d$ such that $x\in l$. The following conditions are equivalent.
\begin{enumerate}
\item
$l\cap U$ is a face of $U$ such that $l=\aff(l\cap U)$.
\item
$l\cap (U\cap W)$ is a face of $U\cap W$ such that $l=\aff(l\cap (U\cap W))$.
\end{enumerate}
\end{Cor}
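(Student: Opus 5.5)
The corollary follows by combining Lemma~\ref{locface} with the unnamed lemma immediately preceding the corollary (the one asserting $\aff(l\cap U)=\aff(l\cap(U\cap W))$). The hypotheses of both results coincide with those of the corollary: $U,W$ are convex, $x\in U$, $x\in\intX W$, and $x\in l$. So both results may be applied to the same data $U,W,x,l$ with no further verification.

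\emph{From $1.$ to $2.$} Assume $l\cap U$ is a face of $U$ and $l=\aff(l\cap U)$. Lemma~\ref{locface} (direction $1.\Rightarrow 2.$) shows that $l\cap(U\cap W)$ is a face of $U\cap W$. The preceding lemma gives $\aff(l\cap(U\cap W))=\aff(l\cap U)$, and the right-hand side equals $l$ by assumption. This yields both parts of condition $2$.

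\emph{From $2.$ to $1.$} Assume $l\cap(U\cap W)$ is a face of $U\cap W$ and $l=\aff(l\cap(U\cap W))$. Lemma~\ref{locface} (direction $2.\Rightarrow 1.$) shows that $l\cap U$ is a face of $U$. The same equality of affine hulls gives $\aff(l\cap U)=\aff(l\cap(U\cap W))=l$.

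There is no real obstacle: all the geometric work is done in Lemma~\ref{locface} and in the affine-hull lemma. The only thing to check is that the hypotheses match exactly, and they do, since the statement of the corollary copies the common hypotheses of those two results. The proof therefore consists of two short paragraphs citing the two results in each direction.
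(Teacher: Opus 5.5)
Your proposal is correct and is exactly the intended argument: the paper states this corollary without a separate proof, as the immediate combination of Lemma~\ref{locface} (equivalence of the face conditions) with the preceding lemma (equality $\aff(l\cap U)=\aff(l\cap(U\cap W))$). Both results are applied under the same hypotheses, just as you do.
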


\section{Extension of the space $\mathbb{R}^d$ by a set of directions and generalized simplices}\label{secgensim}

\subsection{Definitions and basic properties}

Extended spaces and simplices with vertices at infinity were considered by Rockafellar (see~\cite{Rockafellar}, \S 17).

The set of directions can be identified with the $(d-1)$-dimensional sphere $S^{d-1}$. Let $e\in S^{d-1}$; then~$\vec{e}$ is the point at infinity in this direction. Unlike in projective geometry, $\vec{e}$ and $-\vec{e}$ are different points at infinity. The set of points at infinity corresponds not to equivalence classes of parallel lines, as in projective geometry, but to equivalence classes of codirectional rays. We denote the set of points at infinity by $\vec{S}^{d-1}$. When speaking about convergence of directions, we assume that $\vec{S}^{d-1}$ inherits its topology from $S^{d-1}$. We will also need the notion of convergence of finite points to a given direction; for this purpose we give the following definition.
\begin{Def}
We say that a sequence of points $\{y_n\}_{n\in\mathbb{N}}\subseteq\mathbb{R}^d$ converges to a direction $\vec{e}\in\vec{S}^{d-1}$, and we write $y_n{\to} \vec{e}$ as $n\to+\infty$, if $|y_n|{\to}+\infty$ and ${y_n}/{|y_n|}{\to}e$ as $n\to+\infty$.
\end{Def}
\begin{Def}
A pair of sets $(V;\;E)$, where $V\subseteq\mathbb{R}^d$ and $E\subseteq\vec{S}^{d-1}$, is called closed if $V$ is closed in the topology of $\mathbb{R}^d$, $E$ is closed in the topology of the sphere, and, for any sequence of points $\{v_n\}_{n\in\mathbb{N}}$ from $V$ and any direction $\vec{e}\in\vec{S}^{d-1}$ such that $v_n{\to}\vec{e}$ as $n\to+\infty$, the inclusion $\vec{e}\in E$ holds.
\end{Def}

\begin{Def}
Let $x\in\mathbb{R}^d$ and $\vec{e}\in\vec{S}^{d-1}$. We denote by $[x,\,\vec{e})$ and $(x,\,\vec{e})$ the closed and open rays, respectively, starting from $x$ in the direction $\vec{e}$:
\begin{equation}
[x,\,\vec{e}\,)=\{x+\alpha e|\,\alpha\geqslant 0\},
\end{equation}
\begin{equation}
(x,\,\vec{e}\,)=\{x+\alpha e|\,\alpha> 0\}.
\end{equation}
The point $x$ will be called the origin, or base point, and the direction $\vec{e}$ will be called the direction of the ray.
\end{Def}

\begin{Def}
Let $E\subseteq \vec{S}^{d-1}$. We denote by $\ray E$ the set formed by the points of rays starting from the origin in the directions from $E$:
\begin{equation}
\ray E=\{0\}\cup\bigcup\limits_{\vec{e}\in E}(0,\,\vec{e}).
\end{equation}
In other words, $\ray E=\{0\}$ for $E=\varnothing$, and for $E\ne\varnothing$ we have
\begin{equation}
\ray E=\{\alpha e|\,\alpha\in[0,\,+\infty),\,\vec{e}\in E\}.
\end{equation}
\end{Def}
\begin{Def}
Let $V\subseteq\mathbb{R}^d$, $V\ne\varnothing$, and $E\subseteq\vec{S}^{d-1}$. The convex hull of the set of points $V$ and the set of directions $E$ is defined as the following set:
\begin{equation}
\conv(V;\;E)=\conv(V+\ray E).
\end{equation}
In other words,
\begin{multline}
\conv(V;\;E)
=
\Big\{x\in\mathbb{R}^d\Big|\,
\exists k\in\mathbb{N},\,y_1,\dots,\,y_k\in V,\,r\in\mathbb{N}_0,\,\vec{e}_1,\dots,\,\vec{e}_r\in E,\,\alpha_1,\dots,\,\alpha_k,\,\beta_1,\dots,\,\beta_r\geqslant 0\colon
\\
\sum\limits_{j=1}^{k}\alpha_j=1\text{ and }\sum\limits_{j=1}^{k}\alpha_j y_j+\sum\limits_{i=1}^{r}\beta_i e_i=x 
\Big\}.
\end{multline}
Sums of the form
\begin{equation}
\sum\limits_{j=1}^{k}\alpha_j y_j+\sum\limits_{i=1}^{r}\beta_i e_i,
\end{equation}
where $\alpha_1,\dots,\,\alpha_k,\,\beta_1,\dots,\,\beta_r\geqslant 0$ and $\sum_{j=1}^{k}\alpha_j=1$, will be called convex combinations of elements of the sets $V$ and $E$.
\end{Def}
\begin{Le}[see~\cite{Rockafellar} Theorem 18.3]\label{faceconv}
Let $V\subseteq\mathbb{R}^d$, $V\ne\varnothing$, and $E\subseteq\vec{S}^{d-1}$. Let $C=\conv(V;\;E)$, and let $C'\subseteq C$ be a nonempty face of $C$. Then there exist sets $V'\subseteq V$, $V'\ne\varnothing$, and $E'\subseteq E$ such that $C'=\conv(V';\;E')$.
\end{Le}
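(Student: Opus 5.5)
The natural candidates are $V'=V\cap C'$ and $E'=\{\vec e\in E\,|\,y+\alpha e\in C' \text{ for some } y\in C',\ \alpha>0\}$. Equivalently, $E'$ consists of the directions of $E$ that are recession directions of $C'$. We will show that $C'=\conv(V';\;E')$.

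First we check that $V'\ne\varnothing$. Take any $x\in C'$ and write it as a convex combination
\begin{equation*}
x=\sum_{j=1}^k\alpha_j y_j+\sum_{i=1}^r\beta_i e_i,
\end{equation*}
where we keep only the terms with $\alpha_j>0$ and $\beta_i>0$.

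We claim that every $y_j$ lies in $C'$, and that for each $i$ and some $\varepsilon>0$ the point $x+\varepsilon e_i$ lies in $C'$. To see this, note that $x$ lies in the relative interior of the convex set
\begin{equation*}
W=\conv\bigl(\{y_1,\dots,y_k\}\cup\{y_1+t e_i\,|\,t\geqslant 0,\ 1\leqslant i\leqslant r\}\bigr)\subseteq C,
\end{equation*}
because all coefficients are strictly positive. One can make this explicit: $x$ is an interior point of a segment inside $W$ in the direction from $y_j$ to $x$ (for $k\geqslant 2$), and of a segment in direction $e_i$, namely
\begin{equation*}
[x-\delta e_i,\ x+\delta e_i]\subseteq W
\end{equation*}
for small $\delta>0$, using $\beta_i>0$.

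By Proposition~\ref{grvipvnutgr} we get $W\subseteq C'$. In particular $y_j\in C'\cap V=V'$, and $y_1+te_i\in C'$ for all $t\geqslant0$, so $\vec e_i\in E'$. Hence every point of $C'$ is a convex combination of elements of $V'$ and $E'$. This gives $C'\subseteq\conv(V';\;E')$ and, as a byproduct, $V'\ne\varnothing$.

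For the reverse inclusion, $V'\subseteq C'$ and $C'$ is convex, so it remains to show $C'+\ray E'\subseteq C'$. Let $z\in C'$ and $\vec e\in E'$, witnessed by some $y\in C'$ with $y+\alpha e\in C'$. Since $\vec e\in E$, the whole ray $[y,\vec e)$ lies in $C$, and it meets $C'$ at the interior point $y+\alpha e$ of any longer segment $[y,\,y+2\alpha e]$. The face property then gives that all of $[y,\vec e)\subseteq C'$. For $z$ we use the same argument: the points $z+te$ lie in $C$ because $\vec e\in E$ and $C=C+\ray E$. The segment from $z+te$ to $y+te$, whose endpoints are both in $C$, has midpoint $\tfrac{z+y}{2}+te$. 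This midpoint is the midpoint of the segment $[z,\,y+2te]$, whose endpoints lie in $C$ and which contains points of $C'$. Applying the face definition twice yields $z+te\in C'$.

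The main obstacle is the relative-interior claim for $W$ when the combination involves directions, in particular the case $k=1$. There I would argue directly with Proposition~\ref{grvipvnutgr} applied to the segment $[y_1,\,x+\delta\sum_i e_i]$, which contains $x$ in its open part. I expect the cleanest route is instead to simply cite Rockafellar's Theorem 18.3, as the statement does, and use this sketch only as a self-contained check.
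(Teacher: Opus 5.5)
Your proof is correct. The paper gives no proof of this lemma; it only cites Rockafellar's Theorem~18.3. Your argument is essentially a self-contained version of that theorem:
\begin{itemize}
\item you take $V'=V\cap C'$ and $E'$ the directions of $E$ in which $C'$ recedes;
\item you get $C'\subseteq\conv(V';\;E')$ from Proposition~\ref{grvipvnutgr}, applied to the hull $W$ of the points and rays in a strictly positive representation of $x\in C'$;
\item you get the reverse inclusion from the convexity of $C'$.
\end{itemize}
The step where you do genuine extra work is the upgrade from ``$C'$ contains one nontrivial step $y\to y+\alpha e$'' to $C'+\ray\{\vec e\}\subseteq C'$. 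You do this with the face property on $[y,\,y+Te]$ and on $[z+te,\,y+te]$; the midpoint of the latter lies in $C'$ by convexity, being the midpoint of $[z,\,y+2te]$. This upgrade is what lets the argument run with no closedness of $C'$. That matters because $C'$ can fail to be closed when $V$ is not closed, so Lemma~\ref{luvvipmn} is not available. The citation buys brevity; your route buys independence from the reference.

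One correction to your last paragraph: the case $k=1$ is not actually an obstacle, and the segment you propose there is wrong in general. Indeed, $x\in[y_1,\,x+\delta\sum_i e_i]$ forces $\sum_i e_i$ to be parallel to $x-y_1=\sum_i\beta_i e_i$, which fails for $r\geqslant 2$. The correct segment is the one from $y_1$ through $x$, namely $[y_1,\,y_1+(1+\delta)\sum_i\beta_i e_i]\subseteq C$. So your ``for $k\geqslant 2$'' restriction is unnecessary whenever $x\ne y_1$.

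Even simpler, for every $k\geqslant 1$ and small $\gamma_i>0$ you can write
\[
x=\Bigl(\alpha_1-\sum_i\gamma_i\Bigr)y_1+\sum_{j\geqslant 2}\alpha_j y_j+\sum_i\gamma_i\Bigl(y_1+\tfrac{\beta_i}{\gamma_i}e_i\Bigr).
\]
This is a strictly positive convex combination of points of $W$ whose affine hull is $\aff W$, so $x\in\ri W$ with no case distinction.
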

\begin{Def}
Let $V\subseteq\mathbb{R}^d$, $V\ne\varnothing$, and $E\subseteq\vec{S}^{d-1}$. The affine hull of the set of points $V$ and the set of directions $E$ is defined as the following set:
\begin{equation}
\aff(V;\;E)=\aff(V+\ray E).
\end{equation}
In other words,
\begin{multline}
\aff(V;\;E)
=
\Big\{
x\in\mathbb{R}^d\Big|\,
\exists k\in\mathbb{N},\,y_1,\dots,\,y_k\in V,\,r\in\mathbb{N}_0,\,\vec{e}_1,\dots,\,\vec{e}_r\in E,\,\alpha_1,\dots,\,\alpha_k,\,\beta_1,\dots,\,\beta_r\in\mathbb{R}\colon
\\
\sum\limits_{j=1}^{k}\alpha_j=1\text{ and }\sum\limits_{j=1}^{k}\alpha_j y_j+\sum\limits_{i=1}^{r}\beta_i e_i=x 
\Big\}.
\end{multline}
Sums of the form
\begin{equation}
\sum\limits_{j=1}^{k}\alpha_j y_j+\sum\limits_{i=1}^{r}\beta_i e_i,
\end{equation}
where $\alpha_1,\dots,\,\alpha_k,\,\beta_1,\dots,\,\beta_r\in\mathbb{R}$ and $\sum_{j=1}^{k}\alpha_j=1$, will be called affine combinations of elements of the sets $V$ and $E$.
\end{Def}
\begin{Def}
A pair of sets $(V;\;E)$, where $V\subseteq\mathbb{R}^d$, $V\ne\varnothing$, and $E\subseteq\vec{S}^{d-1}$, is called affinely independent if
\begin{equation}
\dim(\aff(V;\;E))=\#V+\#E-1.
\end{equation}
\end{Def}
\begin{Rem}
For $V=\{x_1,\dots,\,x_k\}\subseteq\mathbb{R}^d$, $V\ne\varnothing$, and $E=\{\vec{e}_1,\dots,\,\vec{e}_r\}\subseteq\vec{S}^{d-1}$, the condition that the pair of sets $(V;\;E)$ is affinely independent is equivalent to saying that any point $x\in\aff(V;\;E)$ is represented uniquely as an affine combination of elements of the sets $V$ and $E$.
\end{Rem}
\begin{Def}
The convex hull $C$ of an affinely independent pair of sets $(V;\;E)$, where $V\subseteq\mathbb{R}^d$, $V\ne\varnothing$, and $E\subseteq\vec{S}^{d-1}$, is called a generalized simplex with finite vertices $V_{\fin}(C)=V$ and vertices $V_{\dir}(C)=E$ at infinity.
\end{Def}
\begin{Th}[Carath\'eodory; see~\cite{Rockafellar} Theorem 17.1]\label{carat}
Let $V\subseteq\mathbb{R}^d$, $V\ne\varnothing$, and $E\subseteq\vec{S}^{d-1}$. Then $\conv(V;\; E)$ is the union of generalized simplices with vertices in $V\cup E$.
\end{Th}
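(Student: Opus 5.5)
The plan is to reduce the statement to the classical Carath\'eodory reduction argument, run on the representation of points of $\conv(V;\;E)$ as convex combinations of elements of $V$ and $E$. By the definition $\conv(V;\;E)=\conv(V+\ray E)$, any $x\in\conv(V;\;E)$ admits a representation
\begin{equation*}
x=\sum_{j=1}^{k}\alpha_j y_j+\sum_{i=1}^{r}\beta_i e_i,\qquad y_j\in V,\ \vec{e}_i\in E,\ \alpha_j,\beta_i\geqslant 0,\ \sum_{j=1}^k\alpha_j=1,
\end{equation*}
with $k\geqslant 1$. Conversely, every generalized simplex with vertices in $V\cup E$ (at least one finite vertex) lies in $\conv(V;\;E)$, so only the inclusion of $\conv(V;\;E)$ in the union needs proof. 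I will choose a representation of $x$ with the smallest possible total number $k+r$ of terms. Minimality forces all $\alpha_j>0$ and $\beta_i>0$, and the $y_j$, $e_i$ pairwise distinct.

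The key step is to show that for such a minimal representation the pair $(\{y_1,\dots,y_k\};\;\{\vec e_1,\dots,\vec e_r\})$ is affinely independent. Suppose it is not. Then $\dim\aff$ is less than $k+r-1$, which means the vectors $y_2-y_1,\dots,y_k-y_1,e_1,\dots,e_r$ are linearly dependent. Equivalently, there are coefficients $\gamma_j,\delta_i$, not all zero, with $\sum_j\gamma_j=0$ and $\sum_j\gamma_jy_j+\sum_i\delta_ie_i=0$. For real $t$, set
\begin{equation*}
x=\sum_j(\alpha_j-t\gamma_j)y_j+\sum_i(\beta_i-t\delta_i)e_i .
\end{equation*}
The finite coefficients still sum to $1$ for every $t$.

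There are now two cases. If some $\gamma_j$ or $\delta_i$ is positive, increase $t$ from $0$ until the first coefficient hits zero. If all of them are nonpositive, flip the sign of $(\gamma,\delta)$ and do the same. This deletes at least one term while keeping all coefficients nonnegative, contradicting minimality.

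The main point requiring care is that the deleted term must never be the last finite vertex. This cannot happen: the finite coefficients always sum to $1$, so not all of them can vanish, and at least one $y_j$ survives. The resulting representation is therefore again admissible with $k\geqslant 1$.

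Hence $x\in\conv(\{y_j\};\;\{\vec e_i\})$ with an affinely independent pair. By definition this convex hull is a generalized simplex with vertices in $V\cup E$, which proves the theorem.
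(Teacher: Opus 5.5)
Your argument is correct and complete. The paper gives no proof of its own and simply cites Rockafellar's Theorem 17.1, whose proof is this same Carath\'eodory reduction: there, points $y$ are lifted to $(1,y)$ and directions $e$ to $(0,e)$ in $\mathbb{R}^{d+1}$, and a conic combination is reduced to linearly independent generators. Your condition $\sum_j\gamma_j=0$ is the first coordinate of that lifted dependence, and your check that the last finite vertex cannot disappear corresponds to the first coordinate of $(1,x)$ being $1$.
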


\begin{Le}
Let $V\subseteq\mathbb{R}^d$, $V\ne\varnothing$, and $E\subseteq\vec{S}^{d-1}$. Let $C=\conv(V;\;E)$ be a generalized simplex. The set $C'\subseteq C$ is a face of $C$ if and only if there exist sets $V'\subseteq V$, $V'\ne\varnothing$, and $E'\subseteq E$ such that $C'=\conv(V';\;E')$.
\end{Le}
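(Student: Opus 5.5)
One direction is already given by Lemma~\ref{faceconv}: a nonempty face $C'$ of $C=\conv(V;\;E)$ equals $\conv(V';\;E')$ for some $V'\subseteq V$, $V'\ne\varnothing$, $E'\subseteq E$. It remains to show the converse: for every such pair $(V';\;E')$ the set $C'=\conv(V';\;E')$ is a face of $C$. (For the empty face the statement is read with the formal convention; I will treat only nonempty faces.) The plan is to use affine independence to obtain unique barycentric-type coordinates on $\aff(V;\;E)$ and to describe $C'$ by vanishing of some coordinates.

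Write $V=\{x_1,\dots,x_k\}$ and $E=\{\vec e_1,\dots,\vec e_r\}$; these are finite because the dimension formula forces finiteness. By the Remark following the definition of affine independence, every $x\in\aff(V;\;E)$ has a unique representation
\begin{equation*}
x=\sum_{j=1}^k\alpha_j(x)x_j+\sum_{i=1}^r\beta_i(x)e_i,\qquad \sum_{j=1}^k\alpha_j(x)=1 .
\end{equation*}
The coefficient maps $\alpha_j,\beta_i$ are then affine functions on $\aff(V;\;E)$. The set $C$ is exactly the set of points where all the coefficients are nonnegative. For $C'$ we additionally require $\alpha_j=0$ for $x_j\notin V'$ and $\beta_i=0$ for $\vec e_i\notin E'$. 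The inclusion of $C'$ in this description follows from uniqueness of the coefficients, and the reverse inclusion is immediate.

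Now consider the affine functional $L=-\sum_{x_j\notin V'}\alpha_j-\sum_{\vec e_i\notin E'}\beta_i$, extended affinely from $\aff(V;\;E)$ to all of $\mathbb{R}^d$. On $C$ we have $L\leqslant 0$, and $L=0$ exactly on $C'$, which is nonempty because $V'\ne\varnothing$. Hence $C'=\{x\in C\mid L(x)=\max_C L\}$, and Proposition~\ref{graff} shows that $C'$ is a face of $C$. Alternatively, one can check the face property directly: if $y,z\in C$ and some point of $(y,z)$ lies in $C'$, then each omitted coefficient is nonnegative at $y$ and $z$, linear along the segment, and zero at an interior point, so it vanishes identically on $[y,z]$.

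The only delicate step is establishing that the coefficients are unique and affine, and that $C$ and $C'$ are described by them as claimed. This rests on the cited Remark, together with the fact that a direction $\vec e$ contributes the vector $e$ in affine combinations.
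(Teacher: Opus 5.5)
Your proposal is correct and is essentially the paper's argument: both rest on the uniqueness of the affine coordinates of points of $\aff(V;\;E)$, which comes from affine independence and forces the coefficients at elements of $V\setminus V'$ and $E\setminus E'$ to vanish on any segment of $C$ whose interior meets $C'$. Your ``direct check'' is exactly the paper's contradiction argument phrased positively, and your main route repackages it by exhibiting $C'$ as the set where the affine functional $L$ attains its maximum on $C$ and citing Proposition~\ref{graff}, which also shows that $C'$ is an exposed face.
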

This lemma is probably not new. The author could not find it in the literature, so we provide a proof.
\begin{proof}
By Lemma~\ref{faceconv} any face of the simplex $C$ has the form $\conv(V';\;E')$, where $V'\subseteq V$ and $E'\subseteq E$. Therefore, it suffices to show that for any $V'\subseteq V$, $V'\ne\varnothing$ and $E'\subseteq E$ set $\conv(V';\;E')$ is a face of $C$.

Assume the contrary: $\conv(V';\;E')$ is not a face of $C$. Then there exist points $y,\,z\in C$ such that $[y,\,z]\nsubseteq\conv(V';\;E')$, but $(y,\,z)\cap\conv(V';\;E')\ne\varnothing$. Then at least one of the points $y,\,z$ does not lie in $\conv(V';\;E')$. Without loss of generality we assume that it is $y$. Let $x\in(y,\,z)\cap\conv(V';\;E')$. On the one hand, since $x\in\conv(V';\;E')$, the point $x$ is representable as a convex combination of elements of the sets $V'$ and $E'$. On the other hand, there exists a number $\gamma\in(0,\,1)$ such that $x=\gamma y+(1-\gamma)z$, and $y$ and $z$ are representable as convex combinations of elements of $V$ and $E$. Moreover, since $y$ does not lie in $\conv(V';\;E')$, in the representation of $y$ as a convex combination of elements of the sets $V$ and $E$ there is a nonzero coefficient at some element of the sets $(V\setminus V')$ and $(E\setminus E')$. Then the equality $x=\gamma y+(1-\gamma)z$ gives a representation of $x$ as a convex combination of elements of the sets $V$ and $E$ with at least one nonzero coefficient at an element of the sets $(V\setminus V')$ and $(E\setminus E')$. Thus, we have represented $x$ as two different convex combinations of elements of the sets $V$ and $E$, which contradicts the affine independence of the pair of sets $(V;\;E)$.
\end{proof}

\begin{Def}
The ray $[x,\,\vec{e})$ will be called an extreme ray of the convex set $U$ if it is a face of $U$.
\end{Def}

\begin{Cor}
Let $C\subseteq\mathbb{R}^d$ be a generalized simplex. Then $V_{\fin}(C)$ is the set of extreme points of $C$, and $V_{\dir}(C)$ is the set of directions of extreme rays of $C$. In particular, $C$ does not contain lines, since $V_{\fin}(C)\ne\varnothing$.
\end{Cor}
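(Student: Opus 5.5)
The corollary follows from the preceding lemma, which identifies the faces of the generalized simplex $C=\conv(V;\;E)$ with $V=V_{\fin}(C)$, $E=V_{\dir}(C)$ as exactly the sets $\conv(V';\;E')$ with $\emptyset\ne V'\subseteq V$ and $E'\subseteq E$. The plan is to sort these faces by which choices of $(V';\;E')$ produce a single point and which produce a ray. Affine independence of $(V;\;E)$ is what keeps distinct choices from giving the same set.

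\textbf{Extreme points.} A point $x\in C$ is extreme if and only if $\{x\}$ is a face of $C$, by the remark identifying zero-dimensional faces with extreme points. By the lemma, $\{x\}=\conv(V';\;E')$ for some admissible pair.
\begin{itemize}
\item If $E'\ne\varnothing$, the set $\conv(V';\;E')$ contains a ray, so it is not a singleton.
\item If $\#V'\ge 2$, affine independence makes the points of $V'$ distinct, so again the hull is not a singleton.
\end{itemize}
Hence $V'=\{v\}$ and $E'=\varnothing$, so $x=v\in V$. Conversely, each $\{v\}$ with $v\in V$ is a face by the lemma. Therefore the set of extreme points is exactly $V$.

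\textbf{Extreme rays.} An extreme ray $[x,\vec e)$ is a face, so it equals some $\conv(V';\;E')$.
\begin{itemize}
\item A ray has exactly one extreme point, namely $x$. By the previous step applied to the subsimplex $\conv(V';\;E')$ (which is itself a generalized simplex, since subsets of an affinely independent pair remain independent), its extreme points are $V'$. So $V'=\{x\}$.
\item Next, $E'\ne\varnothing$, since otherwise the hull is a single point. And $\#E'\le 1$: two distinct directions in $E'$ are linearly independent by affine independence, so $\conv(\{x\};\;E')$ would be at least two-dimensional.
\end{itemize}
Thus $E'=\{\vec f\}$ and $[x,\vec e)=[x,\vec f)$, which forces $\vec e=\vec f\in E$. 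Conversely, for any $\vec e\in E$, pick any $v\in V$; the set $\conv(\{v\};\{\vec e\})=[v,\vec e)$ is a face by the lemma, hence an extreme ray with direction $\vec e$.

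\textbf{No lines.} Suppose $C$ contains a line $\{y+te\}$. Fix $v\in V$, which exists since $V\ne\varnothing$. For a closed convex set this would give $v\pm e$ directions in $C$, but closedness need not be assumed, so argue directly via uniqueness of coordinates instead. Write $y+te=\sum\alpha_j(t) v_j+\sum\beta_i(t) e_i$ with nonnegative coefficients and $\sum_j\alpha_j(t)=1$. By affine independence these coordinates are unique and affine in $t$.
\begin{itemize}
\item Each $\alpha_j(t)$ and $\beta_i(t)$ is an affine function of $t$ that stays nonnegative for all $t\in\mathbb{R}$, so it is constant.
\item Then $y+te$ does not depend on $t$, which forces $e=0$, a contradiction.
\end{itemize}
The only delicate point in the whole argument is using uniqueness of affine coordinates correctly, both in the ray count and in this final step.
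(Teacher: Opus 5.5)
Your proof is correct. For the first two assertions you follow the route the paper intends. The statement is placed as an immediate corollary of the preceding lemma (faces of a generalized simplex are exactly the sets $\conv(V';\;E')$), and you read off which of these faces are singletons and which are rays.

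The part that differs is the absence of lines. The paper's only justification is the clause ``since $V_{\fin}(C)\ne\varnothing$'', i.e.\ it appeals to the standard fact that a closed convex set containing a line has no extreme points (Rockafellar, Corollary 18.5.3). That tacitly uses that $C$ is closed, which is true because finitely generated convex sets are polyhedral, but the paper does not say so. Your argument instead uses uniqueness of affine coordinates: each coordinate is an affine function of $t$ that is nonnegative on all of $\mathbb{R}$, hence constant. This bypasses both the closedness of $C$ and the Rockafellar fact and relies only on affine independence, so it is self-contained at the cost of a few extra lines.

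Two harmless redundancies: distinctness of the points of $V'$ is automatic for a set and needs no affine independence, and the point $v$ fixed at the start of the last step is never used.
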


\begin{Le}\label{luvvipmn}
Let $U\subseteq\mathbb{R}^d$ be a closed convex set and let $\vec{e}\in\vec{S}^{d-1}$. Then the following conditions are equivalent.
\begin{enumerate}
\item[$1.$]
There exists a sequence of points $\{y_j\}_{j\in\mathbb{N}}\subseteq U$ such that $y_j\underset{j\to+\infty}{\longrightarrow}\vec{e}$.
\item[$2.$]
For any $x\in U$, the ray $[x,\,\vec{e}\,)$ lies in $U$.
\end{enumerate}
\end{Le}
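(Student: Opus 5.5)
The plan is to prove the two implications separately; both are elementary facts about recession directions of closed convex sets.

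\textbf{Implication $2.\Rightarrow 1.$} Assume $U\ne\varnothing$; otherwise condition $1.$ fails and condition $2.$ is vacuous, so the equivalence should be read for nonempty $U$. Take any $x\in U$ and set $y_j=x+je$. By condition $2.$ these points lie in $U$. Moreover $|y_j|\to+\infty$ and
\begin{equation*}
\frac{y_j}{|y_j|}=\frac{x+je}{|x+je|}\to e .
\end{equation*}
Hence $y_j\to\vec{e}$.

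\textbf{Implication $1.\Rightarrow 2.$} Fix $x\in U$ and $\alpha>0$; we must show $x+\alpha e\in U$. Put $t_j=|y_j-x|$. Since $|y_j|\to+\infty$, we have $t_j\to+\infty$, and for $j$ large $t_j>\alpha$. Convexity of $U$ gives
\begin{equation*}
z_j=\Big(1-\frac{\alpha}{t_j}\Big)x+\frac{\alpha}{t_j}y_j\in U,
\qquad
z_j=x+\alpha\,\frac{y_j-x}{t_j}.
\end{equation*}
It remains to check that $(y_j-x)/|y_j-x|\to e$. Write
\begin{equation*}
\frac{y_j-x}{|y_j-x|}=\frac{|y_j|}{|y_j-x|}\Big(\frac{y_j}{|y_j|}-\frac{x}{|y_j|}\Big).
\end{equation*}
Here $x/|y_j|\to0$, and $|y_j|/|y_j-x|\to1$ because $\big||y_j-x|-|y_j|\big|\leqslant|x|$. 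Therefore $z_j\to x+\alpha e$, and closedness of $U$ gives $x+\alpha e\in U$. The case $\alpha=0$ is trivial, so $[x,\,\vec{e}\,)\subseteq U$.

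The only step requiring any care is this normalization $(y_j-x)/|y_j-x|\to e$. It is handled by the triangle inequality above, so I expect no real obstacle.
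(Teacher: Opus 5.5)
Your proof is correct and follows the same route as the paper: you use the sequence $x+je$ for $2\Rightarrow 1$, and for $1\Rightarrow 2$ you combine $[x,y_j]\subseteq U$ (by convexity) with closedness, spelling out the normalization $(y_j-x)/|y_j-x|\to e$ that the paper leaves implicit. Your remark that the equivalence needs $U\neq\varnothing$ is also a valid point that the paper glosses over.
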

\begin{proof}$ $

{\bf Implication from condition $2$ to condition $1$.}

If $[x,\,\vec{e})\subseteq U$, then the sequence $\{x+je\}_{j\in\mathbb{N}}$ lies in $U$ and tends to $\vec{e}$.

{\bf Implication from condition $1$ to condition $2$.}

Let $x\in U$ and let $\{y_j\}_{j\in\mathbb{N}}\subseteq U$, with $y_j\to\vec{e}$ as $j\to+\infty$. Since the set $U$ is convex, the inclusion $[x,\,y_j]\subseteq U$ holds. By the closedness of $U$, the inclusion $[x,\,\vec{e})\subseteq U$ also holds.

\end{proof}

\begin{Def}
Let $U\subseteq\mathbb{R}^d$ be a closed convex set. We say that a direction $\vec{e}\in\vec{S}^{d-1}$ lies in $U$ if, for any point $x\in U$, the ray $[x,\,\vec{e})$ lies in $U$.
\end{Def}

\begin{Le}
Let $C=\conv(\{x_1,\dots,\,x_k\};\;\{\vec{e}_1,\dots,\,\vec{e}_r\})$ be a generalized simplex, and let $\vec{e}\in\vec{S}^{d-1}$. Then the following conditions are equivalent.
\begin{enumerate}
\item
The direction $\vec{e}$ lies in $C$.
\item
There exist nonnegative numbers $\gamma_1,\dots,\,\gamma_r$ such that $\sum\limits_{i=1}^{r}\gamma_i e_i=e$.
\end{enumerate}
\end{Le}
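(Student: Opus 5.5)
We prove the two implications separately, working throughout with a base point of $C$. The simplest choice is $x_1$, since $C$ is closed and, by Lemma~\ref{luvvipmn}, a direction lies in $C$ if and only if the ray from any single point of $C$ lies in $C$.

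\textbf{From 2 to 1.} Suppose $e=\sum_{i=1}^r\gamma_i e_i$ with $\gamma_i\geqslant 0$. Take any point $y\in C$ and write it as a convex combination $y=\sum_j\alpha_j x_j+\sum_i\beta_i e_i$. For every $t\geqslant 0$ we then have
\begin{equation*}
y+te=\sum_j\alpha_j x_j+\sum_i(\beta_i+t\gamma_i)e_i,
\end{equation*}
which is again a convex combination of the vertices. Hence $[y,\,\vec{e})\subseteq C$, and so the direction $\vec{e}$ lies in $C$.

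\textbf{From 1 to 2.} If $\vec{e}$ lies in $C$, then $x_1+te\in C$ for every $t\geqslant 0$. So for each $t$ we can write
\begin{equation*}
x_1+te=\sum_j\alpha_j(t)x_j+\sum_i\beta_i(t)e_i,
\end{equation*}
with $\alpha_j(t),\,\beta_i(t)\geqslant 0$ and $\sum_j\alpha_j(t)=1$. By affine independence of the pair $(\{x_1,\dots,x_k\};\{\vec{e}_1,\dots,\vec{e}_r\})$, this representation is unique (see the remark on affine independence).

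The map sending a point of $\aff(V;E)$ to its affine coordinates is affine. Since $t\mapsto x_1+te$ is a line lying in $\aff(V;E)$, the coordinates are affine in $t$:
\begin{equation*}
\alpha_j(t)=\delta_{j1}+ta_j,\qquad \beta_i(t)=tb_i,
\end{equation*}
because at $t=0$ the coordinates of $x_1$ are $\alpha_1=1$ and all others $0$. From $\sum_j\alpha_j(t)=1$ we get $\sum_j a_j=0$. Nonnegativity of $\alpha_j(t)$ for all large $t$ forces $a_j\geqslant 0$ for $j\ne1$, and together with $\sum_j a_j=0$ this gives $a_1\leqslant 0$. Nonnegativity of $\beta_i(t)$ gives $b_i\geqslant 0$.

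Substituting into the identity,
\begin{equation*}
e=\sum_j a_jx_j+\sum_i b_ie_i .
\end{equation*}
Since $\sum_j a_j=0$, the $x$-part can be rewritten as $\sum_{j\geqslant2}a_j(x_j-x_1)$ with $a_j\geqslant 0$. This produces a second representation of the same vector $e$ as a linear combination of $x_j-x_1$ ($j\geqslant 2$) and the $e_i$. These vectors form a basis of the direction space of $\aff(V;E)$, since the dimension there is $k+r-1$.

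\textbf{Main obstacle.} We must show all $a_j=0$, and this does not follow from uniqueness of coordinates alone; it needs boundedness instead. The point $x_1+te$ has coordinates $\alpha_j(t)=ta_j\geqslant 0$ for $j\geqslant 2$ and $\sum_{j\geqslant2}\alpha_j(t)\leqslant 1$ for all $t\geqslant 0$, because $\alpha_1(t)\geqslant 0$ and the $\alpha_j(t)$ sum to $1$. Letting $t\to+\infty$ forces $a_j=0$ for $j\geqslant2$, hence also $a_1=0$. Therefore $e=\sum_i b_ie_i$ with $b_i\geqslant 0$, and we take $\gamma_i=b_i$.

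The step requiring care is justifying that the coordinates depend affinely on $t$. This holds because the coordinate map is the inverse of the linear isomorphism $(\alpha,\beta)\mapsto\sum_j\alpha_jx_j+\sum_i\beta_ie_i$ restricted to $\{\sum_j\alpha_j=1\}$, and that isomorphism exists precisely by affine independence. Finally, $e\neq0$ guarantees that $r\geqslant 1$, so the representation is nontrivial.
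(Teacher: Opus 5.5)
Your proof is correct and follows essentially the same route as the paper. Affine independence gives unique affine coordinates, which depend affinely on the parameter along the ray; the bounds $0\leqslant\alpha_j\leqslant 1$ force the $\alpha$-slopes to vanish, and $\beta_i\geqslant 0$ forces the $\beta$-slopes to be nonnegative. The only differences are cosmetic: you base the ray at the vertex $x_1$ and get affine dependence from the inverse coordinate map, whereas the paper bases it at an arbitrary $x\in C$ and writes the affine parametrization explicitly from the representations of $x$ and $x+e$.
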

\begin{proof}$ $

{\bf Implication from condition $2$ to condition $1$.}

If $x\in C$, then the element $x$ is representable as a convex combination of elements of the sets $\{x_1,\dots,\,x_k\}\cup\{\vec{e}_1,\dots,\,\vec{e}_r\}$. In this case, for $\alpha\geqslant 0$, the point $x+\alpha e=x+\alpha\sum_{i=1}^{r}\gamma_i e_i$ is also representable as a convex combination of elements of the sets $\{x_1,\dots,\,x_k\}\cup\{\vec{e}_1,\dots,\,\vec{e}_r\}$. Therefore, the whole ray $[x,\,\vec{e})$ lies in $C$. Thus, $\vec{e}$ lies in $C$.

{\bf Implication from condition $1$ to condition $2$.}

Let $[x,\,\vec{e})\subseteq C$. Then the points $x$ and $x+e$ are representable as convex combinations of elements of the sets $\{x_1,\dots,\,x_k\}\cup\{\vec{e}_1,\dots,\,\vec{e}_r\}$. In other words, there exist numbers
\begin{equation}
\alpha_1,\dots,\,\alpha_k,\,\alpha'_1,\dots,\,\alpha'_k,\,\beta_1,\dots,\,\beta_r,\,\beta'_1,\dots,\,\beta'_r\geqslant 0
\end{equation}
such that
\begin{equation}
\sum\limits_{j=1}^{k}\alpha_j
=
\sum\limits_{j=1}^{k}\alpha'_j=1,\quad
\sum\limits_{j=1}^{k}\alpha_j x_j+\sum\limits_{i=1}^{r}\beta_i e_i=x,
\quad\text{and}\quad
\sum\limits_{j=1}^{k}\alpha'_j x_j+\sum\limits_{i=1}^{r}\beta'_i e_i=x+e.
\end{equation}
Then
\begin{equation}\label{echere1ek}
e=\sum\limits_{j=1}^{k}(\alpha'_j-\alpha_j)x_j+\sum\limits_{i=1}^{r}(\beta'_i-\beta_i)e_i.
\end{equation}
Consequently, for any numbers $\zeta\in\mathbb{R}$ the formula 
\begin{equation}
x+\zeta e=\sum\limits_{j=1}^{k}(\alpha_j+\zeta(\alpha'_j-\alpha_j))x_j+\sum\limits_{i=1}^{r}(\beta_i+\zeta(\beta'_i-\beta_i))e_i
\end{equation}
holds. We have found a representation of the point $x+\zeta e$ as an affine combination of elements of the sets $\{x_1,\dots,\,x_k\}\cup\{\vec{e}_1,\dots,\,\vec{e}_r\}$. On the one hand, by the affine independence of the sets $\{x_1,\dots,\,x_k\}\cup\{\vec{e}_1,\dots,\,\vec{e}_r\}$, points of the form $x+\zeta e$ are uniquely representable as affine combinations of them. On the other hand, since the ray $[x,\,\vec{e})$ lies in $C$, for nonnegative $\zeta$ the point $x+\zeta e$ must be representable as a convex combination of the given elements. Thus, for any $\zeta\geqslant 0$ the affine combination written above is convex. In particular, for any $j=1,\dots,\,k$ and any $\zeta\geqslant 0$ the inequalities $0\leqslant \alpha_j+\zeta(\alpha'_j-\alpha_j)\leqslant 1$ hold, which is possible only if $\alpha'_j-\alpha_j=0$. Also, for any $i=1,\dots,\,r$ and any $\zeta\geqslant 0$, the inequality $\beta_i+\zeta(\beta'_i-\beta_i)\geqslant 0$ holds, which is possible only if $\beta'_i-\beta_i\geqslant 0$. Therefore, formula~\eqref{echere1ek} gives the desired representation.
\end{proof}

\subsection{On contracting a convex hull to a fixed point}
In this subsection we prove lemmas whose geometric meaning can be formulated as ``if we contract points of a sets to a given point, then the points of the convex hull also contract to the given point''.

\begin{Le}\label{stve}
Let $x,\,y,\,z,\,z'\in\mathbb{R}^d$, $V\subseteq\mathbb{R}^d$, and $E\subseteq\vec{S}^{d-1}$. Assume that $y\in\conv(V\cup\{z\};\;E)$ and $z'\in\conv(\{x,\,y,\,z\})$. Then there exists an element $y'\in[y,\,x]$ such that $y'\in\conv(V\cup\{z'\};\;E)$.
\end{Le}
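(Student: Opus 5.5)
The plan is a direct computation with convex-combination coefficients. We eliminate $z$ between the representation of $y$ and the representation of $z'$.

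\textbf{Step 1: set up the representations.} Since $y\in\conv(V\cup\{z\};\;E)$, write $y=az+w$ with $a\in[0,\,1]$. Here $w=\sum_j\alpha_jv_j+\sum_i\beta_ie_i$, where $v_j\in V$, $\vec{e}_i\in E$, $\alpha_j,\,\beta_i\geqslant 0$ and $\sum_j\alpha_j=1-a$. Since $z'\in\conv(\{x,\,y,\,z\})$, write $z'=\lambda x+\mu y+\nu z$ with $\lambda,\,\mu,\,\nu\geqslant 0$ and $\lambda+\mu+\nu=1$.

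\textbf{Step 2: the case $a=0$.} Then $y=w\in\conv(V;\;E)\subseteq\conv(V\cup\{z'\};\;E)$, so we take $y'=y$.

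\textbf{Step 3: the case $a>0$.} Look for $y'$ of the form $y'=cz'+kw$ with $c,\,k\geqslant 0$. Such a point is a convex combination of elements of $V\cup\{z'\}$ and $E$ exactly when its finite coefficients sum to one, that is, $c+k(1-a)=1$.

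Substitute $w=y-az$ and the expression for $z'$:
\begin{equation*}
y'=c\lambda x+(c\mu+k)y+(c\nu-ka)z.
\end{equation*}
Choose $k=c\nu/a$, which removes the $z$ term. The normalization $c+k(1-a)=1$ then becomes $c(\lambda+\mu)+k=1$, i.e.
\begin{equation*}
c\Big(\lambda+\mu+\frac{\nu}{a}\Big)=1.
\end{equation*}
The bracket is strictly positive, because $\lambda+\mu+\nu=1$ and $0<a\leqslant 1$, so it is at least $1$. Hence $c>0$ and $k\geqslant 0$ are determined.

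With these choices, $y'=c\lambda x+(c\mu+k)y$. Both coefficients are nonnegative, and they sum to $c(\lambda+\mu)+k=1$, which is the same identity as the normalization. Therefore $y'\in[y,\,x]$. By construction $y'=cz'+kw$ with $c+k(1-a)=1$, so $y'\in\conv(V\cup\{z'\};\;E)$.

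\textbf{Main obstacle.} The only real issue is checking that the two normalizations are compatible: the coefficient sum for membership in $\conv(V\cup\{z'\};\;E)$ and the coefficient sum for membership in the segment $[y,\,x]$. The identity $c+k(1-a)=c(\lambda+\mu)+k$, which holds once $ka=c\nu$, shows they coincide.

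The degenerate situations are covered without extra work. When $a=0$, Step 2 applies. When $\nu=0$, the formulas give $k=0$, $c=1$ and $y'=z'\in[x,\,y]$. When $E=\varnothing$ or $V=\varnothing$, the same formulas apply verbatim.
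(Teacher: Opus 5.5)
Your proof is correct and is essentially the paper's argument. In your notation, $c=a/(a+(1-a)\nu)$ and $k=\nu/(a+(1-a)\nu)$, so your $y'=cz'+kw$ is exactly the paper's $y'=\gamma z'+(1-\gamma)u$ with $u=w/(1-a)$. Keeping $w$ unnormalized just avoids dividing by $1-a$, which folds the paper's separate case $a=1$ (where $y'=z'+\nu\sum_i\beta_ie_i$) into your single formula.
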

\begin{figure}[h]\centering
\def\svgwidth{7cm}
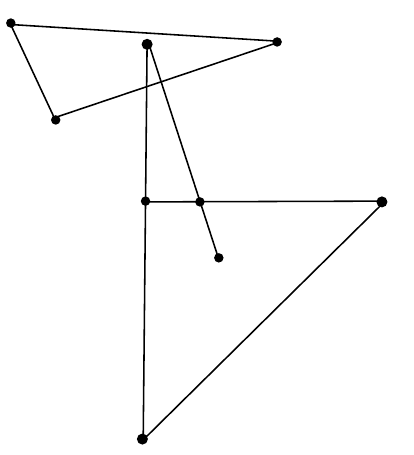
\caption{Illustration for the proof of Lemma~\ref{stve}}\label{ilstve}
\end{figure}

\begin{proof}
If $y\in\conv(V;\;E)$, then $y\in\conv(V\cup\{z'\};\;E)$, and one can take $y'=y$. If $y\notin\conv(V;\;E)$, then there exist numbers $\alpha,\,\alpha_1,\dots,\,\alpha_k,\,\beta_1,\dots,\,\beta_r>0$ such that
\begin{equation}
\alpha+\sum\limits_{j=1}^{k}\alpha_j=1
\quad\text{and}\quad
y=\alpha z+\sum\limits_{j=1}^{k}\alpha_j y_j+\sum\limits_{i=1}^{r}\beta_i e_i.
\end{equation}
Since $z'\in\conv(\{x,\,y,\,z\})$, there exist numbers $a,\,b,\,c\geqslant 0$ such that $z'=ax+by+cz$ and $a+b+c=1$. Consider two cases.

{\bf The first case: $k=0$.}

In this case $\alpha=1$ and $y=z+\sum_{i=1}^{r}\beta_i e_i$. It suffices to take $y'=z'+c\sum_{i=1}^{r}\beta_i e_i$: First, $y'$ is defined as a convex combination of elements of the sets $\{z'\}$ and $E$, and hence $y'$ automatically belongs to $\conv(V\cup\{z'\};\;E)$; Second,
\begin{equation}
y'=z'+c\sum\limits_{i=1}^{r}\beta_i e_i=ax+by+cz+c\sum\limits_{i=1}^{r}\beta_i e_i=ax+by+cy=ax+(1-a)y\in[x,\,y].
\end{equation}

{\bf Second case: $k>0$.}

In this case $V\ne\varnothing$ and $\alpha\ne 1$. Define the point $u$ by the formula
\begin{equation}\label{texfor}
u=\frac{1}{1-\alpha}y-\frac{\alpha}{1-\alpha}z=\frac{1}{1-\alpha}\sum\limits_{j=1}^{k}\alpha_j y_j+\frac{1}{1-\alpha}\sum\limits_{i=1}^{r}\beta_i e_i\in\conv(V;\;E).
\end{equation}
The point $u$ is the intersection of the ray with base $z$ passing through $y$ and the convex hull of the points $y_1,\dots,\,y_k$ and the directions $e_1,\dots,\,e_r$ (see Fig.~\ref{ilstve}).
Formula~\eqref{texfor} implies that $y=\alpha z+(1-\alpha)u$.

Let $\gamma=\frac{\alpha}{\alpha+(1-\alpha)c}$, let us check that it suffices to set $y'=\gamma z'+(1-\gamma)u$. By the definition, $y'\in[z',\,u]\subseteq\conv(V\cup\{z'\};E)$, and we only need to check that $y'\in[x,\,y]$: 
\begin{multline}
y'=\gamma z'+(1-\gamma)u=\frac{\alpha a}{\alpha+(1-\alpha)c}x+\frac{\alpha b}{\alpha+(1-\alpha)c}y+\frac{\alpha c}{\alpha+(1-\alpha)c}z+\frac{(1-\alpha) c}{\alpha+(1-\alpha)c}u=
\\
=\frac{\alpha a}{\alpha+(1-\alpha)c}x+\frac{\alpha (1-a-c)}{\alpha+(1-\alpha)c}y+\frac{c}{\alpha+(1-\alpha)c}y=\frac{\alpha a}{\alpha+(1-\alpha)c}x+\frac{\alpha +(1-\alpha)c-\alpha a}{\alpha+(1-\alpha)c}y\in[x,\,y].
\end{multline}
\end{proof}
\begin{Cor}\label{corstve}
Let $x,\,y,\,z,\,z'\in\mathbb{R}^d$, $V\subseteq\mathbb{R}^d$, and $E\subseteq\vec{S}^{d-1}$. Assume that $[x,\,y]\cap\conv(V\cup\{z\};\;E)\ne\varnothing$ and $z'\in\conv(\{x,\,y,\,z\})$. Then there exists an element $y'\in[x,\,y]$ such that $y'\in\conv(V\cup\{z'\};\;E)$.
\end{Cor}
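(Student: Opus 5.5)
The plan is to reduce the corollary to Lemma~\ref{stve} by choosing a suitable point on the segment $[x,\,y]$. The hypothesis gives a point $w\in[x,\,y]\cap\conv(V\cup\{z\};\;E)$, and we apply Lemma~\ref{stve} with $y$ replaced by $w$. Two things must be checked.

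\textbf{The hypothesis on $z'$ transfers.} We need $z'\in\conv(\{x,\,w,\,z\})$, whereas we only know $z'\in\conv(\{x,\,y,\,z\})$. Since $w\in[x,\,y]$, the triangle $\conv(\{x,\,w,\,z\})$ is contained in $\conv(\{x,\,y,\,z\})$, but not conversely. So the point $w$ cannot be arbitrary, and we choose it as follows.

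\textbf{Case $z'\in\conv(\{x,\,w,\,z\})$.} Lemma~\ref{stve}, applied to $x,\,w,\,z,\,z'$, yields $y'\in[w,\,x]\subseteq[x,\,y]$ with $y'\in\conv(V\cup\{z'\};\;E)$, which is what we want.

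\textbf{Case $z'\in\conv(\{w,\,y,\,z\})$.} Here we apply Lemma~\ref{stve} with the roles of $x$ and $y$ swapped, that is, to the quadruple $(y,\,w,\,z,\,z')$. It gives $y'\in[w,\,y]\subseteq[x,\,y]$ with $y'\in\conv(V\cup\{z'\};\;E)$.

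\textbf{The two cases are exhaustive.} The triangle $\conv(\{x,\,y,\,z\})$ is the union of the two triangles $\conv(\{x,\,w,\,z\})$ and $\conv(\{w,\,y,\,z\})$. Indeed, write $z'=ax+by+cz$ with $a,\,b,\,c\geqslant0$ and $a+b+c=1$, and $w=tx+(1-t)y$ with $t\in[0,\,1]$.
\begin{itemize}
\item If $a/t\geqslant b/(1-t)$, we subtract $\frac{b}{1-t}w$ from the $x,\,y$ part. This leaves a nonnegative coefficient at $x$, expressing $z'$ as a convex combination of $x,\,w,\,z$.
\item Otherwise we subtract $\frac{a}{t}w$, which expresses $z'$ as a convex combination of $w,\,y,\,z$.
\end{itemize}
The degenerate cases $t\in\{0,\,1\}$ are trivial, since then $w$ coincides with $x$ or with $y$.

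The main (mild) obstacle is exactly this decomposition step. Once it is written out with explicit coefficients, the remainder is a direct invocation of Lemma~\ref{stve}, which is symmetric enough in $x$ and $y$ to be applied in both cases.
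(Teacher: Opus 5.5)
Your proposal is correct and matches the paper's proof: choose $w\in[x,\,y]\cap\conv(V\cup\{z\};\;E)$, split $\conv(\{x,\,y,\,z\})=\conv(\{x,\,w,\,z\})\cup\conv(\{w,\,y,\,z\})$, and apply Lemma~\ref{stve} in whichever triangle contains $z'$. The paper states this splitting without justification, so your explicit coefficient check is a small extra that the paper omits.
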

\begin{proof}
Let $y_0\in[x,\,y]\cap\conv(V\cup\{z\};\;E)$. Then 
\begin{equation}
\conv(\{x,\,y,\,z\})=\conv(\{x,\,y_0,\,z\})\cup\conv(\{y_0,\,y,\,z\}).
\end{equation}
Consequently, $z'$ lies in at least one of the two sets $\conv(\{x,\,y_0,\,z\})$, $\conv(\{y_0,\,y,\,z\})$. If it lies in the first, then by Lemma~\ref{stve} there exists $y'\in[x,\,y_0]\cap\conv(V\cup\{z'\};\;E)$; if it lies in the second, then again by Lemma~\ref{stve} we find $y'\in[y,\,y_0]\cap\conv(V\cup\{z'\};\;E)$. Since $y_0\in[x,\,y]$, the inclusions $[x,\,y_0],\,[y,\,y_0]\subseteq[x,\,y]$ hold; that is, in both cases $y'\in[x,\,y]\cap\conv(V\cup\{z'\};\;E)$.
\end{proof}

The following lemma is an analog of Lemma~\ref{stve}, where $z$ is a point at infinity.
\begin{Le}\label{stna}
Let $x,\,y,\,z'\in\mathbb{R}^d$, $V\subseteq\mathbb{R}^d$, $V\ne\varnothing$, $\vec{e}\in\vec{S}^{d-1}$, and $E\subseteq\vec{S}^{d-1}$. Assume that $y\in\conv(V;\;E\cup\{\vec{e}\})$ and $z'\in\conv(\{x,\,y\};\;\{\vec{e}\})$. Then there exists an element $y'\in[y,\,x]$ such that $y'\in\conv(V\cup\{z'\};\;E)$.
\end{Le}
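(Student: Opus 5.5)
I will mimic the proof of Lemma~\ref{stve}, with the finite point $z$ replaced by the direction $\vec{e}$. First I write both hypotheses explicitly. Since $y\in\conv(V;\;E\cup\{\vec{e}\})$, there are $y_1,\dots,y_k\in V$ ($k\geqslant1$, because $V\ne\varnothing$), $\vec e_1,\dots,\vec e_r\in E$, coefficients $\alpha_j,\beta_i\geqslant0$ with $\sum\alpha_j=1$, and some $\beta\geqslant0$ such that
\begin{equation*}
y=u+\beta e,\qquad u=\sum_{j=1}^k\alpha_jy_j+\sum_{i=1}^r\beta_ie_i\in\conv(V;\;E).
\end{equation*}
Since $z'\in\conv(\{x,\,y\};\;\{\vec e\})$, there are $a\in[0,\,1]$ and $c\geqslant0$ with $z'=ax+(1-a)y+ce$. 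If $\beta=0$, then $y\in\conv(V;\;E)\subseteq\conv(V\cup\{z'\};\;E)$, and I take $y'=y$. So from now on I assume $\beta>0$. In this case the point $u$ plays the role of the point $u$ in formula~\eqref{texfor}: $u$ lies on the ray from $y$ in direction $-e$, and it belongs to $\conv(V;\;E)$.

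Next I look for $y'$ on the segment $[z',\,u]$, which lies in $\conv(V\cup\{z'\};\;E)$. I set $y'=\gamma z'+(1-\gamma)u$ with $\gamma\in[0,\,1]$, and I want $y'$ to be of the form $\lambda x+(1-\lambda)y$. Substituting $u=y-\beta e$ gives
\begin{equation*}
y'=\gamma a x+(1-\gamma a)y+\bigl(\gamma c-(1-\gamma)\beta\bigr)e.
\end{equation*}
The $e$-term vanishes exactly when $\gamma=\beta/(\beta+c)$, which lies in $(0,\,1]$ because $\beta>0$ and $c\geqslant0$. With this choice $y'=\gamma a x+(1-\gamma a)y$, and since $\gamma a\in[0,\,1]$, this point lies in $[x,\,y]$. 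This is the exact analog of the choice $\gamma=\alpha/(\alpha+(1-\alpha)c)$ in Lemma~\ref{stve}, with the weight at infinity $\beta$ taking the role of $\alpha$.

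There is no genuine obstacle here. The only delicate points are bookkeeping. One must separate the degenerate case $\beta=0$, so that $\gamma$ is well defined; the case $c=0$ is harmless, since then $\gamma=1$ and $y'=z'\in[x,\,y]$. One must also use $V\ne\varnothing$ to guarantee that $u$ is an honest point of $\conv(V;\;E)$. This is why, unlike in Lemma~\ref{stve}, no separate case ``$k=0$'' arises. Finally, one checks that $[z',\,u]\subseteq\conv(V\cup\{z'\};\;E)$, which holds by the convexity of this hull.
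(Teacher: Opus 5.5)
Your proof is correct and is essentially the paper's proof: the paper also pulls off the coefficient $\beta$ of $e$ to get $y-\beta e\in\conv(V;\;E)$, and in your notation it takes $y'=\frac{c}{c+\beta}(y-\beta e)+\frac{\beta}{c+\beta}z'$, which is exactly your point with weight $\gamma=\beta/(\beta+c)$ on $z'$. The only difference is the degenerate case you split off: the paper treats $c=0$ separately (taking $y'=z'$), while you treat $\beta=0$ (taking $y'=y$), and either split keeps the weights well defined.
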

\begin{proof}
First we observe that there exists a number $\beta\geqslant 0$ such that $y-\beta e\in\conv(V;\;E)$. Since $y\in\conv(V;\;E\cup\{\vec{e}\})$, the element $y$ is represented as a convex combination of elements of $V\cup E\cup\{\vec{e}\}$.  Note that it suffices to take $\beta$ equal to the coefficient of $e$ in this convex combination, since then the point $y-\beta e$ will be representable as convex combinations of elements of $V\cup E$. Since $z'\in\conv(\{x,\,y\};\;\{\vec{e}\})$, there exist numbers $\alpha\in[0,\,1]$ and $\gamma\geqslant 0$ such that $z'=\alpha x+(1-\alpha)y+\gamma e$. If $\gamma=0$, then $z'\in[x,\,y]\cap\conv(V\cup\{z'\};\;E)$ and one may take $y'=z'$. In case $\gamma>0$, let us check that it suffices to set $y'=\frac{\gamma}{\gamma+\beta}(y-\beta e)+\frac{\beta}{\gamma+\beta}z'$. By the definition, $y'\in[z',\,y-\beta e]\subseteq\conv(V\cup\{z'\};\;E)$ and we only need to check that $y'\in[x,\,y]$:
\begin{multline}
y'
=
\frac{\gamma}{\gamma+\beta}(y-\beta e)+\frac{\beta}{\gamma+\beta}z'
=
\frac{\gamma}{\gamma+\beta}(y-\beta e)+\frac{\beta}{\gamma+\beta}(\alpha x+(1-\alpha)y+\gamma e)
=
\\
=
\frac{\alpha\beta}{\gamma+\beta}x+\frac{\gamma+(1-\alpha)\beta}{\gamma+\beta}y\in[x,\,y].
\end{multline}
\end{proof}
\begin{Cor}\label{corstna}
Let $x,\,y,\,z'\in\mathbb{R}^d$, $V\subseteq\mathbb{R}^d$, $V\ne\varnothing$, $\vec{e}\in\vec{S}^{d-1}$ and $E\subseteq\vec{S}^{d-1}$. Assume also that 
\begin{equation}
[x,\,y]\cap\conv(V;\;E\cup\{\vec{e}\})\ne\varnothing
\end{equation}
and $z'\in\conv(\{x,\,y\};\;\{\vec{e}\})$. Then there exists a point $y'\in[y,\,x]$ such that $y'\in\conv(V\cup\{z'\};\;E)$.
\end{Cor}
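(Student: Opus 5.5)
The plan is to reduce Corollary~\ref{corstna} to Lemma~\ref{stna}, in the same way Corollary~\ref{corstve} was derived from Lemma~\ref{stve}. First fix a point
\begin{equation*}
y_0\in[x,\,y]\cap\conv(V;\;E\cup\{\vec{e}\}),
\end{equation*}
and split the set containing $z'$ into two pieces:
\begin{equation*}
\conv(\{x,\,y\};\;\{\vec{e}\})=\conv(\{x,\,y_0\};\;\{\vec{e}\})\cup\conv(\{y_0,\,y\};\;\{\vec{e}\}).
\end{equation*}
To verify this equality, write $z'=w+\gamma e$ with $w\in[x,\,y]$ and $\gamma\geqslant 0$. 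Since $[x,\,y]=[x,\,y_0]\cup[y_0,\,y]$, the point $w$ lies in one of the two subsegments, and so $z'$ lies in the corresponding piece. The reverse inclusion is immediate.

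Next, apply Lemma~\ref{stna} in each case, always with $y_0$ playing the role of $y$; this is allowed because $y_0\in\conv(V;\;E\cup\{\vec{e}\})$.

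If $z'\in\conv(\{x,\,y_0\};\;\{\vec{e}\})$, take the lemma's $x$ to be $x$. The lemma gives $y'\in[y_0,\,x]$ with $y'\in\conv(V\cup\{z'\};\;E)$.

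If $z'\in\conv(\{y_0,\,y\};\;\{\vec{e}\})$, take the lemma's $x$ to be $y$. Since $\conv(\{y,\,y_0\};\;\{\vec{e}\})=\conv(\{y_0,\,y\};\;\{\vec{e}\})$, the hypothesis holds, and the lemma gives $y'\in[y_0,\,y]$ with $y'\in\conv(V\cup\{z'\};\;E)$.

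In both cases $y'\in[x,\,y]$, because $y_0\in[x,\,y]$ implies $[x,\,y_0],\,[y_0,\,y]\subseteq[x,\,y]$. This is exactly the claim.

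No step here is a real obstacle. The only point needing care is the decomposition of the generalized convex hull $\conv(\{x,\,y\};\;\{\vec{e}\})$ into two pieces, and that follows from the explicit form $z'=w+\gamma e$. Note also that Lemma~\ref{stna} needs $V\ne\varnothing$, and this is assumed in the statement of the corollary.
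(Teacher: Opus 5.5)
Your proof is correct and is the argument the paper intends: the paper only states that Corollary~\ref{corstna} follows from Lemma~\ref{stna} in the same way Corollary~\ref{corstve} follows from Lemma~\ref{stve}. That means picking $y_0\in[x,\,y]\cap\conv(V;\;E\cup\{\vec{e}\})$, splitting $\conv(\{x,\,y\};\;\{\vec{e}\})$ at $y_0$, and applying the lemma on whichever piece contains $z'$. Your check of the splitting via $z'=w+\gamma e$ supplies the one detail the paper leaves implicit.
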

The deduction of Corollary~\ref{corstna} from Lemma~\ref{stna} is analogous to the derivation of Corollary~\ref{corstve} from Lemma~\ref{stve}.
\begin{Le}\label{stia}
Let $x,\,y\in\mathbb{R}^d$, $V,\,V'\subseteq\mathbb{R}^d$, $V,\,V'\ne\varnothing$, and $E,\,E'\subseteq\vec{S}^{d-1}$. Assume also that $y\in\conv(V;\;E)$ and that the following conditions hold.
\begin{enumerate}
\item
For any point $z\in V$ there exists $z'\in V'\cap\conv(\{x,\,y,\,z\})$.
\item
For any direction $\vec{e}\in E$, if $\vec{e}\notin E'$, then there exists $z'\in V'\cap\conv(\{x,\,y\};\;\{\vec{e}\})$.
\end{enumerate}
Then there exists a point $y'\in[x,\,y]$ such that $y'\in\conv(V';\; E')$.
\end{Le}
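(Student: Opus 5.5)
Since $y\in\conv(V;\;E)$, it is a convex combination of finitely many elements, so we may assume from the start that $V=\{z_1,\dots,z_k\}$ and $E=\{\vec e_1,\dots,\vec e_r\}$ are finite with $k\ge1$. Restricting to these finite sets keeps both hypotheses in force. The plan is an induction on $k+r$: we replace the elements of $V\cup E$ by elements of $V'\cup E'$ one at a time. At each step we move along the segment $[x,y]$ towards $x$, using Corollaries~\ref{corstve} and~\ref{corstna}.

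To run the induction, we prove the stronger statement obtained by replacing $y\in\conv(V;\;E)$ with $[x,y]\cap\conv(V;\;E)\ne\varnothing$, and concluding $[x,y]\cap\conv(V';\;E')\neq\varnothing$. The key observation is that the hypotheses only get easier along the way. If $y_1\in[x,y]$, then $\conv(\{x,y_1,z\})\subseteq\conv(\{x,y,z\})$, but we need the reverse direction. So we keep the original $y$ fixed in conditions 1 and 2 throughout, and only track a nonempty intersection of $[x,y]$ with the current hull; the corollaries are formulated exactly in this form.

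\textbf{Inductive step.} Consider the current hull $\conv(V_0\cup W;\;E_0\cup F)$, where $W\subseteq V'$ and $F\subseteq E'$ are the already replaced parts and $V_0\subseteq V$, $E_0\subseteq E$ are the remaining original parts. Suppose it meets $[x,y]$.

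If $z\in V_0$, we pick $z'\in V'\cap\conv(\{x,y,z\})$ by condition 1. Corollary~\ref{corstve}, applied with $V=(V_0\setminus\{z\})\cup W$ and direction set $E_0\cup F$, gives a point of $[x,y]$ in $\conv((V_0\setminus\{z\})\cup W\cup\{z'\};\;E_0\cup F)$. This reduces $|V_0|$.

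If $\vec e\in E_0$ and $\vec e\in E'$, we simply move $\vec e$ into $F$; no change is needed. If $\vec e\notin E'$, condition 2 gives $z'\in V'\cap\conv(\{x,y\};\;\{\vec e\})$. Corollary~\ref{corstna}, applied with the finite part $V_0\cup W$ (nonempty, since it always contains at least one point) and $E$-part $(E_0\setminus\{\vec e\})\cup F$, replaces $\vec e$ by $z'$.

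After finitely many steps $V_0$ and $E_0$ are empty. The hull is then $\conv(W;\;F)\subseteq\conv(V';\;E')$, and it meets $[x,y]$, which gives the required $y'$.

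\textbf{Expected obstacle.} The delicate point is the nonemptiness of the finite part required in Corollary~\ref{corstna}. This is why the point replacements are performed before the direction replacements, or why we simply note that $V_0\cup W$ never becomes empty: removing a $z$ always adds a $z'$. A second subtlety is that after replacing $z$ by $z'$, a later application needs its hypothesis with the \emph{original} $y$, not the current witness on $[x,y]$. The corollaries are stated with the triangle $\conv(\{x,y,z\})$ and the intersection condition on the full segment $[x,y]$, so this is consistent.
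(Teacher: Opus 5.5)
Your proposal is correct and is essentially the paper's proof: the paper also replaces the elements one at a time (points first via Corollary~\ref{corstve}, then directions via Corollary~\ref{corstna}), keeps the original $y$ in the hypotheses, and carries as the invariant a point $u_t\in[x,\,y]\cap\conv(V_t;\;E_t)$. The only cosmetic difference is that the paper starts from a Carath\'eodory simplex containing $y$ (Theorem~\ref{carat}), whereas you start from an arbitrary finite convex combination; this works equally well, since affine independence of the vertices is never used in the argument.
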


\begin{figure}[h]\centering
\def\svgwidth{7cm}
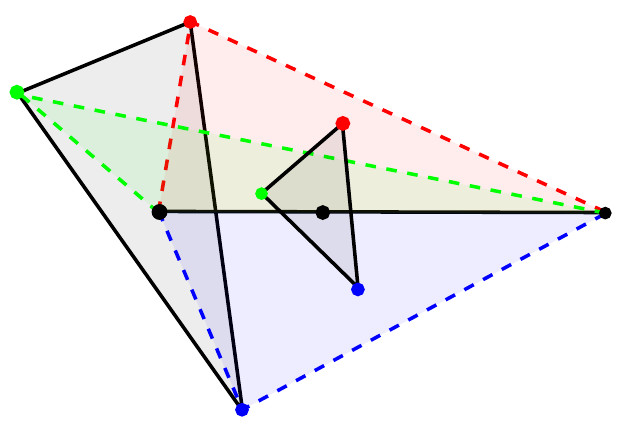
\caption{Illustration for the proof of Lemma~\ref{stia}}\label{ilstia}
\end{figure}
The proof of Lemma~\ref{stia} consists of the successive application of Corollaries~\ref{corstve} and~\ref{corstna} to the vertices of a generalized simplex with vertices from $V\cup E$ that contains $y$.
\begin{proof}
By the assumptions, there exists a function $\varphi\colon V\cup E\to V'\cup E'$ such that, for any point $v\in V$, the inclusion $\varphi(v)\in\conv(\{x,\,y,\,v\})$ holds, and, for any direction $\vec{e}\in E$, either $\varphi(\vec{e})=\vec{e}$ or $\varphi(\vec{e})\in\conv(\{x,\,y\};\;\{\vec{e}\})$. By Theorem~\ref{carat}, there exists a generalized simplex $C$ with vertices $y_1,\dots,\,y_k\in V$ and $\vec{e}_1,\dots,\,\vec{e}_r\in E$ such that $y\in C$. Let $V_0=\{y_1,\dots,\,y_k\}$, $E_0=\{\vec{e}_1,\dots,\,\vec{e}_r\}$. For $t=1,\dots,\,k$ set $V_t=\{\varphi(y_1),\dots,\,\varphi(y_t),\,y_{t+1},\dots,\,y_{k}\}$ and $E_t=E_0$. For $t=k+1,\dots,\,k+r$, if $\varphi(\vec{e}_{t-k})=\vec{e}_{t-k}$, then set $V_t=V_{t-1}$ and $E_t=E_{t-1}$; if $\varphi(\vec{e}_{t-k})\ne\vec{e}_{t-k}$, then set $V_t=V_{t-1}\cup\{\varphi(\vec{e}_{t-k})\}$ and $E_t=E_{t-1}\setminus\{\vec{e}_{t-k}\}$. Then $V_0\subseteq V$, $E_0\subseteq E$, $V_{k+r}\subseteq V'$, and $E_{k+r}\subseteq E'$. It remains to prove by induction on $t$ that there exists an element $u_t\in[x,\,y]$ such that $u_t\in\conv(V_t;\;E_t)$.

{\bf Base of the induction: $t=0$.}

In this case $u_0=y$.

{\bf Induction step with $t\leqslant k$.} 

In this case $E_t=E_{t-1}$, and $V_t$ differs from $V_{t-1}$ only in that the point $v_t$ is replaced by $\varphi(v_t)$, where $\varphi(v_t)\in\conv(\{x,\,y,\,y_t\})$. By the induction hypothesis, there exists a point $u_{t-1}\in[x,\,y]\cap\conv(V_{t-1},\,E_{t-1})$. Then, by Corollary~\ref{corstve} ($z=v_t$, $z'=\varphi(v_t)$), there exists $u_{t}\in[x,\,y]\cap\conv(V_{t},\,E_{t})$.

{\bf Induction step with $t>k$.} 

If $\varphi(\vec{e}_{t-k})=\vec{e}_{t-k}$, then $V_t=V_{t-1}$ and $E_{t}=E_{t-1}$, and hence one can take $u_t=u_{t-1}$.

If $\varphi(\vec{e}_{t-k})\ne\vec{e}_{t-k}$, then $\varphi(\vec{e}_{t-k})\in\conv(\{x,\,y\};\;\{\vec{e}_{t-k}\})$. In this case the sets $V_{t-1}$ and $E_{t-1}$ differ from the sets $V_t$ and $E_t$ only in that we removed the element $\vec{e}_{t-k}$ from $E_{t-1}$ and added the element $\varphi(\vec{e}_{t-k})$ to $V_{t-1}$. By the induction hypothesis, there exists a point $u_{t-1}\in[x,\,y]\cap\conv(V_{t-1},\,E_{t-1})$. Then, by Corollary~\ref{corstna} ($\vec{e}=\vec{e}_{t-k}$, $z'=\varphi(\vec{e}_{t-k})$), there exists a point $u_{t}\in[x,\,y]\cap\conv(V_{t},\,E_{t})$.

\end{proof}

\begin{Le}\label{stpr}
Let $x,\,y,\,z\in\mathbb{R}^d$ and $\vec{e}\in\vec{S}^{d-1}$. Assume also that $z\in\conv(\{x,\,y\};\;\{\vec{e}\})$. Then there exists a point $y'\in[x,\,y]$ such that $y'\in[z,\,-\vec{e})$.
\end{Le}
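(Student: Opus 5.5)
The plan is to unwind the definition of $\conv(\{x,\,y\};\;\{\vec{e}\})$ and write down $y'$ explicitly. Since $z\in\conv(\{x,\,y\};\;\{\vec{e}\})=\conv(\{x,\,y\}+\ray\{\vec{e}\})$, there are numbers $\alpha\in[0,\,1]$ and $\gamma\geqslant 0$ such that
\begin{equation*}
z=\alpha x+(1-\alpha)y+\gamma e .
\end{equation*}
The coefficients at $x$ and $y$ sum to $1$, and all the multiples of $e$ can be collected into one nonnegative coefficient. This representation is already used, without comment, in the proof of Lemma~\ref{stna}.

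With this representation we take
\begin{equation*}
y'=z-\gamma e=\alpha x+(1-\alpha)y .
\end{equation*}
Because $\alpha\in[0,\,1]$, the point $y'$ lies in $[x,\,y]$. Because $\gamma\geqslant 0$, we have $y'=z+\gamma(-e)$, which lies on the closed ray $[z,\,-\vec{e})=\{z-\beta e\mid \beta\geqslant 0\}$. This is exactly what the statement asks for.

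The only step that needs any care is obtaining the two-term form of $z$. In general a convex combination of points of $\{x,\,y\}+\ray\{\vec{e}\}$ has the form $\sum_j\lambda_j(p_j+t_je)$, where each $p_j\in\{x,\,y\}$, each $t_j\geqslant 0$, the $\lambda_j$ are nonnegative and sum to $1$. Regrouping the terms gives $z=\alpha x+(1-\alpha)y+\big(\sum_j\lambda_jt_j\big)e$, and $\gamma=\sum_j\lambda_jt_j$ is nonnegative. After this regrouping the lemma follows by direct substitution; no case analysis and no earlier results beyond the definitions are needed.
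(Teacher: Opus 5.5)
Your proposal is correct and is the paper's proof: write $z=\alpha x+(1-\alpha)y+\gamma e$ with $\alpha\in[0,1]$, $\gamma\geqslant 0$, and take $y'=\alpha x+(1-\alpha)y$. Your explicit regrouping of a general convex combination of points of $\{x,y\}+\ray\{\vec{e}\}$ into this two-term form is a detail the paper leaves implicit.
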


\begin{figure}[h]\centering
\def\svgwidth{7cm}
%% Creator: Inkscape 1.3.2 (091e20e, 2023-11-25, custom), www.inkscape.org
%% PDF/EPS/PS + LaTeX output extension by Johan Engelen, 2010
%% Accompanies image file 'stprris.pdf' (pdf, eps, ps)
%%
%% To include the image in your LaTeX document, write
%%   \input{<filename>.pdf_tex}
%%  instead of
%%   \includegraphics{<filename>.pdf}
%% To scale the image, write
%%   \def\svgwidth{<desired width>}
%%   \input{<filename>.pdf_tex}
%%  instead of
%%   \includegraphics[width=<desired width>]{<filename>.pdf}
%%
%% Images with a different path to the parent latex file can
%% be accessed with the `import' package (which may need to be
%% installed) using
%%   \usepackage{import}
%% in the preamble, and then including the image with
%%   \import{<path to file>}{<filename>.pdf_tex}
%% Alternatively, one can specify
%%   \graphicspath{{<path to file>/}}
%% 
%% For more information, please see info/svg-inkscape on CTAN:
%%   http://tug.ctan.org/tex-archive/info/svg-inkscape
%%
\begingroup%
  \makeatletter%
  \providecommand\color[2][]{%
    \errmessage{(Inkscape) Color is used for the text in Inkscape, but the package 'color.sty' is not loaded}%
    \renewcommand\color[2][]{}%
  }%
  \providecommand\transparent[1]{%
    \errmessage{(Inkscape) Transparency is used (non-zero) for the text in Inkscape, but the package 'transparent.sty' is not loaded}%
    \renewcommand\transparent[1]{}%
  }%
  \providecommand\rotatebox[2]{#2}%
  \newcommand*\fsize{\dimexpr\f@size pt\relax}%
  \newcommand*\lineheight[1]{\fontsize{\fsize}{#1\fsize}\selectfont}%
  \ifx\svgwidth\undefined%
    \setlength{\unitlength}{255.11811024bp}%
    \ifx\svgscale\undefined%
      \relax%
    \else%
      \setlength{\unitlength}{\unitlength * \real{\svgscale}}%
    \fi%
  \else%
    \setlength{\unitlength}{\svgwidth}%
  \fi%
  \global\let\svgwidth\undefined%
  \global\let\svgscale\undefined%
  \makeatother%
  \begin{picture}(1,0.95555556)%
    \lineheight{1}%
    \setlength\tabcolsep{0pt}%
    \put(0,0){\includegraphics[width=\unitlength,page=1]{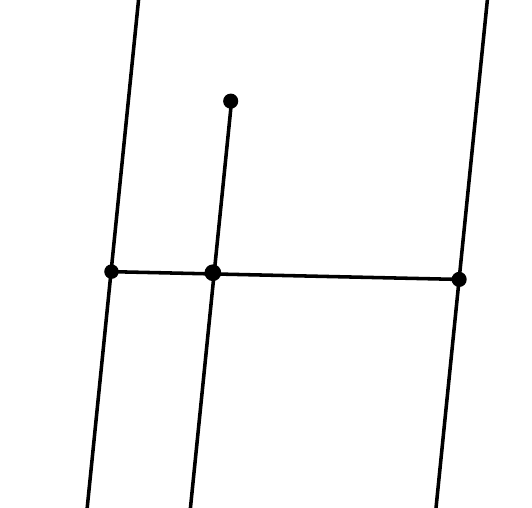}}%
    \put(0.89754418,0.43386112){\color[rgb]{0,0,0}\makebox(0,0)[lt]{\lineheight{1.25}\smash{\begin{tabular}[t]{l}$x$\end{tabular}}}}%
    \put(0.14842919,0.43845969){\color[rgb]{0,0,0}\makebox(0,0)[lt]{\lineheight{1.25}\smash{\begin{tabular}[t]{l}$y$\end{tabular}}}}%
    \put(0.45349341,0.78403904){\color[rgb]{0,0,0}\makebox(0,0)[lt]{\lineheight{1.25}\smash{\begin{tabular}[t]{l}$z$\end{tabular}}}}%
    \put(0.35411568,0.46974932){\color[rgb]{0,0,0}\makebox(0,0)[lt]{\lineheight{1.25}\smash{\begin{tabular}[t]{l}$y'$\end{tabular}}}}%
    \put(0.14252131,0.86957066){\color[rgb]{0,0,0}\makebox(0,0)[lt]{\lineheight{1.25}\smash{\begin{tabular}[t]{l}$\vec{e}$\end{tabular}}}}%
    \put(0.04754068,0.05160624){\color[rgb]{0,0,0}\makebox(0,0)[lt]{\lineheight{1.25}\smash{\begin{tabular}[t]{l}$-\vec{e}$\end{tabular}}}}%
    \put(0,0){\includegraphics[width=\unitlength,page=2]{stprris.pdf}}%
  \end{picture}%
\endgroup%

\caption{Illustration for Lemma~\ref{stpr}}\label{ilstpr}
\end{figure}

\begin{proof}
Since $z\in\conv(\{x,\,y\};\;\{\vec{e}\})$, there exist numbers $\alpha\in[0,\,1]$ and $\beta\geqslant 0$ such that 
\begin{equation}
z=\alpha x+(1-\alpha)y+\beta e.
\end{equation}
Then 
\begin{equation}
y'=\alpha x+(1-\alpha)y\in [x,\,y]\cap[z,\,-\vec{e}).
\end{equation}
\end{proof}
\begin{Cor}\label{stiag}
Let $x,\,y\in\mathbb{R}^d$, $V,\,V'\subseteq\mathbb{R}^d$, and $E,\,E'\subseteq\vec{S}^{d-1}$. Assume that the following conditions hold.
\begin{enumerate}
\item
For any point $z\in V$ there exists $z'\in V'\cap\conv(\{x,\,y,\,z\})$.
\item
For any direction $\vec{e}\in E$, if $\vec{e}\notin E'$, then there exists $z'\in V'\cap\conv(\{x,\,y\};\;\{\vec{e}\})$.
\end{enumerate}
Assume that at least one of the following two conditions holds.
\begin{enumerate}
\item[1']
$y\in\conv(V;\;E)$.
\item[2']
There exists a direction $\vec{e}\in\vec{S}^{d-1}$ such that $\vec{e},\,-\vec{e}\in E$.
\end{enumerate}
Then at least one of the following two statements holds.
\begin{enumerate}
\item[1'']
There exists an element $y'\in[x,\,y]$ such that $y'\in\conv(V';\; E')$.
\item[2'']
There exists a direction $\vec{e}\in\vec{S}^{d-1}$ such that $\vec{e},\,-\vec{e}\in E'$.
\end{enumerate}
\end{Cor}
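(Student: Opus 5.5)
The plan is to split according to which of the hypotheses $1'$, $2'$ holds, and to reduce the first case to Lemma~\ref{stia}. The second case is handled by a direct argument based on Lemma~\ref{stpr}.

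\textbf{Case $1'$: $y\in\conv(V;\;E)$.} In particular $V\ne\varnothing$, since $\conv(V;\;E)$ is only defined for nonempty $V$.
\begin{itemize}
\item If $V'\ne\varnothing$, then all hypotheses of Lemma~\ref{stia} are satisfied. It yields $y'\in[x,\,y]\cap\conv(V';\;E')$, which is $1''$.
\item If $V'=\varnothing$, condition 1 fails for any $z\in V$. So this subcase cannot occur.
\end{itemize}

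\textbf{Case $2'$: there is $\vec{e}$ with $\vec{e},\,-\vec{e}\in E$.}
\begin{itemize}
\item If both $\vec{e}$ and $-\vec{e}$ lie in $E'$, then $2''$ holds immediately.
\item Otherwise, by symmetry we may assume $\vec{e}\notin E'$. Condition 2 then gives $z'\in V'\cap\conv(\{x,\,y\};\;\{\vec{e}\})$.
\item By Lemma~\ref{stpr}, there is $y'\in[x,\,y]$ with $y'\in[z',\,-\vec{e})$.
\item If $-\vec{e}\in E'$, then $[z',\,-\vec{e})\subseteq z'+\ray E'\subseteq\conv(V';\;E')$. Hence $y'\in\conv(V';\;E')$, and $1''$ holds.
\item If also $-\vec{e}\notin E'$, condition 2 applied to $-\vec{e}$ gives $z''\in V'\cap\conv(\{x,\,y\};\;\{-\vec{e}\})$.
\end{itemize}

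In this last subcase we write
\begin{equation*}
z'=p+\beta e,\qquad z''=q-\beta' e,
\end{equation*}
with $p,\,q\in[x,\,y]$ and $\beta,\,\beta'\geqslant 0$. The segment $[z',\,z'']$ lies in $\conv(V';\;E')$. Its points have the form
\begin{equation*}
t\,p+(1-t)\,q+\bigl(t\beta-(1-t)\beta'\bigr)e .
\end{equation*}
The coefficient of $e$ changes sign as $t$ runs over $[0,1]$. By continuity there is $t_0$ at which it vanishes. The corresponding point $t_0p+(1-t_0)q$ lies in $[x,\,y]$, since $[x,\,y]$ is convex, and it lies in $[z',\,z'']\subseteq\conv(V';\;E')$. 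This gives $1''$.

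The main obstacle is this final subcase, where neither $\vec{e}$ nor $-\vec{e}$ survives in $E'$, so there is no ray direction to project along. The interpolation between $z'$ and $z''$ resolves it. The other delicate point is checking that $V'\ne\varnothing$ whenever $\conv(V';\;E')$ is used; in every branch where $1''$ is claimed, a point of $V'$ has been exhibited explicitly.
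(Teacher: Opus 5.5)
Your proof is correct and uses the same case split as the paper. Case $1'$ goes through Lemma~\ref{stia}, and case $2'$ uses condition~2 and Lemma~\ref{stpr} with the same subcases. The only divergence is the last subcase, $\vec{e},\,-\vec{e}\notin E'$:
\begin{itemize}
\item The paper takes the point $y_+\in[x,\,y]\cap[z_+,\,-\vec{e})$ and applies Corollary~\ref{corstna} to $y_+$ and $z_-$.
\item You interpolate directly along $[z',\,z'']\subseteq\conv(V')$ until the $e$-component cancels.
\end{itemize}
Your interpolation is essentially the computation inside Lemma~\ref{stna}. Your version avoids the intermediate point $y_+$ and the splitting of $[x,\,y]$ in Corollary~\ref{corstna}, so it is a bit more elementary and self-contained. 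Your explicit check that $V'\neq\varnothing$ in case $1'$ also spells out a detail the paper leaves implicit.
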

\begin{proof}
If condition~1' holds, then we are in the setting of Lemma~\ref{stia}, and hence condition~1'' holds. If condition~2' holds but condition~2'' does not, then there exists a direction $\vec{e}\in\vec{S}^{d-1}$ such that $\{\vec{e},-\vec{e}\}\subseteq E$, whereas $\{\vec{e},-\vec{e}\}\nsubseteq E'$. Without loss of generality, let $\vec{e}\notin E'$. Then, by assumption~2, there exists a point $z_+\in V'\cap\conv(\{x,\,y\};\;\{\vec{e}\})$. By Lemma~\ref{stpr}, there exists a point
\begin{equation}\label{texforvslstiag}
y_+\in[x,\,y]\cap [z_+,\,-\vec{e})\subseteq [x,\,y]\cap\conv(V';\;E'\cup\{-\vec{e}\}).
\end{equation}
If $-\vec{e}\in E'$, then formula~\eqref{texforvslstiag} gives condition~1''. If $-\vec{e}\notin E'$, then, by assumption 2, there exists a point $z_-\in V'\cap\conv(\{x,\,y\};\;\{-\vec{e}\})$. Then, by Corollary~\ref{corstna}, the formula
\begin{equation}
\varnothing
\ne
[x,\,y]\cap\conv(V'\cup\{z_-\},\,E')
=
[x,\,y]\cap\conv(V',\,E').
\end{equation}
Again, condition~1'' holds.
\end{proof}

\subsection{On the intersection of a chain of nested generalized simplices.}
We will need to choose an inclusion-minimal generalized simplex from a certain family. Therefore, this subsection is devoted to proving that chains of simplices from such a family satisfy the hypothesis of Zorn's lemma. First we observe that it suffices to consider not arbitrary chains of nested simplices, but sequences indexed by natural numbers. The main result of the subsection is Corollary~\ref{tsorsim}.
\begin{Def}
Let $(\mathfrak{J},\,\leqslant)$ be a linearly ordered set. A collection of generalized simplices $\{C_j\}_{j\in\mathfrak{J}}$, indexed by the elements of $\mathfrak{J}$, will be called a decreasing chain of generalized simplices if, for any indices $i,\,j\in\mathfrak{J}$, the condition $i\leqslant j$ implies $C_j\subseteq C_i$.
\end{Def}
\begin{Le}\label{dosschcsim}
Let $\{C_j\}_{j\in\mathfrak{J}}$ be a decreasing chain of generalized simplices. Then there exists a sequence of elements $\{j_n\}_{n\in\mathbb{N}}\subseteq\mathfrak{J}$ such that $j_{n+1}\geqslant j_n$ and, for any $i\in\mathfrak{J}$, there exists a number $n\in\mathbb{N}$ such that $C_{j_n}\subseteq C_i$.
\end{Le}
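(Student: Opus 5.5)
We need a cofinal sequence in a chain of generalized simplices ordered by reverse inclusion. The plan is to attach to each simplex a finite-dimensional "size" invariant that is monotone under inclusion and takes values in a well-behaved set. We then reduce the cofinality problem to the separability of $\mathbb{R}$.

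\textbf{Reduction.} First we reduce to the case where $\mathfrak{J}$ has no largest element: if $j^*$ is largest, we take $j_n=j^*$ for all $n$. Note that $C_j\subseteq C_i$ for $i\leqslant j$ means we need indices that are \emph{eventually large}, i.e., a sequence cofinal "from above" in the sense of inclusion. It suffices to find a countable set $\{j_n\}$ such that every $C_i$ contains some $C_{j_n}$; afterwards we replace $j_n$ by $\max(j_1,\dots,j_n)$ to make the sequence nondecreasing, which only shrinks the simplices.

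\textbf{Stabilising the dimension.} The dimensions $\dim C_j$ are nonincreasing along the chain and take values in $\{0,\dots,d\}$. So there is $j_0$ beyond which the dimension is constant, equal to some $m$, and for all $j\geqslant j_0$ the affine hulls coincide with a common $m$-dimensional affine space $A$. This holds because nested convex sets of equal dimension have equal affine hulls. We restrict attention to $j\geqslant j_0$.

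\textbf{Encoding with a separable family.} Next we choose a countable dense set $Q\subseteq A$. For each $q\in Q$, consider $I_q=\{j\geqslant j_0 \mid q\in \ri C_j\}$; this is a down-set of the chain. Since $\ri$ is monotone for sets with the same affine hull, the sets $\ri C_j$ are nested, so the down-sets $I_q$ are also nested with the chain. For every $q$ such that $I_q\ne\{j\geqslant j_0\}$, we pick $k_q\notin I_q$. We also choose a second countable cofinal "test" family to handle rays. The claim is that $\{k_q\}$ together with $j_0$ is cofinal. Suppose $i$ is such that no $k_q$ satisfies $C_{k_q}\subseteq C_i$. Then every $k_q$ is $<i$ in the chain, so $C_i\subseteq C_{k_q}$. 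We then want a $j>i$ with $C_j\subsetneq C_i$ and a point $q\in Q\cap\ri C_i\setminus C_j$. From $q\in\ri C_i$ we get $I_q\ni i$, and $k_q\notin I_q$ gives $q\notin \ri C_{k_q}\supseteq \ri C_i$, a contradiction.

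\textbf{Main obstacle.} The hardest step is guaranteeing that $\ri C_i\setminus C_j$ contains a rational point, i.e., that it has nonempty interior in $A$. If $C_j\subsetneq C_i$ with the same affine hull, then $C_i\setminus C_j$ is relatively open in $C_i$. It is nonempty, and since $C_i=\clos\ri C_i$, it meets $\ri C_i$ in a nonempty open subset of $A$, which therefore contains a point of $Q$. The remaining degenerate case is that the chain is eventually constant after $i$, i.e., $C_j=C_i$ for all $j\geqslant i$. In that case $C_i$ is the minimum and we simply include $i$ itself, taking $j_n=i$. This covers everything; the construction does not even need the simplex structure, only closedness and convexity. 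I expect the delicate point to be correctly ordering the choices so that the chosen $k_q$ really witness cofinality. If needed, I would replace the $k_q$ argument by a cleaner one: the sets $U_j=A\setminus C_j$ form an increasing chain of open sets, and a countable base of $A$ yields a countable subfamily with the same union. The lemma then follows by picking, for each basic open set contained in some $U_j$, one such $j$.
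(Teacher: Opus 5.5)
Your proof is correct and is essentially the paper's argument. You stabilize the affine hull $A$, fix a countable dense subset of $A$, and attach to each dense point an index whose simplex misses it. Cofinality then comes from a dense point lying in the nonempty, relatively open set $\ri C_i\setminus C_{i'}$, and the stabilizing case is handled by a constant sequence. The differences are cosmetic: you choose the witnesses independently and then take running maxima, whereas the paper builds the nondecreasing sequence inductively. The unexplained ``second countable cofinal test family to handle rays'' is never used and should simply be deleted.
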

\begin{proof}
If the chain stabilizes, that is, if there exists an index $j\in\mathfrak{J}$ such that $C_i=C_j$ for every $i\in\mathfrak{J}$ with $i\geqslant j$, then choose the constant sequence $j_n=j$. Further, without loss of generality, assume that the chain does not stabilize. Since the generalized simplices are nested, their dimensions do not increase and therefore stabilize. Thus, there exists an index $j\in\mathfrak{J}$ such that, for any index $i\in\mathfrak{J}$ with $i\geqslant j$, the equality $\dim(C_i)=\dim(C_j)$ holds. Since these simplices are nested, we have $\aff(C_i)=\aff(C_j)$. Denote this $j$ by $j_0$, and denote $\aff(C_{j_0})$ by $l$. Let a sequence $\{x_n\}_{n\in\mathbb{N}}\subseteq l$ be dense in $l$. We construct the sequence $\{j_n\}_{n\in\mathbb{N}}$ inductively. Suppose that the elements $j_0,\dots,\,j_n$ have already been chosen; we choose the element $j_{n+1}$ as follows.
\begin{enumerate}
\item
If $x_{n+1}\notin C_{j_n}$, then $j_{n+1}=j_{n}$.
\item
If $x_{n+1}\in{\bigcap}_{j\in\mathfrak{J}}C_j$, then $j_{n+1}=j_n$.
\item
If $x_{n+1}\in C_{j_n}$, but $x_{n+1}\notin{\bigcap}_{j\in\mathfrak{J}}C_j$, then choose $j_{n+1}$ so that $x_{n+1}\notin C_{j_{n+1}}$.
\end{enumerate}
Then the condition $j_{n+1}\geqslant j_n$ holds. Let us check that the constructed sequence has the required property. Let $i\in\mathfrak{J}$. If $\dim(C_i)>\dim(C_{j_1})$, then $C_i\nsubseteq C_{j_1}$, and hence $C_{j_1}\subseteq C_i$. If $\dim(C_i)=\dim(C_{j_1})$, choose an index $i'\in\mathfrak{J}$ such that $C_{i'}\subsetneq C_i$. Then $\ri(C_i)\setminus C_{i'}\ne\varnothing$ (because $C_i=\clos(\ri(C_i))$ and $C_{i'}=\clos(\ri(C_{i'}))$). Choose a number $k\in\mathbb{N}$ such that $x_k\in\ri(C_i)\setminus C_{i'}$. Such a number exists because the sequence $\{x_n\}_{n\in\mathbb{N}}$ is dense in $l$. Then $x_k\in C_i$, but $x_k\notin C_{i'}$. In particular, $x_{k}\notin\bigcap_{j\in\mathfrak{J}}C_j$; therefore, $x_k\notin C_{j_k}$. Thus, $C_i\nsubseteq C_{j_k}$ and $C_{j_k}\subseteq C_i$.
\end{proof}
Before passing to the search for a generalized simplex in the intersection of a sequence of nested generalized simplices, we show that generalized simplices are covered by certain cones.
\begin{Le}\label{kodno}
Let $C$ be generalized simplex, $V_{\fin}(C)=\{v_1,\dots,\,v_k\}$ and $V_{\dir}(C)=\{\vec{e}_1,\dots,\,\vec{e}_r\}$. Then 
\begin{equation}
C'=\conv\left(\{v_1\};\;\left\{\overrightarrow{\left(\frac{v_2-v_1}{|v_2-v_1|}\right)},\dots,\,\overrightarrow{\left(\frac{v_k-v_1}{|v_k-v_1|}\right)},\,\vec{e}_1,\dots,\,\vec{e}_r\right\}\right)
\end{equation}
is a generalized simplex containing $C$.
\end{Le}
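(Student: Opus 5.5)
Two things have to be shown: that the pair $(\{v_1\};\;\{\vec{u}_2,\dots,\vec{u}_k,\vec{e}_1,\dots,\vec{e}_r\})$, where $u_j=(v_j-v_1)/|v_j-v_1|$, is affinely independent, and that $C\subseteq C'$. Since $v_j\ne v_1$ for $j\geqslant 2$ (the finite vertices are distinct elements of a set), the directions $\vec{u}_j$ are well defined. Affine independence is used in the form of the dimension count in the definition. One subtlety must be checked along the way: the directions $\vec{u}_2,\dots,\vec{u}_k,\vec{e}_1,\dots,\vec{e}_r$ must be pairwise distinct, so that the new direction set really has $k-1+r$ elements.

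\textbf{Affine independence.} Observe that
\begin{equation*}
\aff(\{v_1\};\;\{\vec{u}_2,\dots,\vec{u}_k,\vec{e}_1,\dots,\vec{e}_r\})=v_1+\operatorname{span}\{v_2-v_1,\dots,v_k-v_1,e_1,\dots,e_r\}.
\end{equation*}
The right-hand side is exactly $\aff(V_{\fin}(C);\;V_{\dir}(C))$, written with base point $v_1$. Indeed, any affine combination $\sum\alpha_jv_j+\sum\beta_ie_i$ with $\sum\alpha_j=1$ equals $v_1+\sum_{j\geqslant2}\alpha_j(v_j-v_1)+\sum\beta_ie_i$, and conversely. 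Hence both hulls have dimension $k+r-1$, by the affine independence of $C$.

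Distinctness of the directions also follows from this. If two of the listed unit vectors coincided, the spanning family of $k-1+r$ vectors would contain a repeated vector. The span would then have dimension less than $k-1+r$, a contradiction. So the new pair has $\#V+\#E-1=1+(k-1+r)-1=k+r-1=\dim\aff$, and it is affinely independent. Thus $C'$ is a generalized simplex.

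\textbf{Inclusion.} Take $x\in C$ and write it as $x=\sum_{j=1}^k\alpha_jv_j+\sum_{i=1}^r\beta_ie_i$ with $\alpha_j,\beta_i\geqslant0$ and $\sum\alpha_j=1$. Then
\begin{equation*}
x=v_1+\sum_{j=2}^k\alpha_j|v_j-v_1|\,u_j+\sum_{i=1}^r\beta_ie_i .
\end{equation*}
This is a convex combination of elements of $\{v_1\}$ and the new direction set: the coefficient of $v_1$ is $1$, and all coefficients of the directions are nonnegative. Hence $x\in C'$.

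There is no real obstacle in this argument. The only delicate point is the distinctness of the directions, which is needed so that the cardinality in the affine-independence definition is correct, and it is handled by the dimension argument above.
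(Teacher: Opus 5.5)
Your proof is correct and is essentially the paper's argument: you show the inclusion $C\subseteq C'$ and then do a dimension count. For the inclusion, the paper uses $v_j\in[v_1,\vec{u}_j)$ and convexity, while you rewrite convex combinations directly. For the dimension count, you compute $\aff(C')=\aff(C)$ directly and check explicitly that the new directions are distinct, whereas the paper gets $\dim\aff(C')\geqslant k+r-1$ from $C\subseteq C'$, compares it with the trivial upper bound for a hull of $k+r$ elements, and leaves distinctness implicit.
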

\begin{proof}
First note that for $j=2,\dots,\,k$ the inclusion
\begin{equation}
v_j\in\left[v_1,\,\overrightarrow{\left(\frac{v_j-v_1}{|v_j-v_1|}\right)}\right)
\end{equation}
holds. Consequently,
\begin{equation}
\{v_1,\dots,\,v_k\}
\subseteq
\conv\left(\{v_1\};\;\left\{\overrightarrow{\left(\frac{v_2-v_1}{|v_2-v_1|}\right)},\dots,\,\overrightarrow{\left(\frac{v_k-v_1}{|v_k-v_1|}\right)},\,\vec{e}_1,\dots,\,\vec{e}_r\right\}\right).
\end{equation}
Hence $C\subseteq C'$. Let us check that $C'$ is a generalized simplex. Indeed, $C\subseteq C'$, and therefore $\aff(C)\subseteq\aff(C')$. In particular, $\dim(\aff(C'))\geqslant\dim(\aff(C))=k+r-1$. The set $C'$ is also the convex hull of $k+r$ points and directions; hence $\dim(\aff(C'))=k+r-1$ and $C'$ is a generalized simplex.
\end{proof}
\begin{Le}\label{pocvipcon}
Let $U\subseteq\mathbb{R}^d$ be a closed convex set that contains no lines. Let $x\in \mathbb{R}^d\setminus U$. Then the cone
\begin{equation}
C=\clos(\{x+\lambda(y-x)|\,y\in U,\,\lambda>0\}\cup\{x\})
\end{equation}
does not contain lines.
\end{Le}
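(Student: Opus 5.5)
We argue by contradiction. Suppose the cone $C$ contains a line $\{w+t e\mid t\in\mathbb{R}\}$ with $|e|=1$. Since $C$ is a closed convex cone with apex $x$, i.e. $C=x+K$ with $K$ a closed convex cone, containing this line means that both $e$ and $-e$ are recession directions of $C$. For a cone with apex $x$ this gives $x\pm te\in C$ for all $t\geqslant 0$, so the line $\{x+te\mid t\in\mathbb{R}\}$ lies in $C$. The plan is to pull this line back to a line in $U$, contradicting the hypothesis, or else to force $x\in U$.

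The first step is to separate $x$ from $U$. Since $U$ is closed and convex and $x\notin U$, there are a linear functional $\ell$ and a number $c$ with $\ell(y)\geqslant c>\ell(x)$ for all $y\in U$. Every generator $x+\lambda(y-x)$ with $y\in U$, $\lambda>0$ satisfies $\ell(x+\lambda(y-x))-\ell(x)=\lambda(\ell(y)-\ell(x))>0$, so $\ell(z)\geqslant\ell(x)$ on $C$. Applying this to the points $x\pm te$ gives $\ell(e)=0$.

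The second step constructs approximating points. Take $z=x+e\in C$. By the definition of $C$ as a closure, there are $y_n\in U$ and $\lambda_n>0$ with $x+\lambda_n(y_n-x)\to x+e$. Since $\ell(y_n)-\ell(x)\geqslant c-\ell(x)>0$, we get
\begin{equation*}
\lambda_n(c-\ell(x))\leqslant\lambda_n(\ell(y_n)-\ell(x))\to\ell(e)=0,
\end{equation*}
so $\lambda_n\to 0$. Then $y_n-x=(\lambda_n(y_n-x))/\lambda_n$ has $|y_n-x|\to\infty$, and $(y_n-x)/|y_n-x|\to e$. Hence $y_n\to\vec{e}$ in the sense of the paper's convergence to directions. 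By Lemma~\ref{luvvipmn}, $\vec{e}$ lies in $U$. The same argument with $x-e$ shows that $-\vec{e}$ lies in $U$. Therefore, for any $u\in U$, the full line $\{u+te\mid t\in\mathbb{R}\}$ lies in $U$, which is a contradiction.

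The one subtle point is the degenerate case $x+e=x$, which cannot occur because $|e|=1$. The main obstacle is the limit argument showing $\lambda_n\to0$, and this is exactly where the strict separation, that is, the closedness of $U$ together with $x\notin U$, is used. No further difficulty is expected.
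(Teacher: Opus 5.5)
Your proof is correct, but it takes a different route from the paper's.

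The paper never separates. After picking $v\neq x$ with $v,\,2x-v\in C$ (this is the reduction to a line through the apex, which the paper uses tacitly and you justify via recession directions), it argues in two steps. First, it shows that the open rays from $x$ in the directions $\pm(v-x)$ miss $U$. If $y=x+\lambda(v-x)\in U$, then convex combinations of $y$ with points $z_n\in U$ approximating the opposite ray give points of $U$ converging to $x$, contradicting closedness. Second, for a ray $[x,\vec e)\subseteq C$ missing $U$, it writes points of $U$ as $x+\lambda_n(v_n-x)$ with $v_n\to x+e$. It then argues by compactness that $\lambda_n\to+\infty$, since a bounded subsequence would give a limit point in $U\cap[x,\vec e)$. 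Lemma~\ref{luvvipmn} then finishes exactly as in your proof.

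In your argument, the single strict separation $\ell\geqslant c>\ell(x)$ on $U$ does both jobs at once. The identity $\ell(e)=0$ already shows that the line through $x$ misses $U$. The uniform gap $c-\ell(x)>0$ forces $\lambda_n\to0$ directly, with no subsequence extraction (your $\lambda_n$ is the reciprocal of the paper's).

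So your route is shorter and more quantitative, at the price of invoking the separation theorem. The paper's route stays purely sequential and uses only the closedness and convexity of $U$.
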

\begin{proof}
Assume the contrary. Then there exists a point $v\in C\setminus\{x\}$ such that $2x-v\in C$. First we wish to prove that the ray
\begin{equation}
\left(x,\,\overrightarrow{\frac{v-x}{|v-x|}}\right)
\end{equation}
does not intersect $U$. Assume the contrary; then there exists a number $\lambda>0$ such that $y=x+\lambda(v-x)\in U$. Since $2x-v\in C$, there exists a sequence of points $\{v_n\}_n$ and positive numbers $\{\lambda_n\}_n$ such that $v_n\to 2x-v$ and $z_n=x+\lambda_n(v_n-x)\in U$. Then, by the convexity of $U$,
\begin{multline}
U\ni w_n=\frac{\lambda_n}{\lambda+\lambda_n}y+\frac{\lambda}{\lambda+\lambda_n}z_n
=
\frac{\lambda_n}{\lambda+\lambda_n}(x+\lambda(v-x))+\frac{\lambda}{\lambda+\lambda_n}(x+\lambda_n(v_n-x))
=
\\
=
x+\frac{\lambda\lambda_n}{\lambda+\lambda_n}(v_n-(2x-v)).
\end{multline}
Moreover
\begin{equation}
|w_n-x|=\frac{\lambda\lambda_n}{\lambda+\lambda_n}|v_n-(2x-v)|<\frac{\lambda\lambda_n+\lambda^2}{\lambda+\lambda_n}|v_n-(2x-v)|=\lambda|v_n-(2x-v)|\to 0.
\end{equation}
Since $U$ is closed, this means that $x\in U$, a contradiction.

Thus, the ray 
\begin{equation}
\left(x,\,\overrightarrow{\frac{v-x}{|v-x|}}\right)
\end{equation}
does not intersect $U$. Similarly, the ray
\begin{equation}
\left(x,\,-\overrightarrow{\frac{v-x}{|v-x|}}\right)
\end{equation}
does not intersect $U$. Let us show that it suffices to prove that if the ray $[x,\,\vec{e})$ lies in $C$ but does not intersect $U$, then the direction $\vec{e}$ lies in $U$. Indeed, in such a case both the direction $\overrightarrow{\frac{v-x}{|v-x|}}$ and the direction $-\overrightarrow{\frac{v-x}{|v-x|}}$ lie in $U$, and hence $U$ contains a line in the direction $\overrightarrow{\frac{v-x}{|v-x|}}$, which contradicts the assumption of the lemma.

Let $\vec{e}\in\vec{S}^{d-1}$ be a direction such that the ray $[x,\,\vec{e})$ lies in $C$ but does not intersect $U$. Then there exists a sequence of points $\{v_n\}_n$ and positive numbers $\{\lambda_n\}_n$ such that $v_n\to x+e$ and $x+\lambda_n(v_n-x)\in U$. Then $\lambda_n\to+\infty$; otherwise, the point $x+(\liminf\lambda_n)e$ would be a limit point of the sequence $\{x+\lambda_n(v_n-x)\}_n$ and hence would lie in the intersection of $U$ and the ray $[x,\,\vec{e})$. Therefore, $\lambda_n\to+\infty$ and $x+\lambda_n(v_n-x)\to \vec{e}$. Thus, by Lemma~\ref{luvvipmn}, the direction $\vec{e}$ lies in $U$.
\end{proof}

\begin{Le}\label{pocconconcon}
Let $C\subseteq\mathbb{R}^d$ be a closed cone that contains no lines and has a vertex at the point $x$. Then there exist $e\in S^{d-1}$ and $\varepsilon>0$ such that $C$ lies in the cone $\left\{y\in\mathbb{R}^d\big|\,\langle e,\,y-x\rangle\geqslant\varepsilon|y-x|\right\}$.
\end{Le}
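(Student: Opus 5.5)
Translate so that $x=0$; then $C$ is a closed cone with vertex $0$ that contains no lines. The plan is a standard compactness argument on the sphere. Set $K=C\cap S^{d-1}$, which is compact because $C$ is closed. If $K=\varnothing$, then $C=\{0\}$ and any $e$ with any $\varepsilon\in(0,1]$ works. Otherwise consider $\conv(K)$, the convex hull of a compact set in $\mathbb{R}^d$, which is compact by Carath\'eodory's theorem (Theorem~\ref{carat} with $E=\varnothing$, together with compactness of the simplex of coefficients).

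The key step is to show $0\notin\conv(K)$. Suppose $0=\sum_{j=1}^k\alpha_jy_j$ with $y_j\in K$, $\alpha_j>0$, $\sum\alpha_j=1$, and $k\geqslant 2$ (since $k=1$ is impossible, as $|y_1|=1$). Here I use that $C$ is convex. I am reading ``closed cone'' in the sense used in Lemma~\ref{pocvipcon}, whose cone is the closure of a union of rays through a convex set and is therefore convex; I expect this interpretation to be the main obstacle and will state it explicitly. Given convexity, $-\alpha_1y_1=\sum_{j\geqslant2}\alpha_jy_j\in C$, because $C$ is a convex cone and so is closed under nonnegative combinations. Hence $-y_1\in C$, so $C$ contains the line $\mathbb{R}y_1$, a contradiction.

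Since $\conv(K)$ is compact, convex, and does not contain $0$, strict separation gives a unit vector $e$ and $\varepsilon>0$ with $\langle e,z\rangle\geqslant\varepsilon$ for all $z\in\conv(K)$. One can take $e$ to be the normalized nearest point of $\conv(K)$ to $0$ and $\varepsilon$ its distance from $0$. Finally, for $y\in C\setminus\{x\}$ we have $(y-x)/|y-x|\in K$, so $\langle e,y-x\rangle\geqslant\varepsilon|y-x|$; the case $y=x$ is trivial.
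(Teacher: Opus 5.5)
Your proof is correct, and it takes a different route from the paper's. The paper notes that $x$ is the unique extreme point of $C$. It then invokes Rockafellar's Theorem 18.6 (exposed points are dense among the extreme points) to conclude that $x$ is exposed, i.e.\ there is an affine $L$ with $L(x)=0$ and $L>0$ on $C\setminus\{x\}$. It takes $e$ to be the unit normal to $\ker L$ and obtains $\varepsilon$ by contradiction: points violating the estimate are normalized to $|y_n-x|=1$, and a convergent subsequence gives a limit where $L\leqslant 0$.

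You instead use compactness from the start. With $K=C\cap S^{d-1}$, convexity plus the absence of lines gives $0\notin\conv(K)$ directly. The nearest-point projection onto the compact convex set $\conv(K)$ then produces $e$ and $\varepsilon=\dist(0,\conv(K))$ at once, with no appeal to exposed points. So your argument is self-contained and gives an explicit constant. The paper's is shorter only because it relies on the cited theorem to do the separation.

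Your worry about how to read ``closed cone'' is not an obstacle. The paper's proof needs convexity just as much: Theorem 18.6 concerns closed convex sets, and identifying $x$ as the unique extreme point uses convexity. Both places where the lemma is applied, the simplex $C'$ of Lemma~\ref{kodno} and the cone of Lemma~\ref{pocvipcon}, are convex. Convexity is also genuinely necessary. Three rays from the origin at angles $120^\circ$ apart in $\mathbb{R}^2$ form a closed, line-free cone whose unit direction vectors sum to $0$. Hence it lies in no cone of the stated form. Stating the assumption explicitly, as you did, is the right call.
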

\begin{proof}
Since $x$ is the unique extreme point of $C$, by Theorem 18.6 in~\cite{Rockafellar}, $x$ is an exposed point of the cone $C$. That is, there exists an affine functional $L\colon\mathbb{R}^d\to\mathbb{R}$ such that $L(x)=0$ and $L$ is positive on $C\setminus\{x\}$. Let $L^0$ be the kernel of $L$, let $L^+$ be the open half-space where $L$ is positive, and let $L^-$ be the open half-space where $L$ is negative. Let a vector $e\in S^{d-1}$ be orthogonal to $L^0$ and chosen in such a way that $x+e\in L^+$. Then there exists a number $\alpha>0$ such that $L(y)=\alpha \langle e,\,y-x\rangle$. Let us show that, for sufficiently small $\varepsilon>0$, the inclusion
\begin{equation}
C\subseteq\{y\in\mathbb{R}^d|\langle e,\,y-x\rangle\geqslant\varepsilon|y-x|\}
\end{equation}
holds. Assume the contrary, then for any $\varepsilon>0$ there exists a point $y\in C$ such that $\langle e,\,y-x\rangle<\varepsilon |y-x|$. Let $y_n'\in C$ be a point such that $\langle e,\,y_n'-x\rangle<\frac{1}{n} |y_n'-x|$. Then the point $y_n=x+\frac{y_n'-x}{|y_n'-x|}$ has the same properties, with $|y_n-x|=1$. One can choose a subsequence $y_{n_k}\to y$. However $|y-x|=1$ and $y\in C$, since the set $C$ is closed. This
\begin{equation}
0<\langle e,\,y-x\rangle=\lim\limits_{k\to+\infty}\langle e,\,y_{n_k}-x\rangle\leqslant\lim\limits_{k\to+\infty}\frac{1}{n_k}|y_{n_k}-x|=\lim\limits_{k\to+\infty}\frac{1}{n_k}=0.
\end{equation}
Contradiction.
\end{proof}

\begin{Cor}
Let $C$ be a generalized simplex, and let $x\in V_{\fin}(C)$. Then there exist $e\in S^{d-1}$ and $\varepsilon>0$ such that $C$ lies in the cone $\left\{y\in\mathbb{R}^d\big|\,\langle e,\,y-x\rangle\geqslant\varepsilon|y-x|\right\}$.
\end{Cor}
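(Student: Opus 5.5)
The plan is to combine Lemma~\ref{kodno} with Lemma~\ref{pocconconcon}. Since $x\in V_{\fin}(C)$, we may relabel the finite vertices so that $v_1=x$. Lemma~\ref{kodno} then gives a generalized simplex
\begin{equation*}
C'=\conv\left(\{x\};\;\left\{\overrightarrow{\left(\frac{v_2-x}{|v_2-x|}\right)},\dots,\,\overrightarrow{\left(\frac{v_k-x}{|v_k-x|}\right)},\,\vec{e}_1,\dots,\,\vec{e}_r\right\}\right)\supseteq C.
\end{equation*}
It therefore suffices to show that $C'$ lies in a cone of the required form.

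Next we check that $C'$ meets the hypotheses of Lemma~\ref{pocconconcon}. First, $C'$ is $x$ plus a finitely generated convex cone $\ray\{\dots\}$, so it is a closed convex cone with vertex $x$. It is closed because a finitely generated cone is polyhedral; alternatively, affine independence makes the coefficient map a linear isomorphism onto $\aff(C')$, and the image of the closed orthant is closed. Second, $C'$ contains no lines. This is exactly the Corollary stated just before Lemma~\ref{luvvipmn}: a generalized simplex has $V_{\fin}\ne\varnothing$, and hence contains no lines. As a direct check, if $x+te$ and $x-te$ both lay in $C'$, the uniqueness of the affine representation would force the nonnegative coefficients of $e$ and $-e$ to be negatives of each other, hence zero.

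Finally, Lemma~\ref{pocconconcon} applied to $C'$ with vertex $x$ yields $e\in S^{d-1}$ and $\varepsilon>0$ such that $C'\subseteq\{y\mid\langle e,\,y-x\rangle\geqslant\varepsilon|y-x|\}$. Since $C\subseteq C'$, the same cone contains $C$.

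The only step needing any care is verifying that $C'$ is closed, since that is a hypothesis of Lemma~\ref{pocconconcon}. The isomorphism argument above handles it. The case $k=1$ with $r=0$ is trivial, because then $C=\{x\}$ and any choice of $e$ and $\varepsilon$ works.
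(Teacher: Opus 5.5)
Your proposal is correct and is essentially the paper's proof: cover $C$ by the generalized simplex $C'$ of Lemma~\ref{kodno}, whose only finite vertex is $x$, note that $C'$ is a closed cone with vertex $x$ containing no lines, and apply Lemma~\ref{pocconconcon}. Your explicit checks of closedness (via linear independence of the generators) and of the absence of lines fill in hypotheses of Lemma~\ref{pocconconcon} that the paper leaves unchecked.
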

\begin{proof}
By Lemma~\ref{kodno}, the simplex $C$ is covered by a simplex $C'$ with the unique finite vertex $x$. Then $C'$ is a cone that contains no lines and has a vertex at $x$ . Therefore, by Lemma~\ref{pocconconcon}, the simplex $C'$, and hence the simplex $C$, is covered by the required cone.
\end{proof}
\begin{Cor}\label{konsi}
Let $U\subseteq\mathbb{R}^d$ be a closed convex set that contains no lines. Let $x\in \mathbb{R}^d\setminus U$. Then there exist $e\in S^{d-1}$ and $\varepsilon>0$ such that $U$ lies in the cone $\left\{y\in\mathbb{R}^d\big|\,\langle e,\,y-x\rangle\geqslant\varepsilon|y-x|\right\}$.
\end{Cor}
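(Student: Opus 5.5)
The plan is to combine Lemma~\ref{pocvipcon} with Lemma~\ref{pocconconcon}, in the same way as the preceding corollary combined Lemma~\ref{kodno} with Lemma~\ref{pocconconcon}. The only difference is that here the cone is generated by $U$ itself and not by a simplex.

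Fix $U$ and $x\notin U$ as in the statement. Consider the set
\begin{equation*}
C=\clos(\{x+\lambda(y-x)|\,y\in U,\,\lambda>0\}\cup\{x\}).
\end{equation*}
The first step is to check that $C$ is a closed cone with vertex at $x$, i.e., that it is closed, contains $x$, and is invariant under the dilations $y\mapsto x+\mu(y-x)$ with $\mu>0$. Closedness holds by definition. Invariance under dilations holds for the set before taking the closure, and it passes to the closure because each such dilation is a homeomorphism of $\mathbb{R}^d$. We also note that $C$ is convex: the set before closure is convex, being the union of the scaled copies $x+\lambda(U-x)$ of the convex set $U$, which are nested in a suitable sense. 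Convexity is not strictly needed if Lemma~\ref{pocconconcon} is read as applying to arbitrary closed cones. However, its proof invokes Rockafellar's Theorem~18.6, which requires convexity, so it is safer to verify convexity directly. For $y_1,y_2\in U$ and $\lambda_1,\lambda_2>0$, any convex combination of $x+\lambda_1(y_1-x)$ and $x+\lambda_2(y_2-x)$ can be written as $x+\lambda(y-x)$, where $y\in[y_1,y_2]\subseteq U$ and $\lambda$ is the corresponding weighted combination of $\lambda_1$ and $\lambda_2$. The closure of a convex set is convex.

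The second step is to apply Lemma~\ref{pocvipcon}, which states exactly that $C$ contains no lines, since $U$ is closed, convex and line-free and $x\notin U$. Lemma~\ref{pocconconcon} then provides $e\in S^{d-1}$ and $\varepsilon>0$ such that $C\subseteq\{y\,|\,\langle e,\,y-x\rangle\geqslant\varepsilon|y-x|\}$. Finally, $U\subseteq C$: every $y\in U$ is obtained by taking $\lambda=1$. Hence $U$ lies in the required cone.

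I do not expect a substantial obstacle. The only point that needs care is the first step, namely confirming that $C$ satisfies the hypotheses of Lemma~\ref{pocconconcon}, in particular that it is a convex cone with apex $x$. This is the short verification described above.
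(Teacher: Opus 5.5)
Your proposal is correct and is essentially the paper's proof. You use Lemma~\ref{pocvipcon} to show that the cone $C$ with apex $x$ generated by $U$ contains no lines, and then Lemma~\ref{pocconconcon} to place $C$, and hence $U\subseteq C$, inside the required round cone; your explicit check that $C$ is a closed convex cone with apex $x$, which the paper leaves implicit, is carried out correctly.
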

\begin{proof}
By Lemma~\ref{pocvipcon}, the set $U$ is covered by a cone $C$ with vertex $x$ that contains no lines. Therefore, by Lemma~\ref{pocconconcon}, the cone $C$, and hence the set $U$, is covered by the required cone.
\end{proof}

\begin{Cor}\label{abots}
Let $U\subseteq\mathbb{R}^d$ be a closed convex set that contains no lines. Then there exist numbers $a>0,\,b\geqslant 0$ such that, for any collection of points $\{x_1,\dots,\,x_k\}$ and directions $\{\vec{e}_1,\dots,\,\vec{e}_r\}$ from $U$ and any convex combination of elements of the sets $\{x_1,\dots,\,x_k\}\cup\{\vec{e}_1,\dots,\,\vec{e}_r\}$, the inequality
\begin{equation}
a\left|\sum\limits_{j=1}^{k}\alpha_j x_j+\sum\limits_{i=1}^{r}\beta_i e_i\right|+b\geqslant \sum\limits_{j=1}^{k}\alpha_j |x_j|+\sum\limits_{i=1}^{r}\beta_i
\end{equation}
holds true.
\end{Cor}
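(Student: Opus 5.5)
We reduce Corollary~\ref{abots} to Corollary~\ref{konsi} by choosing an auxiliary point outside $U$. If $U=\varnothing$ there is nothing to prove. Otherwise, since $U$ contains no lines, $U\ne\mathbb{R}^d$, so there is a point $x\in\mathbb{R}^d\setminus U$.

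Corollary~\ref{konsi} then gives $e\in S^{d-1}$ and $\varepsilon>0$ such that
\begin{equation*}
\langle e,\,y-x\rangle\geqslant\varepsilon|y-x|\quad\text{for all } y\in U.
\end{equation*}
The key observation is that this cone inequality also holds for directions lying in $U$. Indeed, if $\vec{e}_i$ lies in $U$, then for any $y\in U$ the whole ray $y+te_i$, $t\geqslant 0$, lies in $U$. Dividing the cone inequality at $y+te_i$ by $t$ and letting $t\to+\infty$ yields $\langle e,\,e_i\rangle\geqslant\varepsilon|e_i|=\varepsilon$.

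Now take a convex combination $s=\sum_j\alpha_jx_j+\sum_i\beta_ie_i$. Using $\sum_j\alpha_j=1$, we have
\begin{equation*}
\langle e,\,s-x\rangle=\sum_j\alpha_j\langle e,\,x_j-x\rangle+\sum_i\beta_i\langle e,\,e_i\rangle\geqslant\varepsilon\Big(\sum_j\alpha_j|x_j-x|+\sum_i\beta_i\Big).
\end{equation*}
The left-hand side is at most $|s|+|x|$. On the right, the triangle inequality gives $|x_j-x|\geqslant|x_j|-|x|$, so the bracket is at least $\sum_j\alpha_j|x_j|+\sum_i\beta_i-|x|$. Combining,
\begin{equation*}
\frac{1}{\varepsilon}|s|+\Big(\frac{1}{\varepsilon}+1\Big)|x|\geqslant\sum_j\alpha_j|x_j|+\sum_i\beta_i.
\end{equation*}
So we may take $a=1/\varepsilon$ and $b=(1/\varepsilon+1)|x|$.

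The only step needing care is extending the cone inequality to directions. It relies on $U$ being closed, which is what makes the notion ``a direction lies in $U$'' meaningful, and on dividing by $t$ before passing to the limit. Everything else is routine.
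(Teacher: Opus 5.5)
Your proposal is correct and follows the paper's proof essentially step for step: pick $x\notin U$, apply Corollary~\ref{konsi}, extend the cone inequality to directions lying in $U$, and arrive at the same constants $a=1/\varepsilon$, $b=(1+\varepsilon)|x|/\varepsilon$. Your limiting argument (divide by $t$, let $t\to+\infty$) is just an explicit version of the paper's terse claim that $x+f$ lies in the cone, and it is the right justification.
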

\begin{proof}
Let $x\in \mathbb{R}^d\setminus U$. By Corollary~\ref{konsi}, there exist $e\in S^{d-1}$ and $\varepsilon>0$ such that $U$ is contained in the cone $C=\left\{y\in\mathbb{R}^d\big|\,\langle e,\,y-x\rangle\geqslant\varepsilon|y-x|\right\}$. Let $\vec{f}\in\vec{S}^{d-1}$ be a direction from $U$. Then $x+f\in C$ and $\langle e,\,f\rangle\geqslant\varepsilon$. We write the estimate for the absolute value of a convex combination:
\begin{multline}
\left|\sum\limits_{j=1}^{k}\alpha_j x_j+\sum\limits_{i=1}^{r}\beta_i e_i\right|
\geqslant
\left|\sum\limits_{j=1}^{k}\alpha_j (x_j-x)+\sum\limits_{i=1}^{r}\beta_i e_i\right|-|x|
\geqslant
\\
\geqslant
\left\langle e,\,\left(\sum\limits_{j=1}^{k}\alpha_j (x_j-x)+\sum\limits_{i=1}^{r}\beta_i e_i\right)\right\rangle-|x|
=
\sum\limits_{j=1}^{k}\alpha_j \langle e,\,x_j-x\rangle+\sum\limits_{i=1}^{r}\beta_i \langle e,\, e_i\rangle-|x|
\geqslant
\\
\geqslant
\sum\limits_{j=1}^{k}\alpha_j \varepsilon|x_j-x|+\sum\limits_{i=1}^{r}\beta_i \varepsilon-|x|
\geqslant
\sum\limits_{j=1}^{k}\alpha_j \varepsilon|x_j|+\sum\limits_{i=1}^{r}\beta_i \varepsilon-(1+\varepsilon)|x|.
\end{multline}
Thus,
\begin{equation}
\frac{1}{\varepsilon}\left|\sum\limits_{j=1}^{k}\alpha_j x_j+\sum\limits_{i=1}^{r}\beta_i e_i\right|+\frac{(1+\varepsilon)|x|}{\varepsilon}\geqslant \sum\limits_{j=1}^{k}\alpha_j |x_j|+\sum\limits_{i=1}^{r}\beta_i.
\end{equation}
\end{proof}
\begin{Cor}\label{abotsx}
Let $U\subseteq\mathbb{R}^d$ be a closed convex set that contains no lines. Then there exist numbers $a>0,\,b\geqslant 0$ such that, for any point $x\in U$ and any collection of points $\{x_1,\dots,\,x_k\}$ and directions $\{\vec{e}_1,\dots,\,\vec{e}_r\}$ from $U$ such that
\begin{equation}
x\in\conv(\{x_1,\dots,\,x_k\};\;\{\vec{e}_1,\dots,\,\vec{e}_r\}),
\end{equation}
the inequality
\begin{equation}
\min\left\{|x_j|\big|\,j=1,\dots,\,k\right\}
\leqslant
a|x|+b
\end{equation}
holds.
\end{Cor}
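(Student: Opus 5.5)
This statement follows directly from Corollary~\ref{abots}, applied with the same constants. Let $a>0$, $b\geqslant 0$ be the numbers given by Corollary~\ref{abots} for the set $U$. We will show that these same $a,\,b$ work here.

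Fix $x\in U$ and points $x_1,\dots,\,x_k$ and directions $\vec{e}_1,\dots,\,\vec{e}_r$ from $U$ with $x\in\conv(\{x_1,\dots,\,x_k\};\;\{\vec{e}_1,\dots,\,\vec{e}_r\})$. By the definition of the convex hull of points and directions, there exist coefficients $\alpha_1,\dots,\,\alpha_k,\,\beta_1,\dots,\,\beta_r\geqslant 0$ with $\sum_{j=1}^k\alpha_j=1$ and
\begin{equation*}
x=\sum_{j=1}^k\alpha_jx_j+\sum_{i=1}^r\beta_ie_i.
\end{equation*}
Here $k\geqslant 1$, since the set of points in the definition is nonempty. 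If the representation uses only a subcollection of the $x_j$, the minimum over that subcollection is at least the minimum over all $j$, so nothing is lost.

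Corollary~\ref{abots} applied to this convex combination gives
\begin{equation*}
a|x|+b\geqslant\sum_{j=1}^k\alpha_j|x_j|+\sum_{i=1}^r\beta_i\geqslant\sum_{j=1}^k\alpha_j|x_j|.
\end{equation*}
The second inequality holds because $\beta_i\geqslant 0$. The last sum is a convex combination of the numbers $|x_j|$, since the $\alpha_j$ are nonnegative and sum to $1$. Hence it is at least $\min\{|x_j|\,|\,j=1,\dots,\,k\}$, which is the desired inequality.

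There is no real obstacle here. The only point needing care is that the constants in Corollary~\ref{abots} depend only on $U$ and not on the chosen collection, and this is exactly what that corollary asserts. The hypothesis $x\in U$ is not actually needed in this argument.
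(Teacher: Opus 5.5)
Your proposal is correct and matches the paper's proof: it takes the constants $a,\,b$ from Corollary~\ref{abots}, drops the nonnegative $\beta_i$ terms, and bounds the convex combination $\sum_j\alpha_j|x_j|$ from below by $\min_j|x_j|$. Your side remarks also hold: the subcollection issue disappears once unused points are given zero coefficients, and $x\in U$ is automatic from the hypotheses.
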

\begin{proof} Let us show that the numbers $a$ and $b$ from Corollary~\ref{abots} are suitable. Since
\begin{equation}
x\in\conv(\{x_1,\dots,\,x_k\};\;\{\vec{e}_1,\dots,\,\vec{e}_r\}),
\end{equation}
the point $x$ is representable as a convex combination of the points $\{x_1,\dots,\,x_k\}$ and directions $\{\vec{e}_1,\dots,\,\vec{e}_r\}$. Then by Corollary~\ref{abots} the following chain of inequalities:
\begin{equation}
a|x|+b
=
a\left|\sum\limits_{j=1}^{k}\alpha_j x_j+\sum\limits_{i=1}^{r}\beta_i e_i\right|+b
\geqslant
\sum\limits_{j=1}^{k}\alpha_j |x_j|+\sum\limits_{i=1}^{r}\beta_i
\geqslant
\sum\limits_{j=1}^{k}\alpha_j |x_j|
\geqslant
\min\left\{|x_j|\big|\,j=1,\dots,\,k\right\}
\end{equation}
holds.
\end{proof}

\begin{Le}\label{perescep}
Let $V\subseteq\mathbb{R}^d$, $E\subseteq\vec{S}^{d-1}$, and let $(V;\;E)$ be a closed pair of sets. Let $x\in\conv(V;\;E)$ and let $\{C_j\}_{j\in\mathbb{N}}$ be a decreasing sequence of generalized simplices such that, for any $j\in\mathbb{N}$, the inclusions $x\in C_{j}$, $V_{\fin}(C_{j})\subseteq V$, and $V_{\dir}(C_j)\subseteq E$ hold. Then there exists a generalized simplex $C$ such that $x\in C$, $V_{\fin}(C)\subseteq V$, $V_{\dir}(C)\subseteq E$, and $C\subseteq\bigcap_{j\in\mathbb{N}}C_j$.
\end{Le}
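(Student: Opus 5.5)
The plan is to pass to a limit of the vertex representations of $x$ in $C_j$, then extract a simplex from the limiting vertices by Carath\'eodory's theorem.

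\textbf{Normalizing the representations.} Each generalized simplex has at most $d+1$ vertices in total. Passing to a subsequence, I may assume that every $C_j$ has exactly $k\geqslant 1$ finite vertices $v^1_j,\dots,v^k_j\in V$ and exactly $r$ vertices at infinity $\vec e^{\,1}_j,\dots,\vec e^{\,r}_j\in E$. Since $x\in C_j$, I fix a convex combination
\begin{equation*}
x=\sum_{i=1}^k\alpha^i_j v^i_j+\sum_{i=1}^r\beta^i_j e^i_j,\qquad \alpha^i_j,\beta^i_j\geqslant 0,\quad \sum_{i=1}^k\alpha^i_j=1 .
\end{equation*}
All these points and directions lie in $C_1$. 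The set $C_1$ is closed and convex, being the sum of a polytope and a finitely generated cone, and by the corollary after the characterization of faces of simplices it contains no lines. So Corollary~\ref{abots} applies with $U=C_1$ and gives
\begin{equation*}
\sum_{i=1}^k\alpha^i_j|v^i_j|+\sum_{i=1}^r\beta^i_j\leqslant a|x|+b
\end{equation*}
uniformly in $j$. This is the key estimate: all $\beta^i_j$ and all products $\alpha^i_j|v^i_j|$ are bounded.

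\textbf{Extracting limits.} By compactness I pass to a further subsequence along which the following hold:
\begin{enumerate}
\item $\alpha^i_j\to\alpha^i$, $\beta^i_j\to\beta^i$ and $\vec e^{\,i}_j\to\vec e^{\,i}$.
\item For each $i$, either $v^i_j\to v^i$ for some finite $v^i$, or $|v^i_j|\to\infty$ with $v^i_j\to\vec f^{\,i}$ in the sense of convergence to a direction.
\item In the divergent case, $\alpha^i_j|v^i_j|\to\gamma^i\geqslant 0$. This forces $\alpha^i_j\to 0$, and hence $\alpha^i_jv^i_j\to\gamma^i f^i$.
\end{enumerate}
Since $\sum_i\alpha^i=1$ and the divergent indices carry limiting weight $0$, the set $I$ of convergent indices is nonempty, and $\sum_{i\in I}\alpha^i=1$.

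Closedness of the pair $(V;\;E)$ gives three memberships: $v^i\in V$ (closedness of $V$), $\vec e^{\,i}\in E$ (closedness of $E$), and $\vec f^{\,i}\in E$ (the defining property of a closed pair). Setting $V_0=\{v^i\}_{i\in I}$ and $E_0=\{\vec e^{\,i}\}\cup\{\vec f^{\,i}\}$ and taking the limit in the representation, I get
\begin{equation*}
x=\sum_{i\in I}\alpha^i v^i+\sum_i\beta^i e^i+\sum_{i\notin I}\gamma^i f^i\in\conv(V_0;\;E_0).
\end{equation*}

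\textbf{Containment in the intersection.} I claim $\conv(V_0;\;E_0)\subseteq C_m$ for every $m$. Each $C_m$ is closed and convex, and for $j\geqslant m$ the points $v^i_j$ lie in $C_m$, so $v^i\in C_m$. The direction $\vec f^{\,i}$ is a limit of points of $C_m$, so it lies in $C_m$ by Lemma~\ref{luvvipmn}. For $\vec e^{\,i}$: each $\vec e^{\,i}_j$ lies in $C_m$, so $[y,\vec e^{\,i}_j)\subseteq C_m$ for a fixed $y\in C_m$, and closedness of $C_m$ passes this to the limit, giving $[y,\vec e^{\,i})\subseteq C_m$. Hence $V_0+\ray E_0\subseteq C_m$, and by convexity $\conv(V_0;\;E_0)\subseteq C_m$. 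Finally, Theorem~\ref{carat} applied to $\conv(V_0;\;E_0)$ yields a generalized simplex $C\ni x$ with vertices in $V_0\cup E_0\subseteq V\cup E$, and $C\subseteq\bigcap_m C_m$.

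\textbf{Main obstacle.} The hard step is controlling vertices that escape to infinity. Corollary~\ref{abots} is what makes it work: it ensures that escaping finite vertices carry vanishing weight, so at least one finite vertex survives and the rest degenerate into admissible directions in $E$. Everything else is routine compactness and closedness.
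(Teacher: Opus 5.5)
Your proof is correct and follows essentially the same route as the paper. You normalize the vertex counts, apply the uniform bound of Corollary~\ref{abots} on $C_1$, extract limits of the coefficients, the vertices and the products $\alpha^i_j|v^i_j|$, check that the limiting hull lies in every $C_m$, and finish with Carath\'eodory's theorem. The one small difference is how you show a finite vertex survives: the paper invokes Corollary~\ref{abotsx}, while you deduce it from the limiting weights summing to $1$ with escaping vertices carrying weight $0$, which is slightly more direct.
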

\begin{proof}
Passing to a subsequence of $\{C_j\}_{j\in\mathbb{N}}$, we may assume without loss of generality that all generalized simplices $C_j$ have the same dimension, the same number $k$ of finite vertices, and the same number $r$ of vertices at infinity. Let the finite vertices of the generalized simplex $C_j$ be the points $v_{1,j},\dots,\,v_{k,j}$, and let the vertices at infinity be the directions $\vec{e}_{1,j},\dots,\,\vec{e}_{r,j}$.

Since $(V; E)$ is a closed pair of sets, after passing to a subsequence of $\{C_j\}_{j\in\mathbb{N}}$ we may assume that, for any $i=1,\dots,\,k$, the sequence of points $\{v_{i,\,j}\}_{j\in\mathbb{N}}$ converges either to some point $v_i\in V$ or to some direction $\vec{f}_i\in E$, and, for any $t=1,\dots,\,r$, the sequence of directions $\{\vec{e}_{t,\,j}\}_{j\in\mathbb{N}}$ converges to some direction $\vec{e}_{t}\in E$. Corollary~\ref{abotsx} says that, for any $j$, the generalized simplex $C_j$ has a vertex $v_{i,\,j}$ such that $|v_{i,\,j}|\leqslant a|x|+b$, where $a,\,b$ are constructed from the simplex $C_1$. Therefore, among the limiting vertices there is at least one finite vertex $v_i$, and it also satisfies the estimate $|v_i|\leqslant a|x|+b$. Let there be $n$ finite limiting points, and, without loss of generality, let them be $v_1,\dots,\,v_n$. That is, for $i\leqslant n$ we have convergence $v_{i,\,j}\longrightarrow v_i$, and for $i\geqslant n+1$ we have convergence $v_{i,\,j}\longrightarrow \vec{f}_i$. Since for $j'\leqslant j$ the inclusion $v_{i,\,j}\in C_{j'}$ holds, we have $v_i\in C_j$, and the direction $\vec{f}_i$ is a direction from $C_j$ for any $j$. In particular, the ray $[v_1,\,\vec{f}_i)$ lies in the intersection of all simplices of our sequence. Similarly, the direction $\vec{e}_i$ lies in the simplex $C_j$ for any $j$, and the ray $[v_1,\,\vec{e}_i)$ lies in the intersection of the simplices. Therefore, by the convexity of the intersection of convex sets, we obtain the inclusion
\begin{equation}
\conv(\{v_1,\dots,\,v_n\};\;\{\vec{f}_{n+1},\dots,\,\vec{f}_k,\,\vec{e}_1,\dots,\,\vec{e}_r\})\subseteq\bigcap\limits_{j\in\mathbb{N}}C_j.
\end{equation}
It remains to show that 
\begin{equation}\label{vklvlemper}
x\in\conv(\{v_1,\dots,\,v_n\};\;\{\vec{f}_{n+1},\dots,\,\vec{f}_k,\,\vec{e}_1,\dots,\,\vec{e}_r\}),
\end{equation}
then by Carath\'eodory's theorem, Theorem~\ref{carat}, we find the generalized simplex we need.

Since $x\in C_j$, the point $x$ is representable as a convex combination of elements of the sets 
\begin{equation}
\{v_{1,\,j},\dots,\,v_{k,\,j}\}\cap\{\vec{e}_{1,\,j},\dots,\,\vec{e}_{r,\,j}\}
\end{equation}
and there exist numbers $\alpha_{1,\,j},\dots,\,\alpha_{k,\,j},\,\beta_{1,\,j},\dots,\,\beta_{r,\,j}\geqslant 0$ such that $\sum\limits_{i=1}^{k}\alpha_{i,\,j}=1$ and
\begin{equation}
\sum\limits_{i=1}^{k}\alpha_{i,\,j}v_{i,\,j}+\sum\limits_{t=1}^{r}\beta_{t,\,j}e_{t,\,j}=x.
\end{equation}
Since $C_j\subseteq C_1$, Corollary~\ref{abots} gives the estimate
\begin{equation}\label{abvpr}
\sum\limits_{i=1}^{k}\alpha_{i,\,j}|v_{i,\,j}|+\sum\limits_{t=1}^{r}\beta_{t,\,j}\leqslant a|x|+b.
\end{equation}
It follows from this inequality that $\beta_{t,\,j}\leqslant a|x|+b$ and $\alpha_{i,\,j}|v_{i,\,j}|\leqslant a|x|+b$. Consequently, we can pass to a subsequence $\{C_j\}_{j\in\mathbb{N}}$ such that, for any $i=1,\dots,\,k$, the sequence of numbers $\{\alpha_{i,\,j}\}_{j\in\mathbb{N}}$ converges to some number $\alpha_i$, the sequence of numbers $\{\alpha_{i,\,j}|v_{i,\,j}|\}_{j\in\mathbb{N}}$ converges to some number $\gamma_i$, and, for any $t=1,\dots,\,r$, the sequence of numbers $\{\beta_{t,\,j}\}_{j\in\mathbb{N}}$ converges to some number $\beta_t$.
Then one can see that
\begin{equation}
x=\sum\limits_{i=1}^{k}\alpha_{i,\,j}v_{i,\,j}+\sum\limits_{t=1}^{r}\beta_{t,\,j}e_{t,\,j}\underset{j\to+\infty}{\longrightarrow}
\sum\limits_{i=1}^{n}\alpha_{i}v_{i}+\sum\limits_{i=n+1}^{k}\gamma_{i}f_{i}+\sum\limits_{t=1}^{r}\beta_{t}e_{t}.
\end{equation}
This proves the inclusion~\eqref{vklvlemper}.
\end{proof}
\begin{Cor}\label{tsorsim}
Let $(V;\;E)$ be a closed pair of sets and let $x\in \conv(V;\;E)$. Then, among the generalized simplices $C$ such that $x\in C$, $V_{\fin}(C)\subseteq V$, and $V_{\dir}(C)\subseteq E$, there is an inclusion-minimal one. 
\end{Cor}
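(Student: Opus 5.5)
The plan is to apply Zorn's lemma (in its dual form, for minimal elements) to the family
\begin{equation*}
\mathcal{F}=\{C\,|\,C \text{ is a generalized simplex},\ x\in C,\ V_{\fin}(C)\subseteq V,\ V_{\dir}(C)\subseteq E\},
\end{equation*}
partially ordered by inclusion. We must check two things: that $\mathcal{F}$ is nonempty, and that every decreasing chain in $\mathcal{F}$ has a lower bound in $\mathcal{F}$.

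Nonemptiness follows from Carath\'eodory's theorem, Theorem~\ref{carat}. Since $x\in\conv(V;\;E)$, the point $x$ lies in some generalized simplex whose vertices are in $V\cup E$, and that simplex belongs to $\mathcal{F}$. Note that $V\ne\varnothing$ here, because otherwise $\conv(V;\;E)$ would not be defined.

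Now let $\{C_j\}_{j\in\mathfrak{J}}$ be an arbitrary decreasing chain in $\mathcal{F}$. By Lemma~\ref{dosschcsim} there is a nondecreasing sequence of indices $\{j_n\}_{n\in\mathbb{N}}$ such that every $C_i$ contains some $C_{j_n}$. The sequence $\{C_{j_n}\}_{n\in\mathbb{N}}$ is then a decreasing sequence of generalized simplices, each containing $x$, with finite vertices in $V$ and directions in $E$. Since $(V;\;E)$ is a closed pair, Lemma~\ref{perescep} gives a generalized simplex $C\in\mathcal{F}$ with
\begin{equation*}
C\subseteq\bigcap_{n\in\mathbb{N}}C_{j_n}.
\end{equation*}
For every $i\in\mathfrak{J}$ we can pick $n$ with $C_{j_n}\subseteq C_i$, so $C\subseteq C_i$. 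Hence $C$ is a lower bound of the chain in $\mathcal{F}$. Zorn's lemma then yields a minimal element of $\mathcal{F}$, which is the required inclusion-minimal simplex.

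No step here is genuinely hard, since the analytic difficulty (compactness of the vertices, and vertices escaping to infinity) is already handled inside Lemma~\ref{perescep}. The only point needing care is the reduction from arbitrary chains to sequences. Lemma~\ref{perescep} is stated only for $\mathbb{N}$-indexed sequences, and Lemma~\ref{dosschcsim} is exactly what allows the arbitrary chain to be replaced by a cofinal sequence.
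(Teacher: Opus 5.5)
Your proposal is correct and follows the paper's argument: Zorn's lemma for the inclusion order, Lemma~\ref{dosschcsim} to replace an arbitrary chain by a cofinal sequence, and Lemma~\ref{perescep} to supply the bound in the family. You additionally spell out two steps the paper leaves implicit, namely nonemptiness of the family via Theorem~\ref{carat} and why a bound for the cofinal sequence is a bound for the whole chain.
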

\begin{proof}
It suffices to show that the set of generalized simplices $C$ such that $x\in C$, $V_{\fin}(C)\subseteq V$, and $V_{\dir}(C)\subseteq E$ is nonempty and, with the order $C\leqslant C'$ whenever $C'\subseteq C$, satisfies Zorn's lemma. By Lemma~\ref{dosschcsim}, it suffices to consider chains indexed by natural numbers. Such chains have an upper bound by Lemma~\ref{perescep}.
\end{proof}
\begin{Rem}
For Lemma~\ref{perescep} and Corollary~\ref{tsorsim}, it suffices to require that the sets $V$ and $E$ be closed separately, rather than as a pair of sets.
\end{Rem}
For this, it suffices to note that the directions $\vec{f}_{n+1},\dots,\,\vec{f}_k$ from the proof of Lemma~\ref{perescep} lie in the simplices $C_j$; hence, the vectors $f_{n+1},\dots,\,f_k$ are representable as sums of the vectors $e_{1,\,j},\dots,\,e_{r,\,j}$ with nonnegative coefficients; moreover, by Corollary~\ref{abots}, these coefficients are uniformly bounded. Passing to the limit, we obtain that $f_{n+1},\dots,\,f_k$ are represented as sum vectors $e_{1},\dots,\,e_{r}$ with nonnegative coefficients. Hence,
\begin{equation}
\conv(\{v_1,\dots,\,v_n\};\;\{\vec{f}_{n+1},\dots,\,\vec{f}_k,\,\vec{e}_1,\dots,\,\vec{e}_r\})
=
\conv(\{v_1,\dots,\,v_n\};\;\{\vec{e}_1,\dots,\,\vec{e}_r\}).
\end{equation}

\part{The hereditary property of minimal locally concave functions.}\label{prinnasl}
We allow concave functions to attain the values $-\infty$ and $+\infty$, following Rockafellar~\cite{Rockafellar}. A concave function is a function whose subgraph is convex (equivalently, $(-1)$ times this function is convex in the sense of Rockafellar). 
\section{Piecewise definition of concave functions on a segment.}\label{seckusnaotr}
\begin{Le}\label{lepo}
Let $[a,\,b]\subseteq\mathbb{R}^d$ be a segment, $c\in [a,\,b]$. Let $B\colon [a,\,b]\to\re$ be a concave function. Let $B^{\circ}\colon [a,\,c]\to\re$ be a concave function such that $B(c)=B^{\circ}(c)$, and for any point $x\in[a,\,c)$ the inequality $B^{\circ}(x)\leqslant B(x)$ holds. Then the function $B'\colon[a,\,b]\to\re$ defined by the formula
\begin{equation}
B'(x)=
\begin{cases}
B(x),&x\in [c,\,b],\\
B^{\circ}(x),&x\in [a,\,c],
\end{cases}
\end{equation}
is concave.
\end{Le}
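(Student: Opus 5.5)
Parametrize the segment as $[0,1]$ with $a\leftrightarrow 0$, $c\leftrightarrow \gamma$, $b\leftrightarrow 1$, so all functions become functions of one real variable. We use the characterization of concavity with values in $\re$ via the convexity of the subgraph (hypograph) $\{(t,s)\,|\,s\leqslant f(t)\}$. Equivalently, the strict subgraph $\{(t,s)\,|\,s<f(t)\}$ is convex; this form avoids the ambiguity of $\infty-\infty$. Let $H$, $H^\circ$, $H'$ denote the strict subgraphs of $B$, $B^\circ$, $B'$. Then $H'=(H\cap\{t\geqslant\gamma\})\cup(H^\circ\cap\{t\leqslant\gamma\})$, and each of the two pieces is convex. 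The degenerate cases $\gamma=0$ ($B'=B$) and $\gamma=1$ ($B'=B^\circ$) are trivial, so assume $0<\gamma<1$.

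Take $(t_1,s_1)\in H'$ and $(t_2,s_2)\in H'$. If both lie in the same piece, the segment joining them lies in that piece, so the only case to handle is $t_1<\gamma<t_2$. We must show that every point of the segment lies in $H'$. The segment crosses the line $t=\gamma$ at a point $(\gamma,\sigma)$. The key claim is that $\sigma<B(\gamma)=B^\circ(\gamma)$. Once this holds, $(\gamma,\sigma)$ belongs to both pieces. The subsegment from $(t_1,s_1)$ to $(\gamma,\sigma)$ then lies in $H^\circ\cap\{t\leqslant\gamma\}$ by convexity of $H^\circ$, and the subsegment from $(\gamma,\sigma)$ to $(t_2,s_2)$ lies in $H\cap\{t\geqslant\gamma\}$ by convexity of $H$. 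Hence the whole segment lies in $H'$.

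To prove the key claim, use the hypothesis $B^\circ\leqslant B$ on $[0,\gamma)$. It gives $s_1<B^\circ(t_1)\leqslant B(t_1)$, so $(t_1,s_1)\in H$. Also $(t_2,s_2)\in H$ by definition. By convexity of $H$, the point $(\gamma,\sigma)$ lies in $H$, that is, $\sigma<B(\gamma)$, and $B(\gamma)=B^\circ(\gamma)$. This is precisely where both hypotheses are used.

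The main obstacle is the bookkeeping with infinite values. If $B^\circ(t_1)=-\infty$, the strict subgraph contains no point above $t_1$, so there is nothing to check. If $B(\gamma)=+\infty$, every point on the line $t=\gamma$ lies in both pieces. Working with strict subgraphs handles all of these cases uniformly, since the definition of $H'$ never involves arithmetic with $\pm\infty$. Finally, note that $B'$ is well defined at $c$ because $B(c)=B^\circ(c)$.
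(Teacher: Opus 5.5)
Your proof is correct and follows essentially the same route as the paper: split the chord at $c$, use $B^{\circ}\leqslant B$ together with concavity of $B$ to see that the chord passes below $B(c)=B^{\circ}(c)$ there, and then apply concavity of $B^{\circ}$ on the left piece and of $B$ on the right piece. Working with strict subgraphs is a small improvement: it handles the $\pm\infty$ values cleanly, whereas the paper's inequality chain tacitly relies on extended-real arithmetic such as $\alpha B'(x)+(1-\alpha)B'(y)$.
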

\begin{figure}[h]\centering
\def\svgwidth{17cm}
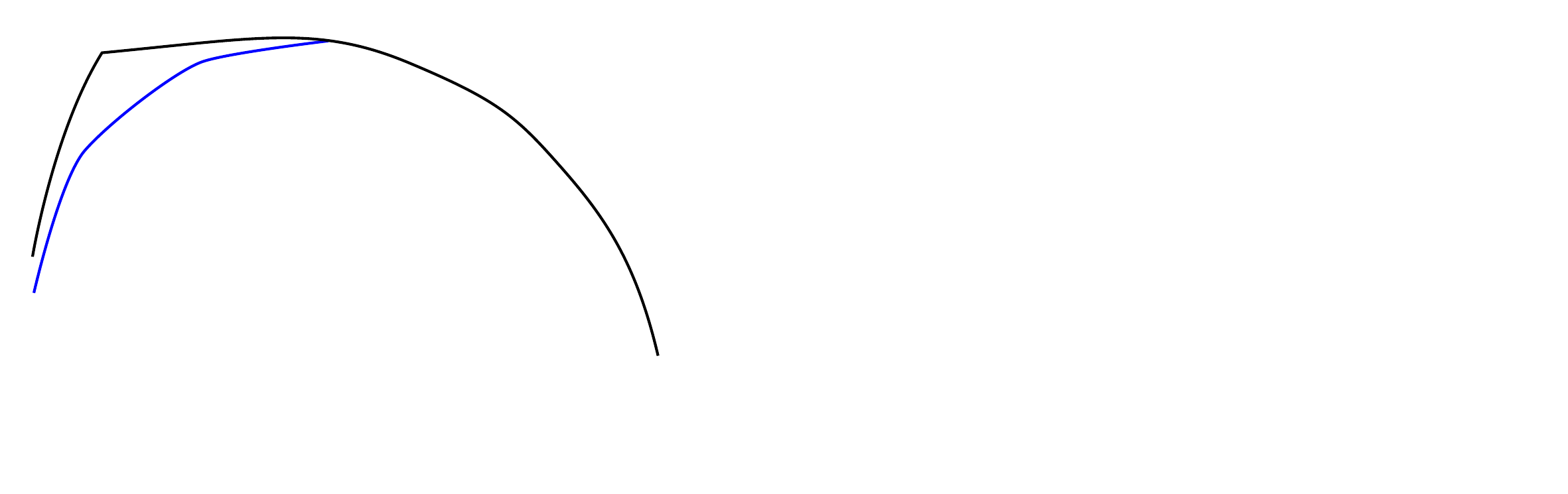
\caption{Illustration for Lemma~\ref{lepo}}
\end{figure}
\begin{proof}
We need to check that, for any $x,\,y\in[a,\,b]$, $x<y$, and $\alpha\in[0,\,1]$, the inequality $B'(\alpha x+(1-\alpha)y)\geqslant\alpha B'(x)+(1-\alpha)B'(y)$ holds. If $\alpha x+(1-\alpha)y\geqslant c$, then
\begin{equation}\label{bc}
B'(\alpha x+(1-\alpha)y)=B(\alpha x+(1-\alpha)y)\geqslant\alpha B(x)+(1-\alpha)B(y)\geqslant \alpha B'(x)+(1-\alpha)B'(y).
\end{equation}
If $y\leqslant c$, then
\begin{equation}\label{mc}
B'(\alpha x+(1-\alpha)y)=B^{\circ}(\alpha x+(1-\alpha)y)\geqslant\alpha B^{\circ}(x)+(1-\alpha)B^{\circ}(y)= \alpha B'(x)+(1-\alpha)B'(y).
\end{equation}
It remains to consider the case where $\alpha x+(1-\alpha)y<c<y$. In this case, let $\beta\in(0,\,1)$ be such that $c=\beta x+(1-\beta)y$. Then
\begin{equation}
\alpha x+(1-\alpha)y=\frac{\alpha-\beta}{1-\beta}x+\frac{1-\alpha}{1-\beta}c
\end{equation}
and
\begin{multline}
B'(\alpha x+(1-\alpha)y)\overset{\eqref{mc}}{\geqslant}\frac{\alpha-\beta}{1-\beta}B'(x)+\frac{1-\alpha}{1-\beta}B'(c)\overset{\eqref{bc}}{\geqslant}\frac{\alpha-\beta}{1-\beta}B'(x)+\frac{1-\alpha}{1-\beta}(\beta B'(x)+(1-\beta)B'(y))
=
\\
=
\alpha B'(x)+(1-\alpha)B'(y).
\end{multline}
\end{proof}
\begin{Rem}\label{prpo}
An analogous lemma, with a similar proof, holds for a function $B^{\circ}$ defined on the right half-segment instead of the left one, that is, on $[c,\,b]$ rather than on $[a,\,c]$.
\end{Rem}
\begin{figure}[h]\centering
\def\svgwidth{17cm}
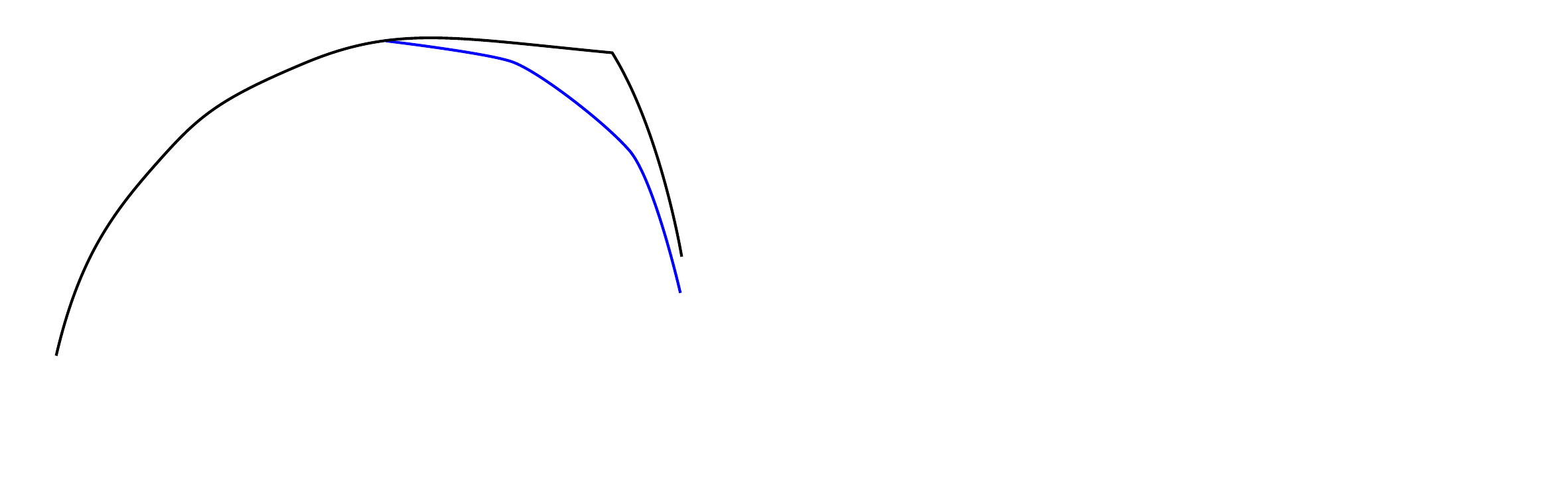
\caption{Illustration for Remark~\ref{prpo}}
\end{figure}
\begin{Le}\label{cer}
Let $[a,\,b]\subseteq\mathbb{R}$ be a segment, $c,\,d\in [a,\,b]$ and $c<d$. Let $B\colon [a,\,b]\to\re$ be a concave function. Let $B^{\circ}\colon [c,\,d]\to\re$ be a concave function such that $B(c)=B^{\circ}(c)$ and $B(d)=B^{\circ}(d)$, and for any $x\in(c,\,d)$ the inequality $B^{\circ}(x)\leqslant B(x)$ holds. Then the function $B'\colon[a,\,b]\to\re$ defined by the formula
\begin{equation}
B'(x)=
\begin{cases}
B(x),&x\notin [c,\,d],\\
B^{\circ}(x),&x\in [c,\,d],
\end{cases}
\end{equation}
is concave.
\end{Le}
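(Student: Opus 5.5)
The plan is to obtain Lemma~\ref{cer} from two applications of the one-sided gluing result: Lemma~\ref{lepo} and its mirror version, Remark~\ref{prpo}. The idea is to perform the replacement in two stages, first on $[a,d]$ and then on all of $[a,b]$. Throughout, the segment is identified with an interval of $\mathbb{R}$, so the order $c<d$ makes sense.

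\textbf{First stage.} We work on the segment $[a,d]$ and define
\[
B_1(x)=B^{\circ}(x) \text{ for } x\in[c,d], \qquad B_1(x)=B(x) \text{ for } x\in[a,c].
\]
The two pieces agree at $c$, since $B(c)=B^{\circ}(c)$. To show that $B_1$ is concave, we apply Remark~\ref{prpo} on $[a,d]$ with splitting point $c$. The ambient concave function is $B|_{[a,d]}$, and the replacing function is $B^{\circ}$ on the right half-segment $[c,d]$. The hypotheses of the Remark hold: $B^{\circ}(c)=B(c)$, and $B^{\circ}\le B$ on $(c,d]$. Indeed, on $(c,d)$ this inequality is assumed, and at $d$ we have equality. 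Hence $B_1$ is concave on $[a,d]$.

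\textbf{Second stage.} We now work on the full segment $[a,b]$ with splitting point $d$. The ambient concave function is $B$, and the replacing function is $B_1$ on the left half-segment $[a,d]$. We apply Lemma~\ref{lepo}, whose hypotheses we check as follows.
\begin{enumerate}
\item $B_1(d)=B^{\circ}(d)=B(d)$.
\item $B_1\le B$ on $[a,d)$: on $[a,c]$ the two functions are equal, and on $(c,d)$ we have $B_1=B^{\circ}\le B$.
\end{enumerate}
The resulting glued function equals $B$ on $[d,b]$, $B^{\circ}$ on $[c,d]$, and $B$ on $[a,c]$, which is exactly $B'$. Thus $B'$ is concave.

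\textbf{Main obstacle.} The only delicate point is bookkeeping with extended values. The functions may take the values $\pm\infty$, and the restriction of a concave function to a subsegment must still be concave in the paper's subgraph sense. This is immediate, because the subgraph of the restriction is the intersection of the original subgraph with a convex set. Also, Lemma~\ref{lepo} is stated for a general segment $[a,b]\subseteq\mathbb{R}^d$, so applying it on the subsegment $[a,d]$ is legitimate. Finally, the degenerate cases $c=a$ or $d=b$ need no separate treatment, since both Lemma~\ref{lepo} and Remark~\ref{prpo} allow the splitting point to be an endpoint.
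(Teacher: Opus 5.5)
Your proposal is correct and is essentially the paper's proof: two successive one-sided gluings via Lemma~\ref{lepo} and Remark~\ref{prpo}, with the endpoint equalities supplying the needed inequality at the closed end. The only difference is the order. The paper first applies Lemma~\ref{lepo} on $[c,b]$ with splitting point $d$, and then applies Remark~\ref{prpo} on $[a,b]$ with splitting point $c$; you do the mirror image.
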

\begin{figure}[h]\centering
\def\svgwidth{17cm}
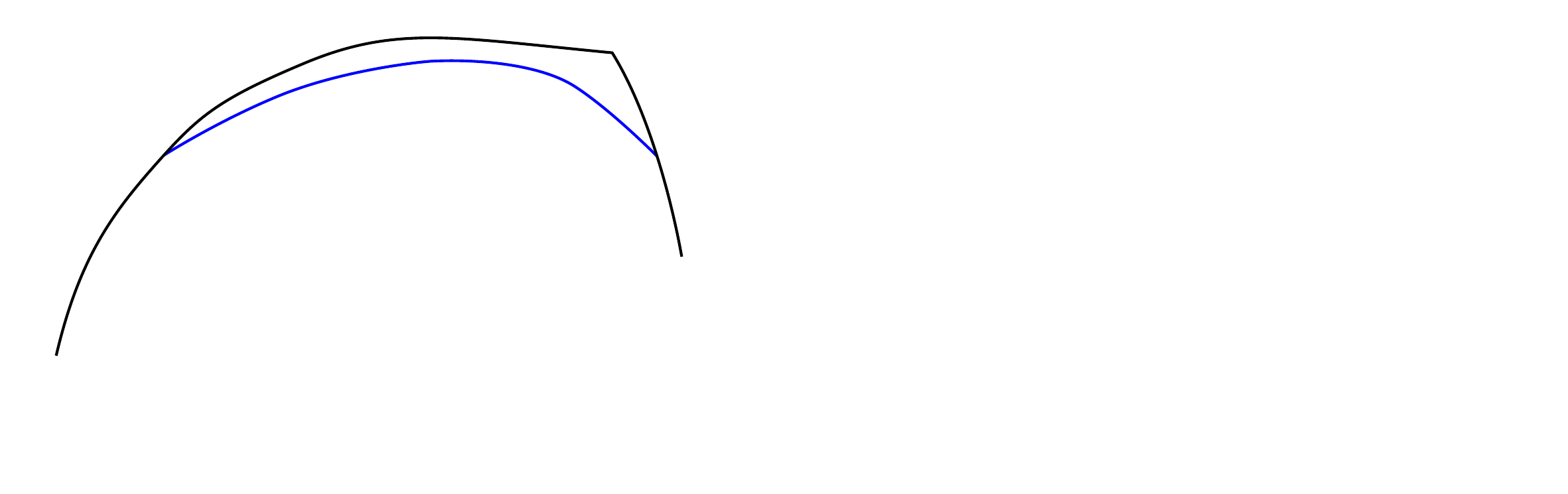
\caption{Illustration for Lemma~\ref{cer}}
\end{figure}
\begin{proof}
By Lemma~\ref{lepo}, the function $B'|_{[c,\,b]}$ is concave; hence, by Remark~\ref{prpo}, the function $B'$ is concave as well.
\end{proof}

\begin{Le}\label{kusotrpolob}
Let $[a,\,b]\subseteq\mathbb{R}$ be a segment, $V\subseteq [a,\,b]$ be a subset of the segment such that for any $x,\,y\in[a,\,b]$, if $(x,\,y)\subseteq V$, then $[x,\,y]\subseteq V$. Let $B\colon [a,\,b]\to\re$ be a concave function. Let $B^{\circ}\colon V\to\re$ be a locally concave function such that $B|_{(\partial V\setminus \{a,\,b\})\cap V}=B^{\circ}|_{(\partial V\setminus \{a,\,b\})\cap V}$, and for any element $x\in V$ the inequality $B^{\circ}(x)\leqslant B(x)$ holds. Then the function $B'\colon[a,\,b]\to\re$ defined by the formula
\begin{equation}\label{kusfor}
B'(x)=
\begin{cases}
B(x),&x\notin V,\\
B^{\circ}(x),&x\in V,
\end{cases}
\end{equation}
is concave.
\end{Le}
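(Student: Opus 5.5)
The condition on $V$ says that $V$ is ``order-convex up to endpoints'': whenever an open interval lies in $V$, its closure does too. From this I first derive the structure of $V$. Each connected component of $V$ is an interval $I\subseteq[a,\,b]$. If $I$ has nonempty interior, then $I$ is closed. Indeed, if $\inf I<\sup I$, then $(\inf I,\,\sup I)\subseteq V$, hence $[\inf I,\,\sup I]\subseteq V$. By maximality of components, this forces $I=[\inf I,\,\sup I]$. Degenerate components are single points. Since $B^{\circ}$ is only locally concave on $V$, i.e.\ concave on every segment in $V$, it is concave on each component $[c,\,d]$. Moreover, every endpoint of a nondegenerate component that differs from $a$ and $b$ lies in $(\partial V\setminus\{a,\,b\})\cap V$. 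So $B^{\circ}=B$ there, and $B^{\circ}\leqslant B$ on the whole component.

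Next I reduce concavity of $B'$ to a local statement. A function on $[a,\,b]$ with values in $\re$ is concave iff its restriction to every subsegment is concave. I will instead verify the three-point inequality $B'(\alpha x+(1-\alpha)y)\geqslant\alpha B'(x)+(1-\alpha)B'(y)$ directly for given $x<y$. The plan is to modify $B$ component by component. For a single nondegenerate component $[c,\,d]$ the concavity follows from Lemma~\ref{cer} if $a<c<d<b$. If $c=a$ or $d=b$, it follows from Lemma~\ref{lepo} or Remark~\ref{prpo}, since the equality is only required at the interior endpoint. Degenerate components $\{c\}$ are harmless: either $c\in\{a,\,b\}$ with $B^{\circ}(c)\leqslant B(c)$, and lowering the value at an endpoint preserves concavity, or $c\in\partial V\setminus\{a,\,b\}$ and then $B^{\circ}(c)=B(c)$.

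To handle possibly infinitely many components, I fix $x<y$ and $z=\alpha x+(1-\alpha)y$. Only the components containing $x$, $y$ or $z$ matter, at most three of them. Let $B''$ be $B$ modified only on these (at most three) components. Applying Lemmas~\ref{cer}, \ref{lepo} and Remark~\ref{prpo} successively shows that $B''$ is concave. Each application uses a function that is concave by the previous step and agrees with $B$ at the relevant endpoints, since the components are disjoint. The one-point components are treated as above. Finally, $B''$ coincides with $B'$ at $x$, $y$ and $z$, which gives the inequality for $B'$.

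The main obstacle is the careful bookkeeping at the points $a$ and $b$ and for degenerate components, together with values $\pm\infty$. In Lemma~\ref{lepo} equality at $c$ is needed, while at $a$ or $b$ only $B^{\circ}\leqslant B$ is available. Here I would use Lemma~\ref{lepo} with $c$ the interior endpoint of the component. When the component is all of $[a,\,b]$, $B'=B^{\circ}$ is concave outright.
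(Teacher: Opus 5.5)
Your argument is correct, but it takes a more laborious route than the paper's proof.

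\textbf{The paper's route.} The paper never modifies $B$ near $x$ or $y$. It uses that $B'\leqslant B$ everywhere.
\begin{itemize}
\item If $B'(z)=B(z)$, i.e.\ $z\notin V$ or $z\in(\partial V\setminus\{a,b\})\cap V$, the inequality follows at once from concavity of $B$.
\item Otherwise $z\in\intX V$. The paper takes the nearest points $c\leqslant z\leqslant d$ of $\partial V$ and notes $[c,d]\subseteq V$ by the segment-closedness hypothesis. It then replaces $B$ by $B^{\circ}$ only on $[c,d]$.
\end{itemize}
The resulting function $B_0$ is concave by Lemmas~\ref{lepo}, \ref{cer} and Remark~\ref{prpo}. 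Since $B_0\geqslant B'$ everywhere, with equality at $z$, one gets
\[
B'(z)=B_0(z)\geqslant\alpha B_0(x)+(1-\alpha)B_0(y)\geqslant\alpha B'(x)+(1-\alpha)B'(y).
\]

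\textbf{Your route.} You insist on exact agreement with $B'$ at all three points $x,y,z$. This forces you to:
\begin{itemize}
\item modify up to three components in succession;
\item check disjointness so that each step still agrees with $B$ at the relevant endpoints;
\item treat degenerate components separately, including the ``lowering the value at $a$ or $b$'' argument.
\end{itemize}
All of this is valid. It becomes unnecessary once you notice that an upper bound by a concave function at $x$ and $y$ suffices, which is exactly what $B\geqslant B^{\circ}$ gives for free.

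\textbf{What your version adds.} You describe $V$ explicitly: nondegenerate components are closed intervals, and their endpoints lying in $(a,b)$ belong to $(\partial V\setminus\{a,b\})\cap V$. This makes transparent a point the paper uses only implicitly when it asserts $c,d\in V$ and $B(c)=B^{\circ}(c)$ unless $c=a$.
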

\begin{Rem}\label{zampootequsle}
The condition on the set $V$ saying that for any $x,\,y\in[a,\,b]$, if $(x,\,y)\subseteq V$, then $[x,\,y]\subseteq V$, means that $V$ is closed with respect to segments in $[a,\,b]$ \textup{(}see Definition~\ref{closeotr}\textup{)}. The equality $B|_{(\partial V\setminus \{a,\,b\})\cap V}=B^{\circ}|_{(\partial V\setminus \{a,\,b\})\cap V}$ means that the functions coincide on the inner boundary of $V$ with respect to segments in $[a,\,b]$ \textup{(}see Definition~\ref{bounotr}\textup{)}. 
\end{Rem}

\begin{figure}[h]\centering
\def\svgwidth{17cm}
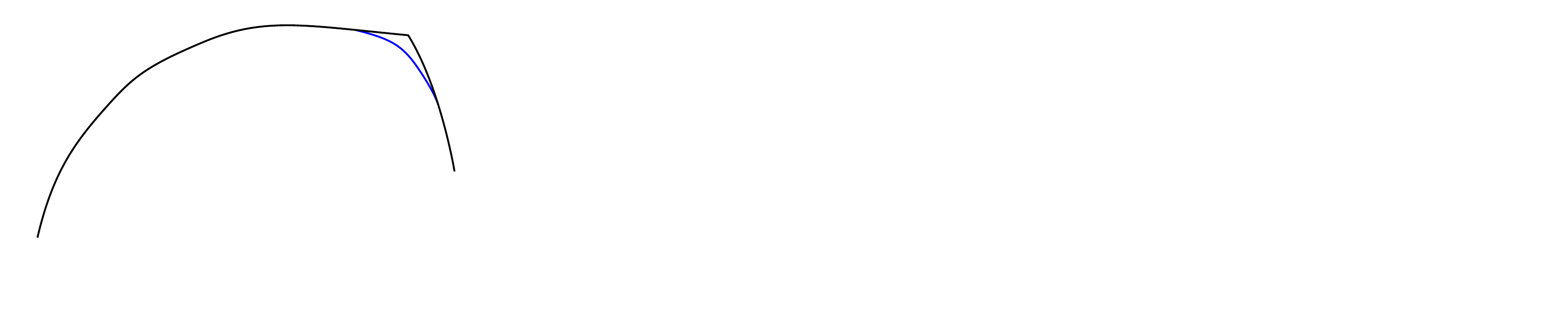
\caption{Illustration for Lemma~\ref{kusotrpolob}}
\end{figure}
\begin{proof}
Let $x,\,y\in [a,\,b]$, $x<y$, and $\alpha\in[0,\,1]$. We need to check the inequality
\begin{equation}\label{nervogkuszad}
B'(\alpha x+(1-\alpha)y)\geqslant\alpha B'(x)+(1-\alpha)B'(y).
\end{equation}
We know that 
\begin{equation}
B(\alpha x+(1-\alpha)y)\geqslant\alpha B(x)+(1-\alpha)B(y)\geqslant\alpha B'(x)+(1-\alpha)B'(y).
\end{equation}
Thus, if $B'(\alpha x+(1-\alpha)y)=B(\alpha x+(1-\alpha)y)$, then the desired inequality~\eqref{nervogkuszad} holds. Therefore, the estimate certainly holds if $\alpha x+(1-\alpha)y\in [a,\,b]\setminus V$ or $\alpha x+(1-\alpha)y\in(\partial V\setminus\{a,\,b\})\cap V$. If $\alpha x+(1-\alpha)y\in\{a,\,b\}$, then the inequality holds automatically, since this means either $x=y$ or $\alpha\in\{0,\,1\}$.

Thus, it remains to consider the case $\alpha x+(1-\alpha)y\in\intX V$. Let $c$ be the point of $\partial V\cap [a,\,\alpha x+(1-\alpha)y]$ xlosest to $\alpha x+(1-\alpha)y$. Similarly, let $d$ be the point of $\partial V\cap [\alpha x+(1-\alpha)y,\,b]$ xlosest to $\alpha x+(1-\alpha)y$. Then the intervals $(c,\,\alpha x+(1-\alpha)y)$ and $(\alpha x+(1-\alpha)y,\,d)$ lie in $V$, and hence the points $c,\,d$ lie in $V$. In other words, $c,\,d\in V\cap\partial V$. Thus, either $c=a$ or $B(c)=B^{\circ}(c)$, and either $d=b$ or $B(d)=B^{\circ}(d)$. Let the function $B_0\colon [x,\,y]\to\re$ be defined by the formula
\begin{equation}
B_0(z)=
\begin{cases}
B(z),&z\notin [c,\,d],\\
B^{\circ}(z),&z\in [c,\,d].
\end{cases}
\end{equation}
Then, by Lemmas~\ref{lepo} and~\ref{cer} and Remark~\ref{prpo}, the function $B_0$ is concave; hence, 
\begin{equation}
B'(\alpha x+(1-\alpha)y)=B_0(\alpha x+(1-\alpha)y)\geqslant \alpha B_0(x)+(1-\alpha)B_0(y)\geqslant \alpha B'(x)+(1-\alpha)B'(y).
\end{equation}
\end{proof}

\begin{Cor}\label{kusotr}
Let $[a,\,b]\subseteq\mathbb{R}^d$ be a segment, and let $V\subseteq [a,\,b]$ be a closed set. Let $B\colon [a,\,b]\to\re$ be a concave function. Let $B^{\circ}\colon V\to\re$ be a locally concave function such that $B|_{\partial V\setminus \{a,\,b\}}=B^{\circ}|_{\partial V\setminus \{a,\,b\}}$, and for any element $x\in V$ the inequality $B^{\circ}(x)\leqslant B(x)$ holds. Then the function $B'\colon[a,\,b]\to\re$ defined by the formula
\begin{equation}\label{kusfor}
B'(x)=
\begin{cases}
B(x),&x\notin V,\\
B^{\circ}(x),&x\in V,
\end{cases}
\end{equation}
is concave.
\end{Cor}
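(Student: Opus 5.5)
The plan is to derive Corollary~\ref{kusotr} directly from Lemma~\ref{kusotrpolob}. This needs two things. First, a closed set $V\subseteq[a,\,b]$ is closed with respect to segments. Second, the hypothesis $B|_{\partial V\setminus\{a,\,b\}}=B^{\circ}|_{\partial V\setminus\{a,\,b\}}$ implies the weaker-looking hypothesis of the lemma. Once both are established, the conclusion (the same function $B'$ defined by~\eqref{kusfor} is concave) is exactly the conclusion of Lemma~\ref{kusotrpolob}.

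\textbf{Step 1: closedness with respect to segments.} Let $x,\,y\in[a,\,b]$ with $(x,\,y)\subseteq V$. If $x=y$, then $[x,\,y]=\{x\}$ and the open interval is empty, so this case must be handled separately; I address it below. If $x\ne y$, then $x$ and $y$ are limits of points of $(x,\,y)\subseteq V$. Since $V$ is closed, $x,\,y\in V$, and hence $[x,\,y]\subseteq V$.

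\textbf{The degenerate case $x=y$.} Here the lemma's condition would literally demand $\{x\}\subseteq V$ for every $x$, which fails in general. So the condition in Lemma~\ref{kusotrpolob} should be read for $x<y$, as its proof only uses it in the nondegenerate situation. Indeed, the proof uses it only to conclude that the nearest boundary points $c$ and $d$ lie in $V$, and there $c<d$ automatically, since they surround an interior point. I would remark on this rather than re-prove the lemma. Alternatively, for a closed $V$ one can say directly that $c,\,d\in\partial V\subseteq V$, which is all the proof of Lemma~\ref{kusotrpolob} uses.

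\textbf{Step 2: the boundary condition.} Since $V$ is closed, $\partial V\subseteq V$, so $(\partial V\setminus\{a,\,b\})\cap V=\partial V\setminus\{a,\,b\}$. The corollary's hypothesis $B|_{\partial V\setminus\{a,\,b\}}=B^{\circ}|_{\partial V\setminus\{a,\,b\}}$ is therefore identical to the lemma's hypothesis. The inequality $B^{\circ}\leqslant B$ on $V$ and the local concavity of $B^{\circ}$ are assumed verbatim in both statements.

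\textbf{Expected obstacle.} With Steps 1 and 2, Lemma~\ref{kusotrpolob} applies and yields the concavity of $B'$. The only subtle point is the degenerate-segment reading of the closedness condition discussed above; everything else is immediate.
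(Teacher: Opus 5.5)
Your proposal is correct and is the intended derivation: the paper gives no separate proof of Corollary~\ref{kusotr} and treats it as an immediate consequence of Lemma~\ref{kusotrpolob}. The two facts needed are exactly the ones you use: a closed $V$ is closed with respect to segments (cf.\ Remark~\ref{otgkc}), and $\partial V\subseteq V$ gives $(\partial V\setminus\{a,\,b\})\cap V=\partial V\setminus\{a,\,b\}$. Your worry about the degenerate case $x=y$ is harmless: with the parametric convention $(x,\,x)=\{x\}$ it disappears, and in any case the lemma's proof only needs $c,\,d\in V$, as you note.
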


\section{On closedness with respect to segments in a linear space.}\label{secclpootr}
Let $X$ be a vector space over $\mathbb{R}$. Let $U\subseteq X$ and $V\subseteq U$.
\begin{Def}\label{closeotr}
We say that the set $V$ is closed with respect to segments in $U$ if, for all $x,\,y\in U$ such that $(x,\,y)\subseteq V$, the inclusion $x,\,y\in V$ holds.
\end{Def}
\begin{Def}\label{grintdef}
Let $a,\,b\in X$. The boundary of the set $V$ along the interval $(a,\,b)$ is the set of points of $(a,\,b)$ that lie in $V$ and in the boundary of $(a,\,b)\cap V$ in the topology of the line $\aff (\{a,\,b\})$:
\begin{equation}
\partial_a^b V=(V\cap (a,\,b))\cap (\partial_{\aff(\{a,\,b\})}(V\cap (a,\,b))).
\end{equation}
\end{Def}
\begin{figure}[h]\centering
\def\svgwidth{6cm}
%% Creator: Inkscape 1.3.2 (091e20e, 2023-11-25, custom), www.inkscape.org
%% PDF/EPS/PS + LaTeX output extension by Johan Engelen, 2010
%% Accompanies image file 'grvint.pdf' (pdf, eps, ps)
%%
%% To include the image in your LaTeX document, write
%%   \input{<filename>.pdf_tex}
%%  instead of
%%   \includegraphics{<filename>.pdf}
%% To scale the image, write
%%   \def\svgwidth{<desired width>}
%%   \input{<filename>.pdf_tex}
%%  instead of
%%   \includegraphics[width=<desired width>]{<filename>.pdf}
%%
%% Images with a different path to the parent latex file can
%% be accessed with the `import' package (which may need to be
%% installed) using
%%   \usepackage{import}
%% in the preamble, and then including the image with
%%   \import{<path to file>}{<filename>.pdf_tex}
%% Alternatively, one can specify
%%   \graphicspath{{<path to file>/}}
%% 
%% For more information, please see info/svg-inkscape on CTAN:
%%   http://tug.ctan.org/tex-archive/info/svg-inkscape
%%
\begingroup%
  \makeatletter%
  \providecommand\color[2][]{%
    \errmessage{(Inkscape) Color is used for the text in Inkscape, but the package 'color.sty' is not loaded}%
    \renewcommand\color[2][]{}%
  }%
  \providecommand\transparent[1]{%
    \errmessage{(Inkscape) Transparency is used (non-zero) for the text in Inkscape, but the package 'transparent.sty' is not loaded}%
    \renewcommand\transparent[1]{}%
  }%
  \providecommand\rotatebox[2]{#2}%
  \newcommand*\fsize{\dimexpr\f@size pt\relax}%
  \newcommand*\lineheight[1]{\fontsize{\fsize}{#1\fsize}\selectfont}%
  \ifx\svgwidth\undefined%
    \setlength{\unitlength}{263.62204724bp}%
    \ifx\svgscale\undefined%
      \relax%
    \else%
      \setlength{\unitlength}{\unitlength * \real{\svgscale}}%
    \fi%
  \else%
    \setlength{\unitlength}{\svgwidth}%
  \fi%
  \global\let\svgwidth\undefined%
  \global\let\svgscale\undefined%
  \makeatother%
  \begin{picture}(1,0.53763441)%
    \lineheight{1}%
    \setlength\tabcolsep{0pt}%
    \put(0,0){\includegraphics[width=\unitlength,page=1]{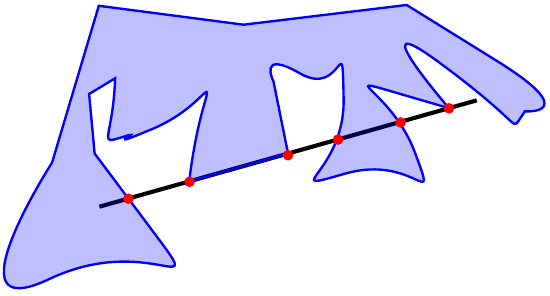}}%
    \put(0.1635056,0.11063241){\color[rgb]{0,0,0}\makebox(0,0)[lt]{\lineheight{1.25}\smash{\begin{tabular}[t]{l}$a$\end{tabular}}}}%
    \put(0.85740235,0.30743937){\color[rgb]{0,0,0}\makebox(0,0)[lt]{\lineheight{1.25}\smash{\begin{tabular}[t]{l}$b$\end{tabular}}}}%
    \put(0.35118234,0.41294409){\color[rgb]{0,0,0}\makebox(0,0)[lt]{\lineheight{1.25}\smash{\begin{tabular}[t]{l}\textcolor[rgb]{0,0,1}{$V$}\end{tabular}}}}%
    \put(0.33495085,0.14410987){\color[rgb]{0,0,0}\makebox(0,0)[lt]{\lineheight{1.25}\smash{\begin{tabular}[t]{l}\textcolor[rgb]{1,0,0}{$\partial_a^b V$}\end{tabular}}}}%
  \end{picture}%
\endgroup%

\caption{Illustration for Definition~\ref{grintdef}}
\end{figure}
\begin{Def}\label{bounotr}
The inner boundary of $V$ with respect to segments in $U$ is the union of the boundaries of $V$ along the intervals $(a,\,b)$ such that $[a,\,b]\subseteq U$:
\begin{equation}
\partial_U^s V=\underset{[a,\,b]\subseteq U}{\bigcup}\partial_a^b V.
\end{equation}
\end{Def}
In other words, the set $\partial_U^s V$ consists of the points of $V$ for which there exists an interval $(a,\,b)$ in $U$ such that the given point is a boundary point of $V$ inside this interval.
\begin{Rem}
In the case where the ambient set $U=[a,\,b]$ is a segment, the condition that $V$ is closed with respect to segments means that $V$ is the disjoint union of at most countably many closed segments $([c_j,\,d_j])_j$ and a set $Z$ with empty interior. The boundary of $V$ with respect to segments in $U$ in this case is the following set:
\begin{equation}
\partial_{[a,\,b]}^s V=\left(Z\cup\textstyle{\bigcup_j}\{c_j,\,d_j\}\right)\setminus\{a,\,b\}.
\end{equation} 
\end{Rem}

\begin{Ex}\label{vngr}
Let $U=\clos\Omega_0\setminus\Omega_1$, where the sets $\Omega_0,\,\Omega_1\subseteq\mathbb{R}^d$ are open and convex, $\Omega_0$ is nonempty, and $\clos\Omega_1\subseteq\Omega_0$. Let $V$ be a face of $\clos\Omega_0$ that does not coincide with $\clos\Omega_0$. Then $\partial_U^s V=\varnothing$.
\end{Ex}
\begin{figure}[h]\centering
\def\svgwidth{8cm}
%% Creator: Inkscape 1.3.2 (091e20e, 2023-11-25, custom), www.inkscape.org
%% PDF/EPS/PS + LaTeX output extension by Johan Engelen, 2010
%% Accompanies image file 'exvngr.pdf' (pdf, eps, ps)
%%
%% To include the image in your LaTeX document, write
%%   \input{<filename>.pdf_tex}
%%  instead of
%%   \includegraphics{<filename>.pdf}
%% To scale the image, write
%%   \def\svgwidth{<desired width>}
%%   \input{<filename>.pdf_tex}
%%  instead of
%%   \includegraphics[width=<desired width>]{<filename>.pdf}
%%
%% Images with a different path to the parent latex file can
%% be accessed with the `import' package (which may need to be
%% installed) using
%%   \usepackage{import}
%% in the preamble, and then including the image with
%%   \import{<path to file>}{<filename>.pdf_tex}
%% Alternatively, one can specify
%%   \graphicspath{{<path to file>/}}
%% 
%% For more information, please see info/svg-inkscape on CTAN:
%%   http://tug.ctan.org/tex-archive/info/svg-inkscape
%%
\begingroup%
  \makeatletter%
  \providecommand\color[2][]{%
    \errmessage{(Inkscape) Color is used for the text in Inkscape, but the package 'color.sty' is not loaded}%
    \renewcommand\color[2][]{}%
  }%
  \providecommand\transparent[1]{%
    \errmessage{(Inkscape) Transparency is used (non-zero) for the text in Inkscape, but the package 'transparent.sty' is not loaded}%
    \renewcommand\transparent[1]{}%
  }%
  \providecommand\rotatebox[2]{#2}%
  \newcommand*\fsize{\dimexpr\f@size pt\relax}%
  \newcommand*\lineheight[1]{\fontsize{\fsize}{#1\fsize}\selectfont}%
  \ifx\svgwidth\undefined%
    \setlength{\unitlength}{575.43307087bp}%
    \ifx\svgscale\undefined%
      \relax%
    \else%
      \setlength{\unitlength}{\unitlength * \real{\svgscale}}%
    \fi%
  \else%
    \setlength{\unitlength}{\svgwidth}%
  \fi%
  \global\let\svgwidth\undefined%
  \global\let\svgscale\undefined%
  \makeatother%
  \begin{picture}(1,0.62068966)%
    \lineheight{1}%
    \setlength\tabcolsep{0pt}%
    \put(0,0){\includegraphics[width=\unitlength,page=1]{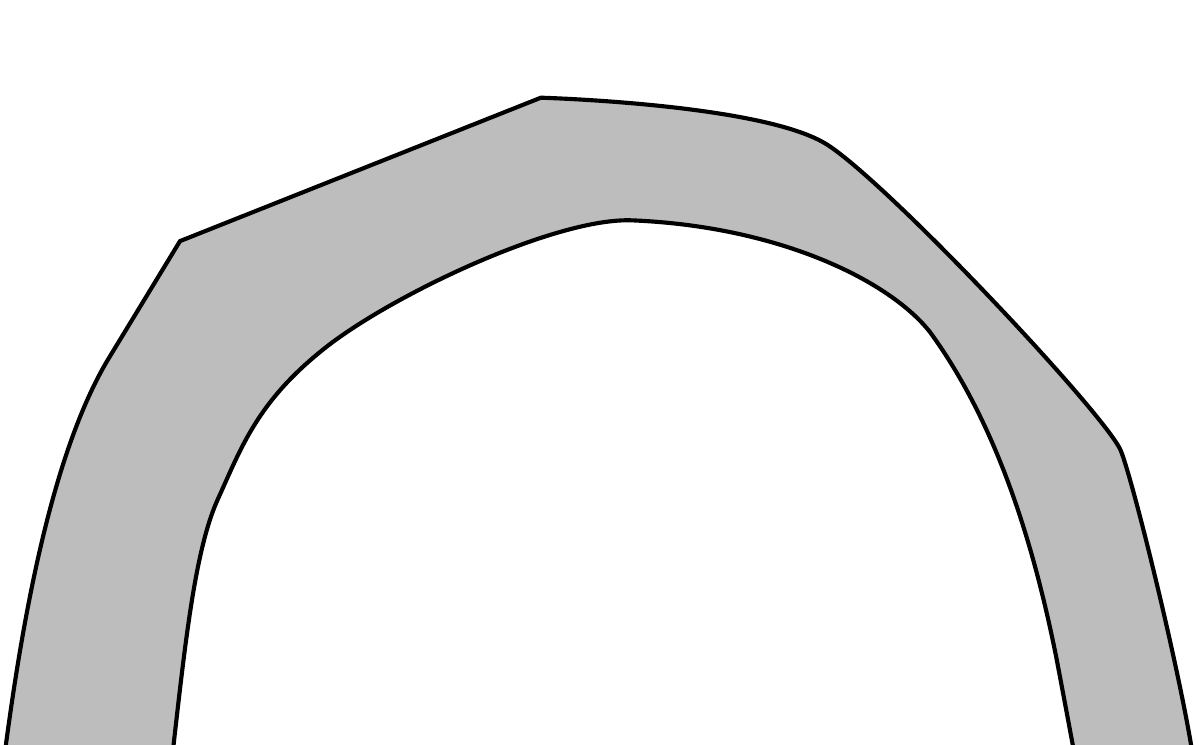}}%
    \put(0.10150909,0.23290313){\color[rgb]{0,0,0}\makebox(0,0)[lt]{\lineheight{1.25}\smash{\begin{tabular}[t]{l}$U$\end{tabular}}}}%
    \put(0,0){\includegraphics[width=\unitlength,page=2]{exvngr.pdf}}%
    \put(0.20535699,0.47740584){\color[rgb]{1,0,0}\makebox(0,0)[lt]{\lineheight{1.25}\smash{\begin{tabular}[t]{l}\textcolor[rgb]{1,0,0}{$V$}\end{tabular}}}}%
    \put(0.0436698,0.55890672){\color[rgb]{0,0,1}\makebox(0,0)[lt]{\lineheight{1.25}\smash{\begin{tabular}[t]{l}\textcolor[rgb]{0,0,1}{$\partial_U^s V=\varnothing$}\end{tabular}}}}%
  \end{picture}%
\endgroup%

\caption{Illustration for Example~\ref{vngr}}
\end{figure}

\begin{Ex}\label{vnegr}
Let $U=\clos\Omega_0\setminus\Omega_1$, where the sets $\Omega_0,\,\Omega_1\subseteq\mathbb{R}^d$ are open and convex, $\Omega_0$ is nonempty, and $\clos\Omega_1\subseteq\Omega_0$. Let $V$ be a face of $\clos\Omega_1$ of dimension $d-1$. Then $\partial_U^s V=\rb V$.
\end{Ex}
\begin{figure}[h]\centering
\def\svgwidth{8cm}
%% Creator: Inkscape 1.3.2 (091e20e, 2023-11-25, custom), www.inkscape.org
%% PDF/EPS/PS + LaTeX output extension by Johan Engelen, 2010
%% Accompanies image file 'exvnegr.pdf' (pdf, eps, ps)
%%
%% To include the image in your LaTeX document, write
%%   \input{<filename>.pdf_tex}
%%  instead of
%%   \includegraphics{<filename>.pdf}
%% To scale the image, write
%%   \def\svgwidth{<desired width>}
%%   \input{<filename>.pdf_tex}
%%  instead of
%%   \includegraphics[width=<desired width>]{<filename>.pdf}
%%
%% Images with a different path to the parent latex file can
%% be accessed with the `import' package (which may need to be
%% installed) using
%%   \usepackage{import}
%% in the preamble, and then including the image with
%%   \import{<path to file>}{<filename>.pdf_tex}
%% Alternatively, one can specify
%%   \graphicspath{{<path to file>/}}
%% 
%% For more information, please see info/svg-inkscape on CTAN:
%%   http://tug.ctan.org/tex-archive/info/svg-inkscape
%%
\begingroup%
  \makeatletter%
  \providecommand\color[2][]{%
    \errmessage{(Inkscape) Color is used for the text in Inkscape, but the package 'color.sty' is not loaded}%
    \renewcommand\color[2][]{}%
  }%
  \providecommand\transparent[1]{%
    \errmessage{(Inkscape) Transparency is used (non-zero) for the text in Inkscape, but the package 'transparent.sty' is not loaded}%
    \renewcommand\transparent[1]{}%
  }%
  \providecommand\rotatebox[2]{#2}%
  \newcommand*\fsize{\dimexpr\f@size pt\relax}%
  \newcommand*\lineheight[1]{\fontsize{\fsize}{#1\fsize}\selectfont}%
  \ifx\svgwidth\undefined%
    \setlength{\unitlength}{575.43307087bp}%
    \ifx\svgscale\undefined%
      \relax%
    \else%
      \setlength{\unitlength}{\unitlength * \real{\svgscale}}%
    \fi%
  \else%
    \setlength{\unitlength}{\svgwidth}%
  \fi%
  \global\let\svgwidth\undefined%
  \global\let\svgscale\undefined%
  \makeatother%
  \begin{picture}(1,0.39408867)%
    \lineheight{1}%
    \setlength\tabcolsep{0pt}%
    \put(0,0){\includegraphics[width=\unitlength,page=1]{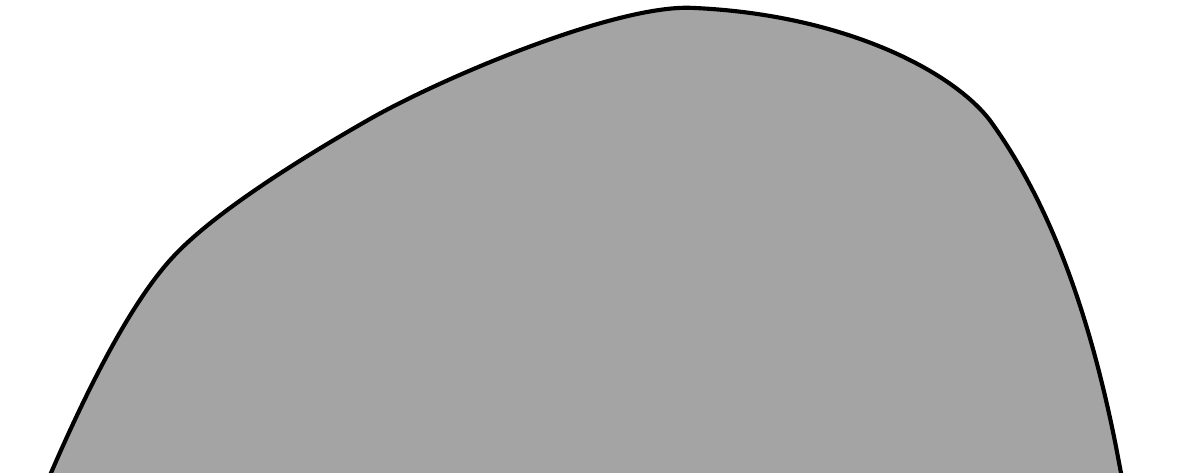}}%
    \put(0.26713994,0.22970265){\color[rgb]{0,0,0}\makebox(0,0)[lt]{\lineheight{1.25}\smash{\begin{tabular}[t]{l}$U$\end{tabular}}}}%
    \put(0,0){\includegraphics[width=\unitlength,page=2]{exvnegr.pdf}}%
    \put(0.24347839,0.11139491){\color[rgb]{1,0,0}\makebox(0,0)[lt]{\lineheight{1.25}\smash{\begin{tabular}[t]{l}\textcolor[rgb]{1,0,0}{$V$}\end{tabular}}}}%
    \put(0.41699647,0.181065){\color[rgb]{0,0,1}\makebox(0,0)[lt]{\lineheight{1.25}\smash{\begin{tabular}[t]{l}\textcolor[rgb]{0,0,1}{$\partial_U^s V$}\end{tabular}}}}%
    \put(0,0){\includegraphics[width=\unitlength,page=3]{exvnegr.pdf}}%
  \end{picture}%
\endgroup%

\caption{Illustration for Example~\ref{vnegr}}
\end{figure}

\begin{Ex}\label{extfor}
Let $U=\clos\Omega_0\setminus\Omega_1$, where the sets $\Omega_0,\,\Omega_1\subseteq\mathbb{R}^2$ are open and convex, $\Omega_0$ is nonempty and strictly convex, and $\clos\Omega_1\subseteq\Omega_0$. Let the set $V$ be the one shown in Fig.~\ref{extpic}. Then $\partial_U^s V$ is the union of the two half-intervals $a,\,b$ shown in Fig.~\ref{extpic}.
\end{Ex}
\begin{figure}[h]\centering
\def\svgwidth{8cm}
%% Creator: Inkscape 1.3.2 (091e20e, 2023-11-25, custom), www.inkscape.org
%% PDF/EPS/PS + LaTeX output extension by Johan Engelen, 2010
%% Accompanies image file 'ext.pdf' (pdf, eps, ps)
%%
%% To include the image in your LaTeX document, write
%%   \input{<filename>.pdf_tex}
%%  instead of
%%   \includegraphics{<filename>.pdf}
%% To scale the image, write
%%   \def\svgwidth{<desired width>}
%%   \input{<filename>.pdf_tex}
%%  instead of
%%   \includegraphics[width=<desired width>]{<filename>.pdf}
%%
%% Images with a different path to the parent latex file can
%% be accessed with the `import' package (which may need to be
%% installed) using
%%   \usepackage{import}
%% in the preamble, and then including the image with
%%   \import{<path to file>}{<filename>.pdf_tex}
%% Alternatively, one can specify
%%   \graphicspath{{<path to file>/}}
%% 
%% For more information, please see info/svg-inkscape on CTAN:
%%   http://tug.ctan.org/tex-archive/info/svg-inkscape
%%
\begingroup%
  \makeatletter%
  \providecommand\color[2][]{%
    \errmessage{(Inkscape) Color is used for the text in Inkscape, but the package 'color.sty' is not loaded}%
    \renewcommand\color[2][]{}%
  }%
  \providecommand\transparent[1]{%
    \errmessage{(Inkscape) Transparency is used (non-zero) for the text in Inkscape, but the package 'transparent.sty' is not loaded}%
    \renewcommand\transparent[1]{}%
  }%
  \providecommand\rotatebox[2]{#2}%
  \newcommand*\fsize{\dimexpr\f@size pt\relax}%
  \newcommand*\lineheight[1]{\fontsize{\fsize}{#1\fsize}\selectfont}%
  \ifx\svgwidth\undefined%
    \setlength{\unitlength}{575.43307087bp}%
    \ifx\svgscale\undefined%
      \relax%
    \else%
      \setlength{\unitlength}{\unitlength * \real{\svgscale}}%
    \fi%
  \else%
    \setlength{\unitlength}{\svgwidth}%
  \fi%
  \global\let\svgwidth\undefined%
  \global\let\svgscale\undefined%
  \makeatother%
  \begin{picture}(1,0.39408867)%
    \lineheight{1}%
    \setlength\tabcolsep{0pt}%
    \put(0,0){\includegraphics[width=\unitlength,page=1]{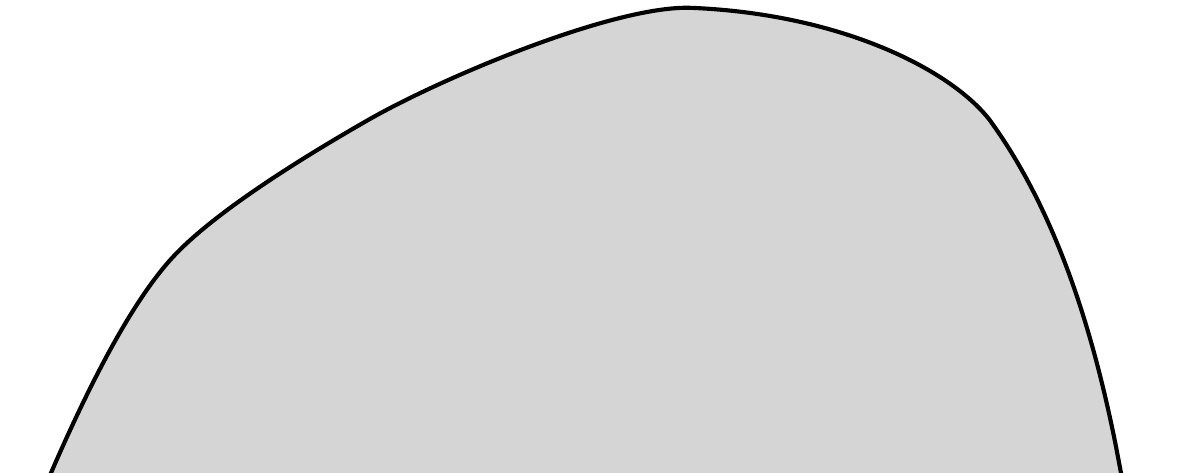}}%
    \put(0.71933848,0.26256591){\color[rgb]{0,0,0}\makebox(0,0)[lt]{\lineheight{1.25}\smash{\begin{tabular}[t]{l}$U$\end{tabular}}}}%
    \put(0,0){\includegraphics[width=\unitlength,page=2]{ext.pdf}}%
    \put(0.29211602,0.2302422){\color[rgb]{1,0,0}\makebox(0,0)[lt]{\lineheight{1.25}\smash{\begin{tabular}[t]{l}\textcolor[rgb]{1,0,0}{$V$}\end{tabular}}}}%
    \put(0,0){\includegraphics[width=\unitlength,page=3]{ext.pdf}}%
    \put(0.198469,0.18040068){\color[rgb]{0,0,1}\makebox(0,0)[lt]{\lineheight{1.25}\smash{\begin{tabular}[t]{l}\textcolor[rgb]{0,0,1}{$a$}\end{tabular}}}}%
    \put(0.57730215,0.31258778){\color[rgb]{0,0,1}\makebox(0,0)[lt]{\lineheight{1.25}\smash{\begin{tabular}[t]{l}\textcolor[rgb]{0,0,1}{$b$}\end{tabular}}}}%
    \put(0.27108359,0.05355553){\color[rgb]{0,0,1}\makebox(0,0)[lt]{\lineheight{1.25}\smash{\begin{tabular}[t]{l}\textcolor[rgb]{0,0,1}{$\partial_U^s V=a \cup b$}\end{tabular}}}}%
  \end{picture}%
\endgroup%

\caption{Illustration for Example~\ref{extfor}}\label{extpic}
\end{figure}

\begin{Ex}\label{exhal}
Let $X$ be a linear space over the field $\mathbb{R}$, and let $Y$ be its affine subspace of codimension $1$. Then $Y$ divides $X$ into two closed half-spaces $Y^+$ and $Y^-$. Let $U\subseteq X$, and let $V$ be one of the halves into which $Y$ divides $U$; in other words, $V=U\cap(Y\cup Y^+)$. Then $\partial_U^s V\subseteq U\cap Y$.
\end{Ex}
\begin{figure}[h]\centering
\def\svgwidth{6cm}
%% Creator: Inkscape 1.3.2 (091e20e, 2023-11-25, custom), www.inkscape.org
%% PDF/EPS/PS + LaTeX output extension by Johan Engelen, 2010
%% Accompanies image file 'hal.pdf' (pdf, eps, ps)
%%
%% To include the image in your LaTeX document, write
%%   \input{<filename>.pdf_tex}
%%  instead of
%%   \includegraphics{<filename>.pdf}
%% To scale the image, write
%%   \def\svgwidth{<desired width>}
%%   \input{<filename>.pdf_tex}
%%  instead of
%%   \includegraphics[width=<desired width>]{<filename>.pdf}
%%
%% Images with a different path to the parent latex file can
%% be accessed with the `import' package (which may need to be
%% installed) using
%%   \usepackage{import}
%% in the preamble, and then including the image with
%%   \import{<path to file>}{<filename>.pdf_tex}
%% Alternatively, one can specify
%%   \graphicspath{{<path to file>/}}
%% 
%% For more information, please see info/svg-inkscape on CTAN:
%%   http://tug.ctan.org/tex-archive/info/svg-inkscape
%%
\begingroup%
  \makeatletter%
  \providecommand\color[2][]{%
    \errmessage{(Inkscape) Color is used for the text in Inkscape, but the package 'color.sty' is not loaded}%
    \renewcommand\color[2][]{}%
  }%
  \providecommand\transparent[1]{%
    \errmessage{(Inkscape) Transparency is used (non-zero) for the text in Inkscape, but the package 'transparent.sty' is not loaded}%
    \renewcommand\transparent[1]{}%
  }%
  \providecommand\rotatebox[2]{#2}%
  \newcommand*\fsize{\dimexpr\f@size pt\relax}%
  \newcommand*\lineheight[1]{\fontsize{\fsize}{#1\fsize}\selectfont}%
  \ifx\svgwidth\undefined%
    \setlength{\unitlength}{277.79527559bp}%
    \ifx\svgscale\undefined%
      \relax%
    \else%
      \setlength{\unitlength}{\unitlength * \real{\svgscale}}%
    \fi%
  \else%
    \setlength{\unitlength}{\svgwidth}%
  \fi%
  \global\let\svgwidth\undefined%
  \global\let\svgscale\undefined%
  \makeatother%
  \begin{picture}(1,0.78571429)%
    \lineheight{1}%
    \setlength\tabcolsep{0pt}%
    \put(0,0){\includegraphics[width=\unitlength,page=1]{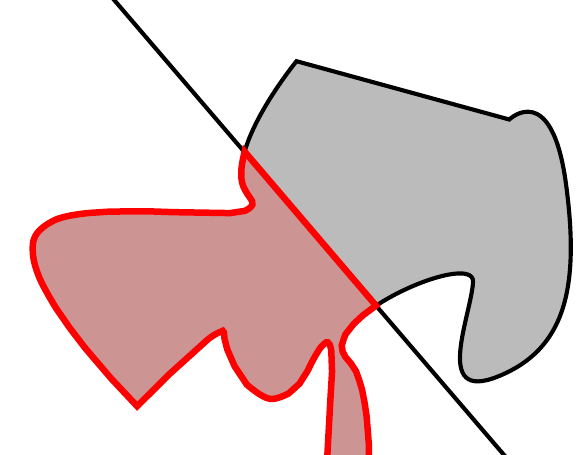}}%
    \put(0.62671646,0.35522095){\color[rgb]{0,0,0}\makebox(0,0)[lt]{\lineheight{1.25}\smash{\begin{tabular}[t]{l}$U$\end{tabular}}}}%
    \put(0.2647373,0.69602034){\color[rgb]{0,0,0}\makebox(0,0)[lt]{\lineheight{1.25}\smash{\begin{tabular}[t]{l}$Y$\end{tabular}}}}%
    \put(0.6710012,0.67484083){\color[rgb]{0,0,0}\makebox(0,0)[lt]{\lineheight{1.25}\smash{\begin{tabular}[t]{l}$Y^+$\end{tabular}}}}%
    \put(-0.00289588,0.47459707){\color[rgb]{0,0,0}\makebox(0,0)[lt]{\lineheight{1.25}\smash{\begin{tabular}[t]{l}$Y^-$\end{tabular}}}}%
    \put(0.18772051,0.28590588){\color[rgb]{1,0,0}\makebox(0,0)[lt]{\lineheight{1.25}\smash{\begin{tabular}[t]{l}\textcolor[rgb]{1,0,0}{$V$}\end{tabular}}}}%
    \put(0,0){\includegraphics[width=\unitlength,page=2]{hal.pdf}}%
    \put(0.51504205,0.44764118){\color[rgb]{0,0,1}\makebox(0,0)[lt]{\lineheight{1.25}\smash{\begin{tabular}[t]{l}\textcolor[rgb]{0,0,1}{$\partial_U^s V$}\end{tabular}}}}%
  \end{picture}%
\endgroup%

\caption{Illustration for Example~\ref{exhal}}\label{halpic}
\end{figure}
\begin{Rem}\label{otgkc}
If $X$ is a topological vector space over $\mathbb{R}$, then the following chain of inclusions:
\begin{equation}
\partial_U^s V\subseteq\partial_U V\subseteq \partial V
\end{equation}
holds. If $V$ is closed in $U$, then $V$ is closed with respect to segments in $U$.
\end{Rem}

\section{Piecewise definition of locally concave functions.}\label{seccuszalo}
Let $X$ be a linear space over $\mathbb{R}$, and let $\Omega\subseteq X$ be a subset. A function $B\colon\Omega\to\re$ is called locally concave if its restriction to any segment lying in its domain is concave. That is, for any points $x,\,y\in X$ such that $[x,\,y]\subseteq\Omega$, the function $B|_{[x,\,y]}$ is concave.

\begin{Le}\label{kuslocvec}
Let $X$ be a vector space over $\mathbb{R}$. Let $U\subseteq X$, and let $V\subseteq U$ be closed with respect to segments in $U$. Let $B\colon U\to \re$ be a locally concave function. Let $B^{\circ}\colon V\to\re$ be a locally concave function such that $B^{\circ}|_{\partial_U^s V}=B|_{\partial_U^s V}$ and, for any $x\in V$, the inequality $B^{\circ}(x)\leqslant B(x)$ holds. Then the function $B'\colon U\to\re$ defined by the formula
\begin{equation}
B'(x)=
\begin{cases}
B(x),&x\in U\setminus V,\\
B^{\circ}(x),&x\in V,
\end{cases}
\end{equation}
is locally concave.
\end{Le}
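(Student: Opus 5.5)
Local concavity only involves segments, so the plan is to fix an arbitrary segment $[a,\,b]\subseteq U$ and reduce to the one-dimensional statement of Lemma~\ref{kusotrpolob}. We must show that $B'|_{[a,\,b]}$ is concave. By assumption $B|_{[a,\,b]}$ is concave, since $B$ is locally concave on $U$. Put $W=V\cap[a,\,b]$ and $B^{\circ}_0=B^{\circ}|_W$. Then $B'|_{[a,\,b]}$ is exactly the piecewise function~\eqref{kusfor} built from $B|_{[a,\,b]}$ and $B^{\circ}_0$ on the set $W$. Identifying $[a,\,b]$ with a real segment via an affine parametrization, it suffices to check the three hypotheses of Lemma~\ref{kusotrpolob}.

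\emph{First hypothesis: $W$ is closed with respect to segments in $[a,\,b]$.} Take $x,\,y\in[a,\,b]$ with $(x,\,y)\subseteq W$. Then $[x,\,y]\subseteq[a,\,b]\subseteq U$ and $(x,\,y)\subseteq V$. Since $V$ is closed with respect to segments in $U$ (Definition~\ref{closeotr}), we get $x,\,y\in V$, hence $x,\,y\in W$.

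\emph{Second hypothesis: local concavity and domination.} $B^{\circ}_0$ is locally concave on $W$, because every segment contained in $W$ is contained in $V$. The inequality $B^{\circ}_0\leqslant B$ on $W$ is inherited directly from the assumption $B^{\circ}\leqslant B$ on $V$.

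\emph{Third hypothesis: agreement on the inner boundary.} This is the only step needing care: we must match the boundary notion of Lemma~\ref{kusotrpolob} with $\partial_U^s V$. Lemma~\ref{kusotrpolob} asks that $B=B^{\circ}_0$ on $(\partial W\setminus\{a,\,b\})\cap W$, where $\partial$ is the boundary in the line $\aff(\{a,\,b\})$. Take a point $z$ in this set. Then $z\in(a,\,b)$, $z\in V$, and $z$ lies in the boundary of $W$ in that line. Inside the open interval $(a,\,b)$ the sets $W$ and $V\cap(a,\,b)$ coincide, so $z$ is also in the boundary of $V\cap(a,\,b)$ relative to $\aff(\{a,\,b\})$. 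Therefore $z\in\partial_a^bV$ by Definition~\ref{grintdef}. Since $[a,\,b]\subseteq U$, Definition~\ref{bounotr} gives $z\in\partial_U^sV$, and the hypothesis $B^{\circ}|_{\partial_U^s V}=B|_{\partial_U^s V}$ yields $B(z)=B^{\circ}(z)$.

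With all three hypotheses verified, Lemma~\ref{kusotrpolob} shows that $B'|_{[a,\,b]}$ is concave. As $[a,\,b]\subseteq U$ was arbitrary, $B'$ is locally concave. The degenerate case $a=b$ is trivial.
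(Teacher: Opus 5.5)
Your proposal is correct and takes the same route as the paper. You restrict to an arbitrary segment $[a,\,b]\subseteq U$, observe that $V\cap[a,\,b]$ inherits closedness with respect to segments and that the one-dimensional boundary points lie in $\partial_a^b V\subseteq\partial_U^s V$, and then apply Lemma~\ref{kusotrpolob}. Your locality argument inside $(a,\,b)$ simply spells out the identity $\partial_a^b V=\partial_a^b(V\cap[a,\,b])$, which the paper uses without comment.
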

\begin{proof}
Let $a,\,b\in X$ be such that $[a,\,b]\subseteq U$. Then the function $B|_{[a,\,b]}$ is concave, and the function $B^{\circ}|_{[a,\,b]\cap V}$ is locally concave. The latter function does not exceed the former, and they coincide on the subset
\begin{equation}
[a,\,b]\cap\partial_U^s V\supseteq\partial_a^b V=\partial_a^b (V\cap[a,\,b]).
\end{equation}
The set $V$, in its turn, is closed with respect to segments in $U$, and hence $V\cap[a,\,b]$ is closed with respect to segments in $[a,\,b]$. Therefore, by Lemma~\ref{kusotrpolob} and Remark~\ref{zampootequsle}, the function $B'|_{[a,\,b]}$ is concave, which proves the local concavity of $B'$.
\end{proof}
\begin{Cor}\label{kusloc}
Let $U\subseteq\mathbb{R}^d$, and let the set $V\subseteq U$ be closed in $U$. Let $B\colon U\to \re$ be a locally concave function. Let $B^{\circ}\colon V\to \re$ be a locally concave function such that $B^{\circ}|_{\partial_U V}=B|_{\partial_U V}$ and, for any element $x\in V$, the inequality $B^{\circ}(x)\leqslant B(x)$ holds. Then the function $B'\colon U\to\re$ defined by the formula
\begin{equation}
B'(x)=
\begin{cases}
B(x),&x\in U\setminus V,\\
B^{\circ}(x),&x\in V,
\end{cases}
\end{equation}
is locally concave.
\end{Cor}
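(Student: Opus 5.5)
This is a direct reduction to Lemma~\ref{kuslocvec}, applied with $X=\mathbb{R}^d$. Two of its hypotheses need to be checked.

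First, $V$ must be closed with respect to segments in $U$. Since $V$ is closed in $U$ and $\mathbb{R}^d$ is a topological vector space, Remark~\ref{otgkc} gives this directly. To check it by hand: let $x,y\in U$ with $(x,\,y)\subseteq V$. Then $x$ and $y$ are limits of points of $(x,\,y)\subseteq V$, so both lie in $\clos V\cap U=\clos_U V=V$.

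Second, the equality $B^{\circ}|_{\partial_U^s V}=B|_{\partial_U^s V}$ must hold. We are given equality on $\partial_U V$, so it suffices to show $\partial_U^s V\subseteq\partial_U V$, which is again the chain of inclusions in Remark~\ref{otgkc}.

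This inclusion is the only step needing any argument, so I verify it. Take $z\in\partial_a^b V$ with $[a,\,b]\subseteq U$. Then $z\in V$ and $z\in(a,\,b)$. Moreover, $z$ is a boundary point of $V\cap(a,\,b)$ within the line $\aff(\{a,\,b\})$.

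Since $z\in V$ and $V\cap(a,\,b)$ is not a neighbourhood of $z$ in the line, there are points $z_n\to z$ on that line with $z_n\notin V\cap(a,\,b)$. Because $z$ lies in the open interval $(a,\,b)$, we may take $z_n\in(a,\,b)$ for large $n$. Then $z_n\in U\setminus V$.

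So every neighbourhood of $z$ meets $U\setminus V$, and $z\in V\subseteq U$. Hence $z\in\partial_U V$, as required.

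With both hypotheses verified, the remaining hypotheses of Lemma~\ref{kuslocvec} (local concavity of $B$ and $B^{\circ}$, and $B^{\circ}\leqslant B$ on $V$) are assumed in the statement. The lemma then yields that $B'$ is locally concave.

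No step is a real obstacle. The only points needing care are the inclusion $\partial_U^s V\subseteq\partial_U V$ and the use of closedness of $V$ in $U$ to obtain segment-closedness.
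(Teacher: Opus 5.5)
Your proposal is correct and follows the paper's own proof. Both reduce the corollary to Lemma~\ref{kuslocvec} via Remark~\ref{otgkc}, using that closedness in $U$ implies closedness with respect to segments, and that $\partial_U^s V\subseteq\partial_U V$ transfers the boundary agreement; your by-hand checks of these two facts are sound.
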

\begin{proof}
By Remark~\ref{otgkc} the set $V$ is closed with respect to the segments in $U$ and $B^{\circ}|_{\partial_U^s V}=B|_{\partial_U^s V}$, since $\partial_U^s V\subseteq\partial_U V$. Thus, the function $B'$ is locally concave by Lemma~\ref{kuslocvec}.
\end{proof}

\section{The hereditary property of minimal locally concave functions.}\label{secprnasl}
Let $X$ be a linear space over the field $\mathbb{R}$, and let $\Omega\subseteq X$ be a subset. The set $\Omega$ will be the domain of a minimal locally concave function. Let $f\colon\Omega\to\re$ be a function.
\begin{Def}
The minimal locally concave function $\BG=\BG_{f,\,\Omega}$ is defined by the formula
\begin{equation}
\BG(x)=\inf\left\{B(x)\big|\,B\colon\Omega\to\re\text{ is locally concave and }B\geqslant f\right\},\quad x\in\Omega.
\end{equation}
In this case, the function $f$ will be called the obstacle of the minimal locally concave function $\BG_{f,\,\Omega}$. 
\end{Def}
Note that $\BG$ may take infinite values.
\begin{Rem}
The function $\BG_{f,\,\Omega}$ is locally concave on $\Omega$.
\end{Rem}
In this section, we prove generalizations of Lemma 2.6 from~\cite{StZa}. 
\begin{Th}\label{kuseeinvec}
Let $X$ be a vector space over $\mathbb{R}$. Let $\Omega\subseteq X$, $f\colon \Omega\to\re$. Let the set $\Omega^{\circ}\subseteq\Omega$ be closed with respect to segments in $\Omega$, and let the function $f^{\circ}\colon\Omega^{\circ}\to\re$ be defined by the formula
\begin{equation}
f^{\circ}(x)=
\begin{cases}
f(x),&x\in \Omega^{\circ}\setminus \partial_{\Omega}^s\Omega^{\circ},\\
\BG_{f,\,\Omega}(x),&x\in \partial_{\Omega}^s\Omega^{\circ}.
\end{cases}
\end{equation}
Then $\BG_{f,\,\Omega}|_{\Omega^{\circ}}=\BG_{f^{\circ},\,\Omega^{\circ}}$.
\end{Th}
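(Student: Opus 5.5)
We prove the two inequalities $\BG_{f,\,\Omega}|_{\Omega^{\circ}}\geqslant\BG_{f^{\circ},\,\Omega^{\circ}}$ and $\BG_{f,\,\Omega}|_{\Omega^{\circ}}\leqslant\BG_{f^{\circ},\,\Omega^{\circ}}$ separately. Throughout we write $\BG=\BG_{f,\,\Omega}$ and $\BG^{\circ}=\BG_{f^{\circ},\,\Omega^{\circ}}$.

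\textbf{First inequality: $\BG|_{\Omega^{\circ}}\geqslant\BG^{\circ}$.} The restriction $\BG|_{\Omega^{\circ}}$ is locally concave on $\Omega^{\circ}$, because every segment lying in $\Omega^{\circ}$ also lies in $\Omega$. It majorizes $f^{\circ}$. On $\Omega^{\circ}\setminus\partial_{\Omega}^s\Omega^{\circ}$ we have $\BG\geqslant f=f^{\circ}$. On $\partial_{\Omega}^s\Omega^{\circ}$ we have $\BG=f^{\circ}$ by definition. So $\BG|_{\Omega^{\circ}}$ is a competitor in the infimum defining $\BG^{\circ}$, and the inequality follows.

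\textbf{Second inequality: $\BG|_{\Omega^{\circ}}\leqslant\BG^{\circ}$.} This is the substantive direction. We glue $\BG$ and $\BG^{\circ}$ using Lemma~\ref{kuslocvec}. Take $U=\Omega$, $V=\Omega^{\circ}$ (closed with respect to segments by hypothesis), $B=\BG$ and $B^{\circ}=\BG^{\circ}$. We then check the two hypotheses of that lemma.
\begin{enumerate}
\item The inequality $\BG^{\circ}\leqslant\BG$ on $\Omega^{\circ}$ is exactly the first inequality.
\item On $\partial_{\Omega}^s\Omega^{\circ}$ we have $\BG^{\circ}\geqslant f^{\circ}=\BG$, and combined with item 1 this gives $\BG^{\circ}=\BG$ there.
\end{enumerate}
Lemma~\ref{kuslocvec} therefore shows that the function $B'$, equal to $\BG$ on $\Omega\setminus\Omega^{\circ}$ and to $\BG^{\circ}$ on $\Omega^{\circ}$, is locally concave on $\Omega$.

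It remains to verify $B'\geqslant f$ on $\Omega$.
\begin{itemize}
\item Off $\Omega^{\circ}$, $B'=\BG\geqslant f$.
\item On $\Omega^{\circ}\setminus\partial_{\Omega}^s\Omega^{\circ}$, $B'=\BG^{\circ}\geqslant f^{\circ}=f$.
\item On $\partial_{\Omega}^s\Omega^{\circ}$, $B'=\BG^{\circ}=\BG\geqslant f$.
\end{itemize}
Hence $B'$ is a competitor for $\BG$, so $\BG\leqslant B'$. Restricting to $\Omega^{\circ}$ gives $\BG\leqslant\BG^{\circ}$ there.

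\textbf{Main obstacle: infinite values.} The minimal functions may equal $\pm\infty$, so we must make sure the gluing lemma remains valid in that case. Lemmas~\ref{lepo}--\ref{kusotrpolob} were stated for $\re$-valued concave functions, so they already cover this; we only need to note that the infimum of locally concave functions is locally concave with the subgraph convention, which is the Remark after the definition of $\BG$. The degenerate case where no competitor exists is handled by the convention that the infimum of the empty set is $+\infty$, in which case both sides equal $+\infty$. Finally, the coincidence of $B$ and $B^{\circ}$ is needed only on $\partial_{\Omega}^s\Omega^{\circ}$, which is exactly the set where $f^{\circ}$ was redefined, so no further boundary comparison is required.
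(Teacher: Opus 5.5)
Your proposal is correct and follows the paper's proof essentially step for step. You first use $\BG_{f,\,\Omega}|_{\Omega^{\circ}}$ as a competitor to get $\BG_{f,\,\Omega}|_{\Omega^{\circ}}\geqslant\BG_{f^{\circ},\,\Omega^{\circ}}$ and hence equality on $\partial_{\Omega}^s\Omega^{\circ}$; then you glue via Lemma~\ref{kuslocvec} into a locally concave majorant of $f$ on $\Omega$, whose comparison with the minimal function gives the reverse inequality. Your caveat about an empty infimum is unnecessary, since the constant $+\infty$ is always an admissible locally concave majorant under the subgraph convention.
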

\begin{proof}
First observe that $\BG_{f,\,\Omega}|_{\Omega^{\circ}}\geqslant\BG_{f^{\circ},\,\Omega^{\circ}}$. This follows from the fact that the function $\BG_{f,\,\Omega}|_{\Omega^{\circ}}$ is locally concave and $\BG_{f,\,\Omega}|_{\Omega^{\circ}}\geqslant f^{\circ}$. In particular, this implies that $\BG_{f,\,\Omega}|_{\partial_{\Omega}^s \Omega^{\circ}}=\BG_{f^{\circ},\,\Omega^{\circ}}|_{\partial_{\Omega}^s\Omega^{\circ}}$. Then, by Lemma~\ref{kuslocvec}, the function $\BG'\colon\Omega\to\re$, defined by the formula
\begin{equation}
\BG'(x)=
\begin{cases}
\BG_{f,\,\Omega}(x),&x\in \Omega\setminus \Omega^{\circ},\\
\BG_{f^{\circ},\,\Omega^{\circ}}(x),&x\in \Omega^{\circ},
\end{cases}
\end{equation}
is locally concave. On the other hand, it is easy to see that $\BG'\leqslant\BG_{f,\,\Omega}$ and $\BG'\geqslant f$, and hence $\BG'=\BG_{f,\,\Omega}$. Therefore, $\BG_{f,\,\Omega}|_{\Omega^{\circ}}=\BG_{f^{\circ},\,\Omega^{\circ}}$.
\end{proof}

\begin{Th}\label{prinnasotgr}
Let $\Omega\subseteq\mathbb{R}^d$, $f\colon \Omega\to\re$. Let the set $\Omega^{\circ}\subseteq\Omega$ be closed in $\Omega$, and let $f^{\circ}\colon\Omega^{\circ}\to\re$ be defined by
\begin{equation}
f^{\circ}(x)=
\begin{cases}
f(x),&x\in \Omega^{\circ}\setminus \partial_{\Omega}\Omega^{\circ},\\
\BG_{f,\,\Omega}(x),&x\in \partial_{\Omega}\Omega^{\circ}.
\end{cases}
\end{equation}
Then $\BG_{f,\,\Omega}|_{\Omega^{\circ}}=\BG_{f^{\circ},\,\Omega^{\circ}}$.
\end{Th}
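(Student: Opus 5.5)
The argument follows the proof of Theorem~\ref{kuseeinvec} almost verbatim, with Corollary~\ref{kusloc} in place of Lemma~\ref{kuslocvec}. One could try to deduce the statement directly from Theorem~\ref{kuseeinvec}, since $\Omega^{\circ}$ is closed in $\Omega$ and hence closed with respect to segments by Remark~\ref{otgkc}. However, the obstacles differ: on $\partial_{\Omega}\Omega^{\circ}\setminus\partial^s_{\Omega}\Omega^{\circ}$ the obstacle $f^{\circ}$ here equals $\BG_{f,\,\Omega}$ rather than $f$. Rather than compare the two minimal functions, I will simply redo the two-sided argument.

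\textbf{Step 1: the inequality $\BG_{f,\,\Omega}|_{\Omega^{\circ}}\geqslant\BG_{f^{\circ},\,\Omega^{\circ}}$.} The restriction $\BG_{f,\,\Omega}|_{\Omega^{\circ}}$ is locally concave on $\Omega^{\circ}$, because every segment in $\Omega^{\circ}$ is a segment in $\Omega$. It also majorizes $f^{\circ}$: off $\partial_\Omega\Omega^{\circ}$ it majorizes $f=f^{\circ}$, and on $\partial_\Omega\Omega^{\circ}$ it equals $f^{\circ}$. By the definition of the minimal locally concave function, the inequality follows. Combined with $\BG_{f^{\circ},\,\Omega^{\circ}}\geqslant f^{\circ}=\BG_{f,\,\Omega}$ on $\partial_\Omega\Omega^{\circ}$, this gives equality of the two functions on $\partial_{\Omega}\Omega^{\circ}$.

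\textbf{Step 2: gluing.} Define $\BG'$ on $\Omega$ to be $\BG_{f,\,\Omega}$ on $\Omega\setminus\Omega^{\circ}$ and $\BG_{f^{\circ},\,\Omega^{\circ}}$ on $\Omega^{\circ}$. We apply Corollary~\ref{kusloc} with $U=\Omega$, $V=\Omega^{\circ}$ (closed in $\Omega$), $B=\BG_{f,\,\Omega}$ and $B^{\circ}=\BG_{f^{\circ},\,\Omega^{\circ}}$. The hypotheses are exactly what Step 1 provides: $B^{\circ}\leqslant B$ on $V$ and $B^{\circ}=B$ on $\partial_U V$. Hence $\BG'$ is locally concave on $\Omega$. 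Moreover $\BG'\geqslant f$. On $\Omega\setminus\Omega^{\circ}$ this is clear. On $\Omega^{\circ}\setminus\partial_\Omega\Omega^{\circ}$ we have $\BG'\geqslant f^{\circ}=f$. On $\partial_\Omega\Omega^{\circ}$ we have $\BG'=\BG_{f,\,\Omega}\geqslant f$. By minimality, $\BG'\geqslant\BG_{f,\,\Omega}$; together with $\BG'\leqslant\BG_{f,\,\Omega}$ from Step 1, this gives $\BG_{f^{\circ},\,\Omega^{\circ}}=\BG_{f,\,\Omega}$ on $\Omega^{\circ}$.

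The only potentially delicate point is that Corollary~\ref{kusloc} needs coincidence on all of $\partial_\Omega\Omega^{\circ}$, not merely on $\partial^s_\Omega\Omega^{\circ}$. This is why the obstacle is redefined on the full relative boundary, and Step 1 supplies exactly that coincidence. Infinite values cause no trouble, since all comparisons are pointwise in $\re$.
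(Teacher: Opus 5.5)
Your proposal is correct and is essentially the paper's proof. The paper proves Theorem~\ref{prinnasotgr} by repeating the argument of Theorem~\ref{kuseeinvec} with Corollary~\ref{kusloc} in place of Lemma~\ref{kuslocvec}: the restriction of $\BG_{f,\,\Omega}$ serves as a competitor, which gives the inequality and coincidence on $\partial_{\Omega}\Omega^{\circ}$; gluing and minimality then finish the argument, exactly as in your Steps 1 and 2.
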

\begin{proof}
The proof is similar to the previous one, except that the theorem reduces to Corollary~\ref{kusloc} rather than to Lemma~\ref{kuslocvec}.
\end{proof}

\part{On the structure of minimal locally concave functions.}\label{prstrmlcf}
\section{Relative convexity.}\label{secotnvip}
In what follows, we restrict ourselves to the finite-dimensional case $\Omega\subseteq\mathbb{R}^d$, and we will always assume that the function $\BG|_{\intX\Omega}$ is pointwise finite. This condition implies two restrictions:
\begin{equation}
\BG|_{\intX\Omega}\ne-\infty,
\end{equation}
\begin{equation}
\BG|_{\intX\Omega}\ne+\infty.
\end{equation}
The first condition imposes certain restrictions on the geometry of the domain $\Omega$ and on the set $C=\{x\in\Omega|\,f(x)\ne-\infty\}$. They can be formulated in terms of the relative convex hull as follows.
\begin{Def}\label{otnositvip}
Let $X\subseteq\Omega\subseteq\mathbb{R}^d$. We say that the set $X$ is relatively convex in the domain $\Omega$ if, for any points $x,\,y\in X$ such that the segment $[x,\,y]$ lies in $\Omega$, the segment $[x,\,y]$ as well lies in~$X$.
\end{Def}
\begin{Def}\label{relconv}
Let $X\subseteq\Omega\subseteq\mathbb{R}^d$. The relative convex hull of the set $X$ in the domain $\Omega$ is the inclusion-minimal superset of $X$ that is relatively convex in $\Omega$; it is denoted by $\conv_{\Omega}(X)$.
\end{Def}
\begin{Ex}\label{inconv}
Let $\Omega\subseteq\mathbb{R}^2$ be an annulus \textup{(}see Fig.~\ref{pikkinconv}\textup{)}, and let $X\subseteq\Omega$ be the lower semicircle of the outer boundary of $\Omega$. Then $\conv X$ is the lower semidisk, whereas $\conv_{\Omega}X$ is the domain bounded by the tangents to the inner circle of $\Omega$.
\end{Ex}
\begin{figure}[h]\centering
\def\svgwidth{8cm}
%% Creator: Inkscape 1.3.2 (091e20e, 2023-11-25, custom), www.inkscape.org
%% PDF/EPS/PS + LaTeX output extension by Johan Engelen, 2010
%% Accompanies image file 'relconv.pdf' (pdf, eps, ps)
%%
%% To include the image in your LaTeX document, write
%%   \input{<filename>.pdf_tex}
%%  instead of
%%   \includegraphics{<filename>.pdf}
%% To scale the image, write
%%   \def\svgwidth{<desired width>}
%%   \input{<filename>.pdf_tex}
%%  instead of
%%   \includegraphics[width=<desired width>]{<filename>.pdf}
%%
%% Images with a different path to the parent latex file can
%% be accessed with the `import' package (which may need to be
%% installed) using
%%   \usepackage{import}
%% in the preamble, and then including the image with
%%   \import{<path to file>}{<filename>.pdf_tex}
%% Alternatively, one can specify
%%   \graphicspath{{<path to file>/}}
%% 
%% For more information, please see info/svg-inkscape on CTAN:
%%   http://tug.ctan.org/tex-archive/info/svg-inkscape
%%
\begingroup%
  \makeatletter%
  \providecommand\color[2][]{%
    \errmessage{(Inkscape) Color is used for the text in Inkscape, but the package 'color.sty' is not loaded}%
    \renewcommand\color[2][]{}%
  }%
  \providecommand\transparent[1]{%
    \errmessage{(Inkscape) Transparency is used (non-zero) for the text in Inkscape, but the package 'transparent.sty' is not loaded}%
    \renewcommand\transparent[1]{}%
  }%
  \providecommand\rotatebox[2]{#2}%
  \newcommand*\fsize{\dimexpr\f@size pt\relax}%
  \newcommand*\lineheight[1]{\fontsize{\fsize}{#1\fsize}\selectfont}%
  \ifx\svgwidth\undefined%
    \setlength{\unitlength}{374.25bp}%
    \ifx\svgscale\undefined%
      \relax%
    \else%
      \setlength{\unitlength}{\unitlength * \real{\svgscale}}%
    \fi%
  \else%
    \setlength{\unitlength}{\svgwidth}%
  \fi%
  \global\let\svgwidth\undefined%
  \global\let\svgscale\undefined%
  \makeatother%
  \begin{picture}(1,1.00200401)%
    \lineheight{1}%
    \setlength\tabcolsep{0pt}%
    \put(0,0){\includegraphics[width=\unitlength,page=1]{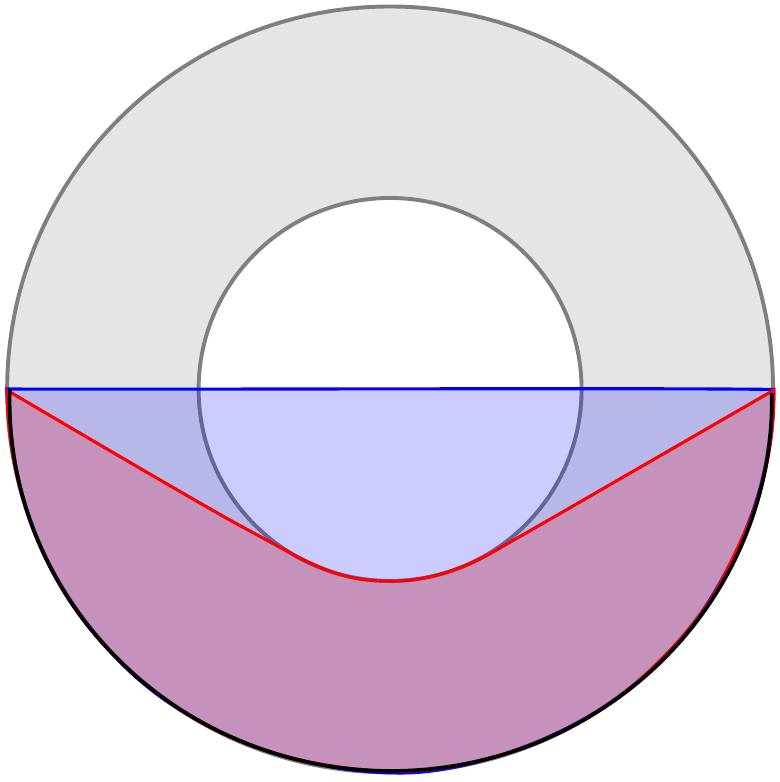}}%
    \put(0.88154214,0.15756813){\color[rgb]{0,0,0}\makebox(0,0)[lt]{\lineheight{1.25}\smash{\begin{tabular}[t]{l}$X$\end{tabular}}}}%
    \put(0.54591006,0.67757864){\color[rgb]{0,0,0}\makebox(0,0)[lt]{\lineheight{1.25}\smash{\begin{tabular}[t]{l}$\textcolor[rgb]{0.5,0.5,0.5}{\Omega}$\end{tabular}}}}%
    \put(0.40870553,0.51770558){\color[rgb]{0,0,0}\makebox(0,0)[lt]{\lineheight{1.25}\smash{\begin{tabular}[t]{l}$\textcolor[rgb]{0,0,1}{\conv X}$\end{tabular}}}}%
    \put(0.47551811,0.30891599){\color[rgb]{0,0,0}\makebox(0,0)[lt]{\lineheight{1.25}\smash{\begin{tabular}[t]{l}$\textcolor[rgb]{1,0,0}{\conv_{\Omega}X}$\end{tabular}}}}%
  \end{picture}%
\endgroup%

\caption{Illustration for Example~\ref{inconv}}\label{pikkinconv}
\end{figure}

The condition that $\BG$ does not attain the value $-\infty$ at interior points is equivalent to the inclusion $\intX\Omega\subseteq\conv_{\Omega}(C)$. 

The second condition bounds the growth of the function $f$ toward $+\infty$. Its precise form depends on the domain $\Omega$ and on the rate at which the negative part of $f$ tends to $-\infty$. In the case where $\Omega=\Omega_0\setminus\Omega_1\subseteq\mathbb{R}^2$ is the difference of two convex bodies and the obstacle is defined on the outer boundary ($f|_{\Omega\setminus\partial\Omega_0}\equiv-\infty$), sufficient conditions on $f$ ensuring that $\BG$ does not attain the value $+\infty$ are described in Condition 2.1.13 of~\cite{talmudeIII}; the paragraph immediately following Condition 2.1.13 gives necessary conditions on $f$ that are close to sufficient. Theorem 1.16 of~\cite{Novi} gives an sharp criterion under which, for nonnegative boundary obstacles, the minimal biconcave function in the diagonal strip is finite; together with Theorem 1.3.4 of~\cite{StaVaZaMRTR}, this gives an sharp criterion for the finiteness of the minimal locally concave function in the parabolic strip with continuous nonnegative boundary data.

\section{Contact zone.}\label{seczcon}
Since the function $\BG$ is pointwise finite at interior points, it is continuous there (see Theorem 10.1 of~\cite{Rockafellar}). Together with the inequality $\BG\geqslant f$, this gives the estimate that at interior points $\BG$ is bounded from below by the upper semicontinuous majorant of $f$:
\begin{equation}\label{podpor}
\BG(x)\geqslant\supfu{f}(x),
\end{equation}
for any $x\in\intX\Omega$.
\begin{Def}
The contact zone of the minimal locally concave function $\BG$ and the obstacle $f$ is the set of points of $\intX\Omega$ for which inequality~\eqref{podpor} becomes an equality. We denote the contact zone by the symbol
\begin{equation}\label{oprE_0for}
E_0
=
E_0(f,\,\Omega)
=
\left\{
x\in\intX\Omega\big|\,\BG(x)=\supfu{f}(x)
\right\}.
\end{equation}
\end{Def}
\begin{Le}
The set $\partial\Omega\cup E_0$ is closed (that is, $E_0$ is closed in the induced topology of $\intX\Omega$).
\end{Le}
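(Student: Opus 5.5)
The plan is to prove the equivalent statement that $E_0$ is closed in $\intX\Omega$. Since $\partial\Omega$ is closed and $\clos(E_0)\subseteq\clos(\intX\Omega)\subseteq\intX\Omega\cup\partial\Omega$, this implies that $\partial\Omega\cup E_0$ is closed. So I take a sequence $x_n\in E_0$ converging to a point $x\in\intX\Omega$ and aim to show $x\in E_0$, that is, $\BG(x)=\supfu{f}(x)$.

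The inequality $\BG(x)\geqslant\supfu{f}(x)$ is already \eqref{podpor}, so only the reverse inequality is needed. It combines two semicontinuity facts.
\begin{enumerate}
\item Since $\BG$ is finite at interior points, it is continuous on $\intX\Omega$ by Theorem 10.1 of~\cite{Rockafellar}, as recalled at the start of this section. Hence $\BG(x_n)\to\BG(x)$.
\item The function $\supfu{f}$ is upper semicontinuous on $\clos\Omega$. This follows from its definition as a $\limsup$: if $\supfu{f}(x_n)>t$, then there are points $z_n\in\Omega$ with $|z_n-x_n|<1/n$ and $f(z_n)>t$, and these $z_n$ converge to $x$, which forces $\supfu{f}(x)\geqslant t$. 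Hence $\limsup_n\supfu{f}(x_n)\leqslant\supfu{f}(x)$.
\end{enumerate}
Putting these together with $\BG(x_n)=\supfu{f}(x_n)$ gives
\begin{equation*}
\BG(x)=\lim_{n}\BG(x_n)=\limsup_n\supfu{f}(x_n)\leqslant\supfu{f}(x),
\end{equation*}
so equality holds and $x\in E_0$.

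The only point needing care is the upper semicontinuity in step 2 when the values can be infinite. The definition uses $z\to y$ with $z\in\Omega$ and does not exclude $z=y$; the diagonal argument above handles that, and also covers the case $\supfu{f}(x_n)=+\infty$. In fact $\supfu{f}(x_n)=\BG(x_n)$ is finite here anyway. No other obstacle is expected.
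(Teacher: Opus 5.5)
Your proof is correct and follows the paper's argument. You reduce to closedness of $E_0$ in $\intX\Omega$, then combine continuity of $\BG$ at interior points with upper semicontinuity of $\supfu{f}$ to obtain $\BG(x)\leqslant\supfu{f}(x)$ at any interior limit point of $E_0$. The only difference is that you justify the upper semicontinuity of $\supfu{f}$ explicitly, whereas the paper uses it without comment in the last inequality of its chain.
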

\begin{proof}
The set $\partial\Omega$ is closed, and the set $E_0$ lies in the interior of $\Omega$. Consequently, it suffices to check that if a point $x\in\intX\Omega$ is a limit point of the set $E_0$, then it lies in $E_0$ itself. That is, it suffices to show that 
\begin{equation}
\BG(x)\leqslant\supfu{f}(x), \quad\text{if}\quad x\in\intX\Omega\cap\clos E_0.
\end{equation}
Indeed, $x\in\intX\Omega$, hence, the function $\BG$ is continuous at the point $x$, and we can write the following chain of equalities and inequalities:
\begin{equation}
\BG(x)
=
\lim\limits_{y\to x}\BG(y)
=
\!\!\lim\limits_{y\to x,\,y\in E_0}\!\!\BG(y)
=
\!\!\lim\limits_{y\to x,\,y\in E_0}\!\!\supfu{f}(y)
\leqslant
\supfu{f}(x).
\end{equation}
\end{proof}
The open set $\intX\Omega\setminus E_0$ will be denoted by $E_{>0}$. For this set, the inclusion $\partial E_{>0}\subseteq\partial\Omega\cup E_0$ holds.

\section{The coincidence set of an affine functional and a minimal locally concave function}\label{secmnsop}
\begin{Def}\label{mnozhsopdef}
Let $W\subseteq E_{>0}$ be a closed convex set with nonempty interior. Let $L\colon\mathbb{R}^d\to\mathbb{R}$ be an affine functional. Assume that $\BG|_W\leqslant L|_W$. Denote
\begin{equation}
U(W,\,L)=\{x\in W|\,\BG(x)=L(x)\}.
\end{equation}
We call $U$ the coincidence set of the functional $L$ and the function $\BG$ on the subset~$W$.
\end{Def}
\begin{Rem}
The set $U(W,\,L)$ is closed and convex.
\end{Rem}
\begin{Rem}\label{Ucap}
Let $W$ and $W'$ be closed convex subsets of $E_{>0}$ with nonempty interiors such that $W\cap W'$ also has nonempty interior. Assume that $L\colon\mathbb{R}^d\to \mathbb{R}$ is an affine functional such that $\BG|_W\leqslant L|_W$ and $\BG|_{W'}\leqslant L|_{W'}$. Then the following equalities:
\begin{equation}
U(W,\,L)\cap U\left(W',\,L\right)=W\cap U\left(W',\,L\right)=U(W,\,L)\cap W'=U\left(W\cap W',\,L\right)
\end{equation}
hold.
\end{Rem}
\begin{Rem}\label{sushch}
For any closed convex set $W\subseteq E_{>0}$ with nonempty interior and any point $x\in\intX W$, there exists an affine functional $L\colon\mathbb{R}^d\to\mathbb{R}$ such that $\BG|_W\leqslant L|_W$ and $x\in U(W,\,L)$.
\end{Rem}
\begin{proof}
It suffices to take for $L$ a functional supporting $\BG|_W$ at the point $x$, that is, the functional defining a supporting hyperplane to the subgraph of $\BG|_W$ at the point $(x,\,\BG(x))$.
\end{proof}
\begin{Le}\label{neighborhood inequality}
Let $W\subseteq E_{>0}$ be a closed convex set with nonempty interior, let $x\in\intX W$, and let $L\colon\mathbb{R}^d\to\mathbb{R}$ be an affine functional such that $x\in U(W,\,L)$. Assume also that $y\in\Omega$ is a point such that $[x,\,y]\subseteq\Omega$. Then $\BG(y)\leqslant L(y)$.
\end{Le}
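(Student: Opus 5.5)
The argument is a one-variable concavity estimate on the segment $[x,\,y]$. Since $[x,\,y]\subseteq\Omega$ and $\BG$ is locally concave, the function $\varphi(t)=\BG(x+t(y-x))-L(x+t(y-x))$, $t\in[0,\,1]$, is concave; this holds because $L$ is affine, so subtracting it preserves concavity. Moreover $\varphi(0)=0$, since $x\in U(W,\,L)$. We also know that $\varphi\leqslant 0$ on a neighbourhood of $0$. Indeed, $x\in\intX W$, so there is $\delta\in(0,\,1]$ with $x+t(y-x)\in W$ for $t\in[0,\,\delta]$, and on $W$ we have $\BG\leqslant L$. The goal is $\varphi(1)\leqslant 0$.

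Suppose instead that $\varphi(1)>0$. Take $t\in(0,\,\delta]$ and write $t=(1-t)\cdot 0+t\cdot 1$. Concavity gives
\begin{equation*}
\varphi(t)\geqslant(1-t)\varphi(0)+t\varphi(1)=t\varphi(1)>0,
\end{equation*}
which contradicts $\varphi(t)\leqslant 0$. So $\varphi(1)\leqslant 0$, that is, $\BG(y)\leqslant L(y)$. If $y=x$ the claim is trivial.

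The only delicate point is that $\BG$ may take the values $\pm\infty$ at points of $[x,\,y]$ lying on $\partial\Omega$, in particular at $y$. Interior values are finite by the standing assumption, and $\varphi(0)$ is finite. If $\varphi(1)=-\infty$ there is nothing to prove. If $\varphi(1)=+\infty$, the concavity of $\varphi$ in Rockafellar's sense (convex hypograph) forces $\varphi(t)=+\infty$ for some, in fact every, $t\in(0,\,1)$. This contradicts finiteness at $x+t(y-x)\in W\subseteq E_{>0}\subseteq\intX\Omega$. So the inequality argument above applies with $\varphi(1)$ finite, or at least positive, in the extended sense. I expect this bookkeeping with infinite values, rather than the main estimate, to be the only step needing care.
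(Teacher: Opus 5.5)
Your proof is correct and is essentially the paper's argument: assume $\BG(y)>L(y)$, use concavity of $\BG-L$ along $[x,\,y]$ together with $\BG(x)=L(x)$ to get $\BG>L$ on $(x,\,y]$, and contradict $\BG\leqslant L$ on $W\ni x$ near $x$. Your extra treatment of the case $\BG(y)=+\infty$ is valid and just makes explicit a point the paper leaves implicit.
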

\begin{proof}
Assume the contrary: $\BG(y)>L(y)$. We have, $\BG(x)=L(x)$, and hence, by concavity, $\BG>L$ on the half-interval $(x,\,y]$. This contradicts $L|_W\geqslant\BG|_W$ and $x\in\intX W$.
\end{proof}
\begin{Cor}\label{segment equality}
Under the assumptions of the previous lemmas, let $L(y)=\BG(y)$. Then $L(z)=\BG(z)$ for any point $z\in[x,\,y]$.
\end{Cor}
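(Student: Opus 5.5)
The plan is to restrict everything to the segment $[x,\,y]$, which lies in $\Omega$ by the hypotheses of Lemma~\ref{neighborhood inequality}. On this segment $\BG$ is concave and $L$ is affine, so the difference $\varphi=\BG-L$ is concave on $[x,\,y]$. We know $\varphi(x)=0$ (since $x\in U(W,\,L)$) and $\varphi(y)=0$ by the assumption $L(y)=\BG(y)$. We also know $\varphi\leqslant 0$ on the whole segment $[x,\,y]$, by Lemma~\ref{neighborhood inequality} applied to each point $z\in[x,\,y]$.

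For $z\in[x,\,y]$, write $z=\alpha x+(1-\alpha)y$ with $\alpha\in[0,\,1]$. Concavity then gives
\begin{equation*}
\varphi(z)\geqslant\alpha\varphi(x)+(1-\alpha)\varphi(y)=0.
\end{equation*}
Combined with $\varphi(z)\leqslant 0$, this yields $\varphi(z)=0$, that is, $\BG(z)=L(z)$.

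Two points need checking before Lemma~\ref{neighborhood inequality} can be applied at each $z$. First, $[x,\,z]\subseteq[x,\,y]\subseteq\Omega$, so the hypothesis of the lemma holds. Second, all values are finite: $\BG(x)=L(x)$ is finite, $\BG(y)=L(y)$ is finite, and concavity together with the bound $\BG\leqslant L$ keeps $\BG$ finite on the segment. This makes the subtraction defining $\varphi$ legitimate and avoids expressions like $\infty-\infty$.

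The argument is essentially immediate. The only mild subtlety is this finiteness bookkeeping. Alternatively, one can avoid subtraction altogether and argue directly with the concavity inequality $\BG(z)\geqslant\alpha\BG(x)+(1-\alpha)\BG(y)=L(z)$.
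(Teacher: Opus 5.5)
Your proof is correct and is exactly the argument the paper intends. The paper states this corollary without proof because it follows immediately from Lemma~\ref{neighborhood inequality}, which gives $\BG\leqslant L$ on $[x,\,y]$, together with concavity of $\BG$ on $[x,\,y]$ and equality at both endpoints, which gives $\BG\geqslant L$; your finiteness remark is a harmless extra check.
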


\begin{Le}\label{simploc}
Let $W\subseteq E_{>0}$ be a compact convex set with nonempty interior, let $x\in\intX W$, and let $L\colon\mathbb{R}^d\to\mathbb{R}$ be an affine functional such that $x\in U(W,\,L)$. Then there exists a simplex $C$ with vertices on $\partial W$ such that $x\in C$ and $C\subseteq U(W,\,L)$.
\end{Le}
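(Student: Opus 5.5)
The key object is $U=U(W,\,L)$. It is a compact convex set, since $W$ is compact and $U$ is closed. We want to show that every extreme point of $U$ lies on $\partial W$. Once this is known, Carath\'eodory's theorem (Theorem~\ref{carat}, with $E=\varnothing$) together with the Krein--Milman/Minkowski theorem ($U=\conv(\text{extreme points of }U)$ for compact convex $U$) gives a simplex $C$ with vertices among the extreme points of $U$ containing $x$. Then $C\subseteq U$ by convexity, and its vertices lie on $\partial W$, as required.

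So the core step is the following claim: if $z\in U\cap\intX W$, then $z$ is not an extreme point of $U$. We argue by contradiction, and here the minimality of $\BG$ is used. Suppose $z\in\intX W$ is an extreme point of $U$. Since $W\subseteq E_{>0}$, we have $\BG(z)>\supfu{f}(z)$. Using continuity of $\BG$ at interior points and the definition of $\supfu f$, choose a small closed ball $B=B_r(z)\subseteq\intX W$ and $\delta>0$ such that $f\leqslant \BG-\delta$ on $B\cap\Omega$; more precisely $f\leqslant L(z)-2\delta$ on $B$, after shrinking $r$. Because $z$ is an extreme point of the compact convex set $U$, a standard separation argument applies: $z$ can be separated from $\conv(U\setminus \intX B_{r'}(z))$ for small $r'<r$, using compactness and the fact that $z\notin\conv(U\setminus B_{r'}(z)^\circ)$, which follows from extremality. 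This gives an affine functional $\ell$ with $\ell(z)=1$ and $\ell\leqslant 0$ on $U\setminus B_{r'}(z)$.

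Next set $L_\varepsilon=L-\varepsilon\ell$. On the compact set $W\setminus \intX B_{r'}(z)$ we have $\BG\leqslant L$, with equality only on $U$, where $\ell\leqslant0$. By compactness and continuity (upper semicontinuity of $\BG-L$ suffices; $\BG$ is continuous on $\intX\Omega\supseteq W$), for small $\varepsilon$ we get $\BG\leqslant L_\varepsilon$ on $W\setminus B_{r''}(z)$ for some intermediate $r''\in(r',r)$. Then define $B'=\min(\BG,L_\varepsilon)$ on $\intX B_{r''}(z)$ and $B'=\BG$ elsewhere. On an annulus around $\partial B_{r''}(z)$ inside $W$ we have $\BG\leqslant L_\varepsilon$, so $B'=\BG$ there; hence $B'$ is locally concave. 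This can be checked via Corollary~\ref{kusloc} or directly, since locally $B'$ is either $\BG$ or a minimum of concave functions. Also $B'\geqslant f$ for $\varepsilon$ small, because $L_\varepsilon\geqslant L(z)-2\delta+\dots\geqslant f$ on the ball for $r$ small. Finally $B'(z)=L(z)-\varepsilon<\BG(z)$, which contradicts the minimality of $\BG$.

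The main obstacle is setting up this local perturbation cleanly. The key requirement is that the new functional $L_\varepsilon$ stays above $\BG$ on the part of $W$ outside the ball, so that the modified function is still locally concave. This is exactly where extremality of $z$ (via the separating $\ell$) and compactness of $W$ are used. The final Carath\'eodory step is routine.
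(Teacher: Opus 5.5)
Your argument is correct in substance, but it follows a different route from the paper's. The paper never looks at extreme points. It uses the hereditary principle (Theorem~\ref{prinnasotgr}) to write $\BG|_W=\BG_{g,\,W}$, with $g=\BG$ on $\partial W$ and $g=f$ in $\intX W$. It sets $J=U(W,\,L)\cap\partial W$ and, assuming $x\notin\conv J$, separates $x$ from $\conv J$ by an affine $L'$. It then checks that $L+\zeta L'\geqslant g$ on all of $W$ for small $\zeta$. This uses the uniform gap $\delta=\min_W(\BG-\supfu{f})>0$ in the interior, and the positive minimum of $L-\BG$ on $\{L'\leqslant 0\}\cap\partial W$. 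Minimality of $\BG_{g,\,W}$ then contradicts $L'(x)<0$. So the paper makes one global perturbation on $W$, gets $x\in\conv J$ directly, and uses Carath\'eodory only at the end.

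You instead prove the reformulation stated right after the lemma: extreme points of $U(W,\,L)$ lie on $\partial W$. You do this by a local ``cap-cutting'' perturbation at a hypothetical interior extreme point $z$, glued into $\BG$ via Corollary~\ref{kusloc}. You then need Minkowski's theorem on top of Carath\'eodory. Your version is local: it only uses the obstacle gap near $z$ and the values of $\BG$ on a small sphere around $z$, so compactness of $W$ is really needed only for Minkowski. The paper's version is shorter and avoids extreme-point theory altogether.

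One step needs tightening. With only $\ell\leqslant 0$ on $K=U\setminus\intX B_{r'}(z)$, compactness does not give $\BG\leqslant L-\varepsilon\ell$ for small $\varepsilon$. Near a point of $K$ where $\ell=0$, the gap $L-\BG$ may vanish quadratically while $\ell$ grows linearly. The fix is easy. Since $\conv K$ is compact and $z\notin\conv K$, you can separate strictly and normalize $\ell(z)=1$, $\ell\leqslant -1$ on $K$. Then $\{\ell\geqslant 0\}\cap(W\setminus\intX B_{r'}(z))$ is compact and disjoint from $U$, so $L-\BG$ has a positive minimum $m$ there. Any $\varepsilon<m/\max_W|\ell|$ then works; for the gluing you in fact only need $\BG\leqslant L_\varepsilon$ on the sphere $\partial B_{r''}(z)$.

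With this fix, the remaining steps go through: local concavity of the glued function, $B'\geqslant f$ after choosing $r$ and then $\varepsilon$ small, and $B'(z)=L(z)-\varepsilon<\BG(z)$.
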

The lemma can be reformulated as follows: The extreme points of the set $U(W,\,L)$ lie on the boundary of $W$. This lemma is of similar spirit with Lemma 2 in~\cite{CafNirSpr} and Lemma 3.2 in~\cite{DePhiFig}. However, in~\cite{CafNirSpr} the authors restrict themselves to the case of a boundary obstacle, and in~\cite{DePhiFig} the obstacle may take only finite values.

\begin{proof}
First, note that the function $\BG$ is continuous on the set $W$, and the function $\supfu{f}$ is upper semicontinuous. Thus, the function $\BG-\supfu{f}$ is lower semicontinuous. Since $W\subseteq E_{>0}$, the function $\BG-\supfu{f}$ is positive on $W$. Therefore, $\BG-\supfu{f}$ attains a positive minimum $\delta$ on $W$. Let the function $g\colon W\to\re$ be defined by the formula
\begin{equation}
g(y)
=
\begin{cases}
\BG(y), & \text{if } y\in\partial W,\\
f(y), & \text{if } y\in\intX W.
\end{cases}
\end{equation}
Then, by Theorem~\ref{prinnasotgr}
\begin{equation}
\BG_{f,\,\Omega}|_{_W}=\BG_{g,\,W}.
\end{equation}
Define the set $J$ by the formula $J=\{y\in\partial W|\,\BG(y)=L(y)\}$. Then $J$ is closed, and, by Carath\'eodory's theorem, it suffices to show that $x\in\conv J$. Assume the contrary. Then there exists an affine functional $L'\colon\mathbb{R}^d\to\mathbb{R}$ such that $L'(x)<0$, $L'|_{J}>0$, and $|L'|<1$ on $W$.

Let us show that for sufficiently small $\zeta>0$ and any $y\in W$ the inequality $L(y)+\zeta L'(y)\geqslant g(y)$ holds. Let $\zeta<\delta$. Then, for $y\in\intX W$, the following chain of inequalities:
\begin{equation}
L(y)+\zeta L'(y)\geqslant L(y)-\delta\geqslant \BG(y)-\delta\geqslant f(y)=g(y)
\end{equation}
holds. Let the set $D$ be defined by the formula $D=\{y\in\partial W| L'(y)\leqslant 0\}$. The set $D$ is closed, and the function $L-\BG$ is positive on $D$, since $D$ does not intersect $J$ by the inequalities $L'|_{J}>0$. Let
\begin{equation}
\zeta<\min\{L(y)-\BG(y)|\,y\in D\}.
\end{equation}
Then for $y\in D$ the following chain of inequalities:
\begin{equation}
L(y)+\zeta L'(y)\geqslant L(y)-\min\{L(z)-\BG(z)|\,z\in D\}\geqslant L(y)-(L(y)-\BG(y))= \BG(y)=g(y)
\end{equation}
holds. We also have
\begin{equation}
L(y)+\zeta L'(y)\geqslant L(y)\geqslant \BG(y)=g(y),\quad y\in\partial W\setminus D.
\end{equation}
Consequently, $L(y)+\zeta L'(y)\geqslant g(y)$ for any $y\in W$. However, the function $L+\zeta L'$ is linear and, in particular, concave. Hence $(L+\zeta L')|_{W}\geqslant\BG_{g,\,W}=\BG_{f,\,\Omega}|_{W}$. Thus, the following chain of inequalities:
\begin{equation}
\BG(x)=L(x)>L(x)+\zeta L'(x)\geqslant \BG(x)
\end{equation}
holds. Contradiction.
\end{proof}

\section{Definition of extremal sets and their properties}\label{secekst}
\begin{Def}\label{oprpredex}
Let $L\colon\mathbb{R}^d\to\mathbb{R}$ be an affine functional, and let $l\subseteq\mathbb{R}^d$ be an affine subspace. A point $x\in E_{>0}\cap l$ belongs to the pre-extremal set $\widetilde{E}(L,\,l)$ if and only if the following conditions:
\begin{enumerate}
\item
There exists a closed convex set $W\subseteq E_{>0}$ with nonempty interior such that $x\in\intX W$, $\BG|_W\leqslant L|_W$, and $x\in U=U(W,\,L)$.
\item
The set $l\cap U$ is a face of $U$ such that $\aff(l\cap U)=l$.
\end{enumerate}
hold.
\end{Def}
\begin{Rem}
For any point $x\in\widetilde{E}(L,\,l)$ the equality $\BG(x)=L(x)$ holds.
\end{Rem}
Informally, a pre-extremal set is a set on which the minimal locally concave function is affine. The additional parameter $l$ appears because such a set naturally splits into pieces of different dimensions. It will turn out below that the boundary of an extremal set is essentially an extremal set of smaller dimension, see Lemma~\ref{extremal boundary}. This fact is the key for carrying out induction on the dimension of extremal sets in Theorem~\ref{mainth}.

In the definition of pre-extremal sets, one chooses a convex set $W$. The following lemma says that this choice is not essential: replacing the words ``there exists'' in the definition by ``for any'' does not change the object under consideration.
\begin{Le}\label{usopr}
Let $L\colon\mathbb{R}^d\to \mathbb{R}$ be an affine functional, let $l\subseteq\mathbb{R}^d$ be an affine subspace, and let $x\in\widetilde{E}(L,\,l)$. This means that $x\in l$, $\BG(x)=L(x)$ and there exists a closed convex set $W\subseteq E_{>0}$ such that $x\in\intX W$, $L|_{W}\geqslant\BG|_{W}$, and $l\cap U(W,\,L)$ is a face of $U(W,\,L)$ such that $l=\aff(l\cap U(W,\,L))$. Then, for any closed convex set $W'\subseteq E_{>0}$ such that $x\in\intX W'$, the following conditions: $L|_{W'}\geqslant\BG|_{W'}$, and $l\cap U\left(W',\,L\right)$ is a face of $U(W',\,L)$ such that $l=\aff(l\cap U(W',\,L))$ hold.
\end{Le}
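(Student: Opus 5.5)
The plan has two steps. First I establish the inequality $L|_{W'}\geqslant\BG|_{W'}$ directly from Lemma~\ref{neighborhood inequality}. Then I transfer the face condition from $W$ to $W'$ through the intermediate set $W\cap W'$, applying Corollary~\ref{lemepdel} twice in opposite directions.

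\textbf{Step 1: the inequality on $W'$.} Let $y\in W'$. Since $W'$ is convex and $x\in W'$, the segment $[x,\,y]$ lies in $W'\subseteq E_{>0}\subseteq\Omega$. The hypotheses give $x\in\intX W$ and $x\in U(W,\,L)$, because $\BG(x)=L(x)$ and $\BG|_W\leqslant L|_W$. Lemma~\ref{neighborhood inequality} then yields $\BG(y)\leqslant L(y)$, so $L|_{W'}\geqslant\BG|_{W'}$. Note also that $W'$ has nonempty interior, since $x\in\intX W'$. Hence $U(W',\,L)$ is defined, and $x\in U(W',\,L)$.

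\textbf{Step 2: passing from $W$ to $W\cap W'$.} Put $W''=W\cap W'$. It is closed and convex, and $x\in\intX W''$, so it has nonempty interior. By Remark~\ref{Ucap},
\begin{equation*}
U(W'',\,L)=U(W,\,L)\cap W'=U(W',\,L)\cap W.
\end{equation*}
Apply Corollary~\ref{lemepdel} with the convex set $U(W,\,L)$ (which contains $x$) in the role of $U$, and $W'$ in the role of $W$ (with $x\in\intX W'$). Its condition~1 is exactly the hypothesis on $W$. Therefore $l\cap U(W'',\,L)$ is a face of $U(W'',\,L)$ with $\aff(l\cap U(W'',\,L))=l$.

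\textbf{Step 3: passing from $W\cap W'$ to $W'$.} Apply Corollary~\ref{lemepdel} once more, now with $U(W',\,L)$ (which contains $x$) in the role of $U$, and $W$ in the role of $W$ (with $x\in\intX W$). Since $U(W',\,L)\cap W=U(W'',\,L)$, condition~2 is what Step~2 established. Hence condition~1 holds: $l\cap U(W',\,L)$ is a face of $U(W',\,L)$ with $\aff(l\cap U(W',\,L))=l$.

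There is no real obstacle; the main care lies in checking the hypotheses. The point to watch is that Step~1 must come first. The set $U(W',\,L)$, and with it the use of Remark~\ref{Ucap}, only makes sense once $L\geqslant\BG$ on $W'$ is known. The interior-point conditions needed by Corollary~\ref{lemepdel} are supplied by $x\in\intX W\cap\intX W'$.
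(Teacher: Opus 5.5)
Your proof is correct and follows the paper's argument essentially step for step. The paper also first derives $L|_{W'}\geqslant\BG|_{W'}$ from Lemma~\ref{neighborhood inequality}, and then applies Corollary~\ref{lemepdel} twice through the intermediate set $W\cap W'$, using Remark~\ref{Ucap}. Your explicit assignment of the roles of $U$ and $W$ in each application, and your note that Step~1 must come before $U(W',\,L)$ is used, make the paper's terse argument easier to check.
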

\begin{proof}
Let $W'\subseteq E_{>0}$ be a closed convex set such that $x\in\intX W'$. Since $L|_{W}\geqslant\BG|_{W}$, $x\in\intX W$, and $L(x)=\BG(x)$, Lemma~\ref{neighborhood inequality} implies that $\BG(y)\leqslant L(y)$ for all $y$ such that $[x,\,y]\subset\Omega$. In its turn, $W'$ is convex, and we have $x\in W'\subseteq E_{>0}$. Thus, for any element $y\in W'$ we have the inclusion
\begin{equation}
[x,\,y]\subseteq W'\subseteq E_{>0}\subseteq\Omega.
\end{equation}
Thus $L|_{W'}\geqslant\BG|_{W'}$.

On the other hand, since $x\in\intX W\cap\intX W'=\intX(W\cap W')\ne\varnothing$, Remark~\ref{Ucap} gives the chain of equalities
\begin{equation}
U(W,\,L)\cap U\left(W',\,L\right)=W\cap U\left(W',\,L\right)=U(W,\,L)\cap W'=U\left(W\cap W',\,L\right).
\end{equation}
Since $l\cap U(W,\,L)$ is a face of $U(W,\,L)$ such that $l=\aff(l\cap U(W,\,L))$, Corollary~\ref{lemepdel} implies that the intersection $l\cap\left(U\left(W\cap W',\,L\right)\right)$ is a face of $U\left(W\cap W',\,L\right)$ such that $l=\aff\left(l\cap U\left(W\cap W',\,L\right)\right)$. Consequently, again by Corollary~\ref{lemepdel}, the intersection $l\cap U\left(W',\,L\right)$ is a face of $U\left(W',\,L\right)$ such that $l=\aff\left(l\cap U\left(W',\,L\right)\right)$.
\end{proof}
\begin{Cor}\label{netnulmerextr}
Let $L\colon\mathbb{R}^d\to\mathbb{R}$ be an affine functional, let $l\subseteq\mathbb{R}^d$ be an affine subset, and let $x\in\widetilde{E}(L,\,l)$. Then $\dim l\geqslant 1$.
\end{Cor}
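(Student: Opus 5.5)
We need to show that if $x\in\widetilde{E}(L,\,l)$, then $\dim l\geqslant 1$, i.e. $l$ cannot be the single point $\{x\}$. We argue by contradiction and reduce to Lemma~\ref{simploc}. Suppose $\dim l=0$; since $x\in l$, this forces $l=\{x\}$.

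First we fix a compact window. By the definition of $\widetilde{E}(L,\,l)$ and Lemma~\ref{usopr}, the choice of $W$ is irrelevant. Since $E_{>0}$ is open and $x\in E_{>0}$, we take $W'=B_r(x)$ with $r>0$ small enough that $B_r(x)\subseteq E_{>0}$. This $W'$ is a compact convex set with nonempty interior and $x\in\intX W'$. Lemma~\ref{usopr} then gives $L|_{W'}\geqslant\BG|_{W'}$, and shows that $l\cap U(W',\,L)=\{x\}$ is a face of $U(W',\,L)$. Hence $x$ is an extreme point of $U(W',\,L)$.

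Next we apply Lemma~\ref{simploc} to $W'$, the point $x$ and the functional $L$. Note that $x\in U(W',\,L)$ because $\BG(x)=L(x)$. The lemma produces a simplex $C\subseteq U(W',\,L)$ with vertices on $\partial W'$ and $x\in C$. Since $x\in\intX W'$, the point $x$ is not a vertex of $C$. Let $F$ be the minimal face of $C$ containing $x$, so that $x\in\ri F$. Because $x$ is not a vertex, $F$ is a simplex of dimension at least $1$, so $x$ lies in the open interval $(y,\,z)$ for some distinct $y,\,z\in F\subseteq U(W',\,L)$.

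Finally we derive the contradiction. By the face property of $\{x\}$ in $U(W',\,L)$, the relation $x\in(y,\,z)$ with $y,\,z\in U(W',\,L)$ forces $[y,\,z]\subseteq\{x\}$. This is impossible since $y\neq z$. Therefore $\dim l\geqslant 1$.

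The one point needing care is the claim that $x$ is not a vertex of $C$. This follows because the vertices of $C$ lie on $\partial W'$ while $x$ lies in $\intX W'$; the rest of the argument is bookkeeping.
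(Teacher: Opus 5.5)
Your proof is correct and takes the same route as the paper: Lemma~\ref{usopr} lets you pass to a compact window (a small closed ball in $E_{>0}$), where $l=\{x\}$ would make $x$ an extreme point of $U(W',\,L)$, and Lemma~\ref{simploc} rules this out. You also spell out a detail the paper leaves implicit, namely that the simplex is nondegenerate at $x$ because its vertices lie on $\partial W'$ while $x\in\intX W'$.
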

\begin{proof}
We need to check that, for no set $W$ with $x\in\intX W$, the point $x$ is an extreme point of the set $U(W,\,L)$. By Lemma~\ref{usopr} it suffices to verify this for compact $W$, and for compact $W$ this follows from Lemma~\ref{simploc}.
\end{proof}

\begin{Def}\label{visible points}
For $x\in E_{>0}$, denote by $V(x)$ the set of points of $E_{>0}$ visible from $x$:
\begin{equation}
V(x)=\left\{y\in E_{>0}\big|\,[x,\,y]\in E_{>0}\right\}.
\end{equation}
\end{Def}
\begin{Rem}\label{open-set}
For $x\in E_{>0}$, the set $V(x)$ is open.
\end{Rem}
\begin{proof}
If $y\in V(x)$, then $[x,\,y]\subseteq E_{>0}$, and hence there exists $\varepsilon>0$ such that $[x,\,y]+B_{\varepsilon}(0)\subseteq E_{>0}$. Therefore, $B_{\varepsilon}(y)\subseteq V(x)$.
\end{proof}

\begin{Rem}\label{remosushchex}
For any point $x\in E_{>0}$, there exist an affine subspace $l$ and an affine functional $L$ such that $x\in\widetilde{E}(L,\,l)$.
\end{Rem}

\begin{Def}\label{opredext}
The extremal set $E(x,\,L,\,l)$ is defined as the connected component of the point $x$ in the set $\widetilde{E}(L,\,l)$. The dimension of the pre-extremal set $\widetilde{E}(L,\,l)$ and of the extremal set $E(x,\,L,\,l)$ is defined as the dimension of the subspace $l$.
\end{Def}
A standard example of an extremal set is a tangent segment from a point of the fixed boundary to the free boundary, or a chord joining points of the fixed boundary. For example, see formula (2.7) and Figure 3 in~\cite{VasVol}; the function obtained there is linear on the domains $\Omega_1,\,\Omega_2^{\pm},\,\Omega_5$, linear along chords on the domains $\Omega_3^{\pm}$, and linear along tangents on the domains $\Omega_4^{\pm}$. Informally, Theorem~\ref{tangency} can be understood as the statement that there are no other one-dimensional extremal sets. Nontrivial examples of two-dimensional extremal sets are given in~\cite{StaVaZa1}: formula (3.2) of that paper describes an extremal set, and formula (3.3) gives a supporting functional on it.
\begin{Le}\label{rasek}
Let $x\in\widetilde{E}(L,\,l)$, $y\in V(x)\cap l$ and $\BG(y)=L(y)$. Then $y\in E\left(x,\,L,\,l\right)$ and $[x,\,y]\subseteq E\left(x,\,L,\,l\right)$. 
\end{Le}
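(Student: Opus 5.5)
The plan is to prove that every point $z\in[x,\,y]$ lies in $\widetilde{E}(L,\,l)$. Then $[x,\,y]$ is a connected subset of $\widetilde{E}(L,\,l)$ containing $x$, so it lies in the connected component $E(x,\,L,\,l)$; in particular $y\in E(x,\,L,\,l)$.

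First, $z\in[x,\,y]\subseteq E_{>0}$ because $y\in V(x)$, and $z\in l$ because $x,\,y\in l$. Since $x\in\widetilde{E}(L,\,l)$, there is a closed convex $W$ with $x\in\intX W$ and $\BG|_W\leqslant L|_W$. By Lemma~\ref{neighborhood inequality}, $\BG\leqslant L$ at every point $w$ with $[x,\,w]\subseteq\Omega$. Corollary~\ref{segment equality} applied to $y$ gives $\BG=L$ on all of $[x,\,y]$.

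Now construct a witness set for $z$. Since $[x,\,y]$ is compact in the open set $E_{>0}$, choose $\varepsilon>0$ with $W_z:=[x,\,y]+B_\varepsilon(0)\subseteq E_{>0}$. This set is closed, convex, contains $z$ in its interior, and contains $x$ in its interior. Every $w\in W_z$ satisfies $[x,\,w]\subseteq W_z\subseteq\Omega$ by convexity, so $\BG|_{W_z}\leqslant L|_{W_z}$. Hence $U_z:=U(W_z,\,L)$ is defined, and $z\in U_z$.

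It remains to check that $l\cap U_z$ is a face of $U_z$ with $\aff(l\cap U_z)=l$. This is the key step. Because $x\in\intX W_z$, Lemma~\ref{usopr} applied to $W'=W_z$ gives exactly this property. Note that Lemma~\ref{usopr} only requires $x$ to be interior to $W'$, not $z$. So $z$ satisfies both conditions of Definition~\ref{oprpredex}, with $W_z$ as the witness set, and $z\in\widetilde{E}(L,\,l)$.

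The main point needing care is this face condition. A naive approach would be to build a neighbourhood of $z$ and try to transfer the face structure from a neighbourhood of $x$. Choosing the single convex set $W_z$ that contains both $x$ and $z$ in its interior, and lies in $E_{>0}$, avoids that transfer entirely: the property is inherited from $x$ via Lemma~\ref{usopr}. Connectedness of the segment then concludes the argument.
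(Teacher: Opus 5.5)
Your proof is correct and follows the paper's argument essentially verbatim. The paper likewise takes the single tube $W=[x,\,y]+B_{\varepsilon}(0)\subseteq E_{>0}$ and applies Lemma~\ref{usopr} at $x$ to get $\BG|_W\leqslant L|_W$ together with the face condition for $l\cap U(W,\,L)$; it then uses Corollary~\ref{segment equality} to place all of $[x,\,y]$ in $l\cap U(W,\,L)$ and concludes by connectedness of the segment.
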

\begin{proof}
Let $\varepsilon<\dist([x,\,y],\,\partial E_{>0})$. Let $W=[x,\,y]+B_{\varepsilon}(0)$ and $U=U(W,\,L)$. Since $x\in\widetilde{E}(L,\,l)$, Lemma~\ref{usopr} implies that $\BG|_{W}\leqslant L|_{W}$ and that $l\cap U$ is a face of $U$ such that $l=\aff(l\cap U)$. In its turn, by Corollary~\ref{segment equality}, for any point $z\in[x,\,y]$ the equality $\BG(z)=L(z)$ holds, and hence $z\in U$. Moreover, since $x,\,y\in l$, we have $z\in l$; therefore, $z\in\widetilde{E}(L,\,l)$. Thus, $[x,\,y]\subseteq\widetilde{E}(L,\,l)$, and hence $y\in E(x,\,L,\,l)$ and $[x,\,y]\subseteq E(x,\,L,\,l)$.
\end{proof}
\begin{Cor}\label{vipukn}
The sets $\widetilde{E}(L,\,l)$ and $E(x,\,L,\,l)$ are relatevly convex in $E_{>0}$.
\end{Cor}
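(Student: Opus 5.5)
The corollary follows from Lemma~\ref{rasek} together with Definition~\ref{otnositvip}. Relative convexity in $E_{>0}$ means the following: whenever two points of the set are joined by a segment lying in $E_{>0}$, that whole segment lies in the set. So I take arbitrary $x',\,y'$ in the set in question with $[x',\,y']\subseteq E_{>0}$ and show that $[x',\,y']$ is contained in the set.

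\textbf{The pre-extremal set $\widetilde{E}(L,\,l)$.} Let $x',\,y'\in\widetilde{E}(L,\,l)$ with $[x',\,y']\subseteq E_{>0}$.
\begin{enumerate}
\item Since $[x',\,y']\subseteq E_{>0}$, Definition~\ref{visible points} gives $y'\in V(x')$.
\item Since $y'\in\widetilde{E}(L,\,l)$, we have $y'\in l$, and by the remark after Definition~\ref{oprpredex}, $\BG(y')=L(y')$.
\item Lemma~\ref{rasek}, applied with base point $x'\in\widetilde{E}(L,\,l)$, then gives $[x',\,y']\subseteq E(x',\,L,\,l)\subseteq\widetilde{E}(L,\,l)$.
\end{enumerate}

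\textbf{The extremal set $E(x,\,L,\,l)$.} Let $x',\,y'\in E(x,\,L,\,l)$ with $[x',\,y']\subseteq E_{>0}$.
\begin{enumerate}
\item Both points lie in $\widetilde{E}(L,\,l)$, so the previous argument gives $[x',\,y']\subseteq E(x',\,L,\,l)$.
\item Since $x'$ lies in the connected component of $x$, that component coincides with the component of $x'$; that is, $E(x',\,L,\,l)=E(x,\,L,\,l)$.
\item Hence $[x',\,y']\subseteq E(x,\,L,\,l)$. Alternatively, $[x',\,y']$ is a connected subset of $\widetilde{E}(L,\,l)$ that meets the component $E(x,\,L,\,l)$, so it lies in that component.
\end{enumerate}

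There is no real obstacle. The one point to check is that the hypotheses of Lemma~\ref{rasek} are met with $x'$ in place of $x$: we need $x'\in\widetilde{E}(L,\,l)$, $y'\in V(x')\cap l$ and $\BG(y')=L(y')$. All three follow directly from the definitions, as shown above.
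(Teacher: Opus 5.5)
Your proposal is correct and matches the paper's approach. The paper states this corollary without proof, directly after Lemma~\ref{rasek}, as an immediate consequence of it. Your check of that lemma's hypotheses with base point $x'$ ($y'\in V(x')\cap l$ and $\BG(y')=L(y')$), together with the observation that $E(x',\,L,\,l)=E(x,\,L,\,l)$ for $x'\in E(x,\,L,\,l)$, supplies exactly the details the paper leaves implicit.
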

\begin{Le}\label{extremal boundary}
Let $x\in\widetilde{E}(L,\,l)$ and $y\in\partial_{l} E\left(x,\,L,\,l\right)$. Then one of the following two alternatives:
\begin{enumerate}
\item
$y\in\partial E_{>0}$.
\item
$y\in E(x,\,L,\,l)$ and there exists an affine subspace $l'\subsetneq l$ of $\mathbb{R}^d$ such that $y\in\widetilde{E}\left(L,\,l'\right)$.
\end{enumerate}
holds.
\end{Le}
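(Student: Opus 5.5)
Assume $y\notin\partial E_{>0}$. Since $y\in\partial_l E(x,L,l)\subseteq\clos E_{>0}$, this gives $y\in E_{>0}$. The plan is to show, in order, that $\BG(y)=L(y)$, that $y\in E(x,L,l)$, and finally to produce the subspace $l'\subsetneq l$.

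\emph{Step 1: $\BG(y)=L(y)$.} Choose $\varepsilon>0$ with $B_{2\varepsilon}(y)\subseteq E_{>0}$ and put $W=B_\varepsilon(y)$. Since $y$ lies in the closure of $E(x,L,l)$ relative to $l$, there is a point $z\in E(x,L,l)\cap\intX W$. By Lemma~\ref{usopr} (applied at $z$, using $z\in\widetilde E(L,l)$ and $z\in\intX W$) we get $\BG|_W\leqslant L|_W$. Moreover, points $z_n\in E(x,L,l)$ with $z_n\to y$ satisfy $\BG(z_n)=L(z_n)$. Since $\BG$ is continuous on $\intX\Omega$, this yields $\BG(y)=L(y)$. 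Hence $y\in U(W,L)$ with $y\in\intX W$, so condition 1 of Definition~\ref{oprpredex} holds at $y$ with this $W$.

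\emph{Step 2: $y\in E(x,L,l)$.} Take $z\in E(x,L,l)\cap\intX W$ as above. The segment $[z,y]$ lies in the ball $W\subseteq E_{>0}$, so $y\in V(z)\cap l$. Also $z\in\widetilde E(L,l)$ and $\BG(y)=L(y)$. Lemma~\ref{rasek} then gives $[z,y]\subseteq E(z,L,l)$. Since $E(z,L,l)=E(x,L,l)$ (same connected component), we get $y\in E(x,L,l)$.

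\emph{Step 3: the subspace $l'$.} Let $U=U(W,L)$, a closed convex set containing $y$. By Lemma~\ref{usopr} applied at $z$, the set $F=l\cap U$ is a face of $U$ with $\aff F=l$.

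The main obstacle is to show that $y\in\rb F$. I would argue by contradiction. Suppose $y\in\ri F$. Then, because $\aff F=l$, a small $l$-neighbourhood $N$ of $y$ is contained in $F\subseteq U$. Every $w\in N$ then satisfies $\BG(w)=L(w)$, $w\in\intX W$, and $l\cap U(W,L)$ is a face with affine hull $l$. So every $w\in N$ lies in $\widetilde E(L,l)$. Each such $w$ is joined to $y$ by a segment inside $\widetilde E(L,l)$, so $N\subseteq E(x,L,l)$. This contradicts $y\in\partial_l E(x,L,l)$. Hence $y\in\rb F$.

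By Proposition~\ref{bougr}, $y$ then lies in a face $F'$ of $F$ with $\dim F'<\dim F$. Choose $F'$ to be the minimal face containing $y$, so that $y\in\ri F'$. By Proposition~\ref{grgrgr}, $F'$ is also a face of $U$.

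Set $l'=\aff F'\subsetneq l$. It remains to check that $l'\cap U=F'$. The set $l'\cap U$ is convex, contains $F'$, and lies in $l'$. Since $F'$ has nonempty relative interior in $l'$, we have $\ri(l'\cap U)\cap F'\neq\varnothing$. Proposition~\ref{grvipvnutgr} then gives $l'\cap U\subseteq F'$, and hence $l'\cap U=F'$.

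Thus $l'\cap U$ is a face of $U$ with $\aff(l'\cap U)=l'$, and $y\in U\cap l'$ with $y\in\intX W$. By Definition~\ref{oprpredex}, $y\in\widetilde E(L,l')$. This is the second alternative.
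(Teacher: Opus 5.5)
Your proof is correct and follows the paper's route: continuity of $\BG$ at $y$ gives $\BG(y)=L(y)$, Lemma~\ref{rasek} places $y$ in $E(x,L,l)$, and Propositions~\ref{bougr} and~\ref{grgrgr} applied to the face $l\cap U(B_\varepsilon(y),L)$ produce the smaller subspace $l'$. You also make explicit two points the paper passes over quickly: why $y$ lies on the relative boundary of $l\cap U$, and, via Proposition~\ref{grvipvnutgr}, why $l'\cap U$ actually equals the lower-dimensional face rather than merely containing it.
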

\begin{proof}
Let $y\in\partial_{l}E(x,\,L,\,l)\setminus\partial E_{>0}$. Then $y$ is a point of continuity of the function $\BG$, and hence $\BG(y)=L(y)$, since $\BG|_{E(x,\,L,\,l)}=L|_{E(x,\,L,\,l)}$. Let $z\in E(x,\,L,\,l)\cap V(y)$. Then $y\in l\cap V(z)$, and by Lemma~\ref{rasek} we have $y\in E(z,\,L,\,l)=E(x,\,L,\,l)$. Let $0<\varepsilon<\dist(y,\,\partial E_{>0})$ and $U=U(B_{\varepsilon}(y),\,L)$. Then $U\cap l$ is a face of $U$ such that $l=\aff(U\cap l)$ and $U\cap l\subseteq E(x,\,L,\,l)$. Since $y\in\partial_l E(x,\,L,\,l)$, we have $y\in\partial_l(U\cap l)$; hence, by Proposition~\ref{bougr}, there exists a face $F$ of $U\cap l$ such that $y\in F$ and $\dim(\aff(F))<\dim l$. Let $l'=\aff F$. Then $l'\subsetneq l$, $U\cap l'=(U\cap l)\cap l'$, and, by Proposition~\ref{grgrgr}, $F$ is a face of $U$. Hence $y\in\widetilde{E}\left(L,\,l'\right)$.
\end{proof}
The following corollary formalizes the principle that a one-dimentional extremal set cannot stop.
\begin{Cor}\label{gronedim}
Let $l$ be a line and $x\in\widetilde{E}(L,\,l)$. Then $\partial_{l}E(x,\,L,\,l)\subseteq\partial E_{>0}$.
\end{Cor}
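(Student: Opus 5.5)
We argue by contradiction, applying Lemma~\ref{extremal boundary} directly. Let $l$ be a line, $x\in\widetilde{E}(L,\,l)$, and suppose some $y\in\partial_{l}E(x,\,L,\,l)$ does not lie in $\partial E_{>0}$. Then the second alternative of Lemma~\ref{extremal boundary} must hold. So $y\in E(x,\,L,\,l)$, and there is an affine subspace $l'\subsetneq l$ with $y\in\widetilde{E}(L,\,l')$.

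Since $l$ is a line, $\dim l=1$. A proper affine subspace $l'\subsetneq l$ containing $y$ therefore has dimension $0$, so $l'=\{y\}$. Corollary~\ref{netnulmerextr}, however, says that any $l'$ with $\widetilde{E}(L,\,l')\ne\varnothing$ has $\dim l'\geqslant 1$. This contradiction shows $y\in\partial E_{>0}$, which proves $\partial_{l}E(x,\,L,\,l)\subseteq\partial E_{>0}$.

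The only point needing care is that Lemma~\ref{extremal boundary} gives an affine subspace $l'$ strictly contained in $l$ and containing $y$; its proof takes $l'=\aff F$ with $y\in F$. For a line, such an $l'$ is forced to be a point. The real content is Corollary~\ref{netnulmerextr}, which rests on Lemma~\ref{simploc}: extreme points of $U(W,\,L)$ lie on $\partial W$, so an interior point $y$ cannot itself be a zero-dimensional face.
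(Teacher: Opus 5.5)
Your argument is correct and is essentially the paper's proof: Lemma~\ref{extremal boundary} puts any boundary point outside $\partial E_{>0}$ into some $\widetilde{E}(L,\,l')$ with $l'\subsetneq l$. Since $l$ is a line and $y\in l'$ by the definition of $\widetilde{E}(L,\,l')$, this $l'$ would be a point, which Corollary~\ref{netnulmerextr} rules out.
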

\begin{proof}
By Lemma~\ref{extremal boundary}, the boundary of an extremal set consists of points of the set $\partial E_{>0}$ and extremal sets of smaller dimension. By Corollary~\ref{netnulmerextr}, extremal sets have dimension at least one, and since $l$ is a line, the desired inclusion $\partial_{l}E(x,\,L,\,l)\subseteq\partial E_{>0}$ follows.
\end{proof}
\subsection{Inductive construction of $E(x,\,L,\,l)$.}\label{inducpostr}
Let $x\in\widetilde{E}(L,\,l)$. We define the sets $E_n(x,\,L,\,l)$ by induction on $n$:
\begin{equation}
E_{0}(x,\,L,\,l)=\{x\},
\end{equation}
\begin{equation}
E_{n+1}(x,\,L,\,l)=\left\{y\in E_{>0}\big|\,\exists z\in E_{n}(x,\,L,\,l)\text{ such that }y\in V(z)\cap l\text{ and }\BG(y)=L(y)\right\}.
\end{equation}
\begin{Le}
One has
\begin{equation}\label{indforext}
E(x,\,L,\,l)=\bigcup\limits_{n\in\mathbb{N}_0}E_{n}(x,\,L,\,l)
\end{equation}
for any $L,\,l,$ and $x$.
\end{Le}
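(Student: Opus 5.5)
We prove the two inclusions separately. Write $E_\infty=\bigcup_{n\in\mathbb{N}_0}E_n(x,\,L,\,l)$.

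\emph{Inclusion $E_\infty\subseteq E(x,\,L,\,l)$.} We argue by induction on $n$ that $E_n(x,\,L,\,l)\subseteq E(x,\,L,\,l)$. The base case holds because $x\in E(x,\,L,\,l)$. For the step, let $y\in E_{n+1}$, and take the witness $z\in E_n$ with $y\in V(z)\cap l$ and $\BG(y)=L(y)$. By the induction hypothesis $z\in E(x,\,L,\,l)\subseteq\widetilde{E}(L,\,l)$. Lemma~\ref{rasek}, applied with $z$ in place of $x$, then gives $y\in E(z,\,L,\,l)$ and $[z,\,y]\subseteq\widetilde{E}(L,\,l)$. Since $E(z,\,L,\,l)$ is the connected component of $z$, and $z$ lies in the component of $x$, this component is $E(x,\,L,\,l)$. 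Hence $y\in E(x,\,L,\,l)$.

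\emph{Inclusion $E(x,\,L,\,l)\subseteq E_\infty$.} Here we use connectedness. We show that $E_\infty$ is nonempty, relatively open in $\widetilde{E}(L,\,l)$, and relatively closed in $\widetilde{E}(L,\,l)$. Then $E_\infty$ is a union of connected components of $\widetilde{E}(L,\,l)$, and since $x\in E_\infty$, it contains the component $E(x,\,L,\,l)$.

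First, $E_\infty\subseteq\widetilde{E}(L,\,l)$. If $y\in E_{n+1}$ with witness $z\in E_n\subseteq\widetilde{E}(L,\,l)$ (by induction), then Lemma~\ref{rasek} places $y$ in $\widetilde{E}(L,\,l)$.

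Openness: let $y\in E_n$ and let $\varepsilon<\dist(y,\,\partial E_{>0})$. Every $w\in\widetilde{E}(L,\,l)\cap B_\varepsilon(y)$ lies in $l$, satisfies $\BG(w)=L(w)$, and has $[y,\,w]\subseteq B_\varepsilon(y)\subseteq E_{>0}$, so $w\in V(y)$. Therefore $w\in E_{n+1}$.

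Closedness: let $y\in\widetilde{E}(L,\,l)$ be a limit of points $y_k\in E_\infty$. Again take $\varepsilon<\dist(y,\,\partial E_{>0})$, and choose $k$ with $y_k\in B_\varepsilon(y)$, say $y_k\in E_n$. Since $y\in l$, $\BG(y)=L(y)$, and $[y_k,\,y]\subseteq B_\varepsilon(y)\subseteq E_{>0}$, we get $y\in V(y_k)\cap l$, so $y\in E_{n+1}$.

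The only delicate point is this local step: inside a ball contained in $E_{>0}$, every point of $l$ with $\BG=L$ is visible from every other such point. That is immediate from the convexity of the ball, so once the clopen argument is set up the proof is essentially formal.
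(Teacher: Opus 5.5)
Your proof is correct and follows the paper's argument. You prove $\bigcup_n E_n(x,L,l)\subseteq E(x,L,l)$ by induction via Lemma~\ref{rasek}, and the reverse inclusion by showing the union is relatively open and closed in $\widetilde{E}(L,l)$. The only cosmetic difference is that you take the ball $B_\varepsilon(y)\subseteq E_{>0}$ as the neighborhood, where the paper uses the open set $V(y)$ from Remark~\ref{open-set}.
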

\begin{proof}
Temporarily denote the set $\bigcup_{n\in\mathbb{N}_0}E_{n}(x,\,L,\,l)$ by $E'(x,\,L,\,l)$. The proof of equality of the sets naturally splits into the proof of two inclusions.

{\bf 1. Proof of the inclusion $E(x,\,L,\,l)\supseteq E'(x,\,L,\,l)$.} We prove by induction on $n$ that $E_n(x,\,L,\,l)\subseteq E(x,\,L,\,l)$. The induction base $n=0$ is obvious. 

\noindent Induction step. We know that $E_n(x,\,L,\,l)\subseteq E(x,\,L,\,l)$. Let $y\in E_{n+1}(x,\,L,\,l)$. Then there exists a point $z\in E_n(x,\,L,\,l)$ such that $y\in V(z)\cap l$ and $\BG(y)= L(y)$. Since, by the induction hypothesis, $z\in E(x,\,L,\,l)$, Lemma~\ref{rasek} implies $y\in E(x,\,L,\,l)$.

{\bf 2. Proof of the inclusion $E(x,\,L,\,l)\subseteq E'(x,\,L,\,l)$.} To prove this inclusion, it suffices to show that the set $E'(x,\,L,\,l)$ is open and closed in $\widetilde{E}(L,\,l)$, since $E(x,\,L,\,l)$ is a connected component of $\widetilde{E}(L,\,l)$. This observation splits the proof of the second assertion into two subitems.

{\bf 2.1. Proof of openness of $E'(x,\,L,\,l)$ in $\widetilde{E}(L,\,l)$.} Let $y\in E'(x,\,L,\,l)$. Then there exists a number $n$ such that $y\in E_{n}(x,\,L,\,l)$. Then, the definition of $E_{n+1}(x,\,L,\,l)$ implies that 
\begin{equation}
V(y)\cap\widetilde{E}(L,\,l)\subseteq E_{n+1}(x,\,L,\,l)\subseteq E'(x,\,L,\,l),
\end{equation}
and, by Remark~\ref{open-set}, $y$ has a nonempty neighborhood in $\widetilde{E}(L,\,l)$.

{\bf 2.2. Proof of closedness of $E'(x,\,L,\,l)$ in $\widetilde{E}(L,\,l)$.} Let 
\begin{equation}
y\in
\clos (E'(x,\,L,\,l))\cap \widetilde{E}(L,\,l)
\subseteq
\clos( E'(x,\,L,\,l))\cap E_{>0}.
\end{equation}
Then, by Remark~\ref{open-set}, there exists $z\in E'(x,\,L,\,l)\cap V(y)$. Then, there exists a number $n$ such that $z\in E_{n}(x,\,L,\,l)$, whereas $y\in V(z)\cap l$ and $\BG(y)=L(y)$; hence, $y\in E_{n+1}(x,\,L,\,l)\subseteq E'(x,\,L,\,l)$. Therefore,
\begin{equation}
\clos (E'(x,\,L,\,l))\cap \widetilde{E}(L,\,l)=E'(x,\,L,\,l).
\end{equation}
\end{proof}
\begin{Cor}\label{embedded extremal}
Let $l,\,l'$ be affine subspaces of $\mathbb{R}^d$ and $l'\subseteq l$. Assume also that $x\in\widetilde{E}(L,\,l)$ and $x\in\widetilde{E}\left(L,\,l'\right)$. Then $E\left(x,\,L,\,l'\right)\subseteq E(x,\,L,\,l)$.
\end{Cor}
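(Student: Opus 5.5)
The plan is to use the inductive description \eqref{indforext} of extremal sets, applied to both $E(x,\,L,\,l')$ and $E(x,\,L,\,l)$. We prove by induction on $n$ that
\begin{equation*}
E_n(x,\,L,\,l')\subseteq E_n(x,\,L,\,l)\quad\text{for every } n\in\mathbb{N}_0 .
\end{equation*}
Taking the union over $n$ and applying \eqref{indforext} to both sides then gives $E(x,\,L,\,l')\subseteq E(x,\,L,\,l)$. Both unions are well defined, since $x\in\widetilde{E}(L,\,l)$ and $x\in\widetilde{E}(L,\,l')$ by hypothesis.

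The base case $n=0$ is trivial, because both sets equal $\{x\}$.

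For the induction step, let $y\in E_{n+1}(x,\,L,\,l')$. By definition there is a point $z\in E_n(x,\,L,\,l')$ such that
\begin{equation*}
y\in V(z)\cap l' \quad\text{and}\quad \BG(y)=L(y).
\end{equation*}
By the induction hypothesis, $z\in E_n(x,\,L,\,l)$. Since $l'\subseteq l$, we have $V(z)\cap l'\subseteq V(z)\cap l$, so $y\in V(z)\cap l$. The condition $\BG(y)=L(y)$ involves neither subspace, and $y\in E_{>0}$ holds in both cases. Hence $y$ satisfies the defining condition of $E_{n+1}(x,\,L,\,l)$, with the same witness $z$.

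This argument is purely set-theoretic. The defining recursion for $E_{n+1}$ is monotone in $l$: it depends on $l$ only through the intersection $V(z)\cap l$, and it does not refer to membership in $\widetilde{E}(L,\,l)$. The only substantive input is the preceding lemma establishing \eqref{indforext}, which requires $x\in\widetilde{E}(L,\,l)$ and $x\in\widetilde{E}(L,\,l')$; both are assumptions of the corollary. I therefore expect no step to be a real obstacle. The one point worth checking is that the recursion indeed never uses pre-extremality of $y$ with respect to $l'$ versus $l$, and it does not.
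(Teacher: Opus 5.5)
Your proposal is correct and follows the paper's proof: the paper also proves $E_n(x,\,L,\,l')\subseteq E_n(x,\,L,\,l)$ by induction on $n$ and then takes the union via \eqref{indforext}. Your write-up just spells out the monotonicity of the recursion in $l$, which the paper leaves implicit.
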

\begin{proof}
It is easy to see by induction on $n$ that $E_n\left(x,\,L,\,l'\right)\subseteq E_n(x,\,L,\,l)$.
\end{proof}

\section{On the structure of extremal sets.}\label{secsteks}
\subsection{On one-dimensional extremal sets.}
\begin{St}\label{basic proposition}
Let $l$ be a line, and suppose that there exists an affine functional $L\colon\mathbb{R}^d\to\mathbb{R}$ such that $\widetilde{E}(L,\,l)\ne\varnothing$. Let $x\in\widetilde{E}(L,\,l)$. Then $E(x,\,L,\,l)$ is either the whole line $l$, or an open ray with base on $\partial E_{>0}$, or an open interval with endpoints on $\partial E_{>0}$.
\end{St}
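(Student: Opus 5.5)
The plan is to show that $E=E(x,\,L,\,l)$ is a connected open subset of the line $l$ whose relative boundary lies in $\partial E_{>0}$, and then to classify such sets.

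\emph{Connectedness.} $E$ is a connected subset of the line $l$ by definition, so it is an interval of $l$: the whole line, a ray, a segment, or a half-open or open variant of these.

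\emph{Openness in $l$.} Let $y\in E$. Since $y\in E_{>0}$, which is open, Remark~\ref{open-set} gives a ball $B_\varepsilon(y)\subseteq V(y)$. Because $y\in\widetilde{E}(L,\,l)$, take $U=U(B_\varepsilon(y),\,L)$ as in Lemma~\ref{usopr}. Then $l\cap U$ is a face of $U$ with $\aff(l\cap U)=l$. Hence $l\cap U$ is a one-dimensional convex subset of $l$ containing $y$.

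We need $y$ to lie in the relative interior of $l\cap U$. Suppose instead that $y$ were an endpoint of $l\cap U$, or that $l\cap U=\{y\}$. In the first case the face $\{y\}$ of $l\cap U$ is a face of $U$ by Proposition~\ref{grgrgr}, so $y\in\widetilde{E}(L,\,l')$ with $l'=\{y\}$. This contradicts Corollary~\ref{netnulmerextr}, which requires $\dim l'\geqslant 1$. The second case contradicts $\aff(l\cap U)=l$.

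So some open interval of $l$ around $y$ lies in $l\cap U$. All its points are visible from $y$ and satisfy $\BG=L$, so by Lemma~\ref{rasek} they lie in $E$. Thus $E$ is open in $l$.

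\emph{Boundary points.} By Corollary~\ref{gronedim}, $\partial_l E\subseteq\partial E_{>0}$.

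\emph{Classification.} An open interval of a line is exactly one of: the whole line $l$; an open ray, whose base is its unique boundary point and hence lies in $\partial E_{>0}$; or a bounded open interval, whose two endpoints lie in $\partial E_{>0}$. This gives the three cases of the statement.

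The main obstacle is the openness step, where the local face structure has to be combined with Corollary~\ref{netnulmerextr} to rule out $y$ being an endpoint. Alternatively, openness follows from Lemma~\ref{extremal boundary}: if $y\in E$ were a relative boundary point of $E$ in $l$, then since $y\notin\partial E_{>0}$ it would lie in a pre-extremal set of dimension $0$, which Corollary~\ref{netnulmerextr} forbids. I would use this shorter route, citing Corollary~\ref{gronedim} directly.
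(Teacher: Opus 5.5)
Your proposal is correct, and the route you say you would actually use is the paper's: Corollary~\ref{gronedim} gives $\partial_l E(x,\,L,\,l)\subseteq\partial E_{>0}$, and since $E(x,\,L,\,l)\subseteq E_{>0}$ is disjoint from $\partial E_{>0}$, the connected set $E(x,\,L,\,l)$ is open in $l$, which leaves only the three listed cases. Your longer local-face argument for openness (an endpoint of $l\cap U$ would be a zero-dimensional face, contradicting Corollary~\ref{netnulmerextr}) is also valid; it simply re-proves Lemma~\ref{extremal boundary} in the special case of a line.
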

\begin{proof}
On the one hand, the set $E(x,\,L,\,l)$ is connected; on the other hand, $E(x,\,L,\,l)\subseteq l$. In this case, by Corollary~\ref{gronedim}, we have $\partial_l E(x,\,L,\,l)\subseteq\partial E_{>0}$. Consequently, $E(x,\,L,\,l)$ is open in $l$. Thus, $E(x,\,L,\,l)$ is either a line, or an open ray, or an open interval whose endpoints lie in $\partial E_{>0}$.
\end{proof}

\subsection{The structural theorem.}
Let $x\in\widetilde{E}(L,\,l)$. Our goal is to show that an analogue of Carath\'eodory's theorem holds for the sets $E(x,\,L,\,l)$. Our final result is the following theorem, which is a direct generalization of Lemma 2 in~\cite{CafNirSpr} and Lemma 3.2 in~\cite{DePhiFig}.
\begin{Th}\label{mainth}
Let $x\in\widetilde{E}(L,\,l)$. Then at least one of the following two statements:
\begin{enumerate}
\item
There exists a line $m$ such that $x\in m\subseteq E(x,\,L,\,l)$.
\item
There exists a generalized simplex $C$ such that $x\in\ri C$, $V_{\fin}(C)\subseteq\partial E_{>0}$ and $C\subseteq \clos(E(x,\,L,\,l))$.
\end{enumerate}
holds. Moreover, if the second condition holds, then the simplex $C$ can be required to satisfy the additional restrictions: $C\setminus V_{\fin}(C)\subseteq E(x,\,L,\,l)$ and $\#(V_{\dir}(C))\leqslant 1$.
\end{Th}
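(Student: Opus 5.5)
The plan follows the scheme announced in the Introduction: build a closed set of candidate vertices, show by induction on $\dim l$ that $x$ lies in their convex hull unless a line through $x$ sits in $E(x,\,L,\,l)$, and then shrink a simplex to a minimal one.

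First I would define the candidates. Let $V(x,\,L,\,l)$ be the set of points $v\in\partial E_{>0}\cap\clos E(x,\,L,\,l)$ with $[x,\,v)\subseteq E(x,\,L,\,l)$, the points of the boundary visible from $x$ inside the extremal set. Let the directions part be the set of $\vec{e}$ with $(x,\,\vec{e})\subseteq E(x,\,L,\,l)$. The first task is to check that this pair is closed in the sense of Section~\ref{secgensim}, so that Corollary~\ref{tsorsim} applies. Take limits of segments $[x,\,v_n)$. Each point of the limit segment is either in $\partial E_{>0}$, in which case we truncate, or in $E_{>0}$. In the second case the point has $\BG=L$ by continuity, and Lemma~\ref{rasek} puts it in $E(x,\,L,\,l)$. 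Truncating at the first boundary point gives a legitimate candidate, which should give closedness after replacing the limit by this truncation. The definitions may need adjusting so that $V(x,\,L,\,l)$ is exactly closed.

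The core step is the dichotomy: either $x\in\conv(V(x,\,L,\,l))$, or there is a line through $x$ inside $E(x,\,L,\,l)$. I would prove it by induction on $\dim l$.
\begin{itemize}
\item The base case $\dim l=1$ is Proposition~\ref{basic proposition}: $E(x,\,L,\,l)$ is a line, a ray, or an interval with ends on $\partial E_{>0}$, and in each case we are done.
\item For the step, take any line $m\subseteq l$ through $x$ and follow it in both directions inside $E(x,\,L,\,l)$. By Corollary~\ref{vipukn} and Lemma~\ref{rasek}, $m\cap E(x,\,L,\,l)$ near $x$ is an interval. If an endpoint $y$ is not on $\partial E_{>0}$, then Lemma~\ref{extremal boundary} gives $y\in\widetilde{E}(L,\,l')$ with $l'\subsetneq l$. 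The induction hypothesis then gives either a line through $y$ in $E(y,\,L,\,l')\subseteq E(x,\,L,\,l)$ (Corollary~\ref{embedded extremal}), or $y\in\conv V(y,\,L,\,l')$.
\item To transfer candidates seen from $y$ into candidates seen from $x$, I would use Lemma~\ref{stia} and Corollary~\ref{stiag}. For each vertex $z$ seen from $y$, the triangle $\conv\{x,\,y,\,z\}$ lies in the relatively convex set $\clos E$. Moving from $x$ toward $z$ inside this triangle hits $\partial E_{>0}$ at a point $z'\in V(x,\,L,\,l)\cap\conv\{x,\,y,\,z\}$, or else the direction survives. Direction vertices are handled the same way, and a pair $\pm\vec{e}$ produces a line (alternative $2''$).
\item Doing this for both ends of $m$ places $x$ on a segment between two points of $\conv V(x,\,L,\,l)$, which proves the claim.
\end{itemize}
In this step, the main obstacle is ensuring that the triangles $\conv\{x,\,y,\,z\}$ genuinely stay in $\clos E(x,\,L,\,l)$. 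The plan is to propagate visibility through the convex neighbourhoods $U(W,\,L)$ and use the fact that $E$ is built by the chain $E_n$.

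If the line alternative occurs, statement 1 holds. Otherwise Carathéodory's theorem (Theorem~\ref{carat}) and Corollary~\ref{tsorsim} give an inclusion-minimal generalized simplex $C$ with vertices in $V(x,\,L,\,l)$ containing $x$. By minimality $x\in\ri C$, since otherwise $x$ lies in a proper face, which is a smaller simplex with the same kind of vertices.

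Next I would show $C\setminus V_{\fin}(C)\subseteq E(x,\,L,\,l)$. Suppose some point $w\in C\setminus V_{\fin}(C)$ lies outside $E$. Expanding from $x$ toward $w$ then yields a point $z\in\partial E_{>0}$ visible from $x$, so $z\in V(x,\,L,\,l)$. Replacing the vertex on whose side $z$ lies by $z$, via Lemma~\ref{stve} or Lemma~\ref{stna}, gives a simplex strictly inside $C$ still containing $x$, which contradicts minimality. The bound $\#V_{\dir}(C)\leqslant1$ comes from the same replacement argument: two directions $\vec{e}_1,\,\vec{e}_2$ would force the two-dimensional cone spanned by them to be in $E$. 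Pushing toward its boundary either produces a finite candidate that shrinks $C$, or makes $E$ contain whole lines through $x$, which is statement 1.
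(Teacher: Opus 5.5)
Your overall architecture matches the paper's: candidates on $\partial E_{>0}$, induction on $\dim l$ through Lemma~\ref{extremal boundary}, transfer of candidates from $y$ to $x$ by Corollary~\ref{stiag}, and a minimal simplex from Corollary~\ref{tsorsim}. However, the step you single out as the main obstacle aims at a false statement.

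The triangle $\conv\{x,\,y,\,z\}$ need not lie in $\clos E(x,\,L,\,l)$, and moving from $x$ straight toward $z$ can leave $\clos\Omega$. This is the typical case, because one-dimensional extremal sets reach the free boundary tangentially (Theorem~\ref{tangency}). Take a two-dimensional extremal set $E$ bounded by a segment tangent to the free boundary at $z$, with $E$ on the same side of the tangent line $l'$ as $\mathbb{R}^d\setminus\clos\Omega$ near $z$; the ``angles'' of Bellman functions on a parabolic strip are of this kind. Let $y$ lie on that segment, so $y\in\widetilde{E}(L,\,l')$ and $z\in V_{\fin}(y,\,L,\,l')$, and let $x$ be an interior point of $E$. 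Then $[x,\,z]$ reaches $z$ at a positive angle to $l'$ from the exterior side, so just before $z$ it runs outside $\clos\Omega$. No propagation through $U(W,\,L)$ or the chain $E_n$ can prove this inclusion.

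What is true, and all Corollary~\ref{stiag} needs, is that some candidate lies in $\conv\{x,\,y,\,z\}$. The paper gets it by a sweep (Lemmas~\ref{finite-vertex lemma} and~\ref{direction-vertex lemma}):
let $w$ be the point of $[y,\,z]\setminus V(x)$ closest to $y$; it exists because $V(x)$ is open and $z\notin E_{>0}$. Every $p\in[y,\,w)$ lies in $V(x)\cap\clos E\subseteq E$, so Lemma~\ref{rasek} gives $[x,\,p]\subseteq E$. Hence $[x,\,w]\subseteq\clos E$, and since $w\notin V(x)$, the segment $[x,\,w]$ meets $\partial E_{>0}$ at a candidate $\widetilde z\in\conv\{x,\,y,\,z\}$. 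Only the swept part $\conv\{x,\,y,\,w\}$ lies in $\clos E$.

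On closedness, the right adjustment is to define candidates by $[x,\,v]\subseteq\clos E$ and $[x,\,\vec e\,)\subseteq\clos E$; this pair is closed for free. Upgrade to $[x,\,v)\subseteq E$ only for the minimal simplex: a point of $[x,\,v)\setminus E$ lies in $\clos E\setminus E\subseteq\partial E_{>0}$ and could replace $v$.

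The final part has two more gaps.

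First, expanding along the segment from $x$ to your point $w\in C\setminus V_{\fin}(C)$ need not meet $\partial E_{>0}$. The segment may leave $E$ at a point of $E\cap\partial_l E$ (a lower-dimensional extremal set, Lemma~\ref{extremal boundary}) and continue in $E_{>0}\setminus E$. Relative convexity (Corollary~\ref{vipukn}) only acts on segments whose two endpoints are already in $E$, which is why the paper expands the whole simplex. Once all $[x,\,v)$ and $[x,\,\vec e\,)$ lie in $E$, the simplices $C_\alpha$ with vertices $\alpha v+(1-\alpha)x$ and $x+\frac{\alpha}{1-\alpha}e$ lie in $E$ as long as they lie in $E_{>0}$. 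At the first contact parameter $\alpha_0<1$, a point $y\in C_{\alpha_0}\cap\partial E_{>0}$ satisfies $[x,\,y)\subseteq\ri C_{\alpha_0}\subseteq E$. So $y$ is a candidate lying in $\ri C$, and subdividing $C$ at $y$ gives a smaller simplex containing $x$.

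Second, your argument for $\#(V_{\dir}(C))\leqslant 1$ does not work: a two-dimensional cone of directions from $x$ inside $E$ forces neither a finite candidate nor a line. For example, let $\Omega=\{(s,\,t)|\,t\geqslant|s|\}$ with $f=0$ on $\partial\Omega$ and $f=-\infty$ inside. Then $\BG\equiv 0$ and $E=\intX\Omega$, and for $x=(0,\,1)$ the simplex $\conv(\{0\};\;\{\vec e_1,\,\vec e_2\})$ with $e_{1,2}=(\pm 1,\,2)/\sqrt5$ has every required property except this one. The bound comes from minimality alone. Since $x\in\ri C$, write $x=\sum_j\alpha_jv_j+\sum_i\beta_ie_i$ with all $\beta_i>0$. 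The direction $\vec g$ of $\sum_i\beta_ie_i$ lies in $C$, so $[x,\,\vec g\,)\subseteq C\subseteq\clos E$ and $\vec g$ is a candidate. Then $\conv(V_{\fin}(C);\;\{\vec g\})$ contains $x$ and is strictly smaller than $C$ whenever $\#(V_{\dir}(C))\geqslant 2$.
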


\begin{Cor}
Let $\Omega$ contain no rays and $f|_{\intX\Omega}\equiv-\infty$. Assume also that $x\in\widetilde{E}(L,\,l)$. Then there exists a simplex $C$ with vertices on $\partial\Omega$ such that $x\in\ri C$ and $C\setminus V(C)\subseteq E(x,\,L,\,l)$.
\end{Cor}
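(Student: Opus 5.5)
The plan is to derive the statement directly from Theorem~\ref{mainth}. This requires three things: identifying $E_{>0}$, excluding the first alternative, and showing that the generalized simplex from the second alternative has no vertices at infinity.

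\emph{Identifying $E_{>0}$.} Let $y\in\intX\Omega$. Some neighbourhood of $y$ lies in $\intX\Omega$, and $f\equiv-\infty$ there, so $\supfu{f}(y)=-\infty$. Since $\BG(y)$ is finite by our standing assumption, the point $y$ is not in the contact zone. Hence $E_0=\varnothing$ and $E_{>0}=\intX\Omega$. Moreover $\partial E_{>0}=\partial(\intX\Omega)\subseteq\partial\Omega$. Indeed, a point of $\clos(\intX\Omega)$ that is not in $\intX\Omega$ lies in $\clos\Omega\setminus\intX\Omega=\partial\Omega$.

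\emph{Excluding the first alternative.} Suppose a line $m$ satisfies $x\in m\subseteq E(x,\,L,\,l)$. Since $E(x,\,L,\,l)\subseteq E_{>0}\subseteq\Omega$, the domain $\Omega$ would contain $m$, and therefore a ray. This contradicts the hypothesis. So Theorem~\ref{mainth} provides a generalized simplex $C$ with the following properties:
\begin{itemize}
\item $x\in\ri C$;
\item $V_{\fin}(C)\subseteq\partial E_{>0}\subseteq\partial\Omega$;
\item $C\setminus V_{\fin}(C)\subseteq E(x,\,L,\,l)$;
\item $\#V_{\dir}(C)\leqslant1$.
\end{itemize}

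\emph{Excluding vertices at infinity.} This is the only step needing care. We cannot argue from $C\subseteq\clos E(x,\,L,\,l)$, because $\clos\Omega$ might contain rays even though $\Omega$ does not. We use the stronger inclusion $C\setminus V_{\fin}(C)\subseteq E(x,\,L,\,l)$ instead.

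Suppose $\vec{e}\in V_{\dir}(C)$, and pick any finite vertex $v\in V_{\fin}(C)$, which exists since $V_{\fin}(C)\neq\varnothing$. Then the ray $[v,\,\vec{e})$ lies in $C$. By the Corollary after the lemma on faces of a generalized simplex, $V_{\fin}(C)$ is exactly the set of extreme points of $C$. A point $v+\alpha e$ with $\alpha>0$ is an interior point of a segment contained in this ray, so it is not extreme. Thus the open ray $(v,\,\vec{e})$ misses $V_{\fin}(C)$ and lies in $E(x,\,L,\,l)\subseteq\Omega$. In particular the closed ray $[v+e,\,\vec{e})$ lies in $\Omega$, a contradiction.

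Hence $V_{\dir}(C)=\varnothing$. Then $C$ is an ordinary simplex whose vertex set $V(C)=V_{\fin}(C)$ lies on $\partial\Omega$, with $x\in\ri C$ and $C\setminus V(C)\subseteq E(x,\,L,\,l)$, as required.
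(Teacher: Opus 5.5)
Your proof is correct and follows the route the paper intends. The paper gives no written proof and treats the corollary as an immediate consequence of Theorem~\ref{mainth}, via the same three observations you make: $E_0=\varnothing$, so $E_{>0}=\intX\Omega$ and $\partial E_{>0}\subseteq\partial\Omega$; the line alternative is excluded; and a vertex at infinity is excluded using $C\setminus V_{\fin}(C)\subseteq E(x,\,L,\,l)\subseteq\Omega$. Your remark that this stronger inclusion is needed, rather than $C\subseteq\clos(E(x,\,L,\,l))$, because $\clos\Omega$ may contain rays, is a worthwhile point the paper leaves implicit.
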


Together with Remark~\ref{remosushchex}, this yields the following corollary.
\begin{Cor}
Let $\Omega$ contain no rays and $f|_{\intX\Omega}\equiv-\infty$. Suppose also that $x\in\intX\Omega$. Then there exists a simplex $C$ with vertices on the boundary of $\Omega$ such that $x\in\ri C$ and $C\setminus V(C)\subseteq \intX\Omega$, and the function $\BG$ is affine on $C\setminus V(C)$.
\end{Cor}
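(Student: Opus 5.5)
The plan is to reduce the statement to the preceding corollary, and through it to Theorem~\ref{mainth}, in three steps: identify $E_{>0}$ with $\intX\Omega$, produce an extremal set through $x$, and then use the assumption that $\Omega$ contains no rays to remove the unwanted alternatives of Theorem~\ref{mainth}.

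\textbf{Step 1: the contact zone is empty.} Fix $y\in\intX\Omega$. Some neighbourhood of $y$ lies in $\intX\Omega$, and $f\equiv-\infty$ there. Hence $\supfu{f}(y)=\limsup_{z\to y}f(z)=-\infty$. Since $\BG(y)$ is finite by the standing assumption of Section~\ref{secotnvip}, we get $\BG(y)\neq\supfu{f}(y)$, so $y\notin E_0$. Thus $E_0=\varnothing$ and $E_{>0}=\intX\Omega$. It follows that $\partial E_{>0}=\partial(\intX\Omega)\subseteq\partial\Omega$.

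\textbf{Step 2: an extremal set through $x$.} By Remark~\ref{remosushchex}, there exist an affine functional $L$ and an affine subspace $l$ with $x\in\widetilde{E}(L,\,l)$. We apply Theorem~\ref{mainth} to $x$ (equivalently, the preceding corollary).

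\textbf{Step 3: excluding lines and rays.} Alternative~1 of Theorem~\ref{mainth} would give a line $m\subseteq E(x,\,L,\,l)\subseteq E_{>0}\subseteq\Omega$. A line contains rays, which contradicts the hypothesis on $\Omega$. So alternative~2 holds: there is a generalized simplex $C$ with $x\in\ri C$, $V_{\fin}(C)\subseteq\partial E_{>0}\subseteq\partial\Omega$, $C\setminus V_{\fin}(C)\subseteq E(x,\,L,\,l)$ and $\#V_{\dir}(C)\leqslant1$.

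Suppose $V_{\dir}(C)=\{\vec e\}$. Pick a finite vertex $v$, which exists because $V_{\fin}(C)\neq\varnothing$. Then $[v,\,\vec e)\subseteq C$. Since $C$ is affinely independent, the ray $(v,\,\vec e)$ contains no finite vertex, so $(v,\,\vec e)\subseteq C\setminus V_{\fin}(C)\subseteq\Omega$. This is an open ray in $\Omega$; shifting its base point slightly along it gives a closed ray in $\Omega$, again a contradiction. Hence $V_{\dir}(C)=\varnothing$, and $C$ is an ordinary simplex with vertex set $V(C)=V_{\fin}(C)\subseteq\partial\Omega$.

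\textbf{Conclusion.} We have $x\in\ri C$ and $C\setminus V(C)\subseteq E(x,\,L,\,l)\subseteq E_{>0}=\intX\Omega$. Every point of $E(x,\,L,\,l)$ lies in $\widetilde{E}(L,\,l)$, where $\BG=L$, so $\BG$ coincides with the affine functional $L$ on $C\setminus V(C)$.

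The only delicate point is Step~1. It needs the interior-point version of the limsup, and it uses the standing assumption that $\BG$ is finite on $\intX\Omega$. Everything else is a direct citation of Theorem~\ref{mainth} together with the no-rays hypothesis.
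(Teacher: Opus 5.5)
Your proof is correct and follows the paper's route. The paper obtains this corollary by combining Remark~\ref{remosushchex} with the preceding corollary, which is Theorem~\ref{mainth} specialized to the case where $\Omega$ contains no rays and $f|_{\intX\Omega}\equiv-\infty$. You make explicit the details the paper leaves implicit: $E_0=\varnothing$, so $E_{>0}=\intX\Omega$ and $\partial E_{>0}\subseteq\partial\Omega$, and the no-rays hypothesis excludes both a line and a vertex at infinity.
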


The proof of the theorem occupies the whole subsection and naturally splits into three steps.
\begin{enumerate}
\item
Define the points of $\partial\Omega$ and the directions that will serve as candidates for finite and vertices at infinity respectively of the generalized simplex $C$.
\item
Prove that either a line passing through $x$ lies entirely in the extremal set, or $x$ lies in the convex hull of the candidates for the vertices of $C$.
\item
Choose the minimal by inclusion among the generalized simplices containing $x$ for which all vertices are candidates, and show that it satisfies all the properties claimed in the theorem.
\end{enumerate}
\begin{Def}\label{canconver}
Denote
\begin{equation}
V_{\fin}(x,\,L,\,l)=\{y\in\partial E_{>0}|[x,\,y]\subseteq \clos(E(x,\,L,\,l))\}.
\end{equation}
\end{Def}
\begin{Def}\label{cannapver}
Denote
\begin{equation}
V_{\dir}(x,\,L,\,l)=\{\vec{e}\in \vec{S}^{d-1}|[x,\,\vec{e})\subseteq \clos(E(x,\,L,\,l))\}.
\end{equation}
\end{Def}
\begin{Rem}\label{closed-pair remark}
The pair of sets $(V_{\fin}(x,\,L,\,l);\;V_{\dir}(x,\,L,\,l))$ is closed.
\end{Rem}
\begin{proof}
To check this fact, it suffices to show that if a sequence of points and directions $\{v_n\}_{n\in\mathbb{N}}\subseteq V_{\fin}(x,\,L,\,l)\cup V_{\dir}(x,\,L,\,l)$ has a limit $v\in \mathbb{R}^d\cup\vec{S}^{d-1}$, then $v\in V_{\fin}(x,\,L,\,l)\cup V_{\dir}(x,\,L,\,l)$. The fact that $v_n\in V_{\fin}(x,\,L,\,l)\cup V_{\dir}(x,\,L,\,l)$ means that $[x,\,v_n)\subseteq\clos(E(x,\,L,\,l))$. By the convergence $v_n\to v$, the inclusion $[x,\,v)\subseteq\clos(E(x,\,L,\,l))$ holds, and hence $v\in V_{\fin}(x,\,L,\,l)\cup V_{\dir}(x,\,L,\,l)$.
\end{proof}
\begin{Le}\label{finite-vertex lemma}
Let $x\in\widetilde{E}(L,\,l)$, $y\in E(x,\,L,\,l)$ and $y\in V(x)$. Then for any point $z\in V_{\fin}(y,\,L,\,l)$ there exists $\widetilde{z}\in V_{\fin}(x,\,L,\,l)\cap\conv(\{x,\,y,\,z\})$.
\end{Le}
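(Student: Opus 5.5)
Here $E(y,\,L,\,l)=E(x,\,L,\,l)$: since $y\in V(x)\cap l$ and $\BG(y)=L(y)$, Lemma~\ref{rasek} gives $[x,\,y]\subseteq E(x,\,L,\,l)$, and $y$ lies in the same connected component. Write $E=E(x,\,L,\,l)$. Fix $z\in V_{\fin}(y,\,L,\,l)$, so $z\in\partial E_{>0}$ and $[y,\,z]\subseteq\clos E$. The plan is to sweep the triangle $\conv(\{x,\,y,\,z\})$ by segments from $x$ and find the first boundary point that $x$ sees inside $\clos E$. If $z=y$ or the points are collinear, the argument degenerates to the segment $[x,\,z]$ and runs the same way.

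For $t\in[0,\,1]$ put $w_t=(1-t)y+tz\in[y,\,z]$ and consider the segments $[x,\,w_t]$. Let $T$ be the set of $t$ such that $[x,\,w_s]\subseteq E$ for all $s\in[0,\,t]$, and let $t^*=\sup T$. We have $0\in T$. Since $[x,\,y]\subseteq E\subseteq E_{>0}$ is compact and $E_{>0}$ is open, the segments $[x,\,w_t]$ stay in $V(x)$ for small $t$. Their endpoints satisfy $w_t\in\clos E$ and $w_t\in E_{>0}$. Moreover $\clos E\cap E_{>0}\subseteq E$, because $E$ is closed in $E_{>0}$: this is the closedness step 2.2 of the inductive description, applied within $l$ and using Remark~\ref{open-set}. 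Lemma~\ref{rasek} then gives $[x,\,w_t]\subseteq E$. Hence $T$ contains a neighbourhood of $0$, and the same argument shows $T$ is open in $[0,\,1]$ away from the stopping point.

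Next, at $t^*$ the closed set $[x,\,w_{t^*}]$ lies in $\clos E$, as a limit of segments in $E$. It cannot lie entirely in $E_{>0}$. Otherwise, by openness of $E_{>0}$ and the same $V(x)$ argument as above, $T$ would extend past $t^*$; and if $t^*=1$ we would get $z\in E_{>0}$, a contradiction with $z\in\partial E_{>0}$. So some point of $[x,\,w_{t^*}]$ lies in $\partial E_{>0}$ (it lies in $\clos E_{>0}$ but not in $E_{>0}$). Let $\widetilde z$ be the point of this segment closest to $x$ that is not in $E_{>0}$. Then $\widetilde z\in\partial E_{>0}$, $[x,\,\widetilde z]\subseteq[x,\,w_{t^*}]\subseteq\clos E$, so $\widetilde z\in V_{\fin}(x,\,L,\,l)$. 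Also $\widetilde z\in\conv(\{x,\,w_{t^*}\})\subseteq\conv(\{x,\,y,\,z\})$.

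The main obstacle is the continuity step at $t^*$. We must justify that $[x,\,w_{t^*}]\subseteq\clos E$ and, when this segment lies in $E_{>0}$, that all nearby segments are in $E$. This needs two facts. First, $w_{t}\in\clos E$ for every $t$; this holds by the hypothesis on $z$. Second, $E$ is relatively closed in $E_{>0}$, including the fact that $\BG=L$ passes to limits at interior points by continuity of $\BG$. With these, the passage ``$\clos E\cap V(x)\cap l\Rightarrow$ in $E$'' follows from Lemma~\ref{rasek}, and the sweeping argument closes.
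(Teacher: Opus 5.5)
Your proposal is correct and is essentially the paper's argument. The paper takes $w$ to be the point of the closed set $[y,\,z]\setminus V(x)$ closest to $y$, which is exactly your $w_{t^*}$. It then uses $\clos(E(x,\,L,\,l))\cap E_{>0}=E(x,\,L,\,l)$ and Lemma~\ref{rasek} to get $[x,\,w]\subseteq\clos(E(x,\,L,\,l))$, and picks $\widetilde{z}\in[x,\,w]\cap\partial E_{>0}$. Your justification of the relative closedness of $E(x,\,L,\,l)$ in $E_{>0}$ (continuity of $\BG$ plus Lemma~\ref{rasek}, as in the proof of Lemma~\ref{extremal boundary}) spells out a step the paper uses without comment.
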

\begin{figure}[h]\centering
\def\svgwidth{7cm}
%% Creator: Inkscape 1.3.2 (091e20e, 2023-11-25, custom), www.inkscape.org
%% PDF/EPS/PS + LaTeX output extension by Johan Engelen, 2010
%% Accompanies image file 'stexfris.pdf' (pdf, eps, ps)
%%
%% To include the image in your LaTeX document, write
%%   \input{<filename>.pdf_tex}
%%  instead of
%%   \includegraphics{<filename>.pdf}
%% To scale the image, write
%%   \def\svgwidth{<desired width>}
%%   \input{<filename>.pdf_tex}
%%  instead of
%%   \includegraphics[width=<desired width>]{<filename>.pdf}
%%
%% Images with a different path to the parent latex file can
%% be accessed with the `import' package (which may need to be
%% installed) using
%%   \usepackage{import}
%% in the preamble, and then including the image with
%%   \import{<path to file>}{<filename>.pdf_tex}
%% Alternatively, one can specify
%%   \graphicspath{{<path to file>/}}
%% 
%% For more information, please see info/svg-inkscape on CTAN:
%%   http://tug.ctan.org/tex-archive/info/svg-inkscape
%%
\begingroup%
  \makeatletter%
  \providecommand\color[2][]{%
    \errmessage{(Inkscape) Color is used for the text in Inkscape, but the package 'color.sty' is not loaded}%
    \renewcommand\color[2][]{}%
  }%
  \providecommand\transparent[1]{%
    \errmessage{(Inkscape) Transparency is used (non-zero) for the text in Inkscape, but the package 'transparent.sty' is not loaded}%
    \renewcommand\transparent[1]{}%
  }%
  \providecommand\rotatebox[2]{#2}%
  \newcommand*\fsize{\dimexpr\f@size pt\relax}%
  \newcommand*\lineheight[1]{\fontsize{\fsize}{#1\fsize}\selectfont}%
  \ifx\svgwidth\undefined%
    \setlength{\unitlength}{195.59055118bp}%
    \ifx\svgscale\undefined%
      \relax%
    \else%
      \setlength{\unitlength}{\unitlength * \real{\svgscale}}%
    \fi%
  \else%
    \setlength{\unitlength}{\svgwidth}%
  \fi%
  \global\let\svgwidth\undefined%
  \global\let\svgscale\undefined%
  \makeatother%
  \begin{picture}(1,0.52173913)%
    \lineheight{1}%
    \setlength\tabcolsep{0pt}%
    \put(0,0){\includegraphics[width=\unitlength,page=1]{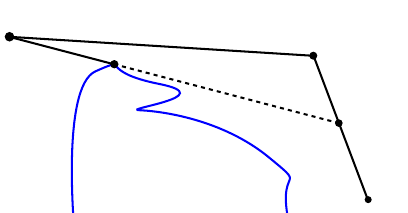}}%
    \put(0.02941814,0.45889409){\color[rgb]{0,0,0}\makebox(0,0)[lt]{\lineheight{1.25}\smash{\begin{tabular}[t]{l}$x$\end{tabular}}}}%
    \put(0.79245703,0.3989496){\color[rgb]{0,0,0}\makebox(0,0)[lt]{\lineheight{1.25}\smash{\begin{tabular}[t]{l}$y$\end{tabular}}}}%
    \put(0.83016404,0.23052122){\color[rgb]{0,0,0}\makebox(0,0)[lt]{\lineheight{1.25}\smash{\begin{tabular}[t]{l}$w$\end{tabular}}}}%
    \put(0.90647921,0.0448866){\color[rgb]{0,0,0}\makebox(0,0)[lt]{\lineheight{1.25}\smash{\begin{tabular}[t]{l}$z$\end{tabular}}}}%
    \put(0.29417137,0.36325455){\color[rgb]{0,0,0}\makebox(0,0)[lt]{\lineheight{1.25}\smash{\begin{tabular}[t]{l}$\widetilde{z}$\end{tabular}}}}%
    \put(0.62325889,0.17224768){\color[rgb]{0,0,0}\makebox(0,0)[lt]{\lineheight{1.25}\smash{\begin{tabular}[t]{l}\textcolor[rgb]{0,0,1}{$\partial E_{>0}$}\end{tabular}}}}%
  \end{picture}%
\endgroup%

\caption{Illustration for Lemma~\ref{finite-vertex lemma}}\label{ilstexf}
\end{figure}
\begin{proof}
The set $V(x)$ is open; hence, the set $[y,\,z]\setminus V(x)$ is closed. Since $y\in V(x)$ and $z\notin V(x)$, because $z\notin E_{>0}$, there exists the closest to $y$ point $w$ of the set $[y,\,z]\setminus V(x)$. Then, for any $\alpha\in (0,\,1)$, the point $(1-\alpha)y+\alpha w$ lies in $[y,\,z]\cap V(x)$. Since $z\in V_{\fin}(y,\,L,\,l)$, we have $[y,\,z]\subseteq \clos(E(y,\,L,\,l))=\clos(E(x,\,L,\,l))$. Also, $V(x)\subseteq E_{>0}$, and hence $(1-\alpha)y+\alpha w\in\clos(E(x,\,L,\,l))\cap E_{>0}=E(x,\,L,\,l)$. Tn such a case, Lemma~\ref{rasek} gives $[x,\,(1-\alpha)y+\alpha w]\subseteq E(x,\,L,\,l)$, and hence $[x,\,w]\subseteq\clos(E(x,\,L,\,l))$. Since $w\notin V(x)$, we have $[x,\,w]\cap\partial E_{>0}\ne\varnothing$. Let $\widetilde{z}\in[x,\,w]\cap\partial E_{>0}$. Then $[x,\,\widetilde{z}]\subseteq\clos(E(x,\,L,\,l))$, that is, $\widetilde{z}\in V_{\fin}(x,\,L,\,l)$. On the other hand, $\widetilde{z}\in\conv(\{x,\,w\})\subseteq\conv(\{x,\,y,\,z\})$.
\end{proof}
\begin{Le}\label{direction-vertex lemma}
Let $x\in\widetilde{E}(L,\,l)$, $y\in E(x,\,L,\,l)$ and $y\in V(x)$. Then for any directions $\vec{e}\in V_{\dir}(y,\,L,\,l)$ if $\vec{e}\notin V_{\dir}(x,\,L,\,l)$, then there exists $\widetilde{z}\in V_{\fin}(x,\,L,\,l)\cap\conv(\{x,\,y\};\;\{\vec{e}\})$.
\end{Le}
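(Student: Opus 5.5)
We adapt the proof of Lemma~\ref{finite-vertex lemma}, replacing the far point $z$ by the ray $[y,\,\vec{e})$. Since $\vec{e}\in V_{\dir}(y,\,L,\,l)$, we have $[y,\,\vec{e})\subseteq\clos(E(y,\,L,\,l))=\clos(E(x,\,L,\,l))$; the equality of the extremal sets holds because $y\in E(x,\,L,\,l)$. We consider the closed set $[y,\,\vec{e})\setminus V(x)$, which is closed in the ray because $V(x)$ is open by Remark~\ref{open-set}. The argument splits according to whether this set is empty.

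\emph{Case 1: $[y,\,\vec{e})\subseteq V(x)$.} Every point $y+\alpha e$, $\alpha\geqslant 0$, then lies in $\clos(E(x,\,L,\,l))\cap E_{>0}$. As in Lemma~\ref{finite-vertex lemma}, this intersection equals $E(x,\,L,\,l)$: a point of $E_{>0}$ in the closure is visible from some nearby point of the extremal set, so Lemma~\ref{rasek} applies, exactly as in step 2.2 of the inductive construction. Lemma~\ref{rasek} then gives $[x,\,y+\alpha e]\subseteq E(x,\,L,\,l)$ for all $\alpha\geqslant 0$. The segments $[x,\,y+\alpha e]$ converge, as $\alpha\to+\infty$, to the ray $[x,\,\vec{e})$. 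Since $\clos(E(x,\,L,\,l))$ is closed, we get $[x,\,\vec{e})\subseteq\clos(E(x,\,L,\,l))$, that is, $\vec{e}\in V_{\dir}(x,\,L,\,l)$. This contradicts the hypothesis, so Case 1 cannot occur.

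\emph{Case 2: the set is nonempty.} Let $w$ be the point of $[y,\,\vec{e})\setminus V(x)$ closest to $y$; it exists since the set is closed and nonempty, and $w\ne y$ because $y\in V(x)$. Every point of $[y,\,w)$ lies in $V(x)\cap\clos(E(x,\,L,\,l))\subseteq E(x,\,L,\,l)$. Hence Lemma~\ref{rasek} gives $[x,\,u]\subseteq E(x,\,L,\,l)$ for every $u\in[y,\,w)$. Passing to the limit, $\conv(\{x,\,y,\,w\})\subseteq\clos(E(x,\,L,\,l))$, and in particular $[x,\,w]\subseteq\clos(E(x,\,L,\,l))$.

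Because $w\notin V(x)$, the segment $[x,\,w]$ is not contained in $E_{>0}$. Since $x\in E_{>0}$, it meets $\partial E_{>0}$; let $\widetilde{z}$ be its first such point starting from $x$. Then $[x,\,\widetilde{z}]\subseteq[x,\,w]\subseteq\clos(E(x,\,L,\,l))$, so $\widetilde{z}\in V_{\fin}(x,\,L,\,l)$. Finally, $w=y+\alpha e$ for some $\alpha>0$, so $[x,\,w]\subseteq\conv(\{x,\,y\};\;\{\vec{e}\})$, and therefore $\widetilde{z}\in\conv(\{x,\,y\};\;\{\vec{e}\})$.

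The main delicate step is the limiting argument in Case 1. Closedness of $\clos(E(x,\,L,\,l))$ makes it routine: every point $x+\beta e$ is the limit of points $(1-t)x+t(y+\alpha e)$ with $t=\beta/\alpha\to 0$, and $(1-t)x+t y\to x$. A second point to check is the inclusion $\clos(E)\cap E_{>0}\subseteq E$, which is used in both cases. It follows from the closedness part 2.2 of the inductive-construction lemma, combined with the fact that such points satisfy $\BG=L$ by continuity and lie in $l$.
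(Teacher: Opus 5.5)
Your proof is correct and is essentially the paper's argument. It uses the same dichotomy according to whether $[y,\,\vec{e})\subseteq V(x)$: the first case yields $\vec{e}\in V_{\dir}(x,\,L,\,l)$ via Lemma~\ref{rasek} and a limit, and the second takes the point $w$ of $[y,\,\vec{e})\setminus V(x)$ closest to $y$ and picks $\widetilde{z}\in[x,\,w]\cap\partial E_{>0}$. You also justify $\clos(E(x,\,L,\,l))\cap E_{>0}\subseteq E(x,\,L,\,l)$ explicitly, via continuity of $\BG$ and the argument of step 2.2, which the paper uses without comment here and in Lemma~\ref{finite-vertex lemma}.
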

\begin{figure}[h]\centering
\def\svgwidth{7cm}
%% Creator: Inkscape 1.3.2 (091e20e, 2023-11-25, custom), www.inkscape.org
%% PDF/EPS/PS + LaTeX output extension by Johan Engelen, 2010
%% Accompanies image file 'stexdris.pdf' (pdf, eps, ps)
%%
%% To include the image in your LaTeX document, write
%%   \input{<filename>.pdf_tex}
%%  instead of
%%   \includegraphics{<filename>.pdf}
%% To scale the image, write
%%   \def\svgwidth{<desired width>}
%%   \input{<filename>.pdf_tex}
%%  instead of
%%   \includegraphics[width=<desired width>]{<filename>.pdf}
%%
%% Images with a different path to the parent latex file can
%% be accessed with the `import' package (which may need to be
%% installed) using
%%   \usepackage{import}
%% in the preamble, and then including the image with
%%   \import{<path to file>}{<filename>.pdf_tex}
%% Alternatively, one can specify
%%   \graphicspath{{<path to file>/}}
%% 
%% For more information, please see info/svg-inkscape on CTAN:
%%   http://tug.ctan.org/tex-archive/info/svg-inkscape
%%
\begingroup%
  \makeatletter%
  \providecommand\color[2][]{%
    \errmessage{(Inkscape) Color is used for the text in Inkscape, but the package 'color.sty' is not loaded}%
    \renewcommand\color[2][]{}%
  }%
  \providecommand\transparent[1]{%
    \errmessage{(Inkscape) Transparency is used (non-zero) for the text in Inkscape, but the package 'transparent.sty' is not loaded}%
    \renewcommand\transparent[1]{}%
  }%
  \providecommand\rotatebox[2]{#2}%
  \newcommand*\fsize{\dimexpr\f@size pt\relax}%
  \newcommand*\lineheight[1]{\fontsize{\fsize}{#1\fsize}\selectfont}%
  \ifx\svgwidth\undefined%
    \setlength{\unitlength}{212.5984252bp}%
    \ifx\svgscale\undefined%
      \relax%
    \else%
      \setlength{\unitlength}{\unitlength * \real{\svgscale}}%
    \fi%
  \else%
    \setlength{\unitlength}{\svgwidth}%
  \fi%
  \global\let\svgwidth\undefined%
  \global\let\svgscale\undefined%
  \makeatother%
  \begin{picture}(1,0.50666667)%
    \lineheight{1}%
    \setlength\tabcolsep{0pt}%
    \put(0,0){\includegraphics[width=\unitlength,page=1]{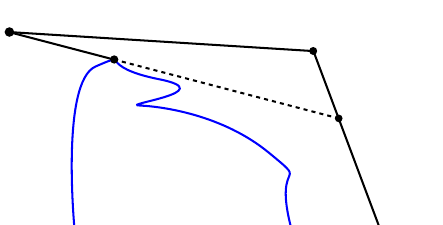}}%
    \put(0.02683539,0.45950699){\color[rgb]{0,0,0}\makebox(0,0)[lt]{\lineheight{1.25}\smash{\begin{tabular}[t]{l}$x$\end{tabular}}}}%
    \put(0.72883067,0.40435729){\color[rgb]{0,0,0}\makebox(0,0)[lt]{\lineheight{1.25}\smash{\begin{tabular}[t]{l}$y$\end{tabular}}}}%
    \put(0.76352122,0.24940114){\color[rgb]{0,0,0}\makebox(0,0)[lt]{\lineheight{1.25}\smash{\begin{tabular}[t]{l}$w$\end{tabular}}}}%
    \put(0.86747233,0.0554273){\color[rgb]{0,0,0}\makebox(0,0)[lt]{\lineheight{1.25}\smash{\begin{tabular}[t]{l}$\vec{e}$\end{tabular}}}}%
    \put(0.27040812,0.37151728){\color[rgb]{0,0,0}\makebox(0,0)[lt]{\lineheight{1.25}\smash{\begin{tabular}[t]{l}$\widetilde{z}$\end{tabular}}}}%
    \put(0.57316864,0.19578878){\color[rgb]{0,0,0}\makebox(0,0)[lt]{\lineheight{1.25}\smash{\begin{tabular}[t]{l}\textcolor[rgb]{0,0,1}{$\partial E_{>0}$}\end{tabular}}}}%
    \put(0,0){\includegraphics[width=\unitlength,page=2]{stexdris.pdf}}%
  \end{picture}%
\endgroup%

\caption{Illustration for Lemma~\ref{direction-vertex lemma}}\label{ilstexd}
\end{figure}
\begin{proof}
The proof is similar to the proof of the previous lemma. Either $[y,\,\vec{e})\subseteq V(x)$; then, for all $\gamma\geqslant 0$, the inclusion $[y+\gamma e,\,x]\subseteq E(x,\,L,\,l)$ holds, whence $[x,\,\vec{e})\subseteq\clos(E(x,\,L,\,l))$ and therefore $\vec{e}\in V_{\dir}(x,\,L,\,l)$. Or there exists the closest to $y$ point $w$ of the set $[y,\,\vec{e})\setminus V(x)$; in this case, the segment $[x,\,w]$ contains a suitable point $\widetilde{z}$.
\end{proof}
\begin{Le}\label{leprovipob}
Let $L\colon\mathbb{R}^d\to\mathbb{R}$ be an affine functional, let $l\subseteq\mathbb{R}^d$ be an affine subspace, and let $x\in\widetilde{E}(L,\,l)$. Then at least one of the following two alternatives:
\begin{enumerate}
\item
There exists a line $m$ such that $x\in m\subseteq E(x,\,L,\,l)$.
\item
The inclusion $x\in\conv(V_{\fin}(x,\,L,\,l);\;V_{\dir}(x,\,L,\,l))$ holds.
\end{enumerate}
holds.
\end{Le}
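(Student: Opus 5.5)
We argue by induction on $\dim l$. By Corollary~\ref{netnulmerextr}, $\dim l\geqslant 1$.

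\textbf{Base case $\dim l=1$.} Proposition~\ref{basic proposition} says that $E(x,\,L,\,l)$ is the line $l$, an open ray, or an open interval with endpoints on $\partial E_{>0}$. In the first case alternative~1 holds with $m=l$. For an open ray with base $b$ and direction $\vec{e}$, the closure is the closed ray, so $b\in V_{\fin}(x,\,L,\,l)$ and $\vec{e}\in V_{\dir}(x,\,L,\,l)$, and $x\in\conv(\{b\};\;\{\vec{e}\})$. For an interval, both endpoints lie in $V_{\fin}(x,\,L,\,l)$ and $x$ lies between them.

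\textbf{Induction step.} Assume the statement for all extremal sets of dimension $<\dim l$, and suppose alternative~1 fails for $x$. Pick any line $m\subseteq l$ through $x$. Then $m\cap E(x,\,L,\,l)$ is relatively convex (Corollary~\ref{vipukn}) and connected near $x$, and by assumption it does not contain all of $m$. So, moving along $m$ in some direction, we reach the first point $y$ of $\partial_l E(x,\,L,\,l)$, with $[x,\,y)\subseteq E(x,\,L,\,l)$. Apply Lemma~\ref{extremal boundary}.

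If $y\in\partial E_{>0}$, then $y\in V_{\fin}(x,\,L,\,l)$. If instead $y\in E(x,\,L,\,l)\cap\widetilde{E}(L,\,l')$ with $l'\subsetneq l$, the induction hypothesis applied to $y$ in $l'$ gives one of two outcomes:
\begin{enumerate}
\item[(a)] $y\in\conv(V_{\fin}(y,\,L,\,l');\;V_{\dir}(y,\,L,\,l'))$, or
\item[(b)] a line $m'$ with $y\in m'\subseteq E(y,\,L,\,l')$.
\end{enumerate}
By Corollary~\ref{embedded extremal}, $E(y,\,L,\,l')\subseteq E(y,\,L,\,l)=E(x,\,L,\,l)$, so $V_{\fin}(y,\,L,\,l')\subseteq V_{\fin}(y,\,L,\,l)$ and likewise for directions. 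Outcome (b) means $V_{\dir}(y,\,L,\,l)$ contains an opposite pair $\pm\vec{e}$.

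Next, transfer from $y$ to $x$. The points of $[x,\,y)$ near $y$ lie in $E(x,\,L,\,l)$ and see $x$. Replace $y$ by a point $y_\varepsilon\in[x,\,y)$ very close to it. To make this legitimate, rerun the argument at $y_\varepsilon$, or note that the candidate sets vary correctly along segments inside $E$, using Lemma~\ref{rasek} and the openness of $V(\cdot)$ (Remark~\ref{open-set}). Lemmas~\ref{finite-vertex lemma} and~\ref{direction-vertex lemma} then supply exactly hypotheses~1 and~2 of Corollary~\ref{stiag}, with $V=V_{\fin}(y,\,L,\,l)$, $E=V_{\dir}(y,\,L,\,l)$, $V'=V_{\fin}(x,\,L,\,l)$ and $E'=V_{\dir}(x,\,L,\,l)$. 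Corollary~\ref{stiag} yields one of two conclusions:
\begin{enumerate}
\item[(c)] some point of $[x,\,y]$ lies in $\conv(V_{\fin}(x,\,L,\,l);\;V_{\dir}(x,\,L,\,l))$, or
\item[(d)] $V_{\dir}(x,\,L,\,l)$ contains an opposite pair $\pm\vec{e}$.
\end{enumerate}
In case (d), the rays $[x,\,\pm\vec{e})$ lie in $\clos E$. Their interior points are in $E_{>0}$ by openness-type arguments, so $x+\mathbb{R}e\subseteq E(x,\,L,\,l)$, i.e.\ alternative~1 holds after all. This step needs care.

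In every remaining case, on the ray from $x$ in direction $u$ there is a point of $K:=\conv(V_{\fin}(x,\,L,\,l);\;V_{\dir}(x,\,L,\,l))$. Since $u$ was arbitrary in the direction space of $l$, and the same applies to $-u$, every line through $x$ in $l$ meets $K$ on both sides of $x$, or at $x$. We conclude $x\in K$ by a separation argument. If $x\notin K$, a hyperplane in $l$ separates $x$ from $K$ strictly. Choose $u$ pointing strictly away from $K$. Then the whole closed ray $x+\mathbb{R}_{\geqslant0}u$ lies in the open halfspace opposite $K$ and cannot meet $K$, a contradiction. Strict separation needs $K$ closed, which follows from closedness of the pair (Remark~\ref{closed-pair remark}) together with Corollary~\ref{tsorsim}-type compactness. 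Alternatively, use non-strict separation and pick $u$ in the open side.

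\textbf{Main obstacle.} The hardest step is the transfer from $y$ to $x$. Lemmas~\ref{finite-vertex lemma} and~\ref{direction-vertex lemma} require $y\in V(x)$, which holds since $[x,\,y]\subseteq E_{>0}$ when $y\notin\partial E_{>0}$. One must also check that $V_{\fin}(y,\,L,\,l')$ is based at $y$ while Corollary~\ref{stiag} is applied with the segment $[x,\,y]$. The degenerate case (d) must be upgraded to a genuine line inside $E(x,\,L,\,l)$.
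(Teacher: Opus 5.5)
Your overall architecture matches the paper's: induction on $\dim l$, the base case from Proposition~\ref{basic proposition}, walking along a line through $x$ to a boundary point $y$ of $E(x,L,l)$ in $l$, Lemma~\ref{extremal boundary}, the induction hypothesis at $y$ in $l'$, and the transfer to $x$ via Lemmas~\ref{finite-vertex lemma}, \ref{direction-vertex lemma} and Corollary~\ref{stiag}. \textbf{But your treatment of case (d) is wrong.} From $\vec{e},-\vec{e}\in V_{\dir}(x,L,l)$ you only get $x+\mathbb{R}e\subseteq\clos(E(x,L,l))$. Nothing forces this line into $E_{>0}$, because it may pass through $\partial E_{>0}$. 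A counterexample:
\begin{itemize}
\item Let $\Omega=\mathbb{R}\times[0,1]$. Put $f=0$ on $\partial\Omega$ and on the slit $S=\{0\}\times(0,\frac12]$, and $f=-\infty$ elsewhere.
\item Then $\BG\equiv0$, $E_0=S$, and $E(x,0,\mathbb{R}^2)$ is the open strip minus $S$.
\item For $x=(1,\frac14)$, both horizontal directions lie in $V_{\dir}(x,0,\mathbb{R}^2)$.
\item Yet the only line through $x$ inside the strip meets $S$.
\end{itemize}
So alternative~1 fails, and your claim $x+\mathbb{R}e\subseteq E(x,L,l)$ is false.

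The missing idea is the dichotomy the paper uses. Either $m'=\conv(\{x\};\{\vec e,-\vec e\})\subseteq E(x,L,l)$, which is alternative~1. Or $m'$ contains a point $y\in\clos(E)\setminus E\subseteq\partial E_{>0}$; here recall $\clos(E)\cap E_{>0}=E$, as in the proof of Lemma~\ref{finite-vertex lemma}. In that case $[x,y]\subseteq m'\subseteq\clos(E)$ gives $y\in V_{\fin}(x,L,l)$ and $x\in\conv(\{y\};\{\vec e,-\vec e\})$, which is alternative~2.

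\textbf{The final assembly also needs repair, though it is easier.}
\begin{itemize}
\item You walk along $m$ only ``in some direction'' and never treat a ray $[x,\vec u)$ lying entirely in $E(x,L,l)$. Such a ray yields only $\vec u\in V_{\dir}(x,L,l)$, not a point of $K$ on the ray. So ``every line through $x$ meets $K$ on both sides'' is not established.
\item The separation argument, and the unproved closedness of $K$ it relies on, are unnecessary: a single line $m$ suffices. For each of $\pm\vec e$ you have one of three outcomes: $\pm\vec e\in V_{\dir}(x,L,l)$; a point of $K$ on $[x,\pm\vec e)$; or an opposite pair in $V_{\dir}(x,L,l)$.
\item The opposite-pair outcome, and the case where both $\pm\vec e\in V_{\dir}(x,L,l)$, go to the dichotomy above. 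Otherwise $x\in[v,v']$ or $x\in[v,-\vec e)$, which lies in $K$ by convexity.
\item The $y_\varepsilon$ detour is not needed. In the second alternative of Lemma~\ref{extremal boundary} you have $y\in E\subseteq E_{>0}$ and $[x,y)\subseteq\intX_l E$, hence $y\in V(x)$ directly, as you yourself note at the end.
\item It is cleaner to first reduce to $x\in\intX_l E(x,L,l)$, as the paper does: handle $x\in\partial_l E$ by Lemma~\ref{extremal boundary} and the induction hypothesis at $x$ itself. Otherwise your ``first boundary point'' on every ray is $x$.
\end{itemize}
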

Let $V(x,\,L,\,l)=V_{\fin}(x,\,L,\,l)\cup V_{\dir}(x,\,L,\,l)$. The proof of the lemma is carried out by induction on the dimension of the extremal set. The base of the induction is essentially Proposition~\ref{basic proposition}. The induction step is a combination of Lemmas~\ref{finite-vertex lemma} and~\ref{direction-vertex lemma} with Corollary~\ref{stiag}, together with a careful analysis of cases.

\begin{proof}
We prove the lemma by induction on the dimension of the extremal set.

{\bf Base of the induction. $\dim l=1$.}

By Proposition~\ref{basic proposition}, the extremal set $E(x,\,L,\,l)$ is either a line, in which case the first condition holds, or a segment or ray with endpoints on the boundary of $E_{>0}$, in which case the second condition holds.

{\bf Step. $\dim l=k+1$.}

If $x\in\partial_{l}(E(x,\,L,\,l))\cap E(x,\,L,\,l)$, then, by Lemma~\ref{extremal boundary}, there exists an affine subspace $\widetilde{l}\subsetneq l$ such that $x\in E(x,\,L,\,\widetilde{l})$. Then, by the induction hypothesis, either there exists a line $m$ such that
\begin{equation}
x\in m\subseteq E(x,\,L,\,\widetilde{l})\overset{\text{Cor.~\ref{embedded extremal}}}{\subseteq} E(x,\,L,\,l),
\end{equation}
or
\begin{equation}
x\in\conv(V_{\fin}(x,\,L,\,\widetilde{l});\;V_{\dir}(x,\,L,\,\widetilde{l}))\overset{\text{Cor.~\ref{embedded extremal}}}{\subseteq}\conv(V_{\fin}(x,\,L,\,l);\;V_{\dir}(x,\,L,\,l)).
\end{equation}
Consequently, in what follows we may assume, without loss of generality, that $x\in\intX_{l}(E(x,\,L,\,l))$. Let $m$ be a line such that $x\in m\subseteq l$. Since $x\in\intX_{l}(E(x,\,L,\,l))$, for sufficiently small $\varepsilon>0$ the inclusion $B_{\varepsilon}(x)\cap m\subseteq\intX_{l}(E(x,\,L,\,l))$ holds. Let $\vec{e}$ and $-\vec{e}$ be the points at infinity of the line $m$. Then $m=[x,\,\vec{e})\cup[x,\,-\vec{e})$. 

First, let us show that at least one of the following three conditions:
\begin{enumerate}
\item[1$\vec{e}$)]
$\vec{e}\in V_{\dir}(x,\,L,\,l)$.
\item[2$\vec{e}$)]
$\exists v\in [x,\,\vec{e})\cap\conv(V_{\fin}(x,\,L,\,l);\;V_{\dir}(x,\,L,\,l))$.
\item[3$\vec{e}$)]
$\exists\vec{f}\in\vec{S}^{d-1}$ such that $\vec{f},\,-\vec{f}\in V_{\dir}(x,\,L,\,l)$.
\end{enumerate}
holds. Either the whole ray $[x,\,\vec{e})$ lies in $E(x,\,L,\,l)$; then $\vec{e}\in V_{\dir}(x,\,L,\,l)$ and condition 1$\vec{e}$) holds. Or the set $[x,\,\vec{e})\setminus\intX_l(E(x,\,L,\,l))$ has a point $y$ nearest to $x$. Then $y\in\partial_{l}(E(x,\,L,\,l))$. By Lemma~\ref{extremal boundary}, there are two possibilities. The first is that $y\in\partial E_{>0}$; then
\begin{equation}
y\in V_{\fin}(x,\,L,\,l)\cap [x,\,\vec{e})\subseteq [x,\,\vec{e})\cap\conv(V_{\fin}(x,\,L,\,l);\;V_{\dir}(x,\,L,\,l))
\end{equation}
and condition 2$\vec{e}$) holds. The second possibility is that $y\in E(x,\,L,\,l)$ and there exists an affine subspace $l'\subsetneq l$ such that $y\in\widetilde{E}(L,\,l')$. We may assume, without loss of generality, that we are in the latter case; in particular, $y\in V(x)$. Then, by the induction hypothesis, at least one of the following two alternatives:
\begin{enumerate}
\item[1$y$)]
There exists a line $\widetilde{m}$ such that $y\in\widetilde{m}\subseteq E(y,\,L,\,l')\subseteq E(y,\,L,\,l)=E(x,\,L,\,l)$. Then there exists a direction $\vec{f}\in\vec{S}^{d-1}$ such that $\vec{f},\,-\vec{f}\in V_{\dir}(y,\,L,\,l)$.
\item[1$y$)]
$y\in\conv(V_{\fin}(y,\,L,\,l');\;V_{\dir}(y,\,L,\,l'))\subseteq \conv(V_{\fin}(y,\,L,\,l);\;V_{\dir}(y,\,L,\,l)).$
\end{enumerate}
holds. By Lemmas~\ref{finite-vertex lemma} and~\ref{direction-vertex lemma}, the points $x,\,y$ and the sets $V=V_{\fin}(y,\,L,\,l)$, $V'=V_{\fin}(x,\,L,\,l)$, $E=V_{\dir}(y,\,L,\,l)$, and $E'=V_{\dir}(x,\,L,\,l)$ satisfy the assumptions of Corollary~\ref{stiag}. Hence the conclusion of Corollary~\ref{stiag} gives either condition 2$\vec{e}$) or condition 3$\vec{e}$). 

Thus, at least one of the three conditions holds; moreover, at least one of the three analogous conditions for $-\vec{e}$. 
\begin{enumerate}
\item[1$-\vec{e}$)]
$-\vec{e}\in V_{\dir}(x,\,L,\,l)$.
\item[2$-\vec{e}$)]
$\exists v'\in [x,\,-\vec{e})\cap\conv(V_{\fin}(x,\,L,\,l);\;V_{\dir}(x,\,L,\,l))$.
\item[3$-\vec{e}$)]
$\exists\vec{f}'\in\vec{S}^{d-1}$ such that $\vec{f}',\,-\vec{f}'\in V_{\dir}(x,\,L,\,l)$.
\end{enumerate}
holds.

If there exists a direction $\vec{f}'\in\vec{S}^{d-1}$ such that $\vec{f}',\,-\vec{f}'\in V_{\dir}(x,\,L,\,l)$, then there exists a line $m'=\conv(\{x\};\;\{\vec{f}',\,-\vec{f}'\})$ such that $x\in m'\subseteq\clos(E(x,\,L,\,l))$. Then either $m'\subseteq E(x,\,L,\,l)$, or there exists
\begin{equation}
y\in m'\setminus E(x,\,L,\,l)\subseteq\clos(E(x,\,L,\,l))\setminus E(x,\,L,\,l)\subseteq \partial E_{>0}.
\end{equation}
Then $y\in V_{\fin}(x,\,L,\,l)$ and
\begin{equation}
x\in m'=\conv(\{y\};\;\{\vec{f},\,-\vec{f}\})\subseteq\conv(V_{\fin}(x,\,L,\,l);\;V_{\dir}(x,\,L,\,l)).
\end{equation}
Consequently, without loss of generality, we may assume that, for $\vec{e}$ and for $-\vec{e}$, at least one of the conditions 1$\vec{e}$), 2$\vec{e}$) and at least one of the conditions 1$-\vec{e}$), 2$-\vec{e}$), respectively, holds. If conditions 1$\vec{e}$) and 1$-\vec{e}$) hold simultaneously, then we are in the case analogous to the third condition. Therefore, without loss of generality, condition 2$\vec{e}$) holds, that is,
\begin{equation}
\exists v\in [x,\,\vec{e})\cap\conv(V_{\fin}(x,\,L,\,l);\;V_{\dir}(x,\,L,\,l)).
\end{equation}
If condition 1$-\vec{e}$) holds for $-\vec{e}$, then
\begin{equation}
x\in [v,\,-\vec{e})\subseteq\conv(V_{\fin}(x,\,L,\,l);\;V_{\dir}(x,\,L,\,l)).
\end{equation}
If condition 2$-\vec{e}$) holds, then
\begin{equation}
x\in [v,\,v']\subseteq\conv(V_{\fin}(x,\,L,\,l);\;V_{\dir}(x,\,L,\,l)).
\end{equation}
\end{proof}

\begin{proof}[Proof Theorem~\ref{mainth}]
By Remark~\ref{closed-pair remark} and Lemma~\ref{leprovipob}, the pair of sets 
\begin{equation}
(V_{\fin}(x,\,L,\,l);\; V_{\dir}(x,\,L,\,l))
\end{equation}
is closed, and at least one of the two alternatives: Either there exists a line $m$ such that $x\in m\subseteq E(x,\,L,\,l)$, or
\begin{equation}
x\in\conv(V_{\fin}(x,\,L,\,l);\;V_{\dir}(x,\,L,\,l))
\end{equation}
holds. Assume, without loss of generality, that the second alternative holds. Then, by Corollary~\ref{tsorsim}, there exists an inclusion-minimal generalized simplex $C$ such that $x\in C$ and
\begin{equation}\label{vk1id}
V_{\fin}(C)\subseteq V_{\fin}(x,\,L,\,l)
\end{equation}
and
\begin{equation}\label{vk2id}
V_{\dir}(C)\subseteq V_{\dir}(x,\,L,\,l).
\end{equation}
Let us show that $C$ satisfies the assumptions of the theorem. First note that $x\in\ri C$; otherwise $C$ could be reduced to one of its faces. From the inclusions~\eqref{vk1id} and~\eqref{vk2id} it follows that, for any $v\in V_{\fin}(C)$, the inclusion $[x,\,v]\subseteq\clos(E(x,\,L,\,l))$ holds, and, for any $\vec{e}\in V_{\dir}(C)$, the inclusion $[x,\,\vec{e})\subseteq\clos(E(x,\,L,\,l))$ holds as well. Let us show that, in fact, $[x,\,v)\subseteq E(x,\,L,\,l)$ and $[x,\,\vec{e})\subseteq E(x,\,L,\,l)$. Indeed, if there exists a point $y$ lying in $[x,\,v)\setminus E(x,\,L,\,l)$ or in $[x,\,\vec{e})\setminus E(x,\,L,\,l)$, then $y\in\partial E_{>0}$ and $[x,\,y]\subseteq \clos(E(x,\,L,\,l))$. Thus, $y\in V_{\fin}(x,\,L,\,l)$, and the generalized simplex $C$ can be reduced by replacing the corresponding vertex ($v$ or $\vec{e}$) by $y$. 

Let $v\in V_{\fin}(C)$. For $\alpha\in [0,\,1)$, denote by $v_{\alpha}$ the point defined by the formula
\begin{equation}
v_{\alpha}=\alpha v+(1-\alpha)x.
\end{equation}
Let $\vec{e}\in V_{\dir}(C)$. For $\alpha\in [0,\,1)$, denote by $e_{\alpha}$ the point given by the formula
\begin{equation}
e_{\alpha}=x+\frac{\alpha}{1-\alpha} e.
\end{equation}
Let 
\begin{equation}
C_{\alpha}=\conv(\{v_{\alpha}|\,v\in V_{\fin}(C)\}\cup\{e_{\alpha}|\,\vec{e}\in V_{\dir}(C)\}).
\end{equation}
As $\alpha$ increases from zero to $1$, the simplex $C_{\alpha}$ continuously expands from the set $\{x\}$ to the generalized simplex $C$.

If $C_{\alpha}\cap\partial E_{>0}=\varnothing$, then $C_{\alpha}\subseteq E_{>0}$, and by Corollary~\ref{vipukn} we obtain $C_{\alpha}\subseteq E(x,\,L,\,l)$. Note that, for $\beta\geqslant\alpha$, the inclusion $C_{\alpha}\subseteq C_{\beta}$ is true. Thus, either $C_{\alpha}\subseteq E(x,\,L,\,l)$ for any $\alpha\in [0,\,1)$, or there exists a number $\alpha_0\in [0,\,1)$ such that, for $\alpha<\alpha_0$, the inclusion $C_{\alpha}\subseteq E(x,\,L,\,l)$ holds, and, for $\alpha>\alpha_0$, the inequality $C_{\alpha}\cap\partial E_{>0}\ne\varnothing$ holds. We show that the second case is impossible.

Note that if $C_{\alpha_0}\cap\partial E_{>0}=\varnothing$, then $\dist(C_{\alpha_0},\,\partial E_{>0})>\delta>0$. In this case, for sufficiently small $\varepsilon>0$ the inequalities $\dist(v_{\alpha_0+\varepsilon},\,v_{\alpha_0})<\frac{\delta}{2}$ for any $v\in V_{\fin}(C)$ and $\dist(e_{\alpha_0+\varepsilon},\,e_{\alpha_0})<\frac{\delta}{2}$ for any $\vec{e}\in V_{\dir}(C)$ hold. Then $C_{\alpha_0+\varepsilon}\subseteq C_{\alpha_0}+B_{\frac{\delta}{2}}(0)\subseteq E_{>0}$, a contradiction. In particular, $\alpha_0\ne 0$.

Thus, $C_{\alpha_0}\cap\partial E_{>0}\ne\varnothing$. Note that $\ri(C_{\alpha_0})=\bigcup_{\alpha<\alpha_0}C_{\alpha}$. Let $y\in C_{\alpha_0}\cap\partial E_{>0}$. Then $[x,\,y)\subseteq\ri C_{\alpha_0}\subseteq E(x,\,L,\,l)$. Thus, $y\in V_{\fin}(x,\,L,\,l)$. In this case, $y\in C_{\alpha_0}\subseteq\ri C$. Therefore, $y$ splits $C$ into $d+1$ simplices (by replacing one of its vertices by $y$). One of them contains $x$, which contradicts the minimality of~$C$.

Thus, for any $\alpha\in [0,\,1)$ the inclusion $C_{\alpha}\subseteq E(x,\,L,\,l)$ holds. That is,
\begin{equation}
\ri C=\bigcup\limits_{\alpha\in[0,\,1)}C_{\alpha}\subseteq E(x,\,L,\,l).
\end{equation}
Thus, in particular, $C\subseteq\clos(E(x,\,L,\,l))$. If $C\cap\partial E_{>0}\ne V_{\fin}(C)$, then there exists $y\in C\cap\partial E_{>0}\setminus V_{\fin}(C)$. Then $y\in V_{\fin}(x,\,L,\,l)$ and $C$ can be reduced by replacing some vertex by $y$. Thus, $C\cap\partial E_{>0}=V_{\fin}(C)$, and since $C\subseteq\clos(E(x,\,L,\,l))$, this means that $C\setminus V_{\fin}(C)\subseteq E(x,\,L,\,l)$.

It remains to show that $\#(V_{\dir}(C))\leqslant 1$. Let $V_{\fin}(C)=\{v_1,\dots,\,v_k\}$, $V_{\dir}(C)=\{\vec{e}_1,\dots,\,\vec{e}_r\}$. The point $x$ is expressed as a convex combination of the vertices of $C$:
\begin{equation}
x=\sum\limits_{j=1}^{k}\alpha_j v_j+\sum\limits_{i=1}^{r}\beta_i e_i.
\end{equation}
Since $x\in\ri C$, the inequalities $\alpha_j>0$ hold for every $j=1,\dots,k$ and $\beta_i>0$ for every $i=1,\dots,r$. However if $r>1$ then $C$ can be reduced to a simplex 
\begin{equation}
\widetilde{C}
=
\conv\left(\{v_1,\dots,\,v_k\};\;\left\{\overrightarrow{\left(\frac{\sum\limits_{i=1}^{r}\beta_i e_i}{\left|\sum\limits_{i=1}^{r}\beta_i e_i\right|}\right)}\right\}\right),
\end{equation}
which contradicts the minimality of $C$.
\end{proof}

\section{On one-dimensional extremal sets}\label{emprul}
This section is devoted to a precise formulation and proof of the following empirical observation: a one-dimensional extremal set may approach the free boundary only tangentially, not transversally. By the free boundary we mean the part of the boundary where the obstacle is not defined or does not coincide with the value of the function. This principle was first observed in the paper~\cite{SlVanir}. It plays an important role in construction of extremal set functions, so-called optimizers, in Bellman function theory, see the fifth section of~\cite{r6} and the fifth section of~\cite{talmudeIII}.
\begin{Th}\label{tangency}
Let $l\subseteq\mathbb{R}^d$ be a line, let $L\colon\mathbb{R}^d\to\mathbb{R}$ be an affine functional, let $x\in\widetilde{E}(L,\,l)$, and let $y\in\partial_l E(x,\,L,\,l)$. Suppose that the pair of points $x,\,y$ satisfies the following cone condition: there exists $\varepsilon>0$ such that, for any point $z\in B_{\varepsilon}(y)\cap\clos\Omega$, the inclusion $[x,\,z)\subseteq\intX\Omega$ holds. Then $L(y)=\supfu{f}(y)$.
\end{Th}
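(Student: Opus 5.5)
We argue by contradiction: suppose $L(y)>\supfu{f}(y)$ (note $\BG\leqslant L$ near the segment, and $\BG(y)$ is understood via continuity along $[x,y)$). The goal is to build a locally concave majorant of $f$ that is strictly smaller than $\BG$ at some point of $[x,\,y)$. By Proposition~\ref{basic proposition} and Corollary~\ref{gronedim}, $y\in\partial E_{>0}$, and the cone condition gives $[x,\,y)\subseteq\intX\Omega$. Hence $y$ is not an interior point of the contact zone, so $y\in\partial\Omega$ or $y\in\clos E_0$. In the latter case, the continuity of $\BG$ on $\intX\Omega$ and the closedness of $E_0$ in $\intX\Omega$ give $L(y)=\BG(y)=\supfu{f}(y)$ when $y\in\intX\Omega$. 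So the substantive case is $y\in\partial\Omega$, approached through the cone.

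We work on the compact ``cone'' $K=\conv(\{x'\}\cup(B_{\varepsilon'}(y)\cap\clos\Omega))$, where $x'\in(x,\,y)$ is close to $y$ and $\varepsilon'<\varepsilon$ is small. By the cone condition, every segment from a point near $x'$ to a point of $B_{\varepsilon'}(y)\cap\clos\Omega$ lies in $\Omega$ except possibly at the far endpoint. Upper semicontinuity of $\supfu{f}$ and the strict inequality $L(y)>\supfu{f}(y)$ give some $\eta>0$ with $f\leqslant L-\eta$ on $B_{\varepsilon'}(y)\cap\Omega$, after shrinking $\varepsilon'$. Consider the modified functional $L_\zeta=L+\zeta L'$, where $L'$ is affine, negative at $y$, and positive at $x'$ (for example, $L'$ vanishes at a point of $(x',\,y)$ and is constant on hyperplanes transversal to $l$). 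For small $\zeta$, $L_\zeta\geqslant f$ on $B_{\varepsilon'}(y)\cap\Omega$, and $L_\zeta<L$ at points of $[x',\,y)$ near $y$.

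Next we glue. We define $B'=\min(\BG,\,L_\zeta)$ on the region $K$ and $B'=\BG$ elsewhere. We want to invoke Lemma~\ref{kuslocvec} (or Corollary~\ref{kusloc}) with $V$ a suitable closed subset of the cone on which $\min(\BG,\,L_\zeta)$ is locally concave, with equality to $\BG$ on its inner boundary $\partial^s_\Omega V$. For this, $L_\zeta\geqslant\BG$ should hold on $\partial_\Omega V$ away from $\partial\Omega$. We choose $V$ bounded by the ``lateral'' part of $\partial K$ together with a cross-section near $x'$. On the cross-section, $L'>0$, so $L_\zeta\geqslant L\geqslant\BG$ by Lemma~\ref{neighborhood inequality}, applied since segments from $x$ to these points lie in $\Omega$. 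On the lateral part we need $L_\zeta\geqslant\BG$ as well. The natural way to get this is to make $L'$ positive there. That is possible only if the lateral surface stays away from $y$ in the sense that the cone is not too degenerate, and this is exactly what transversality (the cone condition with a fixed $\varepsilon$) provides: the lateral boundary of $V$ can be taken to consist of segments $[x',\,z]$ with $|z-y|=\varepsilon'$, where $L'$ can be arranged positive by tilting $L'$ to be positive off a small neighborhood of $y$ (for instance $L'(w)=c-\langle e,\,w-y\rangle$ with $e$ pointing from $x'$ to $y$, plus a compactness argument). Then $B'$ is locally concave, $B'\geqslant f$, and $B'<\BG$ at some point of $(x',\,y)$, contradicting minimality.

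The main obstacle is the lateral boundary. There $\BG\leqslant L$ is known only along segments from $x$ (Lemma~\ref{neighborhood inequality}), whereas $L'$ is negative near $y$. So I must verify that $L'>0$ on the whole relative boundary of $V$ inside $\Omega$, except where it meets $\partial\Omega$, where obstacle values are controlled by $f\leqslant L-\eta$. I would first try choosing $V=\{w\in K\colon L'(w)\leqslant 0\}$, so that $\partial^s_\Omega V\subseteq\{L'=0\}$ by Example~\ref{exhal}, where $L_\zeta=L\geqslant\BG$. Then the cone condition is needed only to guarantee that this half-space slice of a small neighborhood of $y$ is visible from $x$, so that Lemma~\ref{neighborhood inequality} applies at every point of it.
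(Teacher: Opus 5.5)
There is a genuine gap, and it sits exactly at the step you flag as the main obstacle. Lemma~\ref{kuslocvec} needs $L_\zeta\geqslant\BG$ on all of $\partial^s_\Omega V$. Example~\ref{exhal} confines the inner boundary to $\{L'=0\}$ only when $V$ is the full half $\Omega\cap\{L'\leqslant 0\}$ of the ambient set.

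For $V=K\cap\{L'\leqslant 0\}$, or a half-space slice of a small ball, the inner boundary also contains the lateral part of $\partial K$ (or of the sphere) lying in $\{L'<0\}$. There you need $\BG\leqslant L+\zeta L'<L$ with a uniform margin. Lemma~\ref{neighborhood inequality} only gives $\BG\leqslant L$, and nothing you use excludes points near $y$ off $l$ where $\BG=L$.

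Passing to the global half $\Omega\cap\{L'\leqslant 0\}$ does not rescue this. It is a small cap around $y$ when $\Omega$ is locally strictly convex near $y$, and then your argument does work. But if $\Omega$ is locally the exterior of a convex body (the typical free boundary, and compatible with the cone condition), the part of $\Omega\cap\{L'\leqslant 0\}$ near $y$ is never confined to a small neighbourhood of $y$. So neither $f\leqslant L_\zeta$ nor $\BG\leqslant L$ is available on it. A smaller point: the cone condition concerns segments issuing from $x$, and your claim about segments issuing from points near $x'$ is not a direct consequence of it. The paper avoids this by putting the apex of its cone at $x$.

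More fundamentally, when $y\in\partial\Omega$ your construction never uses that $l\cap U(W,L)$ is a face of $U(W,L)$ whose affine hull is the line $l$. Without this the statement is false. Take Example~\ref{transversal example} with $d=0$, $k=2$, so $\BG\equiv 0=L$ on a planar annulus, and a radial segment from $x$ to a point $y$ of the inner circle. Then $\BG=L$ on $[x,y)$, $\BG\leqslant L$ at visible points, the cone condition holds, and $f=-\infty$ near $y$. Every fact you use holds, yet $L(y)=0>-\infty=\supfu{f}(y)$, so no choice of $V$ and $L'$ can produce your majorant.

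The paper normalizes $L\equiv 0$ and works on a cone $U=W\cap\Omega$ with apex $x$ and base beyond $y$, outside $\clos\Omega$. By Theorem~\ref{prinnasotgr} it passes to data $g$ equal to $\BG$ on the lateral part. It then studies $H=\{w\in\clos U\colon \supfu{g}(w)=0\}$, which contains every lateral point where $\BG$ touches $L$, and for each $w\in H$ one has $\BG\equiv 0$ on $[x,w)$. With $z=\frac{x+y}{2}$ it splits into two cases:
\begin{itemize}
\item If $z\notin\conv H$, the tilted functional is chosen adapted to $H$: it vanishes on a hyperplane $N$ through $z$ that misses $\conv H$. Upper semicontinuity and compactness then give the strict margin $\max_{N^-\cap\clos U}\supfu{g}<0$, which is exactly what you lack on the lateral boundary.
\item If $z\in\conv H$, the small copy $\lambda C+(1-\lambda)x$ of a simplex $C$ with vertices in $H$ lies in the coincidence set and has a point of $l$ in its relative interior. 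Proposition~\ref{grvipvnutgr} and the face property force it into $l$, giving $z\in\conv(l\cap H)=\{x\}$, a contradiction.
\end{itemize}
This second case is where one-dimensionality enters, and your proposal has no counterpart to it.
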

In the case where the domain is closed and the function $f$ is upper semicontinuous, the latter condition is replaced by $L(y)=f(y)$. If the domain $\Omega$ is $C^1$-smooth, then the cone condition turns into the condition that the segment $[x,\,y]$ is not tangent to $\partial\Omega$ at the point $y$. Thus, informally, this theorem says that a one-dimensional extremal set is either tangent to the boundary at its endpoint or its endpoint is an ``obstacle point''. 

Before passing to the proof of the theorem, we introduce auxiliary constructions and describe their basic properties. Let $z\in B_{\varepsilon}(y)\cap\Omega$. Then, by the cone condition, $[x,\,z]\subseteq\Omega$, and hence, by Lemma~\ref{neighborhood inequality}, $f(z)\leqslant \BG(z)\leqslant L(z)$. Thus, $L(y)\geqslant\supfu{f}(y)$, and it suffices to check that $L(y)\leqslant\supfu{f}(y)$.

If $y\in\intX\Omega$, then, since $y\in\partial_l E(x,\,L,\,l)$, Corollary~\ref{gronedim} gives $y\in\partial E_{>0}\setminus\partial\Omega\subseteq E_0$. Thus, $L(y)=\BG(y)=\supfu{f}(y)$. Therefore, in what follows we may assume, without loss of generality, that $y\in\partial\Omega$.

Assume the contrary: let $L(y)>\supfu{f}(y)$. Further, without loss of generality, we will assume that $L\equiv 0$, this can be achieved by replacing $f$ by $f-L$.

Choose a number $\xi>0$ so small that the inequality $|\xi y-\xi x|<\frac{\varepsilon}{2}$ holds. Then $(1+\xi)y-\xi x,\,(1-\xi)y+\xi x\in B_{\frac{\varepsilon}{2}}(y)$. By the cone condition and since $y\in\partial\Omega$, the inclusions $(1-\xi)y+\xi x\in\intX\Omega$ and $(1+\xi)y-\xi x\in\mathbb{R}^d\setminus\clos\Omega=\intX(\mathbb{R}^d\setminus\Omega)$ hold. Let $\tau>0$ be a number satisfying the following inequality:
\begin{equation}\label{taudef}
\tau<\min\left\{\frac{\varepsilon}{2},\,\dist((1-\xi)y+\xi x,\,\partial\Omega),\,\dist((1+\xi)y-\xi x,\,\partial\Omega)\right\}.
\end{equation}
Then $B_{\tau}((1+\xi)y-\xi x)\subseteq\intX(\mathbb{R}^d\setminus\Omega)$ and $B_{\tau}((1-\xi)y+\xi x)\subseteq\intX\Omega$. Let $M$ be the affine hyperplane perpendicular to $l$ and passing through the point $(1+\xi)y-\xi x$, and let $M'$ be the analogous hyperplane passing through the point $(1-\xi)y+\xi x$. Let $W$ be the cone
\begin{equation}\label{defW}
W=\left\{z\in\mathbb{R}^d\big|\,\exists v\in B_{\tau}((1+\xi)y-\xi x)\cap M\text{ such that }z\in[x,\,v]\right\}=\conv\left(\{x\}\cup(B_{\tau}((1+\xi)y-\xi x)\cap M)\right),
\end{equation}
see Fig. \ref{Figmnw}.
\begin{figure}[h]\centering
\def\svgwidth{10cm}
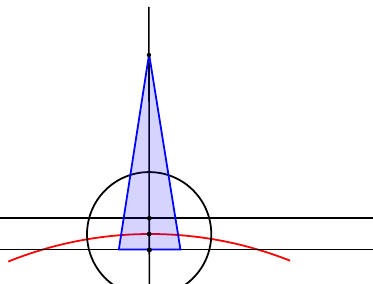
\caption{Construction of the set $W$ in~\eqref{defW}}\label{Figmnw}
\end{figure}
Let the sets $U$ and $F$ be given by the following formulas:
\begin{equation}
U=W\cap\Omega=\left\{z\in\Omega\big|\,\exists v\in B_{\tau}((1+\xi)y-\xi x)\cap M\text{ such that }z\in[x,\,v]\right\},
\end{equation}
\begin{equation}
F=\partial W\cap\Omega=\left\{z\in\Omega\big|\,\exists v\in (\partial B_{\tau}((1+\xi)y-\xi x))\cap M\text{ such that }z\in[x,\,v]\right\}.
\end{equation}
Since $F=(\partial W)\cap\Omega$ and $U=W\cap\Omega$, the inclusion $\partial_{\Omega}U\subseteq F$ holds; and since $W$ is closed, $U$ is closed in $\Omega$. Let the function $g\colon U\to\mathbb{R}\cup\{-\infty,\,+\infty\}$ be defined by the formula
\begin{equation}
g(z)=
\begin{cases}
\BG(z), & z\in F,\\
f(z), & z\in U\setminus F.
\end{cases}
\end{equation}
By the hereditary principle for minimal locally concave functions (Theorem~\ref{prinnasotgr}), the equality $\BG_{f,\,\Omega}|_U=\BG_{g,\,U}$ holds.
\begin{St}\label{closure proposition}
If $z\in\clos U$, then $[x,\,z)\subseteq U\cap \intX\Omega$.
\end{St}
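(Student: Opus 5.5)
The plan is to reduce the statement to the cone condition of Theorem~\ref{tangency}. The key observation is that every point of $\clos U$ is close to $y$, so that condition applies.

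First I would locate $\clos U$. Since $W$ is compact, $\clos U\subseteq W\cap\clos\Omega$. Let $z\in\clos U$ and write $z\in[x,\,v]$ with $v\in B_{\tau}((1+\xi)y-\xi x)\cap M$. The ball $B_{\tau}((1+\xi)y-\xi x)$ lies in $\intX(\mathbb{R}^d\setminus\Omega)$, so $v\notin\clos\Omega$ and hence $z\ne v$.

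Next I would show that $z$ lies at or beyond the hyperplane $M'$ along $l$, i.e.\ in the part of $W$ cut off near $y$. Parametrize $[x,\,v]$ by the coordinate along $l$. Between $M'$ and $M$, the cross-section of $W$ is a scaled copy of $B_\tau\cap M$ centred on $l$, of radius at most $\tau$. The portion of $W$ between $x$ and $M'$ is contained in $\conv(\{x\}\cup B_{\tau}((1-\xi)y+\xi x))$, at least after shrinking $\tau$. I would then argue that this is not quite the right route. The cleaner route is to apply the cone condition only to the points $z\in W\cap\clos\Omega$ lying between $M'$ and $M$. Those points are within distance $\le|\xi(y-x)|+\tau<\varepsilon$ of $y$, by the choice of $\xi$ and since $\tau<\varepsilon/2$, so the cone condition gives $[x,\,z)\subseteq\intX\Omega$. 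For a point $z\in W$ between $x$ and $M'$, I would extend the segment from $x$ through $z$ to the point $z'$ where it meets $M'$. Then $z'\in B_{\tau}((1-\xi)y+\xi x)\subseteq\intX\Omega$, because cross-sections shrink toward $x$. Moreover $|z'-y|<\varepsilon$, so the cone condition gives $[x,\,z']\subseteq\intX\Omega$ and hence $[x,\,z]\subseteq\intX\Omega$.

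Finally, membership in $U$ is immediate. Each point $w\in[x,\,z)$ lies on $[x,\,v]$ for the same $v$, so $w\in W$. Together with $w\in\intX\Omega\subseteq\Omega$, this gives $w\in U$.

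The main obstacle is the bookkeeping in the middle step. One must verify that $W$ is a right circular cone over $l$, which holds because $M\perp l$ and its apex $x$ lies on $l$. One must also check that its slices are balls of radius $\tau t$ centred on $l$, and that the distance estimates $|z-y|<\varepsilon$ hold uniformly for all $z\in W$ between $M'$ and $M$. Using $\tau<\varepsilon/2$ and $|\xi(y-x)|<\varepsilon/2$, this reduces to the triangle inequality.
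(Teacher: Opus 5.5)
Your argument is correct and is essentially the paper's. The paper also splits $[x,\,v]$ at the point $v'=[x,\,v]\cap M'$. When $z\in[x,\,v']$, it uses $v'\in B_{\tau}((1-\xi)y+\xi x)\subseteq\intX\Omega$ together with the cone condition at $v'$. When $z\in[v',\,v]$, it applies the cone condition directly to $z$, via $|z-y|\leqslant\max\{|v-y|,\,|v'-y|\}<\varepsilon$.

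Your version is slightly more explicit: you verify $|z'-y|<\varepsilon$ and the final inclusion $[x,\,z)\subseteq W\cap\Omega=U$, both of which the paper leaves implicit. The abandoned detour through $\conv(\{x\}\cup B_{\tau}((1-\xi)y+\xi x))$ can simply be deleted.
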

\begin{proof}
Since $\clos U\subseteq W\cap\clos\Omega$, there exists a point $v\in B_{\tau}((1+\xi)y-\xi x)\cap M$ such that $z\in[v,\,x]$. Let $v'$ be the point of intersection of the segment $[v,\,x]$ and the hyperplane $M'$. By the definition of $\xi$ and the choice of $\tau$, the inclusion $v'\in\intX\Omega$ holds and $|v-y|<\varepsilon$. Thus, by the cone condition, $\left[x,\,v'\right)\subseteq\intX\Omega$. This proves the proposition in the case $z\in\left[x,\,v'\right]$. In the remaining case, if $z\in\left[v,\,v'\right]$, then
\begin{equation}
|z-y|\leqslant\max\left\{|v-y|,\,\left|v'-y\right|\right\}<\varepsilon,
\end{equation}
and the proposition follows from the cone condition.
\end{proof}

Let the set $H$ be defined by the formula
\begin{equation}
H=\left\{z\in\clos U\middle|\,\supfu{g}(z)=0\right\}.
\end{equation}
Then $H$ is compact.
\begin{St}\label{positive proposition}
Let $z\in H$. Then $\BG|_{[x,\,z)}\equiv 0$.
\end{St}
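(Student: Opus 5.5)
The plan is to squeeze $\BG$ on $[x,\,z)$ between an upper bound of $0$ and a lower bound of $0$. Throughout, we use the normalization $L\equiv 0$ fixed above, so that $\BG(x)=L(x)=0$.

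\emph{Upper bound.} First fix $z\in H$. By Proposition~\ref{closure proposition}, we have $[x,\,z)\subseteq U\cap\intX\Omega$. Hence for every $w\in[x,\,z)$ the segment $[x,\,w]$ lies in $\Omega$. Lemma~\ref{neighborhood inequality}, applied with the functional $L\equiv 0$, then gives $\BG(w)\leqslant 0$ on $[x,\,z)$.

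\emph{Choice of approximating points.} It remains to prove $\BG(w)\geqslant 0$ for $w\in[x,\,z)$. Since $\supfu{g}(z)=0$, there is a sequence $z_n\in U$ with $z_n\to z$ and $g(z_n)\to 0$; in particular $g(z_n)$ is finite for large $n$. By the hereditary principle, $\BG|_U=\BG_{g,\,U}\geqslant g$ on $U$, so $\BG(z_n)\geqslant g(z_n)$. Moreover $z_n\in\clos U$, so Proposition~\ref{closure proposition} gives $[x,\,z_n)\subseteq\intX\Omega$. Together with $z_n\in U\subseteq\Omega$, this yields $[x,\,z_n]\subseteq\Omega$. (Applying Lemma~\ref{neighborhood inequality} once more shows $\BG(z_n)\leqslant 0$ as well, though this is not needed.)

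\emph{Lower bound.} Write $w=(1-t)x+tz$ with $t\in[0,\,1)$, and set $w_n=(1-t)x+tz_n\in[x,\,z_n]$. By local concavity of $\BG$ on the segment $[x,\,z_n]\subseteq\Omega$,
\begin{equation*}
\BG(w_n)\geqslant(1-t)\BG(x)+t\,\BG(z_n)\geqslant t\,g(z_n)\underset{n\to+\infty}{\longrightarrow}0 .
\end{equation*}
Since $w\in\intX\Omega$ and $\BG$ is finite there, $\BG$ is continuous at $w$ (Theorem 10.1 of~\cite{Rockafellar}). Because $w_n\to w$, we conclude $\BG(w)=\lim_n\BG(w_n)\geqslant 0$. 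Combined with the upper bound, this gives $\BG|_{[x,\,z)}\equiv 0$.

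\emph{Main obstacle.} The delicate point is the membership $[x,\,z_n]\subseteq\Omega$ for the approximating points. This is exactly what Proposition~\ref{closure proposition} supplies, and it is what makes both the concavity inequality and Lemma~\ref{neighborhood inequality} applicable. A second, smaller point is that $z$ itself may lie on $\partial\Omega$, where $\BG$ need not be continuous. This is why the argument passes through the interior points $w_n$ and only uses continuity of $\BG$ at the interior point $w$.
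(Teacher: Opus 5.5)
Your proof is correct and follows the paper's argument. The paper likewise takes points $v_\delta\in U$ near $z$ with $g(v_\delta)>-\delta$ and uses Proposition~\ref{closure proposition} to get $[x,\,v_\delta]\subseteq U$. It then combines concavity from $\BG(x)=0$ with continuity of $\BG$ at interior points for the lower bound, and uses $\BG|_U\leqslant 0$ for the upper bound; your version only makes the approximating sequence $w_n\to w$ and the justifications of $\BG\geqslant g$ and $\BG\leqslant 0$ more explicit.
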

\begin{proof}
By the definition of $H$, for any $\delta>0$ there exists a point $v_{\delta}\in U$ such that $|z-v_{\delta}|<\delta$ and $g(v_{\delta})>-\delta$. Then, by Proposition~\ref{closure proposition}, $[x,\,v_{\delta}]\subseteq U$, and by the concavity of $\BG$ the inequality $\BG|_{[x,\,v_{\delta}]}>-\delta$ holds. By the continuity of $\BG$ at interior points, we obtain $\BG|_{[x,\,z)}\geqslant 0$. Since $\BG|_U\leqslant 0$, the equality $\BG|_{[x,\,z)}\equiv 0$ holds.
\end{proof}
\begin{Rem}
The inclusion $x\in H$ holds.
\end{Rem}
\begin{proof}[Proof of Theorem~\ref{tangency}.]
Let $z=\frac{x+y}{2}$. Consider two cases.

{\bf 1)$z\notin\conv H$.}

In this case, there exists an affine hyperplane $N$ passing through the point $z$ that does not intersect the set $\conv H$. The hyperplane $N$ divides the space $\mathbb{R}^d$ into two closed half-spaces $N^-$ and $N^+$. We may assume, without loss of generality, that $x\in N^+$. Then $H\subseteq N^+$. Let $\zeta=\max\{\supfu{g}(v)|\,v\in N^{-}\cap \clos U\}<0$; otherwise, there would be a point $w\in N^{-}\cap\clos U$ at which $\supfu{g}(w)=0$. Thus, $w\in N^{-}\cap H=\varnothing$.

Now let $L'\colon\mathbb{R}^d\to \mathbb{R}$ be an affine functional that vanishes on $N$, is positive on $N^+\setminus N$, and satisfies $L'|_{U}>\zeta$. Then $L'|_{U}\geqslant g$, and hence $L'|_{U}\geqslant\BG|_{U}$. In this case $L'(x)>0$ and $L'\left(\frac{x+y}{2}\right)=0$, and hence $\BG|_{\left(y,\,\frac{x+y}{2}\right)}\leqslant L'|_{\left(y,\,\frac{x+y}{2}\right)}<0$, which contradicts $[x,\,y)\subseteq E(x,\,0,\,l)$.

{\bf 2)$z\in\conv H$.}

\begin{figure}[h]\centering
\def\svgwidth{7cm}
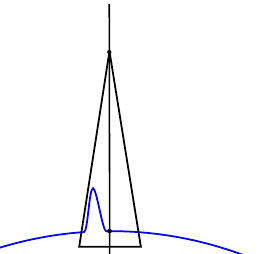
\caption{Illustration of the case $z\in\conv H$}
\end{figure}
In this case, there exists a simplex $C=\conv(\{v_1,\dots,\,v_k\})$ such that $v_1,\dots,\,v_k\in~H$ and $z\in\ri(C)$. Let $\lambda\in(0,\,1)$ be so small that
\begin{equation}
\conv(\lambda C+(1-\lambda)x)\subseteq E_{>0}.
\end{equation}
Such a $\lambda$ exists since $x\in E_{>0}$ and the set $E_{>0}$ is open. By Proposition~\ref{positive proposition}, $\BG((1-\lambda)x+\lambda v_k)=0$, and hence, by the concavity and nonpositivity of $\BG$ on $U$, we have
\begin{equation}
\BG|_{\conv(\lambda C+(1-\lambda)x)}
\equiv
0.
\end{equation}
Let $\gamma>0$ be so small that $\gamma<\dist(\conv(\lambda C+(1-\lambda)x),\,\partial E_{>0})$. Let $V=\conv(\lambda C+(1-\lambda)x)+B_{\gamma}(0)$. On the one hand, $\lambda z+(1-\lambda)x\in\ri(\lambda C+(1-\lambda)x)$ and $\lambda C+(1-\lambda)x\subseteq U(V,\,0)$, where $U(V,\,0)$ is taken from Definition~\ref{mnozhsopdef}. On the other hand, $\lambda z+(1-\lambda)x\in l$, and by Definition~\ref{oprpredex} of pre-extremal sets and Lemma~\ref{usopr}, the set $l\cap U(V,\,0)$ is a face of $U(V,\,0)$. Thus, by Proposition~\ref{grvipvnutgr}, $\lambda C+(1-\lambda)x\subseteq l$, and therefore $v_1,\dots,\,v_k\in l$ and $z\in\conv(l\cap H)$.

It remains to show that $l\cap H=\{x\}$. Indeed, $l\cap H\subseteq l\cap\clos U=[x,\,y]$. In this case, $y\notin H$ by the assumption $L(y)>\supfu{f}(y)$. If, for some $\alpha\in(0,\,1)$, the inclusion $\alpha x+(1-\alpha) y\in H$ holds, then
\begin{equation}
\supfu{g}(\alpha x+(1-\alpha) y)
=
\supfu{f}(\alpha x+(1-\alpha) y)
=
0
=
\BG(\alpha x+(1-\alpha) y),
\end{equation}
which means that $\alpha x+(1-\alpha) y\in E_0$. This contradicts $\alpha x+(1-\alpha) y\in E(x,\,0,\,l)\subseteq E_{>0}$.

\end{proof}

\subsection{Transversal extremal sets of higher dimensions}
The tangency principle for extremal sets relies essentially on their one-dimensionality and is not true for extremal sets of dimension at least $2$. This can be seen from the following example. 

\begin{Ex}\label{transversal example}
Let $d\geqslant 0$ and $k\geqslant 2$ be integers, let $g\colon\mathbb{R}^d\to\mathbb{R}$ be a concave function, and let $\Omega$ be the spherical annulus
\begin{equation}
\Omega=\left\{x\in\mathbb{R}^{d+k}\big|\,|x|\in [1-\varepsilon,\,1]\right\}.
\end{equation}
Let the function $f\colon\Omega\to\re$ be defined by the formula
\begin{equation}
f(x)=
\begin{cases}
g(x_1,\dots,\,x_d), &\text{if } |x|=1,\\
-\infty, &\text{if } |x|<1.
\end{cases}
\end{equation}
Then $\BG_{f,\,\Omega}(x)=g(x_1,\dots,\,x_d)$. If, in addition, the function $g$ is strictly concave, then the extremal sets of the function $\BG_{f,\,\Omega}$ are the sets $\{y\in\intX\Omega|\,y_1=x_1,\dots,\,y_d=x_d\}$. 
\end{Ex}
\begin{proof}
To see that $\BG(x)=g(x_1,\dots,\,x_d)$, first note that the function $x\mapsto g(x_1,\dots,\,x_d)$ is concave and is greater than or equal to $f$ on $\Omega$; hence $\BG(x)\leqslant g(x_1,\dots,\,x_d)$. The reverse inequality is obtained as follows. Let $x\in\Omega$. Then the section $\{y\in\intX\Omega|\,y_1=x_1,\dots,\,y_d=x_d\}$ is a spherical annulus of dimension $k\geqslant 2$, and we can draw in it a segment $[y,\,z]$ with vertices on the outer boundary and passing through $x$. Then
\begin{equation}
\BG(x)\geqslant\alpha\BG(y)+(1-\alpha)\BG(z)\geqslant \alpha f(y)+(1-\alpha)f(z)=g(x_1,\dots,\,x_d).
\end{equation}
In particular, in this case $E_0=\varnothing$.

Now let $g$ be a strictly concave function. Let $x\in\widetilde{E}(L,\,l)$, $0<\delta<\dist(x,\,\partial\Omega)$, and $y\in B_{\delta}(x)\cap E(x,\,L,\,l)$. Then, by Corollary~\ref{segment equality}, $\BG|_{[x,\,y]}=L|_{[x,\,y]}$, and the function $g|_{[(x_1,\dots,\,x_d),\,(y_1,\dots,\,y_d)]}$ is affine. By the strict concavity of $g$, this means that $(x_1,\dots,\,x_d)=(y_1,\dots,\,y_d)$. By Lemma~\ref{usopr},
\begin{equation}
l=\aff(l\cap U(B_{\delta}(x),\,L))\subseteq\aff(\{z\in\intX\Omega|\,z_1=x_1,\dots,\,z_d=x_d\}).
\end{equation}
On the other hand, $\BG$ is constant on the section $\{z\in\intX\Omega|\,z_1=x_1,\dots,\,z_d=x_d\}$. Therefore, any affine functional $L$ locally supporting $\BG$ at $x$ coincides with $\BG$ on the whole section. It follows that $l=\aff(\{z\in\intX\Omega|\,z_1=x_1,\dots,\,z_d=x_d\})$ and $E(x,\,L,\,l)=\{z\in\intX\Omega|\,z_1=x_1,\dots,\,z_d=x_d\}$.
\end{proof}
In particular, in this example one of the extremal sets is the set
\begin{equation}
\left\{x\in\mathbb{R}^{d+k}\big|\,x_1=\dots=x_d=0,\,|(x_{d+1},\dots,\,x_{d+k})|\in(1-\varepsilon,\,1)\right\}.
\end{equation}
This diametral section of the spherical annulus has dimension $k$ and approaches the inner sphere transversally. The obstacle $f$ is equal to $-\infty$ there.
\begin{figure}[h]\centering
\def\svgwidth{6cm}
%% Creator: Inkscape 1.3.2 (091e20e, 2023-11-25, custom), www.inkscape.org
%% PDF/EPS/PS + LaTeX output extension by Johan Engelen, 2010
%% Accompanies image file 'sfv11.pdf' (pdf, eps, ps)
%%
%% To include the image in your LaTeX document, write
%%   \input{<filename>.pdf_tex}
%%  instead of
%%   \includegraphics{<filename>.pdf}
%% To scale the image, write
%%   \def\svgwidth{<desired width>}
%%   \input{<filename>.pdf_tex}
%%  instead of
%%   \includegraphics[width=<desired width>]{<filename>.pdf}
%%
%% Images with a different path to the parent latex file can
%% be accessed with the `import' package (which may need to be
%% installed) using
%%   \usepackage{import}
%% in the preamble, and then including the image with
%%   \import{<path to file>}{<filename>.pdf_tex}
%% Alternatively, one can specify
%%   \graphicspath{{<path to file>/}}
%% 
%% For more information, please see info/svg-inkscape on CTAN:
%%   http://tug.ctan.org/tex-archive/info/svg-inkscape
%%
\begingroup%
  \makeatletter%
  \providecommand\color[2][]{%
    \errmessage{(Inkscape) Color is used for the text in Inkscape, but the package 'color.sty' is not loaded}%
    \renewcommand\color[2][]{}%
  }%
  \providecommand\transparent[1]{%
    \errmessage{(Inkscape) Transparency is used (non-zero) for the text in Inkscape, but the package 'transparent.sty' is not loaded}%
    \renewcommand\transparent[1]{}%
  }%
  \providecommand\rotatebox[2]{#2}%
  \newcommand*\fsize{\dimexpr\f@size pt\relax}%
  \newcommand*\lineheight[1]{\fontsize{\fsize}{#1\fsize}\selectfont}%
  \ifx\svgwidth\undefined%
    \setlength{\unitlength}{364.5bp}%
    \ifx\svgscale\undefined%
      \relax%
    \else%
      \setlength{\unitlength}{\unitlength * \real{\svgscale}}%
    \fi%
  \else%
    \setlength{\unitlength}{\svgwidth}%
  \fi%
  \global\let\svgwidth\undefined%
  \global\let\svgscale\undefined%
  \makeatother%
  \begin{picture}(1,0.99794239)%
    \lineheight{1}%
    \setlength\tabcolsep{0pt}%
    \put(0,0){\includegraphics[width=\unitlength,page=1]{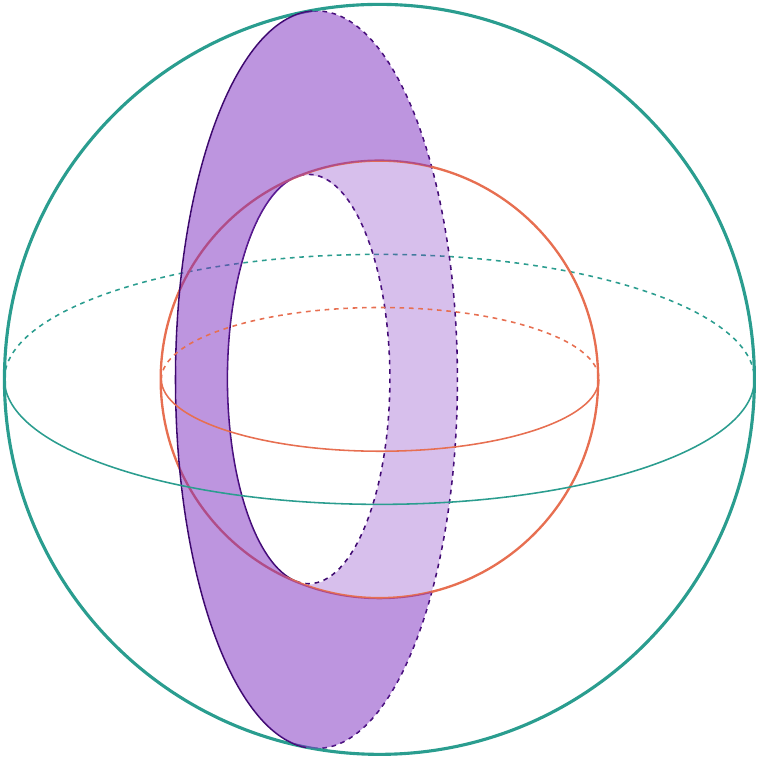}}%
    \put(0.50775178,0.92317659){\color[rgb]{0,0,0}\makebox(0,0)[lt]{\lineheight{1.25}\smash{\begin{tabular}[t]{l}\textcolor[rgb]{0.484375, 0.17578125, 0.75}{$E(x,\,L,\,l)$}\end{tabular}}}}%
  \end{picture}%
\endgroup%

\caption{Illustration for Example~\ref{transversal example}. Case $d=0$, $k=2$.}\label{sphere figure11}
\end{figure}

\section{On the case where a strictly concave $C^2$ majorant exists}\label{c2slmn}

Let the domain $\Omega\subseteq\mathbb{R}^d$ be open and bounded and have sufficiently smooth boundary, and let $f\colon\partial\Omega\to\mathbb{R}$ be such that there exists a strictly locally concave function $W\in C^2(\clos\Omega)$ satisfying $W|_{\partial\Omega}=f$. In this subsection, by $\BG$ we mean $\BG_{f,\clos\Omega}$; by $C^2(\clos\Omega)$ we mean functions that extend to a neighborhood of $\clos\Omega$ as $C^2$-smooth functions; and by a strictly locally concave function we mean a locally concave function without segments of linearity. This case was considered by Guan in~\cite{Guan}. In this subsection, we show which additional restrictions on the structure of a minimal locally concave function are implied by the existence of such a majorant, as compared with the general case. The main difference is that the existence of the majorant guarantees the absence of tangent extremal sets.
\begin{Le}
Let the function $\BG$ have a strictly locally concave majorant $W\in C^1(\clos\Omega)$. Suppose that the boundary of $\Omega$ is $C^1$-smooth, $[x,y)\subseteq\Omega$, $y\in\partial\Omega$, and the segment $[x,\,y]$ is tangent to $\partial\Omega$ at $y$. Then the half-interval $[x,\,y)$ cannot be a part of any extremal set.
\end{Le}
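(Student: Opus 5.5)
The plan is to compare $\BG$ with the majorant $W$ along the segment and along $\partial\Omega$ near $y$, and to get a contradiction from the derivative of $W-L$ at $y$ in the tangent direction. Suppose $[x,\,y)\subseteq E(x,\,L,\,l)$ for some affine $L$ and line $l\supseteq[x,\,y]$, and put $g=W-L\in C^1(\clos\Omega)$.

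\textbf{Step 1: $g\geqslant 0$ on the segment and $g(y)=0$.} Since $W$ is a locally concave majorant of $f$, we have $\BG\leqslant W$ on $\clos\Omega$. On $[x,\,y)$ we have $\BG=L$, so $g\geqslant 0$ there, and by continuity $g(y)\geqslant 0$. At the boundary point $y$ we have $\BG(y)\geqslant f(y)=W(y)$, hence $\BG(y)=W(y)$. Since $\BG|_{[x,\,y]}$ is concave and equals $L$ on $[x,\,y)$, we get $\BG(y)\leqslant L(y)$. Together these give $g(y)=W(y)-L(y)\leqslant 0$, so $g(y)=0$.

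\textbf{Step 2: a strictly positive derivative at $y$.} Put $e=(x-y)/|x-y|$ and $\varphi(t)=g(y+te)$ for $t\in[0,\,|x-y|]$. Because $W$ has no segments of linearity and $L$ is affine, $\varphi$ is strictly concave. It also satisfies $\varphi(0)=0$ and $\varphi\geqslant 0$. Strict concavity makes the right derivative strictly larger than every secant slope, so
\begin{equation*}
\varphi'(0)>\frac{\varphi(t)-\varphi(0)}{t}\geqslant 0 .
\end{equation*}
Hence the directional derivative satisfies $\langle\nabla g(y),\,e\rangle>0$.

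\textbf{Step 3: a vanishing tangential derivative.} The segment is tangent to $\partial\Omega$ at $y$, so $\pm e$ is a tangent vector of the $C^1$ hypersurface $\partial\Omega$ at $y$. On $\partial\Omega$ we have $W=f\leqslant\BG$, so $g\leqslant\BG-L$ there. If we know that $\BG\leqslant L$ on $\partial\Omega\cap B_\rho(y)$ for some $\rho>0$, then $g\leqslant 0$ on $\partial\Omega$ near $y$ and $g(y)=0$. Thus $y$ is a local maximum of $g|_{\partial\Omega}$, and the tangential gradient of $g$ at $y$ vanishes. In particular $\langle\nabla g(y),\,e\rangle=0$, which contradicts Step~2.

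\textbf{Main obstacle.} The hard step is the inequality $\BG\leqslant L$ at boundary points $z$ near $y$. The tool is Lemma~\ref{neighborhood inequality}, run on the closed domain. Take a base point $x_0\in[x,\,y)$ close to $y$, which lies in $E_{>0}$ and is an interior point of a set $W'$ where $L$ supports $\BG$ (Lemma~\ref{usopr}). By the concavity argument of that lemma, $\BG(z)\leqslant L(z)$ whenever $[x_0,\,z]\subseteq\clos\Omega$.

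So it remains to see that enough boundary points $z$ near $y$ are visible from some such $x_0$ inside $\clos\Omega$. First I would try to show that the set of visible $z$ is open in $\partial\Omega$ near $y$ and accumulates at $y$ from both tangential sides, using the $C^1$ graph description of $\partial\Omega$ near $y$ and moving $x_0$ toward $y$. If a full neighbourhood cannot be obtained, the fallback is weaker: one only needs a sequence $z_n\to y$ on $\partial\Omega$ with $(z_n-y)/|z_n-y|\to e$ or $\to-e$ and $g(z_n)\leqslant 0$. From such a sequence the first-order expansion $g(z_n)=\langle\nabla g(y),\,z_n-y\rangle+o(|z_n-y|)$ gives $\langle\nabla g(y),\,\pm e\rangle\leqslant 0$.
\begin{itemize}
\item If the sign works out as $+e$, this contradicts Step~2 directly.
\item If only $-e$ is available, I would also use the sign information from the concave extension of $\varphi$ slightly beyond $t=0$, since $W$ is $C^1$ on a neighbourhood of $\clos\Omega$, to close the argument.
\end{itemize}
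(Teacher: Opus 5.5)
Your framework matches the paper's. Steps~1--2 (strict concavity of $g=W-L$ on $[y,\,x]$ together with $g(y)=0\leqslant g$ forces $\langle\nabla g(y),\,x-y\rangle>0$) and the ``$+e$'' form of your fallback are exactly the two halves of the paper's contradiction. The gap is the step that carries the content: producing boundary points $z\to y$, approaching tangentially \emph{from the side of $x$}, with $\BG(z)\leqslant L(z)$. You leave this open, and the routes you sketch for it do not work.

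The full-neighbourhood version of Step~3 cannot come from Lemma~\ref{neighborhood inequality}. In the typical tangential picture $\Omega$ lies locally below a convex graph, as at the upper boundary of the parabolic strip $\Omega_{\varepsilon}$. There, a boundary point just beyond $y$ on the $-e$ side is not visible inside $\clos\Omega$ from any point of $[x,\,y)$, so ``accumulation from both tangential sides'' fails. The $-e$ branch is a dead end as well. It only gives $\langle\nabla g(y),\,e\rangle\geqslant 0$, which is consistent with Step~2. Since $W$ is not assumed concave outside $\clos\Omega$, extending $\varphi$ past $t=0$ adds nothing.

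The missing construction is short.
\begin{itemize}
\item For small $t>0$ put $p_t=y+t(x-y)\in[x,\,y)$. Let $v_t$ be the first point of $\partial\Omega$ on the ray from $p_t$ in the direction of the outer unit normal $\nu$ at $y$.
\item Then $[p_t,\,v_t]\subseteq\clos\Omega$ and $p_t$ lies in the extremal set, so Lemma~\ref{neighborhood inequality} gives $\BG(v_t)\leqslant L(v_t)$. Hence $g(v_t)=f(v_t)-L(v_t)\leqslant\BG(v_t)-L(v_t)\leqslant 0$.
\item Write $\partial\Omega$ near $y$ as a $C^1$ graph $s=h(w)$ over the tangent hyperplane, which contains $x-y$, with $h(0)=0$ and $\nabla h(0)=0$. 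Then $v_t=p_t+h(t(x-y))\nu$ with $h(t(x-y))=o(t)$, i.e.\ $v_t-y=t(x-y)+o(t)$.
\item Your first-order expansion now yields $\langle\nabla g(y),\,x-y\rangle\leqslant 0$, contradicting Step~2.
\end{itemize}
This is precisely the paper's proof. After normalizing $L=0$, the paper phrases the last step as $W(x)<W(y)+\partial_{x-y}W(y)\leqslant 0=\BG(x)$.

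Finally, nothing in your argument uses $\dim l=1$. The lemma concerns extremal sets of any dimension containing $[x,\,y)$, and your argument covers them verbatim, so there is no need to restrict to lines.
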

\begin{proof}
We argue by contradiction. Suppose that such an extremal set exists. Without loss of generality, let it be $E(x,\,0,\,l)$ for some subspace $l$ containing the points $x,\,y$. Let $e$ be the outer normal to~$\Omega$ at $y$. Since $[x,\,y]$ is tangent to $\Omega$, the ray $(t x+(1-t)y,\,\vec{e})$ intersects $\partial\Omega$ for sufficiently small $t\in(0,\,1)$. Let $v_t$ be the first point of $\partial\Omega$ on this ray. Then, by Lemma~\ref{neighborhood inequality}, $\BG(v_t)\leqslant 0$. Hence $W(v_t)\leqslant 0$, since the boundary values of both $\BG$ and $W$ coincide with $f$. Since $[x,\,y]$ is tangent, the derivative of $W$ at $y$ in the direction $x-y$ can be written as
\begin{equation}
\frac{\partial W}{\partial(x-y)}(y)=\lim\limits_{t\to 0_+}\frac{W(v_t)-W(y)}{t}\leqslant 0.
\end{equation}
However, since $W$ is strictly locally concave, it follows that $W(x)<0$, which contradicts the fact that $\BG(x)=0$, and $W$ is a majorant of $\BG$.
\end{proof}

Guan proved the following lemma in his paper, which is an analogue of Lemma 2 of~\cite{CafNirSpr}, Lemma 3.2 of~\cite{DePhiFig} and Theorem~\ref{mainth} of this paper.
\begin{Le}[Lemma 3.1 of~\cite{Guan}]\label{LeGuan}
Let the function $\BG$ have a strictly locally concave majorant $W\in C^2(\clos\Omega)$, let $x\in\Omega$, $\BG(x)=0$, and suppose that, in a neighborhood of $x$, the function $\BG$ is nonpositive. Let $S_x$ be the connected component of $x$ in the set $\{y\in\clos\Omega|\,\BG(y)=0\}$. Then
\begin{equation}
S_x=\conv_{\clos\Omega}(S_x\cap\partial\Omega).
\end{equation}
\end{Le}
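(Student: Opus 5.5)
The plan is to reduce the lemma to Theorem~\ref{mainth} in the special setting of this section. The obstacle $f$ lives only on $\partial\Omega$, so $\supfu{f}=-\infty$ in $\Omega$. Hence $E_0=\varnothing$, $E_{>0}=\Omega=\intX(\clos\Omega)$ and $\partial E_{>0}=\partial\Omega$. Since $\Omega$ is bounded, no ray lies in $\clos\Omega$, so $V_{\dir}(\cdot,0,l)=\varnothing$ and the ``line'' alternative of Theorem~\ref{mainth} never occurs. The hypothesis on $x$ gives a ball $W_0\ni x$ with $\BG|_{W_0}\leqslant 0=\BG(x)$. Taking $l$ to be the affine hull of $U(W_0,0)$, the whole of $U(W_0,0)$ is trivially a face of itself, so $x\in\widetilde{E}(0,l)$ with this maximal $l$. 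Throughout I use the affine functional $L\equiv 0$.

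\textbf{Step 1 (propagation of local nonpositivity).} Let $A$ be the set of points $w\in S_x\cap\Omega$ having a neighbourhood on which $\BG\leqslant 0$. I will show that $A$ is open and closed in $S_x\cap\Omega$.
\begin{enumerate}
\item[(a)] Openness is immediate from the definition.
\item[(b)] For closedness, let $w_n\to w\in\Omega$ with $w_n\in A$. By Lemma~\ref{neighborhood inequality}, $\BG(y)\leqslant 0$ for every $y$ with $[w_n,y]\subseteq\Omega$. For $n$ large, a fixed ball around $w$ is visible from $w_n$, so $w\in A$.
\end{enumerate}
Combined with Lemma~\ref{rasek}, every $w\in A$ lies in some pre-extremal set $\widetilde{E}(0,l_w)$. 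Moreover, $\clos E(w,0,l_w)\subseteq S_x$, since $\BG$ is continuous up to the boundary and vanishes on the extremal set.

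\textbf{Step 2 (inclusion $\subseteq$).} For $w\in A$, Theorem~\ref{mainth} gives a simplex $C$ with $w\in\ri C$, $V_{\fin}(C)\subseteq\partial\Omega$, $C\setminus V_{\fin}(C)\subseteq E(w,0,l_w)\subseteq\Omega$ and $C\subseteq S_x$. Hence $V_{\fin}(C)\subseteq S_x\cap\partial\Omega$. Every face of $C$ lies in $C\subseteq\clos\Omega$, so I argue by induction on the dimension of faces. Each edge of $C$ is a segment in $\clos\Omega$ with ends in $S_x\cap\partial\Omega$, hence lies in $\conv_{\clos\Omega}(S_x\cap\partial\Omega)$. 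Each higher face is a union of segments joining a vertex to a point of the opposite face, so it lies there too. Thus $C$, and in particular $w$, lies in $\conv_{\clos\Omega}(S_x\cap\partial\Omega)$. Points of $S_x\cap\partial\Omega$ lie there trivially.

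\textbf{Step 3 (inclusion $\supseteq$).} It suffices to show that $S_x$ is relatively convex in $\clos\Omega$. Take $y,z\in S_x$ with $[y,z]\subseteq\clos\Omega$. Concavity gives $\BG\geqslant 0$ on $[y,z]$. For the reverse inequality I would approximate $y$ and $z$ by points of $A$ and again use Lemma~\ref{neighborhood inequality} from these points to get $\BG\leqslant 0$ along the segment. The interior part $(y,z)\cap\Omega$ is then in the zero set and connected to $y$, hence in $S_x$.

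\textbf{Main obstacle.} Both Step~1 and Step~3 require showing that $S_x\cap\Omega=A$, i.e.\ that the connected component $S_x$ cannot leave the locally-supported region by passing through $\partial\Omega$ or through interior points where $\BG$ takes positive values nearby. Here the strictly concave $C^2$ majorant $W$ is essential. At a point $y\in S_x\cap\partial\Omega$ we have $W(y)=f(y)=\BG(y)=0$ and $W\geqslant\BG$. By the tangency lemma of this section, no extremal segment reaches $y$ tangentially, so the extremal pieces meet $\partial\Omega$ transversally, where the cone condition holds. I would try to show, by comparing the first-order expansion of $W$ at $y$ with that of $\BG$ (a supporting functional of $\BG$ near $y$ should match $\nabla W(y)$ tangentially), that every interior point of $S_x$ near $y$ lies in $A$. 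This boundary analysis is the step I expect to be delicate.
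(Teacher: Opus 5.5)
\textbf{There is a genuine gap: the decisive step is left unproved.}

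Steps 1 and 2 are correct as far as they go. The clopen argument via Lemma~\ref{neighborhood inequality} is right. Theorem~\ref{mainth} (with $E_{>0}=\Omega$, and no lines or directions since $\Omega$ is bounded) does place every $w\in A$ in a simplex $C\subseteq S_x$ with vertices in $S_x\cap\partial\Omega$. At a vertex $v$, concavity on $[w,v]$ gives $\BG(v)\leqslant 0$, and $\BG(v)=f(v)=W(v)\geqslant 0$ by continuity of $W$. So you do not need continuity of $\BG$ up to the boundary.

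However, this proves only $A\subseteq\conv_{\clos\Omega}(S_x\cap\partial\Omega)$. Since $A$ is open and closed in $S_x\cap\Omega$, it is merely a union of connected components of $S_x\cap\Omega$. And $S_x\cap\Omega$ need not be connected: $S_x$ is a component of a subset of $\clos\Omega$, so it can pass between interior pieces through $\partial\Omega$. Hence both inclusions rest on the unproved claim $S_x\cap\Omega=A$. Step~3 also needs $S_x\cap\partial\Omega\subseteq\clos A$, and it needs the perturbed segments $[y_n,p]$ to stay in $\clos\Omega$; the latter can fail in a nonconvex domain when $[y,z]$ grazes $\partial\Omega$.

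This missing piece is exactly Proposition~\ref{StGuan}, the relative convexity of $S_x$. The paper itself gives no proof of Lemma~\ref{LeGuan}. It cites Guan and notes that his argument rests on Proposition~\ref{StGuan}, which he postulates. Its parabolic-strip example (Fig.~\ref{nonconvex example}) shows that, without the majorant, the component of $\{\BG=L\}$ really can contain more than $\clos E(x,L,l)$. So your proposal leaves open precisely the nontrivial content of the statement.

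The remedy you sketch does not close the gap. The non-tangency lemma of Section~\ref{c2slmn} concerns only boundary points $y$ that are endpoints of half-intervals $[w,y)$ lying in an extremal set. At such a $y$, transversality gives the cone condition of Theorem~\ref{tangency}. Then Lemma~\ref{neighborhood inequality}, applied from a point of $[w,y)$, directly yields $\BG\leqslant 0$ on $B_\varepsilon(y)\cap\clos\Omega$, hence $S_x\cap\Omega\cap B_\varepsilon(y)\subseteq A$. No first-order comparison with $\nabla W$ is needed there.

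But this is purely local. Nothing prevents $f$, which is $\leqslant 0$ near such a $y$, from vanishing along a piece of $\partial\Omega$ through $y$. Then $S_x$ can travel along $\partial\Omega$ to points that are not endpoints of any extremal segment issuing from $A$. At those points the tangency lemma says nothing, and nothing in your argument stops $S_x$ from re-entering $\Omega$ along a level set across which $\BG$ changes sign. That is exactly the mechanism in the paper's counterexample.

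To finish, you would need a global argument, using the strict concavity of $W$ in an essential way, that rules out such passages through $\partial\Omega$. As it stands, you have proved the covering statement only for the union of the extremal sets through points of $A$ (what the paper calls the ``correct part'' of $S_x$), not for the whole connected component $S_x$.
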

The proof of the lemma is based on the following proposition.
\begin{St}\label{StGuan}
The set $S_x$ is relatively convex in $\clos(\Omega)$ \textup{(}see Definition~\ref{otnositvip}\textup{)}.
\end{St}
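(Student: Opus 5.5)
The plan is a connectedness argument along $S_x$ that promotes the local hypothesis ``$\BG\leqslant 0$ near $x$'' to every point of $S_x$. Relative convexity then follows from concavity of $\BG$ on segments. Throughout, $\BG=\BG_{f,\clos\Omega}$ is locally concave on $\clos\Omega$ and $\BG\leqslant W$.

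\emph{Reduction.} Fix $y,z\in S_x$ with $[y,z]\subseteq\clos\Omega$. Since $\BG|_{[y,z]}$ is concave and $\BG(y)=\BG(z)=0$, we get $\BG\geqslant 0$ on $[y,z]$. So it suffices to show $\BG\leqslant 0$ on $[y,z]$; then $[y,z]\subseteq\{\BG=0\}$, and $[y,z]$, being connected and meeting $S_x$, lies in $S_x$.

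\emph{Propagation.} Introduce
\begin{equation*}
T=\{w\in S_x\mid \BG\leqslant 0 \text{ on } B_r(w)\cap\clos\Omega \text{ for some } r>0\}.
\end{equation*}
By hypothesis $x\in T$, and $T$ is open in $S_x$ by definition. Once $T=S_x$ is known, the segment is handled as follows. Let $t^*$ be the supremum of those $t\in[0,1]$ with $\BG=0$ on $[y,y+t(z-y)]$.
\begin{itemize}
\item $t^*>0$: since $y\in T$, we have $\BG\leqslant 0$ near $y$, and together with $\BG\geqslant 0$ on the segment this gives $\BG=0$ on an initial piece.
\item The supremum is attained, by continuity of $\BG$ along the segment (a concave function on a segment that vanishes at both endpoints is continuous there).
\item The endpoint $y+t^*(z-y)$ lies in $S_x$ by connectedness, hence in $T$, so the same argument pushes $t^*$ further unless $t^*=1$.
\end{itemize}
So the whole task reduces to showing that $T$ is closed in $S_x$.

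\emph{Closedness of $T$.} Let $w_n\in T$ converge to $w\in S_x$.

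\emph{Case $w\in\Omega$ (interior).} Pick $n$ with $w_n$ close to $w$ and a small ball $B\subseteq\Omega$ around $w$ containing $w_n$. Then $0$ is a local supporting functional of $\BG$ at $w_n$, because $\BG\leqslant 0$ near $w_n$ and $\BG(w_n)=0$. By Lemma~\ref{neighborhood inequality}, applied with $L=0$ and $W$ a small ball around $w_n$ on which $\BG\leqslant 0$, we get $\BG(v)\leqslant 0$ for every $v$ with $[w_n,v]\subseteq\Omega$, in particular for all $v\in B$. Hence $w\in T$.

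\emph{Case $w\in\partial\Omega$.} This is the main obstacle, and it is exactly where the strictly concave $C^2$ majorant is needed; without it the statement can fail, since tangent extremal sets may stop at the boundary. Here $\BG(w)=f(w)=W(w)=0$. I would first try to show that every direction of $\{\BG=0\}$ emanating from $w$ is killed by $W$. Concretely, the points of $S_x$ near $w$ (the $w_n$ and the zero segments through them produced by the interior case) force $\nabla W(w)$ to vanish along $\aff$ of these directions, using $W\geqslant\BG=0$ there together with $W(w)=0$. Strict concavity with a uniform $C^2$ modulus on $\clos\Omega$ then gives
\begin{equation*}
\BG(v)\leqslant W(v)\leqslant \langle\nabla W(w),v-w\rangle - c|v-w|^2
\end{equation*}
near $w$. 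Combining this bound with $\BG\leqslant 0$ on the neighbourhoods of the $w_n$, and using local concavity along segments from $w_n$ to $v$, should yield $\BG\leqslant 0$ on $B_r(w)\cap\clos\Omega$. I expect the delicate point to be points $v$ not visible from any $w_n$ inside $\clos\Omega$. If the direct bound is insufficient there, I would fall back on the preceding lemma: a zero segment reaching $\partial\Omega$ cannot be tangent to $\partial\Omega$. Therefore all such segments hit $\partial\Omega$ transversally, and a cone condition at $w$ restores visibility from $w_n$ of a full neighbourhood of $w$ in $\clos\Omega$.
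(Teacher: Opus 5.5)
The paper gives no proof of this proposition; it only records that Guan postulates it. So your argument has to stand on its own, and it does not: the decisive case is only planned, and the plan's key step is false.

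\textbf{What is correct.} Your reduction is fine. Concavity gives $\BG\geqslant 0$ on $[y,z]$, $T$ is open in $S_x$, and the continuation in $t^*$ works once $T=S_x$. The interior case is also correct, since it uses only concavity of $\BG$ along segments inside a convex ball. The same argument already handles any boundary point $w$ for which $B_r(w)\cap\clos\Omega$ is convex. So the whole content of the proposition lies at boundary points where $\clos\Omega$ is locally nonconvex. That is exactly where the Slavin--Vasyunin example breaks relative convexity, and it is the case you leave open.

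\textbf{The gradient claim is false.} You claim that the points of $S_x$ near $w$ force $\nabla W(w)$ to vanish along their directions. From $W\geqslant\BG=0$ on points of $S_x$ approaching $w$ along $d$, together with $W(w)=0$, you only get $\partial_d W(w)\geqslant 0$. Along a chord $[w,b]\subseteq\{\BG=0\}$ with $b\in\partial\Omega$, you have $W(w)=W(b)=0$ and $W$ is not affine on any subsegment. Hence $W>0$ on $(w,b)$ and $\partial_{b-w}W(w)>0$. For example, on the unit disk with $W=1-2x_1^2-x_2^2$ one has $\BG=-x_1^2$ and $S_0=\{x_1=0\}\cap\clos\Omega$. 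At $w=(0,1)$ the derivative of $W$ in the direction $(0,-1)$ of $S_0$ equals $2$. So your bound $\BG(v)\leqslant\langle\nabla W(w),v-w\rangle-c|v-w|^2$ has a positive linear term exactly in the relevant directions and gives nothing. Moreover, $D^2W\leqslant -cI$ is not a hypothesis here; strict local concavity only excludes segments of linearity.

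\textbf{The fallback is not carried out.} The preceding lemma only says that a half-interval of an extremal set ending at $w$ is not tangent at $w$. To use it you would need two things you do not supply:
\begin{itemize}
\item a half-interval ending exactly at $w$, whereas you only have $w_n\to w$, and these may lie on $\partial\Omega$ or carry zero chords ending at other boundary points with degenerating angles;
\item a quantitative transversality that gives a cone condition at $w$.
\end{itemize}
Without these, closedness of $T$ at locally nonconvex boundary points remains unproved, and that is the actual content of the proposition.
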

This proposition is not stated as a separate lemma in Guan's paper, but is postulated at the beginning of the proof of Lemma 3.1. As a result, in this case Guan proves the following theorem.
\begin{Th}[Theorem 1.2 of~\cite{Guan}]\label{ThGuan}
Let the function $\BG$ have a strictly locally concave majorant $W\in C^2(\clos\Omega)$, let the boundary of the domain $\Omega$ be $C^{3,1}$-smooth, and let $f\in C^{3,1}(\partial\Omega)$. Then $\BG\in C^{1,1}(\clos\Omega)$.
\end{Th}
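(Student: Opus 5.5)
The statement is Guan's theorem, quoted from~\cite{Guan}. I would prove it in the spirit of Caffarelli--Nirenberg--Spruck~\cite{CafNirSpr}, with Lemma~\ref{LeGuan} (or Theorem~\ref{mainth}) replacing Carath\'eodory's theorem. The key observation is that $\BG$ is locally concave, so its second difference quotients are automatically bounded from above by $0$. Hence $\BG\in C^{1,1}(\clos\Omega)$ follows once I prove a uniform semiconcavity bound from the other side:
\begin{equation*}
\BG(x+h)+\BG(x-h)-2\BG(x)\geqslant -C|h|^2 ,
\end{equation*}
with $C$ independent of $x$. In fact I would aim for the one-sided estimate $\BG(x+h)\geqslant L_x(x+h)-C|h|^2$, where $L_x$ is an affine functional supporting $\BG$ at $x$.

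\textbf{Step 1 (structure at $x$).} Fix $x\in\Omega$ and a supporting functional $L_x$; after subtracting it, $L_x\equiv 0$. By Lemma~\ref{LeGuan} together with Carath\'eodory's theorem, $x$ lies in a simplex whose vertices $y_1,\dots,y_k\in\partial\Omega$ satisfy $f(y_i)=0$, with $x=\sum\lambda_i y_i$. Moreover, $\BG\leqslant 0$ near each segment $[x,y_i)$, by Lemma~\ref{neighborhood inequality}.

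\textbf{Step 2 (boundary second-order contact).} At each vertex $y_i$ the function $f$ restricted to $\partial\Omega$ attains a local maximum $0$ at $y_i$, because $f\leqslant \BG\leqslant 0$ on nearby boundary points visible from $x$. Hence its tangential gradient vanishes there. Since $f$ and $\partial\Omega$ are $C^{3,1}$ (and $C^{1,1}$ would already suffice), this gives $|f(y)|\leqslant C|y-y_i|^2$ for $y\in\partial\Omega$ near $y_i$.

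\textbf{Step 3 (perturbation).} Given small $h$, I move each vertex to a boundary point $y_i+h_i\in\partial\Omega$ such that $x+h$ is still a convex combination $\sum\lambda_i'(y_i+h_i)$, with $|h_i|\leqslant C|h|$. One natural choice is to translate each $y_i$ by $h$ and project back onto $\partial\Omega$ along a fixed transversal direction, then correct the barycentric weights. Local concavity along the resulting segments (they stay in $\clos\Omega$ for small $h$) then gives
\begin{equation*}
\BG(x+h)\geqslant \sum\lambda_i' f(y_i+h_i)\geqslant -C\sum|h_i|^2\geqslant -C'|h|^2 .
\end{equation*}

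\textbf{Main obstacle.} The difficulty is the uniformity of the constant $C$ in Step 3. The simplex may degenerate: vertices can collide, $x$ can approach $\partial\Omega$, and the segments $[x,y_i]$ can become nearly tangent to $\partial\Omega$, so that no boundary point $y_i+h_i$ with $|h_i|\lesssim|h|$ exists. This is exactly where the strictly concave $C^2$ majorant $W$ enters. The preceding lemma shows that extremal segments cannot be tangent, and I would upgrade it to a quantitative transversality bound: the angle between $[x,y_i]$ and $\partial\Omega$ at $y_i$ is bounded below by a constant depending on the strict concavity modulus of $W$ and on $\|W\|_{C^2}$. I would obtain this by repeating that proof with Taylor expansion of $W$ to second order. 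The same mechanism, using $W(y)\geqslant \BG(y)$ together with the strict concavity of $W$, should rule out short degenerate edges. Finally, for $x$ near $\partial\Omega$ I would reduce to boundary points via the $C^{3,1}$ regularity and the boundary estimate of Step 2.
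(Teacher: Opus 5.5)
\emph{Verdict: your overall architecture is right, but Step~3 has a genuine gap, and the fix you propose for the main obstacle is false.}

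The paper does not prove this theorem. It quotes it from \cite{Guan} and notes only that Guan's argument runs as in \cite{CafNirSpr}, with Lemma~\ref{LeGuan} in place of Carath\'eodory's theorem. Your architecture is the same. For Step~1, use Theorem~\ref{mainth} rather than Carath\'eodory, which does not apply to a relative convex hull. Here $E_0=\varnothing$, so Theorem~\ref{mainth} gives a simplex with vertices on $\partial\Omega$ and $C\setminus V_{\fin}(C)\subseteq\Omega$.

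The gap is the step you call the main obstacle. A counterexample to your proposed remedy:
\begin{itemize}
\item Take $\Omega$ the unit disk, $W(x)=-x_1^2-2x_2^2$ and $f=W|_{\partial\Omega}=-1-x_2^2$.
\item The function $-1-x_2^2$ is concave and equals $f$ on the circle. Concavity along horizontal chords gives the reverse inequality, so $\BG=-1-x_2^2$, and its extremal sets are the open horizontal chords.
\item At $x=(0,t)$ with $t\to1$, the chord has length $\ell=2\sqrt{1-t^2}\to0$ and meets the circle at an angle $\approx\ell/2$.
\end{itemize}
So $W$ gives neither a uniform lower bound on the angle nor an exclusion of short edges. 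Running the tangency lemma quantitatively yields only an angle of order $\mu\ell$, and this example shows that is sharp.

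Worse, Step~3 cannot hold as stated. Every point of the circle within distance $C\eta$ of $y_\pm=(\pm\ell/2,\,t)$ has second coordinate at most $t+C\ell\eta$. So for $h=(0,\eta)$, the point $x+h$ lies in the convex hull of such points only if $C\ell\geqslant1$. Displacements $|h_i|\sim|h|/\ell$ are therefore unavoidable, even though $\BG$ is smooth here.

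The missing idea is how to pay for $|h_i|\sim|h|/\ell$, and this is where the $C^{3,1}$ hypothesis must enter. Your sketch never uses it; you remark yourself that $C^{1,1}$ suffices for Step~2. With $L_x=0$, the boundary function $g=f|_{\partial\Omega}\leqslant0$ has second-order zeros at contact points whose mutual distance is $\sim\ell$. A fourth-order Taylor expansion then forces its tangential Hessian there to be $O(\ell^2)$. In one variable, $g(a)=g'(a)=g(b)=g'(b)=0$ and
\begin{equation*}
0=g(b)-\tfrac{b-a}{3}\,g'(b)=\tfrac{(b-a)^2}{6}\,g''(a)+O\big(\|g\|_{C^{3,1}}|b-a|^4\big)
\end{equation*}
give $|g''(a)|\lesssim\ell^2$.

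In the relevant range $|h|\lesssim\dist(x,\partial\Omega)$, which is $O(\ell^2)$ for such chords, one has $|h_i|\lesssim\ell$. The cubic and quartic Taylor terms are then absorbed, so $|g(y_i+h_i)|\lesssim\ell^2|h_i|^2\lesssim|h|^2$. In the example, $g=-(x_2-t)^2$ on the circle is indeed of this size near $y_\pm$. By contrast, your bound $|g(y)|\lesssim|y-y_i|^2$ only gives $|h|^2/\ell^2$.

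You need this short-chord analysis, and in a nonconvex $\Omega$ it must also keep the perturbed simplex inside $\clos\Omega$. Without it, Step~3 does not produce a constant independent of $x$, and the argument does not prove the theorem.
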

In the case where the majorant $W$ does not exist, the proposition, lemma, and theorem may fail. The key idea is the choice of the correct part $S_x$, which is precisely the construction of extremal sets. A counterexample to all three propositions is the function computed in the paper~\cite{SlVanir}. Let 
\begin{equation}
\Omega_{\varepsilon}=\{(x_1,\,x_2)\in\mathbb{R}^2|\,x_1^2\leqslant x_2\leqslant x_1^2+\varepsilon^2\}.
\end{equation}
Let the function $f$ be defined by the formula $f=e^{x_1}|_{\{x_2=x_1^2\}}$, and $B_{\varepsilon}=\BG_{f,\,\Omega_{\varepsilon}}$. In~\cite{SlVanir}, the following formula was proved
\begin{equation}
B_{\varepsilon}(x_1,\,x_2)=\frac{1-\sqrt{\varepsilon^2+x_1^2-x_2}}{1-\varepsilon}\exp\left(x_1+\sqrt{\varepsilon^2+x_1^2-x_2}-\varepsilon\right),
\end{equation}
\begin{equation}
\frac{\partial^2B_{\varepsilon}}{\partial x_2^2}(x_1,\,x_2)=-\frac{1}{4(1-\varepsilon)\sqrt{\varepsilon^2+x_1^2-x_2}}\exp\left(x_1+\sqrt{\varepsilon^2+x_1^2-x_2}-\varepsilon\right).
\end{equation}
The first of these is formula (3.5) of~\cite{SlVanir}, the second is the formula immediately preceding formula (4.8) in the same paper.
The upper boundary of the domain $\Omega_{\varepsilon}$ is the parabola $\{x_2=x_1^2+\varepsilon^2\}$. As the point $(x_1,\,x_2)$ tends to the upper boundary, the second derivative of the function with respect to $x_2$ tends to infinity, which shows that the function cannot be $C^{1,1}$-smooth up to the boundary. A more careful study of the formula shows that, although away from the free boundary the function is infinitely smooth, it approaches the free boundary only $C^{1,\frac{1}{2}}_{loc}$-smoothly. In this case, the restriction of the function to the free boundary is given by
\begin{equation}
B_{\varepsilon}(x_1,\,x_1^2+\varepsilon^2)=\frac{1}{1-\varepsilon}\exp\left(x_1-\varepsilon\right),
\end{equation}
which is infinitely smooth. To see that the example with the function $B_{\varepsilon}$ shows the necessity of a $C^2$-majorant for Lemma~\ref{LeGuan} and Proposition~\ref{StGuan}, one has to note that the extremal sets of the function are tangent intervals
\begin{equation}
((u,\,u^2),\,(u+\varepsilon,\,(u+\varepsilon)^2+\varepsilon^2))=\{(u+t,\,u+(2u+2\varepsilon)t)|\,t\in(0,\,\varepsilon)\}.
\end{equation}
However the set $\{\BG=L\}$ is the extremal set together with the curve $\clos(E(x,\,L,\,l))\cup \gamma$; that is, it is connected but not relatively convex (see Fig.~\ref{nonconvex example}).
\begin{figure}[h]\centering
\def\svgwidth{6cm}
%% Creator: Inkscape 1.3.2 (091e20e, 2023-11-25, custom), www.inkscape.org
%% PDF/EPS/PS + LaTeX output extension by Johan Engelen, 2010
%% Accompanies image file 'prodzaeks.pdf' (pdf, eps, ps)
%%
%% To include the image in your LaTeX document, write
%%   \input{<filename>.pdf_tex}
%%  instead of
%%   \includegraphics{<filename>.pdf}
%% To scale the image, write
%%   \def\svgwidth{<desired width>}
%%   \input{<filename>.pdf_tex}
%%  instead of
%%   \includegraphics[width=<desired width>]{<filename>.pdf}
%%
%% Images with a different path to the parent latex file can
%% be accessed with the `import' package (which may need to be
%% installed) using
%%   \usepackage{import}
%% in the preamble, and then including the image with
%%   \import{<path to file>}{<filename>.pdf_tex}
%% Alternatively, one can specify
%%   \graphicspath{{<path to file>/}}
%% 
%% For more information, please see info/svg-inkscape on CTAN:
%%   http://tug.ctan.org/tex-archive/info/svg-inkscape
%%
\begingroup%
  \makeatletter%
  \providecommand\color[2][]{%
    \errmessage{(Inkscape) Color is used for the text in Inkscape, but the package 'color.sty' is not loaded}%
    \renewcommand\color[2][]{}%
  }%
  \providecommand\transparent[1]{%
    \errmessage{(Inkscape) Transparency is used (non-zero) for the text in Inkscape, but the package 'transparent.sty' is not loaded}%
    \renewcommand\transparent[1]{}%
  }%
  \providecommand\rotatebox[2]{#2}%
  \newcommand*\fsize{\dimexpr\f@size pt\relax}%
  \newcommand*\lineheight[1]{\fontsize{\fsize}{#1\fsize}\selectfont}%
  \ifx\svgwidth\undefined%
    \setlength{\unitlength}{1500bp}%
    \ifx\svgscale\undefined%
      \relax%
    \else%
      \setlength{\unitlength}{\unitlength * \real{\svgscale}}%
    \fi%
  \else%
    \setlength{\unitlength}{\svgwidth}%
  \fi%
  \global\let\svgwidth\undefined%
  \global\let\svgscale\undefined%
  \makeatother%
  \begin{picture}(1,1)%
    \lineheight{1}%
    \setlength\tabcolsep{0pt}%
    \put(0,0){\includegraphics[width=\unitlength,page=1]{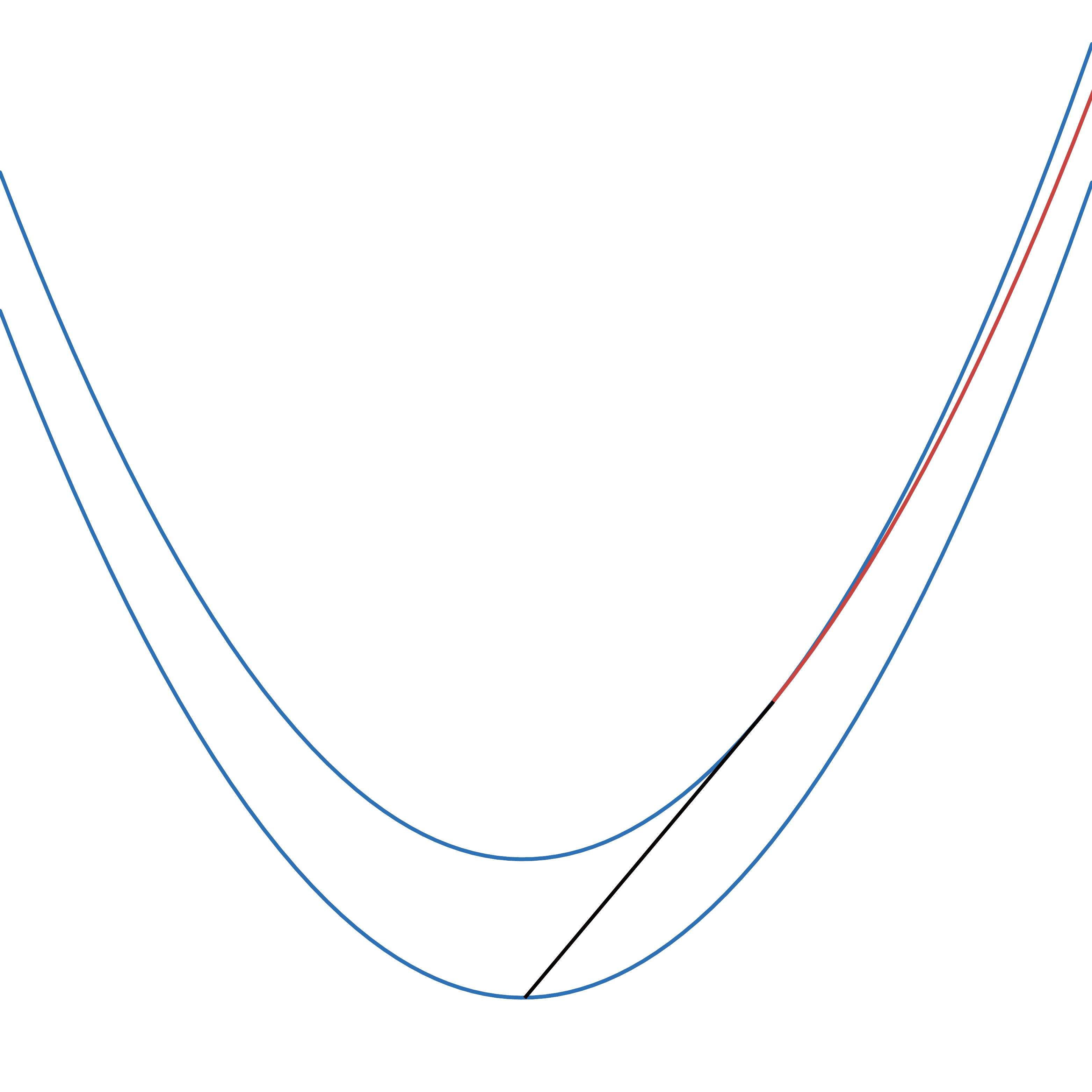}}%
    \put(0.28930255,0.16432373){\color[rgb]{0,0,0}\makebox(0,0)[lt]{\lineheight{1.25}\smash{\begin{tabular}[t]{l}$E(x,\,L,\,l)$\end{tabular}}}}%
    \put(0.78229187,0.42381409){\color[rgb]{0.78039216,0.26666667,0.25098039}\makebox(0,0)[lt]{\lineheight{1.25}\smash{\begin{tabular}[t]{l}\textcolor[rgb]{0.777,0.266,0.25}{$\gamma$}\end{tabular}}}}%
  \end{picture}%
\endgroup%

\caption{Example of nonconvexity of a component of the set where the function is equal to an affine functional, containing an extremal set}\label{nonconvex example}
\end{figure}
One can give an analogous example in a compact domain. Moreover, one can arrange that $\BG\in C^2(\clos(\Omega))$. Thus, for Lemma~\ref{LeGuan} and Proposition~\ref{StGuan}, the existence of a strictly locally concave majorant is important, and there is a counterexample even in the presence of a $C^2$ locally concave majorant.

\section{On relative convexity}

The theory constructed here admits several generalizations. Using the methods developed in this paper, one can transfer several classical results of convex analysis to the relatively convex setting. The following theorem may be regarded as an analogue of the corollary of Carath\'eodory's theorem~\ref{carat} and as a generalization of the Krein--Milman theorem (see Theorem 18.5 in~\cite{Rockafellar}) to unbounded convex sets.
\begin{Th}
A closed convex set $U$ that does not contain lines is a union of generalized simplices whose finite vertices are extreme points of $U$ and whose vertices at infinity are directions of extreme rays of $U$.
\end{Th}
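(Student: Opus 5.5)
The plan is to run a minimal-simplex argument modelled on the proof of Theorem~\ref{mainth}, working directly with $U$ in place of an extremal set. Fix $x\in U$ and set $V=\operatorname{ext}U$ (extreme points) and $E$ the set of directions of extreme rays of $U$. Instead of appealing to Corollary~\ref{tsorsim}, which would require closedness of $(V;\,E)$ and need not hold, I will argue by induction on $\dim U$ that every $x\in U$ lies in a generalized simplex with vertices in $V\cup E$. A closing Carath\'eodory step (Theorem~\ref{carat}) is then unnecessary: once $x\in\conv(V;\,E)$ is shown with finitely many vertices, Theorem~\ref{carat} already yields a generalized simplex.

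For the base case, $\dim U=0$ means $U$ is a point, which is extreme. If $\dim U=1$, then $U$ is a segment or a ray, since it contains no line, and the claim is immediate. For the induction step, take $x\in U$. If $x\in\rb U$, Proposition~\ref{bougr} places $x$ in a face $F$ of smaller dimension. $F$ is closed: a face of a closed convex set is closed, by Rockafellar Corollary~18.1.1. $F$ contains no lines, and by Proposition~\ref{grgrgr} its extreme points and extreme rays are extreme for $U$, so induction applies.

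If $x\in\ri U$, choose a line $m\subseteq\aff U$ through $x$ with directions $\vec e,-\vec e$. Since $U$ contains no line, at least one ray, say $[x,\vec e)$, leaves $U$, and it exits at a point $y\in\rb U$. Two cases remain.

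In the first case $[x,-\vec e)$ also leaves $U$, at a point $y'\in\rb U$. Then $x\in[y,y']$, both endpoints lie in $\conv(V;\,E)$ by the boundary case, and convexity gives $x\in\conv(V;\,E)$.

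In the second case $[x,-\vec e)\subseteq U$. Then $-\vec e$ is a recession direction of $U$, and $x\in[y,-\vec e)$. It then suffices to show that $-\vec e$ lies in the recession cone generated by $E$, since then $x=y+\beta(-e)\in\conv(V;\,E)$.

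This recession-cone claim is the main obstacle. The recession cone $0^+U$ is closed, convex and pointed, so by Krein--Milman type reasoning for pointed cones it is generated by its extreme rays. The remaining task is to check that each extreme direction $\vec f$ of $0^+U$ is realized by an actual extreme ray $[v,\vec f)$ of $U$. To do this I would take the face of $U$ maximizing a linear functional exposing $\vec f$ in $0^+U$, or more robustly apply the induction hypothesis to a suitable proper face that recedes in direction $\vec f$. Choosing that face is the step needing care (Rockafellar Theorem~18.5 gives exactly $U=\conv(V;\,E)$, and I would invoke it here if allowed). Granting this, Theorem~\ref{carat} converts $x\in\conv(V;\,E)$ into a generalized simplex with the required vertices.
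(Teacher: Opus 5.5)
Your second case is a genuine gap, and the lemma you plan to prove there is false. An extreme direction of $0^+U$ need not be the direction of any extreme ray of $U$, and the sufficient condition ``$-\vec e$ lies in the cone generated by $E$'' can fail. Take $U=\{(a,b)\in\mathbb{R}^2\mid b\geqslant a^2\}$. Its faces are $\varnothing$, $U$ and the single points of the parabola, so $E=\varnothing$, while $0^+U$ is the ray spanned by $(0,1)$. Let $x=(0,1)$ and let $m$ be the vertical line. The ray $[x,\,\vec e)$ with $e=(0,-1)$ exits at $y=(0,0)$, while $[x,\,-\vec e)\subseteq U$, so you are in your second case. But $x=y+(0,1)$, and $(0,1)$ is not a nonnegative combination of elements of $E=\varnothing$. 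The theorem still holds at $x$, since it is the midpoint of $(-1,1)$ and $(1,1)$. Here, however, the recession direction comes from extreme points escaping to infinity, not from an extreme ray, and your route cannot capture that. Invoking Rockafellar's Theorem~18.5 inside this case does not rescue the argument. Theorem~18.5 says exactly $U=\conv(V;\;E)$, and together with Theorem~\ref{carat} it is already the whole statement. That citation is all the paper offers for this theorem, so your induction would become idle.

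The repair is to never enter the second case, since the line $m$ is yours to choose. For $\dim U\geqslant 2$, let $L_0$ be the linear subspace parallel to $\aff U$, let $S$ be its unit sphere, and let $K=0^+U\subseteq L_0$. Then $K$ is closed, and $K\cap(-K)=\{0\}$ because $U$ contains no lines. Hence $K\cap S$ and $(-K)\cap S$ are disjoint closed subsets of $S$, each the negative of the other. Since $S$ is connected when $\dim L_0\geqslant 2$, they cannot cover $S$. Choose $e\in S$ with $e\notin K$ and $-e\notin K$. By Lemma~\ref{luvvipmn}, neither $[x,\,\vec e)$ nor $[x,\,-\vec e)$ lies in $U$. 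So $m\cap U=[y,\,y']$ with $y,\,y'\in\rb U$ and $x\in[y,\,y']$, and your first case closes the induction step. With this change the rest of your proposal goes through: the base cases, the boundary case via closed lower-dimensional faces and Proposition~\ref{grgrgr}, and the final appeal to Theorem~\ref{carat}. The result is a self-contained proof that does not lean on Theorem~18.5.
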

The theory constructed above allows one to prove an analogous result for relatively convex relatively closed sets.
\begin{Th}
Let $U\subseteq\mathbb{R}^d$ be an open set, and let $X\subseteq U$ be a relatively convex relatively closed subset. Then $X$ is the union of lines lying entirely in $X$, their extreme points, and sets of the form $C\setminus V_{fin}(C)$, where $C$ ranges over generalized simplices such that $C\setminus V_{fin}(C)\subseteq X$ and whose finite vertices are either extreme points of $X$ or points of $\partial U$.
\end{Th}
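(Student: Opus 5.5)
The plan is to copy the machinery of Sections~\ref{secmnsop}--\ref{secsteks}, replacing the coincidence sets $U(W,\,L)$ by the local pieces $X\cap W$, and $E_{>0}$ by $U$. The first observation is that relative convexity of $X$ in $U$ makes $X\cap W$ convex for every convex $W\subseteq U$, and relative closedness makes it closed in $W$. We then define the pre-extremal set $\widetilde{E}(l)$ to be the set of $y\in X$ having a closed convex neighbourhood $W\subseteq U$ with $y\in\intX W$ such that $l\cap(X\cap W)$ is a face of $X\cap W$ and $\aff(l\cap X\cap W)=l$. Corollary~\ref{lemepdel} then shows, exactly as in Lemma~\ref{usopr}, that this does not depend on $W$. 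The extremal set $E(x,\,l)$ is the connected component of $x$ in $\widetilde{E}(l)$. Every $x\in X$ lies in some $\widetilde{E}(l)$: take for $l$ the affine hull of the minimal face of $X\cap B_\varepsilon(x)$ containing $x$.

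The difference from the function case is that $0$-dimensional pieces now occur, and these are exactly the extreme points of $X$. Indeed, if $x$ is an extreme point of $X\cap B_\varepsilon(x)$ and $x\in(a,\,b)$ with $[a,\,b]\subseteq X$, then a shorter subsegment around $x$ lies in $B_\varepsilon(x)$, which is a contradiction. So the analogue of Corollary~\ref{netnulmerextr} is replaced by ``$\dim l=0$ iff $x$ is an extreme point of $X$''. With this, the analogues of Lemma~\ref{rasek}, Corollary~\ref{vipukn}, the inductive description~\eqref{indforext} and Corollary~\ref{embedded extremal} go through verbatim, with $V(x)=\{y\in U\,|\,[x,\,y]\subseteq U\}$. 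The proofs become shorter, since the equality $\BG=L$ is replaced by membership in $X$.

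Lemma~\ref{extremal boundary} changes as follows. If $y\in\partial_l E(x,\,l)$ and $y\notin\partial U$, then $y\in X$ by relative closedness, and Proposition~\ref{bougr} puts $y$ into $\widetilde{E}(l')$ for some $l'\subsetneq l$. If $\dim l'=0$, then $y$ is an extreme point of $X$. Accordingly, the finite vertex candidates become
\[
V_{\fin}(x,\,l)=\{y\in\partial U\cup\operatorname{ext}X\,|\,[x,\,y]\subseteq\clos E(x,\,l)\cup\{y\}\},
\]
and $V_{\dir}$ is defined as before. The base case $\dim l=1$ of Lemma~\ref{leprovipob} then reads: $E(x,\,l)$ is a line, or an open ray or interval whose endpoints lie in $\partial U\cup\operatorname{ext}X$. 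The step needs Lemmas~\ref{finite-vertex lemma} and~\ref{direction-vertex lemma} for the new candidate set. There the point $\widetilde z$ found on $[x,\,w]$ is either in $\partial U$ or, if $w\in U$, an extreme point blocking the segment; I would re-check this case by hand. Corollary~\ref{stiag} is then applied unchanged, giving $x\in\conv(V_{\fin};\;V_{\dir})$ or a line through $x$ inside $E(x,\,l)$. Finally, the minimal-simplex argument from the proof of Theorem~\ref{mainth} (expanding $C_\alpha$ and replacing a vertex) gives $C\setminus V_{\fin}(C)\subseteq E(x,\,l)\subseteq X$. Taking the union over all $x\in X$ gives the theorem.

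The main obstacle I expect is Corollary~\ref{tsorsim}, which needs the candidate pair to be closed. The set $\operatorname{ext}X$ is not closed in general. A limit $v$ of visible extreme points lies in $\clos E(x,\,l)$, but it may be a non-extreme point of a lower-dimensional boundary piece. I would first try to avoid Zorn's lemma by inducting on the number of vertices instead. Any simplex with candidate vertices containing $x$ has at most $\dim l+1$ vertices, so I would take a simplex with candidate vertices containing $x$ that has the fewest vertices and, among those, is minimal under the vertex-replacement moves used in the proof. If a replacing point $y$ is not itself a candidate, I would instead run the lemma recursively for $y$ inside the lower-dimensional extremal set containing it (by induction on $\dim l$). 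This replaces $y$ by a convex combination of candidates and strictly lowers the dimension, which should terminate the process.
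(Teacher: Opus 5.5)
Your route is the transfer the paper has in mind. The paper gives no proof of this theorem; it only asserts that the machinery of Sections~\ref{secmnsop}--\ref{secsteks} carries over. Most of your transferred steps are sound: independence of $W$ via Corollary~\ref{lemepdel}, the analogues of Lemmas~\ref{rasek} and~\ref{extremal boundary} (using $\clos E(x,l)\cap U=E(x,l)$), and the analogue of Lemma~\ref{leprovipob} with visible points of $\partial U$ and visible extreme points as candidates.

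The genuine gap is the final step: the existence of an inclusion-minimal simplex. You rightly note that Corollary~\ref{tsorsim} is unavailable. The set $\operatorname{ext}X$ need not be closed, so the limit simplex of Lemma~\ref{perescep} may have a vertex in $U$ that is a non-extreme limit of extreme points. But your substitute does not prove that any minimal object exists:
- A vertex replacement keeps the number of vertices, so ``fewest vertices'' gives no termination.
- ``Minimal under the vertex-replacement moves'' presupposes exactly the well-foundedness that Zorn's lemma plus closedness was supplying.
- The situation you plan to treat recursively never occurs. In the proof of Theorem~\ref{mainth}, every inserted point $y$ lies in $\clos E(x,l)\setminus E(x,l)\subseteq\partial U$ or in $C_{\alpha_0}\cap\partial U$, and satisfies $[x,y]\subseteq\clos E(x,l)$. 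So it is always a candidate.

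The missing idea is to freeze the extreme points. Once $x\in\conv(V_{\fin}(x,l);\;V_{\dir}(x,l))$, use Theorem~\ref{carat} to choose one generalized simplex $C_0\ni x$ with candidate vertices. Let $A$ be the finite set of its finite vertices that are extreme points of $X$. Now apply Corollary~\ref{tsorsim} to the pair $\bigl(A\cup\{y\in\partial U\,|\,[x,y]\subseteq\clos E(x,l)\};\;V_{\dir}(x,l)\bigr)$. This pair is closed, and its convex hull contains $C_0\ni x$. Every reduction in the proof of Theorem~\ref{mainth} either passes to a face or inserts a point of $\partial U$ visible from $x$ inside $\clos E(x,l)$, so it stays within this pair. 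Hence the minimal simplex $C$ satisfies $x\in\ri C$ and $C\setminus V_{\fin}(C)\subseteq E(x,l)\subseteq X$, with finite vertices in $A\cup\partial U$.

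Two smaller corrections:
- ``$\dim l=0$ iff $x$ is extreme'' should read ``$x\in\widetilde E(\{x\})$ iff $x$ is extreme''. A vertex of a triangle $X\subseteq U=\mathbb{R}^2$ also lies in $\widetilde E(\mathbb{R}^2)$. Likewise, one-dimensional extremal sets contain their extreme endpoints, so they are not open there.
- The case you left to check in the analogue of Lemma~\ref{finite-vertex lemma} is easy. If $[y,z]\subseteq V(x)$, then $\conv(\{x,y,z\})\subseteq U$, so $[x,z]\subseteq E(x,l)$ and $\widetilde z=z$. Otherwise the point $w$ exists, and the first exit point of $[x,w]$ from $U$ lies in $\partial U$. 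Extreme points lie in $U$ and never block a segment.
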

The main Theorem~\ref{mainth} of this paper is a corollary of this theorem and of the following proposition.
\begin{St}
An extremal set $E(x,\,L,\,l)$ is a relatively closed, relatively convex subset of the open set $E_{>0}$ with no extreme points.
\end{St}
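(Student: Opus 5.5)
The plan is to verify the three properties separately, using only the material of Sections~\ref{secmnsop} and~\ref{secekst}. Throughout, write $E=E(x,\,L,\,l)$ and recall that $E\subseteq\widetilde{E}(L,\,l)\subseteq E_{>0}$, with $E_{>0}$ open.

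\emph{Relative convexity} is exactly Corollary~\ref{vipukn}. For completeness: if $y,\,z\in E$ and $[y,\,z]\subseteq E_{>0}$, then $z\in V(y)\cap l$ and $\BG(z)=L(z)$. Lemma~\ref{rasek}, applied with base point $y\in\widetilde{E}(L,\,l)$, gives $[y,\,z]\subseteq E(y,\,L,\,l)=E$.

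\emph{Relative closedness in $E_{>0}$.} Take $y\in\clos(E)\cap E_{>0}$.
\begin{enumerate}
\item Since $l$ is closed, $y\in l$.
\item Since $\BG$ is continuous at the interior point $y$ and $\BG=L$ on $E$, we get $\BG(y)=L(y)$.
\item By Remark~\ref{open-set}, $V(y)$ is an open neighbourhood of $y$, so it contains some $z\in E$. Visibility is symmetric, so $y\in V(z)\cap l$.
\item Lemma~\ref{rasek}, applied at $z\in\widetilde{E}(L,\,l)$, gives $y\in E(z,\,L,\,l)=E$.
\end{enumerate}
This is essentially step 2.2 of the proof of~\eqref{indforext}, except that we now start from $E_{>0}$ rather than from $\widetilde{E}(L,\,l)$. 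The extra work is only items 1 and 2, which place $y$ in $l$ and on the graph of $L$.

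\emph{No extreme points.} Here an extreme point of $E$ means a point of $E$ that is not an interior point of any segment contained in $E$. Fix $y\in E$ and choose $0<\varepsilon<\dist(y,\,\partial E_{>0})$. Set $W=B_{\varepsilon}(y)$, a compact convex subset of $E_{>0}$ with $y\in\intX W$.
\begin{enumerate}
\item By Lemma~\ref{usopr}, $\BG|_W\leqslant L|_W$, and $l\cap U(W,\,L)$ is a face of $U(W,\,L)$ with $\aff(l\cap U(W,\,L))=l$.
\item By Lemma~\ref{simploc}, $y$ lies in a simplex with vertices on $\partial W$ contained in $U(W,\,L)$. Hence $y$ is not an extreme point of $U(W,\,L)$, and $y\in(a,\,b)$ for some $a,\,b\in U(W,\,L)$ with $a\ne b$. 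This is the content of Corollary~\ref{netnulmerextr}.
\item Since $l\cap U(W,\,L)$ is a face containing $y\in(a,\,b)$, the definition of a face gives $[a,\,b]\subseteq l\cap U(W,\,L)$.
\item Every point $w\in[a,\,b]$ lies in $l$, satisfies $\BG(w)=L(w)$, and satisfies $[y,\,w]\subseteq W\subseteq E_{>0}$, i.e. $w\in V(y)$. Lemma~\ref{rasek} then gives $[a,\,b]\subseteq E$.
\end{enumerate}
So $y$ is an interior point of a nondegenerate segment in $E$, and is not extreme.

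The main point of care is the last part. One must pass from ``not extreme in $U(W,\,L)$'' to ``not extreme in $E$'', and this goes through the face property in item 3, which forces the witnessing segment into $l$. The other two parts are direct reorganizations of Lemma~\ref{rasek} and Remark~\ref{open-set}.
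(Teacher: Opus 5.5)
Your proof is correct; the paper states this proposition without a separate proof. Your argument assembles exactly the ingredients it implicitly rests on: Corollary~\ref{vipukn} for relative convexity; the reasoning of step 2.2 in the proof of~\eqref{indforext} and the opening of the proof of Lemma~\ref{extremal boundary} for relative closedness; and Lemma~\ref{simploc}/Corollary~\ref{netnulmerextr} combined with the face property and Lemma~\ref{rasek} for the absence of extreme points.
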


{\small
 
 St. Petersburg State University, Department of Mathematics and Computer Science;\\
 29, 14th Line of Vasilievsky Island, St. Petersburg, Russia;\\
 e-mail: e.dobronravov@spbu.ru\\
 \bigskip
 
 }

\end{document}